\documentclass[reqno]{amsart}

\setlength{\parskip}{\smallskipamount}
\setlength{\parindent}{0pt}
\setlength\marginparwidth{57pt}


\newif\ifdutch
\dutchfalse
\ifdutch
\let\bm=\relax
\let\LambdaRune=\Lambda

\else
\usepackage[bbgreekl]{mathbbol}
\DeclareMathSymbol{\LambdaRune}{\mathord}{bbold}{"03}
\usepackage[bb=stixtwo]{mathalpha}
\usepackage{microtype}
\fi
\usepackage{bm}

\usepackage{amsmath, amssymb, amsthm}
\usepackage{mathtools}
\usepackage{enumerate}

\usepackage[hang, symbol]{footmisc}
\usepackage{hyperref, url}
\hypersetup{bookmarksdepth=subsubsection}
\urlstyle{same}

\theoremstyle{plain}
\newtheorem{theorem}{Theorem}[section]
\newtheorem{corollary}[theorem]{Corollary}

\newtheorem{lemma}[theorem]{Lemma}
\newtheorem{proposition}[theorem]{Proposition}

\newtheorem{thmx}{Theorem}

\theoremstyle{definition}
\newtheorem{construction}[theorem]{Construction}
\newtheorem{convention}[theorem]{Convention}
\newtheorem{definition}[theorem]{Definition}
\newtheorem{example}[theorem]{Example}
\newtheorem{notation}[theorem]{Notation}

\newtheorem{remark}[theorem]{Remark}

\newcommand{\Hom}{{\textup{Hom}}}
\let\hom=\Hom
\DeclareMathOperator{\iHom}{\underline{\operatorname{Hom}}}
\newcommand{\Fun}{{\textup{Fun}}}

\DeclareMathOperator{\id}{id}
\DeclareMathOperator{\pr}{pr}

\DeclareMathOperator{\const}{const}

\let\lim\relax
\DeclareMathOperator{\lim}{lim}

\DeclareMathOperator{\ev}{ev}

\DeclareMathOperator{\res}{res}

\DeclareMathOperator{\Nm}{Nm}

\DeclareMathOperator{\CMon}{CMon}
\DeclareMathOperator{\CAlg}{CAlg}

\DeclareMathOperator{\PSh}{PSh}

\DeclareMathOperator{\Ar}{Ar}
\DeclareMathOperator{\Tw}{Tw}
\DeclareMathOperator{\PAr}{PAr}
\DeclareMathOperator{\Span}{Span}

\newcommand{\Un}{{\textup{Un}}}
\newcommand{\cc}{{\textup{cc}}}
\newcommand{\ct}{{\textup{ct}}}
\newcommand{\Unco}{\Un^{\cc}}
\newcommand{\Unct}{\Un^{\ct}}
\newcommand{\fw}{{\textup{fw}}}
\newcommand{\comp}{{\textup{comp}}}
\newcommand{\COMP}{{\textup{\textsc{comp}}}}
\newcommand{\core}[1]{\iota{#1}}

\newcommand{\lax}{{\textup{lax}}}
\newcommand{\oplax}{{\textup{oplax}}}
\newcommand{\SQ}{{\textup{SQ}}}
\newcommand{\co}{2\textup{-op}}
\newcommand{\fold}{\textup{fold}}
\newcommand{\all}{\textup{all}}
\newcommand{\op}{{\textup{op}}}
\newcommand{\catop}{^{\textup{op}}}

\newcommand{\SpanTwo}{{\textup{\textsc{Span}}}_2}
\newcommand{\SpanThreeHalves}{\textup{\textsc{Span}}_{1{\frac{1}{2}}}}
\newcommand{\SpanTwoP}[4]{\SpanTwo(#1,#2)_{#4,#3}}
\newcommand{\SpanDblP}[4]{\SpanDbl(#1,#2)_{#4,#3}}
\newcommand{\SpanDbl}{{\textup{SPAN}_\textup{dbl}}}
\newcommand{\BC}{{\textup{BC}}}
\newcommand{\CIP}[4]{\ul\Span_{#4\textup{-fold},#3}(#1^{#2\text-\amalg})}

\newcommand{\twoB}{{\mathbb B}}
\newcommand{\twoC}{{\mathbb C}} 
\newcommand{\twoD}{{\mathbb D}}
\newcommand{\twoE}{{\mathbb E}}


\newcommand{\twoI}{{\mathbb I}}
\newcommand{\Univ}[3]{{\mathbb B}_{#2,#3}(#1)}
\newcommand{\UnivGen}{{\mathbb B}}

\newcommand{\CAT}{{\textup{\textsc{Cat}}}}
\newcommand{\CatTwo}{{\CAT_2}}

\newcommand{\Adj}{{\textup{\textsc{Adj}}}}
\newcommand{\FUN}{\textup{\textsc{Fun}}} 
\newcommand{\NAT}{\textup{\textsc{Nat}}} 
\newcommand{\HOM}{\textup{\textsc{Hom}}} 
\newcommand{\CMON}{\textup{\textsc{CMon}}} 

\newcommand{\dbadj}{\textup{-badj}}
\newcommand{\BAdjTrip}{{\textup{\textsc{BAdjTrip}}}}
\newcommand{\AdTrip}{{\textup{AdTrip}}}
\newcommand{\myC}{{C}}
\newcommand{\myT}{{C}}

\newcommand{\catname}[1]{{\textup{#1}}}

\newcommand{\Spc}{\catname{Spc}}

\newcommand{\Fin}{\catname{Fin}}
\newcommand{\Cat}{\catname{Cat}}


\newcommand{\qquadtext}[1]{\qquad\textrm{#1}\qquad}

\newcommand{\ulhelper}[2]{\underline{\setbox0=\hbox{$#1#2$}\dp0=0.4pt \box0\relax}}
\newcommand{\ul}[1]{{\mathpalette\ulhelper{#1}}\hbox{\rule[-2pt]{0pt}{0pt}}} 
\newcommand{\iso}{\xrightarrow{\;\smash{\raisebox{-0.5ex}{\ensuremath{\scriptstyle\sim}}}\;}} 
\newcommand{\noloc}{
	\nobreak
	\mspace{6mu plus 1mu}
	{:}
	\nonscript\mkern-\thinmuskip
	\mathpunct{}
	\mspace{2mu}
}


\newcommand{\Cc}{\mathcal{C}}
\newcommand{\Dd}{\mathcal{D}}

\newcommand{\Ww}{\mathcal{W}}
\newcommand{\Xx}{\mathcal{X}}
\newcommand{\Yy}{\mathcal{Y}}
\newcommand{\Zz}{\mathcal{Z}}



\renewcommand{\phi}{\varphi}
\renewcommand{\epsilon}{\varepsilon}
\let\atled=\varrho 

\usepackage{tikz, tikz-cd}
\usetikzlibrary{matrix,arrows,decorations.pathmorphing, decorations.markings, cd, calc, nfold}

\tikzset{curve/.style={settings={#1},to path={(\tikztostart)
			.. controls ($(\tikztostart)!\pv{pos}!(\tikztotarget)!\pv{height}!270:(\tikztotarget)$)
			and ($(\tikztostart)!1-\pv{pos}!(\tikztotarget)!\pv{height}!270:(\tikztotarget)$)
			.. (\tikztotarget)\tikztonodes}},
	settings/.code={\tikzset{quiver/.cd,#1}
		\def\pv##1{\pgfkeysvalueof{/tikz/quiver/##1}}},
	quiver/.cd,pos/.initial=0.35,height/.initial=0}

\tikzset{mono/.style={>->},
	epic/.style={->>},
	both/.style={>->>},
	epmo/.style={->,
		postaction={decorate,
			decoration={markings,
				mark=at position 0pt with {
					\draw[-] (0,0)--(0,-3pt);
					\draw[-] (1.5pt,0)--(1.5pt,-2pt);
					\draw[-] (3pt,0)--(3pt,-3pt);
	}}}},
	epmo'/.style={->,
		postaction={decorate,
			decoration={markings,
				mark=at position 0pt with {
					\draw[-] (0,0)--(0,3pt);
					\draw[-] (1.5pt,0)--(1.5pt,2pt);
					\draw[-] (3pt,0)--(3pt,3pt);
}}}}}

\newcommand{\buildarrowfromtikz}[1]{\mathrel{\begin{tikzcd}[ampersand replacement=\&, cramped, cells={nodes={inner sep=0pt}}, column sep=small] \null\arrow[r, #1]\&\null
\end{tikzcd}}}
\newcommand{\rightarrowepic}{\buildarrowfromtikz{epic}}
\newcommand{\rightarrowmono}{\buildarrowfromtikz{mono}}
\newcommand{\rightarrowepmo}{\buildarrowfromtikz{epmo}} 
\newcommand{\buildarrowfromtikzl}[1]{\mathrel{\begin{tikzcd}[ampersand replacement=\&, cramped, cells={nodes={inner sep=0pt}}, column sep=small] \null\&\arrow[l, #1]\null
\end{tikzcd}}}

\newcommand{\leftarrowmono}{\buildarrowfromtikzl{mono}}

\tikzcdset{pullback/.style = {phantom, "\lrcorner"{very near start}}}
\tikzcdset{pushout/.style = {phantom, "{\rotatebox{180}{$\lrcorner$}}"{very near end}}}

\newcommand{\notehelper}[3]{\textcolor{#3}{$\blacksquare$}\marginpar{\ifodd\thepage\raggedright\else\raggedleft\fi\color{#3}\tiny \textbf{#2:} #1}}


\title[Universality of span 2-categories]{Universality of span 2-categories and\\ the construction of 6-functor formalisms}
\author{Bastiaan Cnossen}
\address{B.C.: Mathematisches Institut, Universität Regensburg, Universitätsstraße 31, 93053 Regensburg, Germany}
\author{Tobias Lenz}
\address{T.L.: Mathematisches Institut, Rheinische Friedrich-Wilhelms-Universität Bonn, Endenicher Allee 60, 53115 Bonn, Germany}
\author{Sil Linskens}
\address{S.L.: Mathematisches Institut, Universität Regensburg, Universitätsstraße 31, 93053 Regensburg, Germany}

\begin{document}
\begin{abstract}
	Given an $\infty$-category $\myC$ equipped with suitable wide subcategories $I, P \subset E\subset C$, we show that the $(\infty,2)$-category $\SpanTwoP{\myC}{E}{I}{P}$ of \emph{higher} (or \emph{iterated}) \emph{spans} defined by Haugseng has the universal property that 2-functors $\SpanTwoP{\myC}{E}{I}{P} \to \twoD$ correspond precisely to \emph{$(I, P)$-biadjointable} functors $\myC\catop \to \twoD$, i.e.\ functors $F$ where $F(i)$ for $i \in I$ admits a left adjoint and $F(p)$ for $p \in P$ admits a right adjoint satisfying various Beck--Chevalley conditions. We further extend this universality to the lax symmetric monoidal setting, providing a conceptual explanation for---and an independent proof of---the Mann--Liu--Zheng construction of 6-functor formalisms from suitable functors $C^\op\to\CAlg(\Cat)$.
\end{abstract}

\DeclareRobustCommand{\SkipTocEntry}[5]{}
\newcommand{\starsubsec}[1]{\addtocontents{toc}{\SkipTocEntry}\subsection*{#1}}

\begingroup
\parskip=0pt
\maketitle
\setcounter{tocdepth}{2}
\makeatletter
\newif\ifhe@d
\def\@tocline#1#2#3#4#5#6#7{\begingroup\relax%
	\he@dfalse\ifcase#1\relax\or\he@dtrue\fi%
	\ifnum#1<3\ifhe@d\else\hspace{2em}\hspace{-2pt}\fi#6\hfill#7\par\fi\endgroup}
\makeatother
\tableofcontents
\endgroup

\section{Introduction}
The formalism of \emph{six operations} was originally developed by Grothendieck \cite{SGA4} to capture the fundamental operations ($f^*, f_*, f_!, f^!, \otimes, \iHom$) on étale cohomology and their intricate relationships (adjunctions, base change, projection formul\ae). Grothendieck's ideas have proven remarkably versatile, and analogous axiomatic structures appear in many different settings across pure mathematics, for example for sheaves on locally compact topological spaces \cite{Verdier,Iversen,Kashiwara_Schapira,volpe-six-ops}, $D$-modules \cite{Kashiwara_Schapira, GR2017studyDAG}, perverse sheaves \cite{BBD_Perverse,Kashiwara_Schapira}, ind-coherent sheaves \cite{GR2017studyDAG}, as well as in condensed mathematics \cite{ScholzeSixFunctors}, rigid-analytic geometry \cite{Mann2022SixFunctor}, or motivic homotopy theory \cite{Ayoub2007SixOperations, CisinskiDeglise2019Triangulated, equivariant-motivic-six-ops}.

In the modern language of $\infty$-categories, the structure encoded by a 6-functor formalism can be elegantly captured using $\infty$-categories of \textit{spans}  (also known as \textit{correspondences}). This perspective goes back to foundational work by Liu--Zheng \cite{Liu_Zheng_Gluing, Liu_Zheng_Enhanced} and Gaitsgory--Rozenblyum \cite{GR2017studyDAG}, and has been effectively formulated by Mann \cite{Mann2022SixFunctor} and Scholze \cite{ScholzeSixFunctors}. Specifically, given a base $\infty$-category $\myC$ of geometric objects and a collection of morphisms $E$ of $\myC$ closed under composition and base change, one considers the $\infty$-category $\mathrm{Span}(\myC, E)$ which has the same objects as $\myC$, while its morphisms from $x$ to $y$ are spans $x \leftarrow u \rightarrow y$ where the right-pointing map $u \to y$ belongs to $E$; composition is defined via pullback. When $\myC$ admits finite products, $\mathrm{Span}(\myC, E)$ naturally inherits a symmetric monoidal structure. With this setup, the functors $(f^*, f_!, \otimes)$ and their coherences can be encoded as a lax symmetric monoidal functor $\Dd \colon \mathrm{Span}(\myC, E) \to \Cat_{\infty}$, termed a \textit{3-functor formalism}; the remaining operations $(f_*, f^!, \iHom)$ are then obtained as right adjoints, i.e.~they are \emph{properties} as opposed to further structure.

Such 3-functor formalisms encode a wealth of structure in a highly coherent fashion, making them a powerful tool---at the same time, however, this means that constructing such a formalism $\Dd$ is often quite difficult. In practice, it is usually much easier to construct a functor $\Dd_0\colon \myC\catop \to \CMon(\Cat_{\infty})$ that encodes the pullback functors $f^*$ and the tensor products $\otimes$, and it is therefore natural to look for conditions ensuring $\Dd_0$ extends in some preferred way to $\mathrm{Span}(\myC, E)$. One such extension result was proven by Mann \cite[Proposition~A.5.10]{Mann2022SixFunctor}, building heavily on intricate $\infty$-categorical machinery developed by Liu and Zheng \cite{Liu_Zheng_Enhanced, Liu_Zheng_Gluing}; see also \cite{chowd1, chowd2, chowd3} for a recent reworking. The Mann--Liu--Zheng approach requires the specification of a so-called \emph{suitable decomposition} of $E$, i.e.~ two classes of morphisms, $I$ (thought of as `local isomorphisms') and $P$ (thought of as `proper maps'), satisfying some axioms (see Theorem~\ref{thm:IntroMainTheorem} below for details), including the requirement that every morphism in $E$ factors as a composite $p \circ i$ with $i \in I$ and $p \in P$. Assuming then that the functors $i^*$ for $i \in I$ admit left adjoints $i_\sharp$, the functors $p^*$ for $p \in P$ admit right adjoints $p_*$, and these adjoints satisfy appropriate base change equivalences (including a `mixed' condition $i_{\sharp} p'_* \simeq p_* i'_{\sharp}$) and projection formul\ae, Mann shows that one can extend $\Dd_0$ to a 3-functor formalism $\Dd\colon\Span(C,E)\to\Cat_\infty$, such that the exceptional pushforward $f_!$ is given by the \emph{left} adjoint to $f^*$ if $f\in I$, and by the \emph{right} adjoint if $f\in P$.

An alternative approach, initiated by Gaitsgory and Rozenblyum \cite{GR2017studyDAG}, focuses on the universal properties of certain $(\infty, 2)$-categories built from spans. Here we consider 2-functors $F\colon \myC\catop \to \twoD$, where $\twoD$ is an arbitrary symmetric monoidal $(\infty, 2)$-category taking the place of $\Cat_{\infty}$. The conditions on $\Dd_0$ considered previously may be formulated in this generality; in particular, we say that $F$ is \textit{left $I$-adjointable} if the pullback functor $i^* \coloneqq F(i)$ for $i \in I$ admits a left adjoint $i_{\sharp}$ satisfying the Beck--Chevalley condition, and dually we say that $F$ is \textit{right $P$-adjointable} if the functor $p^*$ for $p \in P$ admits a right adjoint $p_*$ satisfying the Beck--Chevalley condition. Gaitsgory and Rozenblyum conjectured that the universal target for left $I$-adjointable functors is a certain $(\infty, 2)$-categorical enhancement $\smash{\SpanThreeHalves(\myC,I)}$ of $\Span(\myC,I)$, whose 2-morphisms are maps
\[
	\begin{tikzcd}[row sep=scriptsize]
		& u\arrow[dl]\arrow[dd]\arrow[dr]\\
		x && y\\
		& \arrow[ul]v\arrow[ur]
	\end{tikzcd}
\]
between spans; a dual statement then follows for right $P$-adjointability. Rigorous proofs of this universal property were later provided independently by Macpherson \cite{MacPherson2022Bivariant} and Stefanich \cite{Stefanich2020Correspondences}; moreover, the former also provided a (lax) symmetric monoidal version of this universal property. Throwing away non-invertible 2-cells, this then gives an alternative proof of Mann's result in the special case where either $I$ or $P$ only consists of the equivalences in $\myC$. Gaitsgory and Rozenblyum also outlined the construction of suitable $(\infty,2)$-functors more generally in the presence of a decomposition $E=P\circ I$ satisfying similar, but more restrictive assumptions than in Mann's result.

\starsubsec{Main result}
In this article, we establish an $(\infty,2)$-categorical universal property in the full generality of Mann's result, providing a unified and conceptually streamlined perspective. More concretely, given an $\infty$-category $\myC$ with a wide subcategory $E\subset \myC$ closed under base change and a suitable decomposition $I,P\subset E$, we introduce a different $(\infty, 2)$-categorical enhancement $\SpanTwoP{\myC}{E}{I}{P}$ of $\Span(\myC,E)$ as a minor generalization of a construction due to Haugseng \cite{HaugsengSpans}; this time, the 2-morphisms are given by \emph{spans of spans}, i.e.~commutative diagrams
\[
\begin{tikzcd}[row sep=scriptsize]
	& u  \ar{dl}\ar{dr}{e} \\
	x & w \arrow[u,"p"'{yshift=-2pt}] \arrow[d,"i"{yshift=1pt}] & y\rlap, \\
	& v \ar{ul}\ar[swap]{ur}{e'}
\end{tikzcd}
\]
where the vertical maps $p$ and $i$ are required to lie in $P$ and $I$, respectively. We introduce the notion of an \textit{$(I, P)$-biadjointable functor} $F\colon \myC\catop \to \twoD$ as one that is both left $I$-adjointable and right $P$-adjointable such that the left and right adjoints interact via a \emph{double Beck--Chevalley equivalence} $i_{\sharp} p'_* \simeq p_* i'_{\sharp}$. Our main result establishes the universality of the inclusion $\myC\catop \hookrightarrow \SpanTwoP{\myC}{E}{I}{P}$ among $(I,P)$-biadjointable functors:

\begin{thmx}[See Theorem~\ref{thm:main-theorem}]
	\label{thm:IntroMainTheorem}
	Let $\myC$ be an $\infty$-category, and let $I, P \subset E \subset \myC$ be wide subcategories satisfying the following conditions:
	\begin{enumerate}
		\item $I$ and $P$ are closed under base change and are left cancellable (i.e.\ if $gf,g \in I$, then $f \in I$, and similarly for $P$).
		\item Every morphism in $E$ factors as $p \circ i$ for some $i \in I$ and $p \in P$.
		\item Every map in $I \cap P$ is truncated (i.e.\ $n$-truncated for some $n \geq -2$ which may depend on the morphism).
	\end{enumerate}
	Then the inclusion $h\colon \myC\catop \hookrightarrow \SpanTwoP{\myC}{E}{I}{P}$ is the initial $(I,P)$-biadjointable functor, i.e.~for every $(\infty,2)$-category $\twoD$ restriction along $h$ induces an equivalence
	\[
	h^*\colon \FUN(\SpanTwoP{\myC}{E}{I}{P}, \twoD) \iso \FUN_{(I,P)\dbadj}(\myC\catop, \twoD)
	\]
	of $(\infty, 2)$-categories, where the right-hand side denotes the locally full subcategory of $\FUN(\myC^\op,\twoD)$ spanned by the $(I,P)$-biadjointable functors and those natural transformations that satisfy the evident Beck--Chevalley conditions with respect to maps in $I$ and $P$.
\end{thmx}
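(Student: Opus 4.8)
The plan is to construct an inverse to $h^*$ by hand, exploiting that the extension of an $(I,P)$-biadjointable functor is completely forced. That $h^*$ is well defined, with image in $\FUN_{(I,P)\dbadj}(\myC\catop,\twoD)$, is the easy half: the morphism $h(i)$ for $i\in I$ has as left adjoint the ``wrong-way'' span $(x\xleftarrow{=}x\xrightarrow{i}y)$, the morphism $h(p)$ for $p\in P$ has as right adjoint the wrong-way span of $p$, their units and counits are manifestly spans of spans, and the (double) Beck--Chevalley conditions amount to the pasting of pullback squares of spans; since $2$-functors preserve adjunctions and Beck--Chevalley squares, restriction along $h$ therefore produces biadjointable functors and transformations satisfying the Beck--Chevalley conditions. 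For the converse the point is that a $2$-functor $\widetilde F$ with $\widetilde F\circ h\simeq F$ is pinned down on the nose: by the Segal conditions every $1$-morphism of $\SpanTwoP{\myC}{E}{I}{P}$ factors as $h(f)$ followed by the wrong-way span of some $g\in E$, and writing $g=p\circ i$ as in hypothesis~(2) the latter is the composite of the wrong-way spans of $i$ and of $p$; as these are the left adjoint of $h(i)$ and the right adjoint of $h(p)$, the functor $\widetilde F$ must send the $1$-morphism to $p_*\,i_\sharp\,F(f)$, and a parallel analysis forces its values on $2$-morphisms (spans of spans) in terms of the units and counits of these two families of adjunctions. The theorem thus reduces to showing that this rigidly determined recipe (i)~is well defined, (ii)~assembles into a genuine $(\infty,2)$-functor, and (iii)~does so coherently enough to induce equivalences on all mapping $\infty$-categories.

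For~(ii) I would argue in two stages. \emph{One-sided cases.} Take first $P=\iota\myC$, so that $E=I$ and the upward leg of every $2$-morphism is invertible; the claim is that $\SpanTwoP{\myC}{I}{I}{\iota\myC}$ is initial among left $I$-adjointable functors --- the analogue for our model of the Gaitsgory--Rozenblyum universal property of $\SpanThreeHalves(\myC,I)$ proven by Macpherson and Stefanich. I would reprove it directly inside Haugseng's framework by unwinding the defining complete $2$-fold Segal space: the Segal and completeness conditions exhibit a map out of it as the data of $F$ on $\myC\catop$ together with a ``wrong-way'' extension $i\mapsto i_\sharp$, unit and counit $2$-cells, and higher coherences which the simplicial identities force to be essentially the triangle identities together with the Beck--Chevalley equivalences; conversely a left $I$-adjointable functor supplies precisely these data by forming iterated mates. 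The case $I=\iota\myC$, with $E=P$ and the downward leg of every $2$-morphism invertible, follows by passing to $2$-opposites: one has $\SpanTwoP{\myC}{P}{\iota\myC}{P}\simeq(\SpanTwoP{\myC}{P}{P}{\iota\myC})^{\co}$ together with $\FUN(\Aa^{\co},\twoD)\simeq\FUN(\Aa,\twoD^{\co})^{\co}$, under which left $P$-adjointability into $\twoD^{\co}$ becomes right $P$-adjointability into $\twoD$; the asymmetry of hypothesis~(2) causes no trouble here, since in either one-sided situation the factorization required by~(2) is the trivial one.

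\emph{General case.} For arbitrary $I,P$, hypothesis~(2) gives $E=P\circ I$, and I would reduce to the two one-sided cases just treated --- for instance by exhibiting $\SpanTwoP{\myC}{E}{I}{P}$ as a localization of the one-sided $P$-span construction applied to $\SpanTwoP{\myC}{I}{I}{\iota\myC}$, iterating Haugseng's construction, or else by resolving it by a simplicial diagram of one-sided span $2$-categories glued along copies of $\myC\catop$ and of $\Span(\myC,I\cap P)$. Applying $\FUN(-,\twoD)$ and feeding in the two one-sided universal properties then identifies $\FUN(\SpanTwoP{\myC}{E}{I}{P},\twoD)$ with the $2$-category of functors $\myC\catop\to\twoD$ that are simultaneously left $I$-adjointable and right $P$-adjointable and whose two resulting families of mates are suitably compatible along $I\cap P$; unwinding that compatibility is exactly the double Beck--Chevalley equivalence $i_\sharp p'_*\simeq p_* i'_\sharp$, so the right-hand side is $\FUN_{(I,P)\dbadj}(\myC\catop,\twoD)$. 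Left-cancellability (hypothesis~(1)) guarantees that any two factorizations $g=p_1 i_1=p_2 i_2$ are connected through maps in $I\cap P$, and truncatedness (hypothesis~(3)) controls the space of such connections tightly enough for the induced comparison $p_{1*}i_{1\sharp}\simeq p_{2*}i_{2\sharp}$ to be canonical and coherent; both are essential already for step~(i).

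I expect the genuine difficulty to be step~(ii), and within it the coherence of the assembly: passing from ``$\widetilde F$ is forced, and unambiguous cell by cell'' to ``$\widetilde F$ exists as an honest map of complete $2$-fold Segal spaces'' means producing the entire bisimplicial naturality datum simultaneously, with the double Beck--Chevalley equivalence as the only glue binding the $I$-directed half of the construction to the $P$-directed half. Organizing this over every simplex at once, and then running the analysis once more ``one level up'' to obtain~(iii) --- the fully faithfulness of $h^*$ on mapping $\infty$-categories, which upgrades the construction to the asserted equivalence of $(\infty,2)$-categories --- is where essentially all of the technical weight, and the full force of hypotheses~(1)--(3), is concentrated.
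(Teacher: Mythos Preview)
Your proposal takes a genuinely different route from the paper, and the difference is instructive. The paper \emph{never} attempts to construct the inverse of $h^*$ directly. Instead it proves abstractly (Theorem~\ref{thm:existence}) that a universal $(I,P)$-biadjointable functor $\myC\catop\to\Univ{\myC}{I}{P}$ exists, via iterated pushouts along copies of $[1]\times[1]\to\Adj\times\Adj$, and then gives a \emph{recognition principle} (Theorem~\ref{thm:Recognizing_The_Universal_Biadjointable_Functor}): an $(I,P)$-biadjointable functor $F\colon\myC\catop\to\UnivGen$ is universal iff it is essentially surjective and each $\HOM_{\UnivGen}(F(a),F(-))\colon\myC\catop\to\CAT$ is the \emph{free} $(I,P)$-biadjointable functor on $\id_{F(a)}$. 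This converts the $2$-categorical coherence problem you flag in step~(ii) into a question about ordinary $\Cat$-valued functors, which is then attacked with parametrized category theory: the free functor is identified explicitly as $\CIP{\ul a}{E}{I}{P}$ (Theorems~\ref{thm:biadd} and~\ref{thm:free_IP_biadj_on_1}), and one checks that this matches the $\HOM$-functor of $\SpanTwoP{\myC}{E}{I}{P}$ by computing the oplax arrow category (Proposition~\ref{prop:aroplax}).

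Your instinct to reduce to the one-sided cases and glue via a localization is close in spirit to what happens in the paper, but the localization lives one categorical level down: it is Theorem~\ref{thm:P-completion-SpanI}, which exhibits the free $(I,P)$-biadjointable functor as a localization of the free $P$-completion of the free $(I,I\cap P)$-biadjointable functor. The contractibility of factorizations (Proposition~\ref{prop:Decomp_Weakly_Contractible}) and the truncatedness hypothesis enter exactly there. What your plan lacks is a concrete mechanism for the gluing at the level of $\SpanTwo$ itself---``iterate Haugseng's construction'' and ``simplicial resolution glued along $\Span(\myC,I\cap P)$'' are suggestive but neither is a theorem one can cite, and producing the required bisimplicial map by hand is precisely the coherence swamp you correctly identify as the crux. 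The paper's recognition principle is the device that drains that swamp: by trading the $2$-category for its representable $\HOM$-functors, all the delicate Beck--Chevalley bookkeeping (including the double one and the norm-map analysis for $I\cap P$) gets absorbed into the parametrized-colimit machinery of \cite{CLL_Spans}, where it can be handled inductively on truncation degree.
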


Informally, the unique extension $\overline{F}\colon \SpanTwoP{\myC}{E}{I}{P} \to \twoD$ of an $(I,P)$-biadjointable functor $F\colon \myC^{\op} \to \twoD$ may be described as follows: On objects, we have $\overline{F}(x) = F(x)$. A span $x \xleftarrow{\;\smash{f}\;} u \xrightarrow{\;e\;} y$ is sent to the composite $e_!f^*\colon F(x) \to F(y)$, where $f^* \coloneqq F(f)$; the `exceptional pushforward' $e_!$ is obtained by choosing a decomposition $e = p \circ i$ into a map in $I$ followed by a map in $P$ and setting $e_! = p_*i_{\sharp}$, where $i_{\sharp}$ is a left adjoint to $i^*$ and $p_*$ is a right adjoint to $p^*$. The effect of $\overline{F}$ on 2-morphisms is constructed using the units and counits for the adjunctions $i_{\sharp} \dashv i^*$ and $p^* \dashv p_*$.

As a special case of the theorem, we may take $P$ to consist of just the equivalences of $\myC$, in which case we recover the universal property of $\smash\SpanThreeHalves$ recalled above. As the other extreme we can consider the fully biadjointable case $I=P=E=\myC$, for which the universal property of $\SpanTwo(C)$ has been conjectured by Ben-Moshe \cite[Conjecture~2.25]{Ben-Moshe_Transchromatic} based on \cite[Remark~4.2.5]{hopkinsLurie2013ambidexterity}. More generally, if $I=P=E$, then a functor $C\catop \to \twoD$ is biadjointable if and only if it is both left and right $E$-adjointable, and a certain inductively defined \emph{norm map} $\Nm_f\colon f_{\sharp}\to f_\ast$ \cite{hopkinsLurie2013ambidexterity} is an equivalence for every $f\in E$; if $\twoD$ is the $(\infty,2)$-category of $\infty$-categories, this is typically referred to as \emph{ambidexterity}. In this setting, Balmer and Dell'Ambrogio \cite{BalmerDellAmbrogio} proved the $(2,2)$-categorical analogue of Theorem~\ref{thm:IntroMainTheorem} under an additional faithfulness hypothesis on the maps in $E$. The general case proven here has been conjectured by Hoyois \cite[Expected~Theorem~2.7]{Hoyois_YTM}.

\starsubsec{Construction of 6-functor formalisms}
If $\myC$ admits finite products, or more generally if it comes with a symmetric monoidal structure suitably interacting with pullbacks and the classes $I$ and $P$, then $\SpanTwoP{\myC}{E}{I}{P}$ inherits a symmetric monoidal structure, and the functor $h$ becomes universal among \textit{symmetric monoidal} functors whose underlying functor is $(I, P)$-biadjointable, see Proposition~\ref{prop:sym-mon-univ-prop}. In fact, we show that it is also universal among a suitable notion of \textit{lax symmetric monoidal $(I, P)$-biadjointable functors} (Definition~\ref{def:lax-badj}). When $\myC$ is equipped with the cartesian symmetric monoidal structure, this result takes the following form:

\begin{thmx}[See Theorem~\ref{thm:lax-univ-prop}]\label{thm:introlax}
	Let $I,P\subset E\subset \myC$ as before, and assume that $\myC$ has finite products. Then the restriction
	\[
		h^*\colon \FUN^\textup{lax-$\otimes$}\big(\SpanTwoP{\myC}{E}{I}{P}^\otimes,\twoD^\otimes\big)\to\FUN^\textup{lax-$\otimes$}\big((\myC^\times)^\op,\twoD^\otimes\big)
	\]
	is a locally full inclusion for any symmetric monoidal $(\infty,2)$-category $\twoD^\otimes$. Moreover, the objects in its image are precisely the lax symmetric monoidal $(I,P)$-biadjointable functors, and its 1-morphisms are precisely those natural transformations of lax symmetric monoidal 2-functors whose underlying natural transformations of functors $\myC^\op\rightrightarrows\twoD$ are $(I,P)$-biadjointable.
\end{thmx}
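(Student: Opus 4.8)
The plan is to bootstrap from the non-lax Theorem~\ref{thm:main-theorem}---or, more efficiently, from its symmetric monoidal refinement Proposition~\ref{prop:sym-mon-univ-prop}---by means of the symmetric monoidal envelope. For a symmetric monoidal $(\infty,2)$-category $\mathcal A^\otimes$ write $\mathrm{Env}(\mathcal A^\otimes)=\mathcal A^\otimes\times_{\Fin_*}\Fin_*^{\textup{act}}$ for its envelope; as in the $(\infty,1)$-categorical situation it carries a canonical symmetric monoidal structure together with a lax symmetric monoidal $2$-functor $\mathcal A^\otimes\to\mathrm{Env}(\mathcal A^\otimes)$, and restriction along the latter should induce an equivalence
\[
	\FUN^{\otimes}\bigl(\mathrm{Env}(\mathcal A^\otimes),\twoD^\otimes\bigr)\iso\FUN^{\textup{lax-$\otimes$}}\bigl(\mathcal A^\otimes,\twoD^\otimes\bigr)
\]
for every symmetric monoidal $(\infty,2)$-category $\twoD^\otimes$; moreover both $\FUN^{\otimes}(-,\twoD^\otimes)$ and $\FUN^{\textup{lax-$\otimes$}}(-,\twoD^\otimes)$ are locally full sub-$(\infty,2)$-categories of the ambient functor $2$-category $\FUN(-,\twoD)$ that retain \emph{all} modifications as their $2$-morphisms. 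Applying this to $\mathcal A^\otimes=\SpanTwoP{\myC}{E}{I}{P}^\otimes$ and to $\mathcal A^\otimes=(\myC^\times)^\op$ reduces Theorem~\ref{thm:introlax} to a statement about \emph{strong} symmetric monoidal $2$-functors out of the two resulting envelopes, hence to a comparison taking place inside ordinary $2$-functor categories.

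The crucial geometric input I would establish next is an identification of symmetric monoidal $(\infty,2)$-categories
\[
	\mathrm{Env}\bigl(\SpanTwoP{\myC}{E}{I}{P}^\otimes\bigr)\;\simeq\;\SpanTwoP{\myC^{\Fin\text-\amalg}}{\widetilde E}{\widetilde I}{\widetilde P}^\otimes,
\]
where $\myC^{\Fin\text-\amalg}$ is the finite-coproduct completion of $\myC$ and $\widetilde E$ consists of all maps of finite families that are pointwise in $E$, while $\widetilde I$ (resp.\ $\widetilde P$) consists of those which are pointwise in $I$ (resp.\ $P$) and whose underlying function of indexing sets is moreover a bijection. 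Morally this expresses that Haugseng's span construction converts the cartesian monoidal structure on $\myC$ into the coproduct structure on finite families of spans, and I expect it to come down to an unwinding of the very definition of the monoidal structure on $\SpanTwoP{\myC}{E}{I}{P}^\otimes$; compatibly one obtains $\mathrm{Env}\bigl((\myC^\times)^\op\bigr)\simeq\bigl((\myC^{\Fin\text-\amalg})^\times\bigr)^\op$, with the envelope of $h$ being the corresponding inclusion. I would then check that $\myC^{\Fin\text-\amalg}$ together with $\widetilde I,\widetilde P\subset\widetilde E$ again satisfies the hypotheses of Theorem~\ref{thm:main-theorem}: pullbacks along maps in $\widetilde E$, left cancellability and $n$-truncatedness in $\myC^{\Fin\text-\amalg}$ are all computed pointwise over the indexing sets and are therefore inherited, finite products in $\myC$ induce finite products in $\myC^{\Fin\text-\amalg}$ distributing over coproducts, and the required $\widetilde P\circ\widetilde I$-factorizations are obtained family-wise.

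Granting these identifications, Theorem~\ref{thm:main-theorem} (or Proposition~\ref{prop:sym-mon-univ-prop}) applied to $\myC^{\Fin\text-\amalg}$ identifies $\FUN\bigl(\mathrm{Env}(\SpanTwoP{\myC}{E}{I}{P}^\otimes),\twoD\bigr)$, and hence---after passing to the locally full symmetric monoidal parts on both sides---the $2$-category $\FUN^{\textup{lax-$\otimes$}}\bigl(\SpanTwoP{\myC}{E}{I}{P}^\otimes,\twoD^\otimes\bigr)$, with the (locally full) subcategory of $\FUN\bigl((\myC^{\Fin\text-\amalg})^\op,\twoD\bigr)$ on the $(\widetilde I,\widetilde P)$-biadjointable functors equipped with the appropriate Beck--Chevalley data. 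It then remains to translate back to $\myC$: by the Segal condition a strong symmetric monoidal $2$-functor out of $\mathrm{Env}(\SpanTwoP{\myC}{E}{I}{P}^\otimes)$ is the ``external product'' of its restriction along $\langle1\rangle$, that is, of a $2$-functor $\SpanTwoP{\myC}{E}{I}{P}\to\twoD$ together with the structure $2$-morphisms coming from the active maps; matching this with Definition~\ref{def:lax-badj} then identifies the image of $h^*$ with the lax symmetric monoidal $(I,P)$-biadjointable functors $\myC^\op\to\twoD$, where one uses that, thanks to the external-product structure just described, $(\widetilde I,\widetilde P)$-biadjointability and the attendant Beck--Chevalley conditions on $\myC^{\Fin\text-\amalg}$ are equivalent to $(I,P)$-biadjointability applied pointwise. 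Finally, since $h^*$ in Theorem~\ref{thm:main-theorem} is an equivalence---in particular fully faithful on all higher cells---and since the inclusion of the biadjointable strong-monoidal part into $\FUN^{\otimes}\bigl((\myC^{\Fin\text-\amalg})^\times{}^\op,\twoD^\otimes\bigr)$ is cut out by conditions only at the level of objects and $1$-morphisms while keeping all modifications, the resulting $h^*$ of Theorem~\ref{thm:introlax} is automatically a locally full inclusion, and the same translation identifies its $1$-morphisms with exactly those natural transformations of lax symmetric monoidal $2$-functors whose underlying transformations $\myC^\op\to\twoD$ are $(I,P)$-biadjointable.

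The step I expect to be the main obstacle is precisely the span-theoretic identification of $\mathrm{Env}\bigl(\SpanTwoP{\myC}{E}{I}{P}^\otimes\bigr)$ with a span $2$-category of the finite-family completion, carried out carefully enough to keep track of the three classes of maps simultaneously, together with the accompanying verification that $\widetilde I,\widetilde P\subset\widetilde E\subset\myC^{\Fin\text-\amalg}$ again satisfy the hypotheses of Theorem~\ref{thm:main-theorem}; once this is in place, everything else is a bookkeeping exercise combining Theorem~\ref{thm:main-theorem} with the definition of lax symmetric monoidal $(I,P)$-biadjointability. A secondary technical point, which I expect to be formal and entirely parallel to the $(\infty,1)$-categorical case, is setting up the $(\infty,2)$-categorical symmetric monoidal envelope with its corepresentability property and confirming that it interacts correctly with the local-fullness structure on lax $2$-functor categories.
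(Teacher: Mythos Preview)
Your high-level strategy---reduce the lax monoidal statement to Theorem~\ref{thm:main-theorem} applied to an auxiliary span $2$-category built from finite families in $\myC$---is exactly the shape of the paper's argument, but your choice of auxiliary category is wrong, and the proposal has a circularity. The identification $\mathrm{Env}(\SpanTwoP{\myC}{E}{I}{P}^\otimes)\simeq\SpanTwoP{\myC^{\Fin\text-\amalg}}{\widetilde E}{\widetilde I}{\widetilde P}$ fails already on $1$-morphisms: in the envelope, a morphism $(X_1,X_2)\to(Y)$ covering the active map $\langle2\rangle\to\langle1\rangle$ is a span $X_1\times X_2\leftarrow Z\rightarrowepmo Y$, whereas the free coproduct completion $\myC^{\Fin\text-\amalg}$ never sees products in $\myC$---each component of a backward leg there lands in a \emph{single} $X_i$. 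Your classes are also inconsistent: with $\widetilde I,\widetilde P$ lying over bijections of index sets but $\widetilde E$ over arbitrary maps, one does not have $\widetilde E=\widetilde P\circ\widetilde I$, so Convention~\ref{conv:most-general} is violated for your triple. Finally, Proposition~\ref{prop:sym-mon-univ-prop} is in the paper \emph{deduced} from Theorem~\ref{thm:lax-univ-prop}, so invoking it here is circular.

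The paper instead uses the full cocartesian unstraightening over $\Span(\Fin)$ rather than the envelope. It shows (Theorem~\ref{thm:Unstraightening-Span-2}) that $\Un^{\cc}(\SpanTwoP{\myC}{E}{I}{P}^\otimes)$ is itself the span $2$-category of $\widetilde C:=\Un^{\ct}(C^\otimes)$, with $\widetilde I,\widetilde P,\widetilde E$ taken to be the purely \emph{fibrewise} classes (Construction~\ref{cons:Unstraightening_Biadjointable_Triple}); the point is that the ``product'' behaviour is absorbed into the base $\Span(\Fin)$ rather than into $\widetilde C$, so Convention~\ref{conv:most-general} is inherited (Lemma~\ref{lem:Unstraightening_Biadjointable_Triple}) and Theorem~\ref{thm:main-theorem} applies directly to $\widetilde C$. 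One then restricts to functors preserving cocartesian edges over backward spans and uses Lemma~\ref{lem:fiberwise_biadjointability} to match $(\widetilde I,\widetilde P)$-biadjointability of a functor $\widetilde C^\op\to\Un^{\cc}(\twoD^\otimes)$ with the two conditions of Definition~\ref{def:lax-badj}.
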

The datum of an $(I,P)$-biadjointable lax symmetric monoidal functor $(\myC^\times)^\op\to\twoD^\otimes$ can also be encoded as a functor $\myC^\op\to\CAlg(\twoD^\otimes)$ satisfying suitable base change and projection formul\ae. Thus, applying Theorem~B to $\twoD = \CAT_{\infty}$ with the cartesian symmetric monoidal structure, we in particular obtain a 2-categorical enhancement of Mann's result on the construction of 6-functor formalisms; we record this as Theorem~\ref{thm:6FF}. Discarding the non-invertible 2-cells, this also gives an independent proof of Mann's original result, whose proof built on work of Liu--Zheng. Compared to the work of Liu--Zheng, which crucially uses the quasicategorical model of $\infty$-categories as simplicial sets satisfying certain horn-filling conditions, our arguments are model-independent: the desired extensions are constructed by techniques of a purely categorical nature (most notably Kan extensions and by computing $(\infty,1)$-categorical localizations), avoiding any explicit recourse to a pointset model and in particular to the combinatorics of the simplex category.

The universal properties also show that the $(\infty,2)$-categorical extensions we construct are unique. A similar uniqueness result for Mann's construction has recently been obtained by Kuijper \cite{Kuijper_Axiomatixation_6FF} and Dauser--Kuijper \cite{Dauser_Kuijper_Uniqueness} using the machinery of Liu--Zheng: under the stronger assumption that all morphisms in $I$ and $P$ are truncated, they show that the lax symmetric monoidal \textit{1-functors} $\Span(\myC,E) \to \Cat_{\infty}$ arising from Mann's construction can be uniquely characterized using Scholze's notions of \emph{cohomologically proper} and \emph{cohomologically étale} maps. Combined with our result, this means that in this truncated setting the additional adjunction data encoded in the 2-functor is essentially unique whenever it exists. It also shows that in this case our construction forgets to the Mann--Liu--Zheng construction.

\starsubsec{Lax transformations of 6-functor formalisms}
By Theorems \ref{thm:IntroMainTheorem} and \ref{thm:introlax}, every (lax symmetric monoidal) $(I,P)$-biadjointable natural transformation $\phi\colon F \to G$ between $(I,P)$-biadjointable functors $\myC\catop \to \twoD$ uniquely extends to a (lax symmetric monoidal) natural transformation between their extensions to the span 2-category. In particular, $(I,P)$-biadjointable natural transformations between lax symmetric monoidal functors $\myC\catop\to \CAT_\infty$ extend to natural transformations of six functor formalisms, as was proven previously by Mann. In practice, this turns out to be a strong requirement on $\phi$: there exist various interesting examples of transformations $\phi$ which are right $P$-adjointable, but not left $I$-adjointable, or vice versa. In the former case, for example, the Beck--Chevalley maps $i_{\sharp}\phi \to \phi i_{\sharp}$ for maps $i$ in $I$ are not necessarily invertible, so no extension to a natural transformation of six functor formalisms can be expected. It is now natural to wonder whether $\phi$ can still be uniquely extended to a \emph{lax} transformation, with the Beck-Chevalley maps serving as the lax naturality cells.

The $(\infty,2)$-categorical generality of Theorems \ref{thm:IntroMainTheorem} and \ref{thm:introlax} provides a simple way to address questions like these. A natural transformation $\phi$ may equivalently be described as a functor from $C\catop$ into the arrow 2-category $\Ar(\twoD)$ of $\twoD$. After including $\Ar(\twoD)$ into its \emph{lax} version $\Ar^{\lax}(\twoD)$, one can show that a right $P$-adjointable transformation $\phi$ automatically becomes $(I,P)$-biadjointable. By Theorem \ref{thm:IntroMainTheorem}, it then uniquely extends to a 2-functor from $\SpanTwoP{C}{E}{I}{P}$ to $\Ar^{\lax}(\twoD)$, which amounts to a lax transformation between the extensions of $F$ and $G$ to $\SpanTwoP{C}{E}{I}{P}$. Similarly, if the transformation $\phi$ is assumed to be left $I$-adjointable, it uniquely extends to an \emph{oplax} natural transformation. While it seems plausible that results like these can also be proved in a more ad hoc way using the combinatorial methods of Liu--Zheng, they become easy corollaries of the general $(\infty,2)$-categorical result.

In fact, the same idea allows us to prove a stronger result: we can completely characterize \emph{all} (op)lax natural transformations between 2-functors out of $\SpanTwoP{\myC}{E}{I}{P}$ in terms of (op)lax natural transformations out of $\myC\catop$. To formulate this, we consider for a pair of $(\infty,2)$-categories $\twoC$ and $\twoD$ the $(\infty,2)$-category $\FUN(\twoC,\twoD)^{\mathrm{(op)}\lax}$ whose objects, morphisms, and 2-morphisms are 2-functors from $\twoC$ to $\twoD$, (op)lax natural transformations, and modifications; see Definition \ref{def:FunLax}. We then have:

\begin{thmx}[See~Theorem~\ref{thm:lax_nat_trans_of_4FF}]\label{thm:IntroLaxTrans}
	Let $I,P\subset E\subset C$ as before. For every $(\infty,2)$-category $\twoD$, the restriction
	\[
	\FUN(\SpanTwoP{C}{E}{I}{P},\twoD)^{\lax}\to\FUN(C^\op,\twoD)^{\lax}
	\]
	is a locally full inclusion, whose image consists of the $(I,P)$-biadjointable functors and those lax natural transformations $\phi\colon F \to G$ such that the maps
	\[
		i^* \phi \to \phi i^* \qquadtext{ and } \phi p_* \to p_* \phi
	\]
	are equivalences for all $i \in I$ and $p \in P$. Dually, the restriction functor
	\[
	\FUN(\SpanTwoP{C}{E}{I}{P},\twoD)^{\oplax}\to\FUN(C^\op,\twoD)^{\oplax}
	\]
	is a locally full inclusion, whose image consists of the $(I,P)$-biadjointable functors and those oplax natural transformations $\psi\colon F \to G$ for which the maps
	\[
		i_{\sharp} \psi \to \psi i_{\sharp} \qquadtext{ and } \psi p^* \to p^* \psi
	\]
	are equivalences for all $i \in I$ and $p \in P$.
\end{thmx}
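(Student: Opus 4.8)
The plan is to deduce the theorem from the main theorem (Theorem~\ref{thm:main-theorem}) applied to a cleverly chosen target, namely the \emph{lax arrow $(\infty,2)$-category} $\Ar^\lax(\twoD)$, whose objects are the morphisms of $\twoD$, whose morphisms are lax-commuting squares, and whose $2$-morphisms are the evident modifications (compare Definition~\ref{def:FunLax}). The key point is that $\Ar^\lax(\twoD)$ classifies lax natural transformations: for any $(\infty,2)$-category $\twoE$, a lax natural transformation between two $2$-functors $\twoE\to\twoD$ is precisely a single $2$-functor $\twoE\to\Ar^\lax(\twoD)$, naturally in modifications, and composing with the source and target $2$-functors $s,t\colon\Ar^\lax(\twoD)\to\twoD$ recovers the two $2$-functors one started with. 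Via this identification, the content of the lax half of the theorem becomes a statement about how $h^*$ interacts with $(I,P)$-biadjointability for the target $\Ar^\lax(\twoD)$; the oplax half is identical with $\Ar^\oplax(\twoD)$ in place of $\Ar^\lax(\twoD)$.

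Carrying this out, Theorem~\ref{thm:main-theorem} applied to $\Ar^\lax(\twoD)$ says that $h^*\colon\FUN(\SpanTwoP{C}{E}{I}{P},\Ar^\lax(\twoD))\iso\FUN_{(I,P)\dbadj}(C^\op,\Ar^\lax(\twoD))$ is an equivalence onto a locally full subcategory of $\FUN(C^\op,\Ar^\lax(\twoD))$. Because $s$ and $t$ are strict $2$-functors they commute with restriction along $h$ and with the identification above, so restricting to the lax transformations between a fixed pair of $2$-functors identifies $h^*\colon\FUN(\SpanTwoP{C}{E}{I}{P},\twoD)^\lax\to\FUN(C^\op,\twoD)^\lax$ with the inclusion of a locally full subcategory. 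It therefore only remains to describe the image: on objects, by Theorem~\ref{thm:main-theorem} it consists of the $(I,P)$-biadjointable functors; and a lax transformation $\phi\colon F\to G$ lies in the image exactly when the $2$-functor $\psi_\phi\colon C^\op\to\Ar^\lax(\twoD)$ it corresponds to --- which has source $F=s\psi_\phi$, target $G=t\psi_\phi$, and is simply the datum of $\phi$ --- is $(I,P)$-biadjointable, with the analogous description for modifications inherited from the locally full inclusion of Theorem~\ref{thm:main-theorem}.

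It thus remains to unwind $(I,P)$-biadjointability of $\psi_\phi$. Since $s$ and $t$ preserve adjoints, $\psi_\phi$ being $(I,P)$-biadjointable forces $F$ and $G$ to be $(I,P)$-biadjointable. Conversely, granting that, the Beck--Chevalley and double Beck--Chevalley $2$-cells demanded of $\psi_\phi$ are automatically invertible: a $2$-morphism of $\Ar^\lax(\twoD)$ is invertible as soon as its images under $s$ and $t$ are, and those images are the (invertible) Beck--Chevalley cells of $F$ and $G$. Hence the only extra constraints come from requiring that $\psi_\phi(i)$ admit a left adjoint for each $i\in I$ and $\psi_\phi(p)$ admit a right adjoint for each $p\in P$. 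A computation with units, counits and mates in the lax arrow category shows that, once the components already have the relevant adjoints, $\psi_\phi(i)$ admits a left adjoint exactly when the lax naturality cell of $\phi$ at $i$, i.e.\ the comparison $i^*\phi\to\phi i^*$, is invertible, and $\psi_\phi(p)$ admits a right adjoint exactly when $\phi p_*\to p_*\phi$ is invertible; this gives the claimed image in the lax case. The oplax case is the dual computation in $\Ar^\oplax(\twoD)$, which swaps the roles of left and right adjoints and so produces instead the conditions on $i_\sharp\psi\to\psi i_\sharp$ and $\psi p^*\to p^*\psi$.

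The main obstacle should be this last computation, and specifically the bookkeeping of the various lax/oplax conventions: one has to verify that the comparison $2$-cells controlling the existence of adjoints in $\Ar^\lax(\twoD)$ are genuinely the maps $i^*\phi\to\phi i^*$ and $\phi p_*\to p_*\phi$ of the statement rather than any of their mates, and that no compatibility between the cells $\phi_i$ ($i\in I$) and $\phi_p$ ($p\in P$) beyond what is automatic from $\psi_\phi$ being a functor gets imposed. A secondary point requiring care is to make the reconstruction of $\FUN(\twoC,\twoD)^\lax$ from $\FUN(\twoC,\Ar^\lax(\twoD))$ and the functors $s,t$ precise at the level of $2$-morphisms --- this is what turns the argument above into a \emph{locally} full inclusion --- and to check its compatibility with $h^*$; both should be straightforward from the explicit model for $\FUN(-,-)^\lax$ underlying Definition~\ref{def:FunLax}.
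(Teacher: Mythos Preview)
Your approach is correct and closely mirrors the informal sketch given in the paper's introduction, but the paper's actual proof is organized a bit differently. Rather than applying Theorem~\ref{thm:main-theorem} to the single target $\Ar^\lax(\twoD)$, the paper uses the adjunction defining the Gray tensor product to swap the two variables: for every test $2$-category $\twoE$ one has
\[
\Hom_{\Cat_2}\big(\twoE,\FUN(\SpanTwoP{C}{E}{I}{P},\twoD)^{\oplax}\big)\simeq\Hom_{\Cat_2}\big(\SpanTwoP{C}{E}{I}{P},\FUN(\twoE,\twoD)^{\lax}\big),
\]
and similarly for $C^\op$, and then applies Theorem~\ref{thm:main-theorem} with target $\FUN(\twoE,\twoD)^{\lax}$. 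This is essentially your argument run for \emph{all} $\twoE$ at once rather than just $\twoE=[1]$; your $\Ar^\lax(\twoD)$ is the special case $\FUN([1],\twoD)^{\lax}$ (up to the lax/oplax swap coming from the Gray adjunction). The advantage of the paper's organization is that the ``secondary point requiring care'' you flag---upgrading the argument to a genuinely $2$-categorical locally full inclusion, i.e.\ handling modifications---is automatic from Yoneda, whereas in your version one has to separately argue that the locally full inclusion $\FUN(-,\Ar^\lax(\twoD))$ induces a locally full inclusion on $\FUN(-,\twoD)^\lax$.

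The substantive content is the same in both approaches: one must characterize left and right adjoints, and adjointable squares, in $\FUN(\twoE,\twoD)^{\lax}$ (equivalently in $\Ar^\lax(\twoD)$). The paper packages this as Proposition~\ref{prop:adj_in_oplax}, citing \cite{Gray-vs-cc}; this is exactly the ``computation with units, counits and mates'' you allude to. In particular, your claims that invertibility of $2$-cells in $\Ar^\lax(\twoD)$ is detected by $s,t$, and that the existence of the relevant adjoints reduces to invertibility of $i^*\phi\to\phi i^*$ and $\phi p_*\to p_*\phi$, are correct and match parts (1)--(3) of that proposition.
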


We refer to Theorem~\ref{thm:lax_nat_trans_of_4FF} for more precise formulations of the conditions on $\phi$ and $\psi$. We also prove a lax symmetric monoidal version of this result, for which we refer to Theorem \ref{thm:lax_nat_trans_of_6FF}. By applying the latter to the cartesian monoidal structure on $\myC$, this in particular provides the construction of (op)lax natural transformations between 6-functor formalisms discussed before.

\starsubsec{Strategy and outline}
Our proof for Theorem \ref{thm:IntroMainTheorem} relies on a recognition principle (Theorem \ref{thm:Recognizing_The_Universal_Biadjointable_Functor}) which reduces the universal property of $\SpanTwoP{\myC}{E}{I}{P}$ to universal properties of its $\HOM$-functors. More specifically, we show that an $(I,P)$-biadjointable functor $h\colon \myC\catop \to \twoD$ is universal if and only if it is essentially surjective and for every $a \in \myC$, the functor
\[
	\HOM_{\twoD}\big(h(a), h(-)\big)\colon \myC\catop \to \Cat_{\infty}
\]
is the \textit{free} $(I, P)$-biadjointable functor generated by the identity morphism $\id_{h(a)}$, i.e.~it corepresents evaluation at $a$. Identifying these free functors then constitutes the technical heart of our argument.

For this, we more generally explain how to construct for any functor $\Xx\colon \myC\catop \to \Cat_{\infty}$ an $(I,P)$-biadjointable functor $\CIP{\Xx}{E}{I}{P}\colon \myC\catop \to \Cat_{\infty}$ which is freely generated by $\Xx$ (Theorem \ref{thm:biadd}). The proof of this universal property crucially uses the framework of \emph{parametrized higher category theory} in the sense of \cite{BDGNS2016ExposeI,martiniwolf2021limits}, and proceeds in stages: The case where $\Xx$ is terminal and where $P$ is contained in $I$ was addressed in our previous work \cite{CLL_Spans} via filtering $P$ by the subcategories $P_{\le n}$ of $n$-truncated morphisms and inducting over $n$. We will explain how this generalizes to arbitrary $\Xx$ using a parametrized Kan extension trick, adapting an argument due to Bachmann--Hoyois, who described free non-parametrized semiadditive $\infty$-categories as certain span categories. The further generalization to the case where $P$ is not necessarily contained in $I$ requires a new idea in the form of an intricate localization argument (see Theorem~\ref{thm:P-completion-SpanI}) that allows us to build free $(I,P)$-biadjointable functors from free $(I,I\cap P)$- and $(\core \myC,P)$-biadjointable ones. This argument in turn leverages the weak contractibility of the category of factorizations of a morphism $e\in E$ into a map in $I$ followed by a map in $P$, which was previously established by Liu--Zheng, and for which we give a simple independent proof (Proposition~\ref{prop:Decomp_Weakly_Contractible}).

We now provide a linear overview of our paper:

In Section~\ref{sec:badj} we introduce $(I,P)$-biadjointable functors and prove the existence of universal biadjointable functors (Theorem~\ref{thm:existence}). While this existence proof is constructive to a certain extent, the resulting description is completely intractable. We therefore prove the aforementioned recognition principle (Theorem~\ref{thm:Recognizing_The_Universal_Biadjointable_Functor}), which reduces understanding the universal biadjointable functor to understanding free $(I,P)$-biadjointable functors into the $(\infty,2)$-category of $(\infty,1)$-categories.

These free functors are then the subject of Section~\ref{sec:Free_Semiadditive}, where we define the $(I,P)$-biadjointable functor $\CIP{\Xx}{E}{I}{P}$ for any $\Xx\colon \myC^\op\to\Cat_\infty$ and prove its universal property following the outline above.

In Section~\ref{sec:main-results} we then introduce the $(\infty,2)$-categories $\SpanTwoP{\myC}{E}{I}{P}$ (incarnated as 2-fold Segal spaces), and explicitly describe their $\HOM$ functors. Combining this with the results from the previous two sections, we then obtain the proof of Theorem~\ref{thm:IntroMainTheorem}. 

The other two main results now follow as formal consequences. In Section~\ref{sec:Lax_Monoidal_Universality}, we show that the Grothendieck construction of a diagram of span 2-categories is itself a span 2-category, and use this fact to reduce Theorem~\ref{thm:introlax} to Theorem~\ref{thm:IntroMainTheorem}. In Section~\ref{sec:lax_transformations_6FF}, we explain how basic facts about adjunctions in (op)lax functor categories allow us to deduce Theorem~\ref{thm:IntroLaxTrans} from Theorem~\ref{thm:IntroMainTheorem}.

\starsubsec{Acknowledgements}
We are grateful to Maxime Ramzi, Shay Ben-Moshe, and Lior Yanovski for several helpful discussions related to this project. We especially thank Shay Ben-Moshe and later Marc Hoyois for drawing our attention to the problem of establishing the universal property for span 2-categories. It is a pleasure to thank Shay Ben-Moshe, Rune Haugseng, and Marc Hoyois for feedback on an earlier version of this article.

All three authors would like to thank the Isaac Newton Institute for Mathematical Sciences, Cambridge, for support and hospitality during the programme \emph{Equivariant homotopy theory in context}, where work on this paper was undertaken. This work was supported by EPSRC grant EP/Z000580/1.

B.C.\ and S.L.\ are associate members of the SFB 1085 Higher Invariants. T.L.\ is an associate member of the Hausdorff Center for Mathematics at the
University of Bonn (DFG GZ 2047/1, project ID 390685813).

\starsubsec{Conventions}  We will refer to $(\infty,1)$-categories simply as \emph{categories} (or \emph{1-categories} for emphasis) and to $(\infty,n)$-categories for $n>1$ simply as \emph{n-categories}; similarly, we will write $\Cat$ and $\Cat_n$ for the resulting ($\infty$-)categories. Throughout, we will distinguish $n$-categorical constructions from 1-categorical ones by using small caps ($\SpanTwo$, $\FUN$, $\HOM,\dots$) or blackboard bold letters; in particular generic $n$-categories (or double categories) will be denoted $\twoC,\twoD,\dots$ and $\CAT$ will denote the 2-category of 1-categories. We write $\core C$ for the groupoid core of a category $C$, and write $\iota_1 \twoC$ for the underlying category of an $n$-category $\twoC$. We will never need to commit to a specific model of $n$-categories apart from the construction of $\SpanTwoP{C}{E}{I}{P}$ in §\ref{subsec:span2}; the reader is free to choose their favourite model amongst \cite{lurieGoodwillie,GepnerHaugseng2015Enriched,haugseng-comparison}.

\section{Universal biadjointable functors}\label{sec:badj}
In this section, we introduce the notion of a (contravariant) \textit{biadjointable functor} out of a category $\myC$ with respect to suitably chosen subcategories $I$ and $P$, and show that there is always a universal such functor, denoted $h\colon \myC\catop \to \Univ{\myC}{I}{P}$.

\subsection{Biadjointability}
We start by recalling the notions of adjointability and biadjointability.

\begin{definition}[Adjointable squares]\label{def:adj_squares}
	Let $\twoD$ be a 2-category, and consider a commutative square
	\begin{equation}\label{eq:gen_sq}
	\begin{tikzcd}
		X' \rar{g^*} \dar[swap]{f^*} & X \dar{k^*} \\
		Y' \rar[swap]{h^*} & Y
	\end{tikzcd}
	\end{equation}
	in $\twoD$.
	\begin{enumerate}
		\item The square is called \textit{vertically left adjointable} if the morphisms $f^*$ and $k^*$ admit left adjoints $f_{\sharp}$ and $k_{\sharp}$ in $\twoD$ and the Beck--Chevalley map $\BC_{\sharp}\colon k_{\sharp}h^* \to g^*f_{\sharp}$, i.e.\ the 2-morphism
		\[
		k_{\sharp}h^* \xrightarrow{\eta_f} k_{\sharp}h^*f^*f_{\sharp} = k_{\sharp}k^*g^*f_{\sharp} \xrightarrow{\epsilon_k} g^*f_{\sharp},
		\]
		is an equivalence in the hom-category $\HOM_{\twoD}(Y',X)$.
		\item Dually we say that the square is \textit{vertically right adjointable} if it is left adjointable in $\twoD^{\co}$, i.e.\ $f^*$ and $k^*$ admit \textit{right} adjoints $f_*$ and $k_*$ and the Beck--Chevalley map $\BC_* \colon g^*f_* \to k_*h^*$ is an equivalence in $\HOM_{\twoD}(Y',X)$.
		\item The notions of \emph{horizontally left/right adjointable squares} are defined analogously.
		\item \label{it:Biadjointable} The square is called \textit{biadjointable} if it is horizontally left adjointable, and the resulting commutative square
		\[
		\begin{tikzcd}
			X' \dar[swap]{f^*}\arrow[dr, Rightarrow, "\BC_{{\sharp}}^{-1}"{description}] &[4pt]\arrow[l,"g_{\sharp}"'] X \dar{k^*} \\[4pt]
			Y' &\arrow[l,"h_{\sharp}"] Y
		\end{tikzcd}
		\]
		is vertically \textit{right} adjointable. Given a general horizontally left adjointable square $(\ref{eq:gen_sq})$, we will refer to the Beck--Chevalley map $\BC_{{\sharp},*}\colon g_{\sharp}k_*\to f_*h_{\sharp}$ of the induced square above as the \textit{double Beck--Chevalley map} of the original square. So the original square is biadjointable if it is horizontally left adjointable and the double Beck--Chevalley map is an equivalence.
	\end{enumerate}
\end{definition}

\begin{remark}
	There is an apparent asymmetry in the definition of biadjointable squares: we could have alternatively asked the original square to be vertically right adjointable and the resulting commutative square
	to be horizontally left adjointable. By \cite[Lemmas~F.10 and F.11]{Cnossen2023PhD} this condition is equivalent to \eqref{it:Biadjointable} and the resulting double Beck--Chevalley map $ g_{\sharp}k_*\to f_*h_{\sharp}$ agrees with $\BC_{{\sharp},*}$. In Proposition~\ref{prop:corep-badj} below we will also give an alternative characterization of biadjointable squares that is inherently symmetric.
\end{remark}

\begin{convention}\label{conv:indexing}
	We will always depict a functor $X\colon [1]\times[1]\to\twoD$ as
	\[
		\begin{tikzcd}
			X_{00}\arrow[r]\arrow[d] & X_{01}\arrow[d]\\
			X_{10}\arrow[r] & X_{11}
		\end{tikzcd}
	\]
	(i.e.~using the usual indexing convention for matrices). This way, we can unambiguously ask for $X\colon [1]\times[1]\to\twoD$ to be vertically or horizontally left or right adjointable, or ask for $X$ to be biadjointable. For example, horizontal left adjointability of $X$ in particular demands the existence of left adjoints to the maps $X_{00}\to X_{01}$ and $X_{10}\to X_{11}$.
\end{convention}

\begin{definition}[{cf.\ \cite[Definition~2.2.5]{ElmantoHaugseng2023Distributivity}}]
\label{def:Adjointable_Functor}
	Let $\myC$ be a category and let $I, P \subset \myC$ be two wide subcategories closed under base change. Let $\twoD$ be a 2-category, and consider a functor $F\colon \myC\catop \to \twoD$. For a morphism $f\colon X \to Y$ in $\myC$ we write $f^* \coloneqq F(f)\colon F(Y) \to F(X)$.
	\begin{enumerate}[(1)]
		\item We say that $F$ is \textit{left $I$-adjointable} if for every pullback square
		\[
		\begin{tikzcd}
			X' \dar[swap]{g}  \rar{j} \drar[pullback] & X \dar{f} \\
			Y' \rar[swap]{i} & Y
		\end{tikzcd}
		\]
		in $\myC$ with $i \in I$, the associated square
		\[
		\begin{tikzcd}
			F(Y) \dar[swap]{f^*} \rar{i^*} & F(Y') \dar{g^*} \\
			F(X) \rar[swap]{j^*} & F(X')
		\end{tikzcd}
		\]
		in $\twoD$ is horizontally left adjointable. In particular, $i^*$ and $j^*$ admit left adjoints $i_{\sharp}$ and $j_{\sharp}$.
		\item Dually, we say that $F$ is \textit{right $P$-adjointable} if for every pullback square
		\[
		\begin{tikzcd}
			X' \dar[swap]{q}  \rar{g} \drar[pullback] & X \dar{p} \\
			Y' \rar[swap]{f} & Y
		\end{tikzcd}
		\]
		in $\myC$ with $p \in P$, the associated square
		\[
		\begin{tikzcd}
			F(Y) \dar[swap]{p^*} \rar{f^*} & F(Y') \dar{q^*} \\
			F(X) \rar[swap]{g^*} & F(X')
		\end{tikzcd}
		\]
		in $\twoD$ is vertically right adjointable. In particular, $p^*$ and $q^*$ admit right adjoints $p_*$ and $q_*$.
		\item We say that $F$ is \textit{$(I,P)$-biadjointable} if it is both left $I$-adjointable and right $P$-adjointable, and for every pullback square
		\[
		\begin{tikzcd}
			X' \dar[swap]{q}  \rar{j} \drar[pullback] & X \dar{p} \\
			Y' \rar[swap]{i} & Y
		\end{tikzcd}
		\]
		in $\myC$ with $i \in I$ and $p \in P$, the commutative square
		\[
		\begin{tikzcd}
			F(Y) \dar[swap]{p^*} \rar{i^*} & F(Y') \dar{q^*} \\
			F(X) \rar[swap]{j^*} & F(X')
		\end{tikzcd}
		\]
		is biadjointable.
	\end{enumerate}

    If $F,G\colon \myC\catop \to \twoD$ are left $I$-adjointable, we say that a natural transformation $\alpha\colon F \Rightarrow G$ is \textit{left $I$-adjointable} if for every morphism $i\colon X \to Y$ in $I$ the naturality square
    \[
    \begin{tikzcd}
        F(Y)\arrow[r,"i^*"]\arrow[d,"\alpha_Y"'] & F(X)\arrow[d,"\alpha_X"]\\
		G(Y)\arrow[r,"i^*"'] & G(X)
    \end{tikzcd}
    \]
    is horizontally left adjointable, i.e.~the Beck--Chevalley map $i_{\sharp}\alpha\to\alpha i_{\sharp}$ is invertible. We may similarly define what it means for $\alpha$ to be \textit{right $P$-adjointable}, and we say it is \textit{$(I,P)$-biadjointable} if it is both left $I$-adjointable and right $P$-adjointable. We denote by
    \[
    	\FUN_{(I,P)\dbadj}(\myC\catop,\twoD)
    \]
    the locally full sub-2-category\footnote{Local fullness means that the induced maps on $\HOM$-categories are fully faithful inclusions.} of $\FUN(\myC\catop,\twoD)$ spanned by the biadjointable functors and natural transformations, respectively. If $I = P$, we will also speak of \textit{$I$-biadjointability.}
\end{definition}

\begin{remark}
	Our terminology is inspired by \cite[Definition~2.2.5]{ElmantoHaugseng2023Distributivity}, except that we say `right $P$-adjointable' instead of `right $P$-coadjointable.' Alternative names for adjointable functors used in the literature are \textit{bivariant theory with base change} \cite{MacPherson2022Bivariant} or functors that \textit{satisfy the Beck--Chevalley condition} \cite{GR2017studyDAG}. If $\twoD=\CAT$ is the (very large) 2-category of 1-categories, the underlying 1-category of $\FUN_{(I,P)\dbadj}(\myC^\op,\CAT)$ also appears under the name $\mathrm{BCFun}(\myC,E,I,P)$ in \cite{Dauser_Kuijper_Uniqueness}.
\end{remark}

\begin{example}\label{ex:param}
	Let $\twoD = \CAT$ again. Functors $\myC^\op\to\CAT$ are also known as \emph{$\myC$-parametrized categories} \cite{BDGNS2016ExposeI}. In this setting, left and right adjointability of functors fits into the general framework of parametrized colimits and limits developed by Martini--Wolf \cite{martiniwolf2021limits}: a $\myC$-parametrized category  $\myC\catop \to \CAT$ is left  $I$-adjointable if and only if it is \textit{$I$-cocomplete}, and right $P$-adjointable if and only if it is \textit{$P$-complete}, see e.g.~\cite[Example~3.30]{LLP}.

	Similarly, if $Q=I=P$ is truncated and left cancellable (a so-called \emph{inductible subcategory}) then $Q$-biadjointable functors $\myC^\op\to\CAT$ have been studied under the name \emph{$Q$-semiadditive} $\myC$-parametrized categories in \cite{CLL_Spans}. We will revisit this connection in Section~\ref{sec:Free_Semiadditive}, where we will exploit parametrized techniques to understand hom categories in the target $\Univ{\myC}{I}{P}$ of the universal biadjointable functor.
\end{example}

\begin{example}
	If $P = \core\myC$ consists only of the equivalences in $\myC$, then the notion of $(I,P)$-biadjointability agrees with that of left $I$-adjointability; similarly, $(\core\myC,P)$-biadjointability recovers right $P$-adjointability. Accordingly, all of our results about biadjointability proven below will specialize to results about left or right adjointability.
\end{example}

\begin{remark}
	\label{rmk:Postcompose_With_2Functor}
	Any 2-functor $G\colon \twoD \to \twoE$ preserves (bi)adjointable squares. In particular, if $F\colon \myC\catop \to \twoD$ is $(I,P)$-biadjointable, then so is the composite $G \circ F\colon \myC\catop \to \twoE$.
\end{remark}

\subsection{The universal biadjointable functor}
We shall now move to a discussion of the \textit{universal} biadjointable functor out of $\myC\catop$, generalizing the discussion of \cite{ElmantoHaugseng2023Distributivity} in the left adjointable case.

\begin{definition}
	\label{def:Universal_Biadj_Functor}
	Let $\myC$ be a category and let $I,P\subset\myC$ be wide subcategories closed under base change. An $(I,P)$-biadjointable functor $h\colon \myC\catop \to \Univ{\myC}{I}{P}$ is called \textit{universal} if the restriction
	\[
		h^*\colon\FUN(\Univ{\myC}{I}{P},\twoD)\to\FUN_{(I,P)\dbadj}(\myC^\op,\twoD)
	\]
	(see Remark~\ref{rmk:Postcompose_With_2Functor}) is an equivalence of 2-categories for every 2-category $\twoD$.
\end{definition}

As the main result of this subsection we will show:

\begin{theorem}\label{thm:existence}
	Let $I,P\subset \myC$ be as above. Then there exists a universal $(I,P)$-biadjointable functor $h\colon\myC^\op\to\Univ{\myC}{I}{P}$.
\end{theorem}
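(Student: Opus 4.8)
The plan is to construct $\Univ{\myC}{I}{P}$ by a general-nonsense adjoint functor / presentability argument, sidestepping any explicit description (which is deferred to the later sections). First I would observe that the assignment $\twoD \mapsto \FUN_{(I,P)\dbadj}(\myC\catop,\twoD)$ is the objects-and-morphisms part of a functor in $\twoD$: given a 2-functor $G\colon\twoD\to\twoE$, postcomposition preserves biadjointable functors and biadjointable natural transformations by Remark~\ref{rmk:Postcompose_With_2Functor}, so we obtain a functor $\Cat_2\to\Cat_2$ (or, more carefully, a functor to large 2-categories). What we want is a 2-category $\Univ{\myC}{I}{P}$ corepresenting this functor, i.e.\ a left adjoint at the object $\myC\catop$ to the inclusion of the "locally full biadjointable" subcategory. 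The natural strategy is therefore an adjoint functor theorem: exhibit $\FUN_{(I,P)\dbadj}(\myC\catop,-)$ as an accessible, limit-preserving functor out of $\Cat_2$ and invoke representability.

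The key steps, in order, are: (1) Set up the bookkeeping of what it means for a functor $\myC\catop\to\twoD$ to be $(I,P)$-biadjointable as a collection of conditions indexed by pullback squares in $\myC$ with legs in $I$, in $P$, or mixed; each such condition asks for (a) the existence of certain adjoints and (b) the invertibility of certain Beck--Chevalley 2-cells. (2) Encode "$F$ admits specified adjoints along $I$ and $P$ together with all the unit/counit data" as a diagram shape: enlarge $\myC\catop$ to an auxiliary 2-category (or rather a free-adjunction-style gadget $\widetilde{\myC}$ built from $\myC$, $I$, $P$) such that 2-functors $\widetilde{\myC}\to\twoD$ are exactly functors $\myC\catop\to\twoD$ together with a choice of left adjoints along $I$ and right adjoints along $P$; here one uses that adjunctions are algebraic and $\Adj$-like universal properties hold in any 2-category. (3) Impose the Beck--Chevalley equivalences: these say that certain 2-cells in the image are invertible, which is a further (accessible) localization of $\FUN(\widetilde{\myC},\twoD)$ — concretely, invert a set of 2-morphisms in the appropriate sense, or equivalently pass to a full subcategory cut out by the condition that a family of maps be equivalences, and check this is a reflective/accessible localization uniformly in $\twoD$. (4) Conclude that $\FUN_{(I,P)\dbadj}(\myC\catop,-)$ is corepresentable by applying the adjoint functor theorem in $\Cat_2$ (which is presentable), yielding the desired $h\colon\myC\catop\to\Univ{\myC}{I}{P}$ with the universal property of Definition~\ref{def:Universal_Biadj_Functor}; the fact that $h^*$ lands in and is an equivalence onto the \emph{locally full} subcategory of biadjointable functors and biadjointable transformations is then exactly the content of the corepresentability, since local fullness matches the constraint that we only remember adjointable 2-cells.

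The main obstacle I expect is Step (3): making precise, and uniform in $\twoD$, the passage from "functor-plus-chosen-adjoints" to "functor-plus-chosen-adjoints satisfying Beck--Chevalley." The subtlety is that requiring a 2-cell to be invertible is not a colimit-type condition on the functor 2-category in an obvious way, and one must argue that the biadjointable functors form an \emph{accessible} (indeed reflective, or at least detected by a small amount of data) locally full sub-2-category of $\FUN(\widetilde{\myC},\twoD)$, compatibly with change of $\twoD$; this is where one needs the left cancellability and base-change closure of $I$ and $P$ to ensure the class of squares one must check is generated by a small family, so that only set-many conditions are imposed. A secondary, more technical nuisance is correctly building $\widetilde{\myC}$ so that the chosen adjoints along $I$ and along $P$ are genuinely free — i.e.\ that no unintended relations among units and counits are forced, and that composites and mixed squares behave — which amounts to a careful gluing of copies of the free-adjunction 2-category $\Adj$ along $\myC$. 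Once these are in place the representability is formal. (In the paper itself, the later sections replace this abstract existence with the concrete model $\SpanTwoP{\myC}{E}{I}{P}$ and the recognition principle, but for Theorem~\ref{thm:existence} the abstract argument suffices.)
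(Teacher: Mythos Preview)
Your high-level strategy---build the corepresenting 2-category by freely adjoining the required adjoints and then imposing Beck--Chevalley---is exactly the paper's, but the paper executes it more directly and avoids the obstacle you flag in Step~(3).

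The key point you are missing is that the Beck--Chevalley conditions are themselves \emph{corepresented} by explicit maps of small 2-categories, not merely the existence of adjoints. The paper proves (Proposition~\ref{prop:corep-badj}, using the Riehl--Verity universal property of $\Adj$) that for any 2-category $\twoD$, restriction along $\ell\times r\colon [1]\times[1]\to\Adj\times\Adj$ identifies $\FUN(\Adj\times\Adj,\twoD)$ with the locally full sub-2-category of $\FUN([1]\times[1],\twoD)$ spanned by the \emph{biadjointable} squares and biadjointable transformations, and similarly for the one-sided variants $[1]\times\Adj$ and $\Adj\times[1]$. In other words, your Steps~(2) and~(3) are not separate: a single pushout along $\id\times r\colon [1]\times[1]\to[1]\times\Adj$ simultaneously forces the existence of the left adjoints \emph{and} the Beck--Chevalley equivalence for that square. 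The paper then simply takes three iterated pushouts in $\CatTwo$, one for each of the three families of pullback squares (horizontal in $I$, vertical in $P$, and mixed), and reads off the universal property from the pushout description.

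This buys two things over your approach. First, no adjoint functor theorem or accessibility check is needed: the construction is an explicit small colimit in $\CatTwo$. Second, your ``main obstacle''---making the invertibility conditions uniform in $\twoD$---evaporates, because uniformity is automatic once the condition is corepresented. Your proposed separate localization in Step~(3) would work too, but it is strictly more work and you have not actually carried it out; as stated, your proposal identifies the crux correctly but does not resolve it.

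Two minor remarks. Left cancellability of $I$ and $P$ plays no role in this theorem (it enters only in Convention~\ref{conv:most-general} for the later identification with span 2-categories); closure under base change suffices. And your auxiliary $\widetilde{\myC}$ is essentially what the paper builds as the intermediate pushouts $\UnivGen_I(\myC)$ and $\UnivGen_{I/P}(\myC)$, so that part of your sketch is on the right track.
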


The proof of Theorem~\ref{thm:existence} will rely on a representability result for (bi)adjointable squares. For this we first recall the \emph{walking adjunction} $\Adj$ from \cite{RiehlVerityAdj}. This is a 2-category coming with two morphisms $\ell\colon{\ominus}\to{\oplus}$ and $r\colon{\oplus}\to{\ominus}$ such that $\ell\dashv r$. The universal property of $\Adj$ from \cite{RiehlVerityAdj} as refined in \cite[Corollary~2.1.7]{ElmantoHaugseng2023Distributivity} can then be described as follows:

\begin{theorem}
	Let $\twoD$ be any 2-category. Then $\ell^*\colon\FUN(\Adj,\twoD)\to\FUN([1],\twoD)$ is a locally full inclusion. The objects in its image are precisely the left adjoint maps in $\twoD$, and the morphisms are precisely the right adjointable natural transformations, i.e.~those $\tau\colon F\Rightarrow G$ for which the naturality square
	\[
		\begin{tikzcd}
			F(0)\arrow[r, "F(0\to 1)"]\arrow[d,"\tau_0"'] &[1em] F(1)\arrow[d,"\tau_1"]\\
			G(0)\arrow[r,"G(0\to 1)"'] & G(1)
		\end{tikzcd}
	\]
	is horizontally right adjointable.

	Dually, $r^*\colon\FUN(\Adj,\twoD)\to\FUN([1],\twoD)$ identifies the source with the locally full 2-subcategory of right adjoint maps and left adjointable natural transformations.\qed
\end{theorem}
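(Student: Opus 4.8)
The strategy is to bootstrap from the \emph{object-level} statement — the homotopy-coherent uniqueness of adjunctions of Riehl--Verity \cite{RiehlVerityAdj} — applied not only to $\twoD$ but to the functor 2-categories built from it. The input I take as given is: for every 2-category $\twoE$, restriction along $\ell$ induces a monomorphism from the space of objects of $\FUN(\Adj,\twoE)$ (equivalently, of adjunctions in $\twoE$) onto the subspace of left adjoint 1-morphisms in the space of objects of $\FUN([1],\twoE)$. Concretely: a 1-morphism of $\twoE$ extends to a 2-functor out of $\Adj$ if and only if it admits a right adjoint, and then essentially uniquely. Taking $\twoE=\twoD$ this already settles the statement on objects.

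For the statements on 1-morphisms and on local fullness I would first record the exponential identifications $\FUN(\Adj,\FUN(J,\twoD))\simeq\FUN(J,\FUN(\Adj,\twoD))$ and $\FUN([1],\FUN(J,\twoD))\simeq\FUN(J,\FUN([1],\twoD))$, compatibly with $\ell^*$, for any category $J$, together with the standard facts that for a 2-category $\twoE$ the space of objects of $\FUN([1],\twoE)$ is the space of 1-morphisms of $\twoE$, and more generally that the fibers of $\FUN([1],\twoE)\to\twoE\times\twoE$ over pairs of objects are the hom-$(\infty,1)$-categories of $\twoE$. Applying the object-level input to $\twoE=\Ar(\twoD):=\FUN([1],\twoD)$ with $J=[1]$ then exhibits the space of 1-morphisms of $\FUN(\Adj,\twoD)$ — the natural transformations between adjunctions — as a subspace of the space of 1-morphisms of $\FUN([1],\twoD)$, namely those that are left adjoints in $\Ar(\twoD)$. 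A standard computation with mates identifies these last: a 1-morphism of $\FUN([1],\twoD)$, drawn as a square in $\twoD$ commuting up to an invertible 2-cell, is a left adjoint there precisely when its two horizontal edges admit right adjoints in $\twoD$ and the mate 2-cell assembled from the corresponding units and counits is invertible — i.e.\ when the square is horizontally right adjointable. Since this square is exactly the naturality square of the restricted transformation, this gives the asserted description of the 1-morphisms in the image of $\ell^*$.

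To promote this to \emph{local fullness} I would run the same argument one categorical level higher. A modification of $\FUN(\Adj,\twoD)$ with prescribed source and target transformations $\alpha,\beta$ is, after the exponential identifications, an object of a fiber of $\FUN(\Adj,\Ar(\Ar(\twoD)))$, so applying the object-level input with $\twoE=\Ar(\Ar(\twoD))=\FUN([1]\times[1],\twoD)$ realizes the space of such modifications as a subspace of the corresponding space of modifications between $\ell^*\alpha$ and $\ell^*\beta$, cut out by the requirement of being a left adjoint in $\FUN([1]\times[1],\twoD)$. Over the relevant fiber this requirement is automatic: the 1-morphisms of $\twoD$ that would have to admit right adjoints were already arranged to be left adjoints at the previous stage (being composites of the $\phi_i(\ell)$ with identities), and the remaining Beck--Chevalley conditions involve only 2-morphisms of $\twoD$ and therefore hold for free. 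Hence $\ell^*$ is an equivalence on $\HOM$-categories with fixed endpoints, which together with the previous paragraphs shows it is a locally full inclusion with the stated image. Finally, the dual assertion for $r^*$ is obtained by applying everything to $\twoD^{\co}$, using that $\Adj^{\co}$ is isomorphic to $\Adj$ via the swap $\ominus\leftrightarrow\oplus$, $\ell\leftrightarrow r$, that left adjoints in $\twoD^{\co}$ are right adjoints in $\twoD$, and that horizontally right adjointable squares in $\twoD^{\co}$ are horizontally left adjointable in $\twoD$.

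The whole weight of the theorem sits in the object-level input, which Riehl--Verity establish via an explicit combinatorial analysis of the hom-categories of $\Adj$; granting it, the one genuinely delicate point is the bookkeeping in the last step — checking that the auxiliary left-adjointability condition in $\FUN([1]\times[1],\twoD)$ becomes vacuous once the boundary transformations are fixed — alongside the model-independent manipulation of cores and hom-categories of functor 2-categories.
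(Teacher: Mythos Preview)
The paper does not prove this theorem: the statement is quoted as a black box from \cite{RiehlVerityAdj} together with its refinement in \cite[Corollary~2.1.7]{ElmantoHaugseng2023Distributivity}, and simply ends with a \qed. Your proposal is therefore not competing with any argument in the paper; you are reconstructing the proof of the cited result.

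Your strategy --- take the Riehl--Verity object-level statement as input and bootstrap by applying it to $\Ar(\twoD)$ and then one level higher --- is precisely how Elmanto--Haugseng obtain their refinement; the characterization of left adjoints in $\Ar(\twoD)$ as right adjointable squares is their Proposition~2.1.5, which the present paper itself invokes in the proof of Proposition~\ref{prop:corep-badj}. The one point that needs care is the one you flag: that the left-adjointability constraint becomes vacuous at the modification level. This is indeed true, and the reason is naturality of mates: once $\alpha$ and $\beta$ lift to $\FUN(\Adj,\twoD)$, their components at $r$ are the Beck--Chevalley transformations of their components at $\ell$, and a pair of 2-cells $(\sigma_\ominus,\sigma_\oplus)$ compatible with the $\ell$-naturality squares is automatically compatible with the $r$-naturality squares because the mates construction is natural in 2-cells. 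So your sketch is correct and matches the literature.
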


As a consequence of this, we can represent categories of (bi)adjointable squares:

\begin{proposition}\label{prop:corep-badj}
	Let $\twoD$ be any 2-category.
	\begin{enumerate}
		\item The 2-functor
		\begin{equation}\label{eq:idtimesr}
			(\id\times r)^*\colon\FUN([1]\times\Adj,\twoD)\to\FUN([1]\times[1],\twoD)
		\end{equation}
		is a locally full inclusion. The objects in its image are precisely the horizontally left adjointable squares
		\[
			\begin{tikzcd}
				X_{00}\arrow[r, "i"]\arrow[d] & X_{01}\arrow[d]\\
				X_{10}\arrow[r, "j"'] & X_{11}
			\end{tikzcd}
		\]
		and its morphisms are those natural transformations $\alpha$ that are horizontally left adjointable in the sense that the Beck--Chevalley maps $i_{\sharp}\alpha\to\alpha i_{\sharp}$ and $j_{\sharp}\alpha\to\alpha j_{\sharp}$ are invertible.
		\item Dually, the functor $(\ell\times\id)^*$ identifies $\FUN(\Adj\times[1],\twoD)$ with the locally full 2-subcategory of $\FUN([1]\times[1],\twoD)$ given by the vertically right adjointable squares and the vertically right adjointable natural transformations.
		\item Finally, the functor
		\[
			(\ell\times r)^*\colon\FUN(\Adj\times\Adj,\twoD)\to\FUN([1]\times[1],\twoD)
		\]
		identifies the source with the locally full 2-subcategory given by the biadjointable squares and those natural transformations that are both horizontally left and vertically right adjointable.
	\end{enumerate}
	\begin{proof}
		The first statement is immediate from the previous theorem by identifying $(\ref{eq:idtimesr})$ with
		\[
			\FUN\big([1],r^*\big)\colon\FUN\big([1],\FUN(\Adj,\twoD)\big)\to\FUN\big([1],\twoD\big).
		\]
		The proof of the second statement is dual.

		For the third statement, recall from \cite[Proposition~2.1.5]{ElmantoHaugseng2023Distributivity} and its proof that a map $\alpha\colon F\Rightarrow G$ in $\FUN([1],\twoD)$ admits a right adjoint if and only if
		\begin{equation}\label{diag:to-be-radjb}
			\begin{tikzcd}
				F(0)\arrow[r]\arrow[d,"\alpha_0"'] & F(1)\arrow[d, "\alpha_1"]\\
				G(0)\arrow[r] & G(1)
			\end{tikzcd}
		\end{equation}
		is vertically right adjointable; moreover, in this case, the right adjoint is given by the Beck--Chevalley transformation. Writing $\FUN^\text{ladj}([1],\twoD)$ for the image of $r^*\colon\FUN(\Adj,\twoD)\to\FUN([1],\twoD)$, it follows that a map $\alpha$ in $\FUN^\text{ladj}([1],\twoD)$ admits a right adjoint if and only if $(\ref{diag:to-be-radjb})$ is vertically right adjointable and the resulting square is again a map in $\FUN^\text{ladj}([1],\twoD)$, i.e.~horizontally left adjointable. This is precisely what it means for $(\ref{diag:to-be-radjb})$ to be biadjointable. In summary, we have shown that the objects in the image of the locally full inclusion
		\[
			\ell^*\colon\FUN\big(\Adj,\FUN^\text{ladj}([1],\twoD)\big)\to\FUN\big([1],\FUN^\text{ladj}([1],\twoD)\big)\subset\FUN([1]\times[1],\twoD)
		\]
		are precisely the biadjointable squares. It remains to characterize the maps in its image as the vertically right and horizontally left adjointable transformations, which by part (2) amounts to saying that a natural transformation $\alpha\colon F\to G$ of functors $[1]\rightrightarrows\FUN^\text{ladj}([1],\twoD)$ is right adjointable if and only if the induced transformations $\ev_0F\to\ev_0 G$ and $\ev_1F\to\ev_1 G$ are so. This is immediate from the pointwise description of right adjoints in a functor category recalled above.
	\end{proof}
\end{proposition}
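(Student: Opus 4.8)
The plan is to bootstrap all three statements from the universal property of $\Adj$ recalled above, using two soft inputs: the exponential law $\FUN(\twoC\times\twoE,\twoD)\simeq\FUN(\twoC,\FUN(\twoE,\twoD))$ for functor $2$-categories, and the fact that $\FUN(\twoC,-)$ preserves locally full inclusions. The latter holds because the $\HOM$-category $\HOM_{\FUN(\twoC,\twoX)}(F,G)$ is computed as a limit of the categories $\HOM_{\twoX}(Fc,Gc)$, and a levelwise fully faithful, suitably compatible map of such diagrams induces a fully faithful map on limits.

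For part (1), I would note that $\id\times r\colon[1]\times[1]\to[1]\times\Adj$ only alters the second factor, so under the exponential law the functor $(\id\times r)^*$ becomes $\FUN([1],r^*)$. Applying $\FUN([1],-)$ to the locally full inclusion $r^*$---whose image is the right adjoint maps and left adjointable transformations---gives a locally full inclusion, and I would read off its image levelwise. An object in the image is a morphism of $\FUN(\Adj,\twoD)$, i.e.\ a square both of whose horizontal edges are right adjoint maps (equivalently, admit left adjoints) and which is left adjointable as a transformation; this is precisely a horizontally left adjointable square. A $1$-cell between two such squares lifts iff its components over the two endpoints of the outer $[1]$---the edges lying over $i$ and over $j$---are left adjointable transformations in $\FUN([1],\twoD)$, the accompanying $2$-cell then lifting automatically by local fullness. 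This is exactly the invertibility of $i_\sharp\alpha\to\alpha i_\sharp$ and $j_\sharp\alpha\to\alpha j_\sharp$. Part (2) is the verbatim dual, replacing $r$ by $\ell$.

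For part (3), I would factor $\ell\times r$ as $(\id_{[1]}\times r)$ followed by $(\ell\times\id_\Adj)$, so that the exponential law presents $(\ell\times r)^*$ as the composite
\[
\FUN(\Adj,\FUN(\Adj,\twoD))\xrightarrow{\ \FUN(\Adj,r^*)\ }\FUN\big(\Adj,\FUN^\ladj([1],\twoD)\big)\xrightarrow{\ \ell^*\ }\FUN\big([1],\FUN^\ladj([1],\twoD)\big),
\]
where $\FUN^\ladj([1],\twoD)$ is the image of $r^*$ from part (1). The first arrow is a locally full inclusion by the preservation property, and the second is the locally full inclusion furnished by the universal property of $\Adj$ applied with target $2$-category $\FUN^\ladj([1],\twoD)$; a composite of locally full inclusions is again one.

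The crux---and the step I expect to demand the most care---is translating the resulting image, the left adjoint maps and right adjointable transformations of $\FUN^\ladj([1],\twoD)$, back into conditions on squares in $\twoD$. For objects I would invoke \cite[Proposition~2.1.5]{ElmantoHaugseng2023Distributivity}: a morphism of $\FUN([1],\twoD)$ admits a right adjoint exactly when its square is vertically right adjointable, with the right adjoint given by the Beck--Chevalley transformation. Since $\FUN^\ladj([1],\twoD)$ is locally full, an adjunction in it is detected in the ambient $2$-category provided the adjoint object lies in the subcategory; hence a square admits a right adjoint \emph{inside $\FUN^\ladj$} iff it is vertically right adjointable \emph{and} its Beck--Chevalley square is again horizontally left adjointable---precisely the definition of biadjointability. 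The delicate point is exactly this demand that the Beck--Chevalley right adjoint land back in $\FUN^\ladj$: it is what couples the two adjointability directions and produces the double Beck--Chevalley condition. Finally, the image morphisms are the right adjointable transformations of $\FUN^\ladj([1],\twoD)$, which by the pointwise description of right adjoints in a functor category (evaluating at the two objects of $[1]$) are exactly those transformations that are both vertically right adjointable and, by virtue of lying in $\FUN^\ladj$, horizontally left adjointable.
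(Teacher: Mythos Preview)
Your proposal is correct and follows essentially the same approach as the paper's proof: both arguments curry via the exponential law to reduce parts (1) and (2) to $\FUN([1],r^*)$ and its dual, and for part (3) both factor through $\FUN^\ladj([1],\twoD)$, invoke \cite[Proposition~2.1.5]{ElmantoHaugseng2023Distributivity} to identify objects in the image of $\ell^*$ as biadjointable squares (the key point being that the Beck--Chevalley right adjoint must land back in $\FUN^\ladj$), and then use the pointwise description of right adjoints to handle morphisms. Your write-up is a bit more explicit about the soft inputs and the factorization of $\ell\times r$, but there is no substantive difference.
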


\begin{proof}[Proof of Theorem~\ref{thm:existence}]
	We need to show that the 2-functor
	\[
		\FUN_{(I,P)\dbadj}(\myC\catop,-)\colon \CatTwo \to \CatTwo
	\]
	is corepresentable. We will construct the representing 2-category $\Univ{\myC}{I}{P}$ in three steps via pushouts in $\CatTwo$, enforcing first horizontal left adjointability, then vertical right adjointability, and finally biadjointability. 
	
	Denote by $\mathrm{PB}_I$, $\mathrm{PB}_P$ and $\mathrm{PB}_{I,P}$ the collections of equivalence classes of pullback squares in $\myC$ whose horizontal morphisms belong to $I$, whose vertical morphisms belong to $P$, or both, respectively.
	
	As the first step, we consider the pushout
	\[
		\begin{tikzcd}
			\arrow[dr,phantom,"\ulcorner"{very near end,xshift=7pt}]\arrow[d]\coprod_{\mathrm{PB}_I}\big([1]\times[1]\big) \arrow[r,"\coprod (\id\times r)"] &[1.5em] \coprod_{\mathrm{PB}_I}\big([1]\times\Adj\big)\arrow[d]\\
			\myC\catop\arrow[r] & \UnivGen_I(\myC)
		\end{tikzcd}
	\]
	in $\CatTwo$, where the left-hand vertical map sends each square to its image in $\myC\catop$. Applying $\FUN(-,\twoD)$ to this yields a pullback in $\CatTwo$; the previous proposition therefore shows that the restriction $\FUN(\UnivGen_I(\myC),\twoD)\to\FUN(\myC\catop,\twoD)$ is a locally full inclusion, and that its image consists precisely of the left $I$-adjointable functors and natural transformations.

	In the second step, we dually form the pushout along
	\[
		\coprod(\ell\times\id)\colon\coprod\nolimits_{\mathrm{PB}_P}\big([1]\times[1]\big)\to \coprod\nolimits_{\mathrm{PB}_P}\big(\Adj\times[1]\big)
	\]
	to obtain a map $\UnivGen_I(\myC)\to\UnivGen_{I/P}(\myC)$
	such that the restriction $\FUN(\UnivGen_{I/P}(\myC),\twoD)\to\FUN(\myC\catop,\twoD)$ is a locally full inclusion, with essential image those functors and natural transformations that are both left $I$-adjointable and right $P$-adjointable.

	Finally, we form the pushout
	\[
		\begin{tikzcd}
			\arrow[dr,phantom,"\ulcorner"{very near end,xshift=7pt}]\arrow[d]\coprod_{\mathrm{PB}_{I,P}}\big([1]\times[1]\big) \arrow[r,"\coprod (\ell\times r)"] &[1.5em] \coprod_{\mathrm{PB}_{I,P}}\big(\Adj\times\Adj\big)\arrow[d]\\
			\UnivGen_{I/P}(\myC)\arrow[r] & \Univ{\myC}{I}{P}\rlap.
		\end{tikzcd}
	\]
	The third part of the previous proposition then implies by the same arguments as before that the composite $h\colon\myC\catop\to\UnivGen_I(\myC)\to\UnivGen_{I/P}(\myC)\to\Univ{\myC}{I}{P}$ is the universal $(I,P)$-biadjointable functor.
\end{proof}

\begin{definition}\label{def:BAdjTrip}
	We write $\BAdjTrip$ for the 2-category whose objects are triples $(\myC,I,P)$ as in Definition~\ref{def:Universal_Biadj_Functor}, whose morphisms $(\myC,I,P)\to(\myC',I',P')$ are those functors $F\colon\myC\to\myC'$ with $F(I)\subset I'$, $F(P)\subset P'$ that preserve pullbacks along $I$ and along $P$. More formally, $\BAdjTrip$ is the (non-full) subcategory of $\FUN(\Lambda^2_2,\Cat)$ spanned by those diagrams $I\to\myC\leftarrow P$ where the two maps are wide subcategory inclusions with $I$ and $P$ closed under base change, and whose morphisms are the commutative diagrams that preserve pullbacks along $I$ and along $P$.
\end{definition}

\begin{corollary}\label{cor:Univ_Functoriality}
	The assignment $(\myC,I,P) \mapsto \twoB_{I,P}(C)$ defines a 2-functor
	\[
		\twoB\colon \BAdjTrip \to \CatTwo.
	\]
\end{corollary}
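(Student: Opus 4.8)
The plan is to deduce the corollary from Theorem~\ref{thm:existence} by transporting functoriality across the corepresentability equivalence $\FUN(\Univ{\myC}{I}{P},-)\simeq\FUN_{(I,P)\dbadj}(\myC^\op,-)$ furnished by that theorem, using the $(\infty,2)$-categorical Yoneda lemma.

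The only substantial step is to promote the assignment $(\myC,I,P)\mapsto\FUN_{(I,P)\dbadj}(\myC^\op,-)$ to a 2-functor $\Psi\colon\BAdjTrip^\op\to\FUN(\CatTwo,\CatTwo)$. The ambient assignment $(\myC,I,P)\mapsto\FUN(\myC^\op,-)$ is manifestly 2-functorial, being the composite of $\BAdjTrip^\op\to\Cat^\op$, $(\myC,I,P)\mapsto\myC^\op$, with the 2-functor $\Cat^\op\to\FUN(\CatTwo,\CatTwo)$ that sends a category $\Aa$ to $\FUN(\Aa,-)$ and a functor to precomposition. It therefore suffices to check that for every morphism $F\colon(\myC,I,P)\to(\myC',I',P')$ of $\BAdjTrip$ and every 2-category $\twoD$, precomposition along $F^\op$ carries the locally full subcategory $\FUN_{(I',P')\dbadj}(\myC'^{\op},\twoD)$ into $\FUN_{(I,P)\dbadj}(\myC^\op,\twoD)$: by local fullness the restriction is then automatically a 2-functor, and these restrictions assemble over $\BAdjTrip^\op$ (the required coherences being inherited from the ambient 2-functor) into $\Psi$ together with a natural transformation $\Psi\Rightarrow\big((\myC,I,P)\mapsto\FUN(\myC^\op,-)\big)$ that is objectwise the subcategory inclusion. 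The containment is a pointwise verification: since $F$ sends $I$ into $I'$ and $P$ into $P'$, preserves pullbacks along $I$ and along $P$, and $I',P'$ are closed under base change, $F$ carries each pullback square in $\myC$ with horizontal legs in $I$ (resp.\ vertical legs in $P$, resp.\ both) to such a square in $\myC'$; hence for an $(I',P')$-biadjointable functor $G$ the square that $G\circ F^\op$ associates to the original square is literally the one that $G$ associates to its $F$-image, and is therefore horizontally left adjointable (resp.\ vertically right adjointable, resp.\ biadjointable). The identical argument handles natural transformations, which completes the containment.

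Granting $\Psi$, Theorem~\ref{thm:existence} says exactly that $\Psi(\myC,I,P)\simeq\FUN(\Univ{\myC}{I}{P},-)$ for every $(\myC,I,P)$, so $\Psi$ factors through the full sub-2-category $\FUN(\CatTwo,\CatTwo)^{\mathrm{corep}}\subset\FUN(\CatTwo,\CatTwo)$ of corepresentable 2-functors. By the $(\infty,2)$-categorical Yoneda lemma the assignment $\twoX\mapsto\FUN(\twoX,-)$ is a fully faithful 2-functor $\CatTwo^\op\hookrightarrow\FUN(\CatTwo,\CatTwo)$ with essential image $\FUN(\CatTwo,\CatTwo)^{\mathrm{corep}}$; composing $\Psi$ with an inverse of the resulting equivalence $\CatTwo^\op\iso\FUN(\CatTwo,\CatTwo)^{\mathrm{corep}}$ produces a 2-functor $\BAdjTrip^\op\to\CatTwo^\op$, i.e.\ the desired 2-functor $\twoB\colon\BAdjTrip\to\CatTwo$, with $\twoB(\myC,I,P)\simeq\Univ{\myC}{I}{P}$ and $\twoB(F)\colon\Univ{\myC}{I}{P}\to\Univ{\myC'}{I'}{P'}$ the 2-functor classified by precomposition along $F^\op$.

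I expect the main obstacle to be bookkeeping rather than substance: carefully pinning down the 2-functoriality of $\Psi$ and its coherences. One may alternatively avoid the enriched Yoneda lemma altogether by making the three-step construction in the proof of Theorem~\ref{thm:existence} manifestly functorial — indexing the three pushouts over the cores of the $\infty$-categories of pullback squares with legs in $I$, in $P$, and in both, which depend functorially on $(\myC,I,P)\in\BAdjTrip$ by exactly the considerations above, and invoking functoriality of pushouts in $\CatTwo$. Either way, the essential input is the same pointwise preservation statement for Beck--Chevalley squares.
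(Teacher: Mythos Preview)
Your proposal is correct and follows essentially the same approach as the paper: establish functoriality of $(\myC,I,P)\mapsto\FUN_{(I,P)\dbadj}(\myC^\op,-)$ and then invoke corepresentability plus Yoneda. The only difference is that the paper works with the groupoid-core copresheaf $\iota\FUN_{(I,P)\dbadj}(\myC^\op,-)\colon\CatTwo\to\Spc$ and the ordinary Yoneda embedding $\CatTwo\hookrightarrow\Fun(\CatTwo,\Spc)^\op$, which sidesteps the need for the enriched Yoneda lemma you invoke.
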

\begin{proof}
	Sending $(\myC,I,P)$ to the copresheaf $\iota\FUN_{(I,P)\dbadj}(\myC\catop,-)\colon \CatTwo \to \Spc$ defines a 2-functor $\BAdjTrip \to \Fun(\CatTwo,\CatTwo)\catop$. By Theorem~\ref{thm:existence}, this factors through the contravariant Yoneda embedding $\CatTwo \hookrightarrow \Fun(\CatTwo,\Spc)\catop$.
\end{proof}

\subsection{A recognition principle for universal biadjointable functors}
Our eventual goal is to show that the 2-categories $\Univ{\myC}{I}{P}$ from Theorem~\ref{thm:existence} are, in many cases, given by 2-categories of (iterated) spans. As the first step towards this goal, we will provide a convenient criterion for recognizing that a given biadjointable functor is universal. This relies on the notion of \textit{free} biadjointable functors:

\begin{definition}
	\label{def:Free_Adjointable_Functor}
	Let $a\in\myC$ be arbitrary. We call an $(I,P)$-biadjointable functor $F\colon \myC\catop \to \CAT$ \textit{free on a class $1_a\in F(a)$} if for every other $(I,P)$-biadjointable functor $G\colon \myC\catop \to \CAT$ evaluation at $1_a$ induces an equivalence of categories
    \[
        \HOM_{\FUN_{(I,P)\dbadj}(\myC\catop,\CAT)}(F,G) \iso G(a).
    \]
\end{definition}

\begin{proposition}
\label{prop:Hom_Cats_Free_BC_Functor}
    For every object $a \in \myC$, the identity morphism $\id_{h(a)}$ exhibits the functor
    \[
        \HOM_{\Univ{\myC}{I}{P}}(h(a),h(-)) \colon \myC\catop \to \CAT
    \]
    as a free $(I,P)$-biadjointable functor.
\end{proposition}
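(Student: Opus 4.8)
The plan is to recognize $\HOM_{\Univ{\myC}{I}{P}}(h(a),h(-))$ as the restriction along $h$ of a corepresentable $2$-functor on $\Univ{\myC}{I}{P}$, and then to play this off against the universal property of $h$ (Theorem~\ref{thm:existence}) together with the $2$-categorical Yoneda lemma. Concretely, I would first observe that $\HOM_{\Univ{\myC}{I}{P}}(h(a),h(-))$ is the composite of $h\colon\myC\catop\to\Univ{\myC}{I}{P}$ with the $2$-functor $\Xx\coloneqq\HOM_{\Univ{\myC}{I}{P}}(h(a),-)\colon\Univ{\myC}{I}{P}\to\CAT$. Since $h$ is $(I,P)$-biadjointable, Remark~\ref{rmk:Postcompose_With_2Functor} immediately gives that $h^*\Xx$ is $(I,P)$-biadjointable as well, so it only remains to verify the universal property from Definition~\ref{def:Free_Adjointable_Functor}.

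For this, I would fix an arbitrary $(I,P)$-biadjointable functor $G\colon\myC\catop\to\CAT$. By Theorem~\ref{thm:existence} the restriction $h^*\colon\FUN(\Univ{\myC}{I}{P},\CAT)\to\FUN_{(I,P)\dbadj}(\myC\catop,\CAT)$ is an equivalence of $2$-categories; in particular it is essentially surjective, so $G$ is equivalent to $h^*\overline{G}$ for an essentially unique $2$-functor $\overline{G}\colon\Univ{\myC}{I}{P}\to\CAT$, and it induces equivalences on $\HOM$-categories. Since $h^*\Xx=\HOM_{\Univ{\myC}{I}{P}}(h(a),h(-))$ by construction, this yields an equivalence of categories
\[
	\HOM_{\FUN_{(I,P)\dbadj}(\myC\catop,\CAT)}\big(\HOM_{\Univ{\myC}{I}{P}}(h(a),h(-)),\,G\big)\;\simeq\;\HOM_{\FUN(\Univ{\myC}{I}{P},\CAT)}\big(\Xx,\,\overline{G}\big).
\]
Next I would invoke the Yoneda lemma for $\CAT$-valued $2$-functors: evaluation at $\id_{h(a)}$ identifies the right-hand side with $\overline{G}(h(a))\simeq G(a)$. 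Finally, I would chase the identity morphism through the above chain of equivalences — unwinding that $h^*$ is restriction along $h$, so that evaluating $\alpha\colon h^*\Xx\Rightarrow G$ at the object $\id_{h(a)}\in\HOM_{\Univ{\myC}{I}{P}}(h(a),h(a))$ corresponds to Yoneda evaluation of the adjoint transformation — to conclude that the resulting equivalence $\HOM_{\FUN_{(I,P)\dbadj}(\myC\catop,\CAT)}(\HOM_{\Univ{\myC}{I}{P}}(h(a),h(-)),G)\iso G(a)$ is precisely evaluation at $\id_{h(a)}$, as required by Definition~\ref{def:Free_Adjointable_Functor}.

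The argument is essentially formal once set up, and the only point I expect to require genuine care is the foundational input: I am using that the $\HOM$-functor of an $(\infty,2)$-category is a bona fide $2$-functor into $\CAT$, and that the Yoneda lemma holds in the strong form of an equivalence of $\HOM$-categories rather than merely of mapping spaces. Both hold in each of the models of $(\infty,2)$-categories referred to in the conventions, but selecting a clean model-independent reference (or deducing the strong Yoneda statement from a mapping-space version plus naturality) is where attention is needed; the $(\infty,1)$-categorical shadow of this result in the purely left-adjointable case already appears in \cite{ElmantoHaugseng2023Distributivity} and can serve as a template.
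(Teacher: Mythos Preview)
Your proposal is correct and follows essentially the same argument as the paper: both verify biadjointability via Remark~\ref{rmk:Postcompose_With_2Functor}, then combine the equivalence $h^*$ on $\HOM$-categories (from Theorem~\ref{thm:existence}) with the $2$-categorical Yoneda lemma to identify the mapping category with $G(a)$ via evaluation at $\id_{h(a)}$. The paper organizes this as a commutative square of evaluation maps rather than a chain of equivalences, but the content is identical.
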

\begin{proof}
    The fact that it is an $(I,P)$-biadjointable functor is immediate from Remark~\ref{rmk:Postcompose_With_2Functor} since it may be written as a composite
    \[
        \myC\catop \xrightarrow{\;h\;} \Univ{\myC}{I}{P} \xrightarrow{\;\HOM(h(a),-)\;} \CAT,
    \]
    where the first functor is $(I,P)$-biadjointable and the second is a 2-functor. To show it is also free on $a$, consider another $(I,P)$-biadjointable functor $G\colon \myC\catop \to \CAT$. By the universal property of $\UnivGen\coloneqq\Univ{\myC}{I}{P}$, we may uniquely extend $G$ to a 2-functor $\overline{G}\colon \UnivGen\to \CAT$. Consider now the following commutative diagram:
    \[
    \begin{tikzcd}
        \HOM_{\FUN(\UnivGen,\CAT)}\big(\HOM(h(a),-),\overline{G}\big) \dar{h^*}[swap]{\sim} \rar{\ev_{\id_{h(a)}}} &[.5em] \overline{G}(h(a)) \dar[equal] \\
        \HOM_{\FUN_{(I,P)\dbadj}(\myC\catop,\CAT)}\big(\HOM(h(a),h(-)),G\big) \rar[swap]{\ev_{\id_{h(a)}}} & G(a).
    \end{tikzcd}
    \]
    The left vertical map is obtained by passing to $\HOM$-categories from the equivalence \[h^*\colon \FUN(\UnivGen,\CAT) \iso \FUN_{(I,P)\dbadj}(\myC^\op,\CAT),\] hence is itself an equivalence. The top map is an equivalence by the 2-categorical Yoneda lemma. It follows that also the bottom map is an equivalence, as desired.
\end{proof}

So far, we have described the mapping categories in $\UnivGen$ between objects in the essential image of $h\colon\myC^\op\to\UnivGen$. By the following simple observation, this is already a complete description of the hom-categories in $\UnivGen$:

\begin{proposition}\label{prop:Essential_Surjectivity_Universal_BC_Functor}
	Any universal $(I,P)$-biadjointable functor $\myC^\op\to\UnivGen$ is essentially surjective.
	\begin{proof}
		This is completely analogous to the left/right adjointable case treated in \cite[Lemma~2.3.7]{ElmantoHaugseng2023Distributivity}; for the reader's convenience, we repeat the argument.

		Write $\twoI\subset\UnivGen$ for the full 2-subcategory spanned by the objects in the image of $h\colon\myC^\op\to\UnivGen$, and observe that the induced functor $\hbar\colon\myC^\op\to\twoI$ is still $(I,P)$-biadjointable. Thus, $\hbar$ admits an extension to $\UnivGen$ making the left-hand triangle in the following diagram commute:
		\[
			\begin{tikzcd}
				& \myC^\op\arrow[d,"\hbar"{description}]\arrow[dl,"h"', bend right=10pt]\arrow[dr,"h",bend left=10pt]\\[.5em]
				\UnivGen\arrow[r, dashed] & \twoI\arrow[r,hook] & \UnivGen\rlap.
			\end{tikzcd}
		\]
		On the other hand, the right-hand triangle commutes by construction. The universal property of $\UnivGen$ therefore shows that the composite $\UnivGen\to\twoI\to\UnivGen$ is the identity; in particular, $\twoI\hookrightarrow\UnivGen$ is essentially surjective. By definition of $\twoI$, this precisely tells us that $h$ is essentially surjective, as claimed.
	\end{proof}
\end{proposition}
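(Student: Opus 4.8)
The plan is to run the standard ``split the retract'' argument familiar from universal constructions of this kind, in exact analogy with the left/right adjointable case treated in \cite[Lemma~2.3.7]{ElmantoHaugseng2023Distributivity}. Given a universal $(I,P)$-biadjointable functor $h\colon\myC\catop\to\UnivGen$, I would let $\twoI\subseteq\UnivGen$ be the full sub-$2$-category spanned by the essential image of $h$, so that $h$ factors as $\myC\catop\xrightarrow{\hbar}\twoI\hookrightarrow\UnivGen$ with $\hbar$ essentially surjective by construction; write $\iota\colon\twoI\hookrightarrow\UnivGen$ for the inclusion. The only step that is not purely formal is the claim that $\hbar$ is again $(I,P)$-biadjointable. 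Here the point is that the left adjoints $i_\sharp$, the right adjoints $p_*$, and all the $2$-cells witnessing the relevant triangle identities and (double) Beck--Chevalley equivalences are morphisms and $2$-morphisms between objects of the form $h(x)$; since $\twoI$ is a \emph{full} sub-$2$-category, its $\HOM$-categories coincide with those of $\UnivGen$, so these adjunctions already live in $\twoI$ and the relevant $2$-cells remain invertible there. Hence $\hbar$ inherits $(I,P)$-biadjointability.

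Granting this, I would feed $\hbar$, now viewed as an object of $\FUN_{(I,P)\dbadj}(\myC\catop,\twoI)$, into the universal property of $\UnivGen$ applied with target $\twoD=\twoI$: it corresponds under the equivalence $h^*$ to a $2$-functor $r\colon\UnivGen\to\twoI$ with $r\circ h\simeq\hbar$. Postcomposing with $\iota$ yields an endofunctor $\iota\circ r\colon\UnivGen\to\UnivGen$ satisfying $(\iota\circ r)\circ h\simeq\iota\circ\hbar\simeq h$, so that $\iota\circ r$ and $\id_{\UnivGen}$ both restrict along $h$ to $h$ itself. Since $h^*\colon\FUN(\UnivGen,\UnivGen)\to\FUN_{(I,P)\dbadj}(\myC\catop,\UnivGen)$ is an equivalence of $2$-categories and sends both $\iota\circ r$ and $\id_{\UnivGen}$ to objects equivalent to $h$, it follows that $\iota\circ r\simeq\id_{\UnivGen}$.

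Finally, from $\iota\circ r\simeq\id_{\UnivGen}$ I would read off that $\iota$ is essentially surjective: every object $c\in\UnivGen$ satisfies $c\simeq\iota(r(c))$. By definition $\twoI$ comprises exactly the objects equivalent to some $h(x)$, so essential surjectivity of $\iota$ is precisely the statement that $h$ is essentially surjective. The only place I expect to need genuine care is the descent of $(I,P)$-biadjointability along the full inclusion $\twoI\hookrightarrow\UnivGen$ in the first paragraph; with that in hand the remainder is formal bookkeeping with the corepresentability established in Theorem~\ref{thm:existence}.
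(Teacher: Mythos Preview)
Your proposal is correct and follows essentially the same approach as the paper: define the full sub-2-category $\twoI$ on the image of $h$, observe that the corestriction $\hbar$ is still $(I,P)$-biadjointable, extend $\hbar$ to a retraction $r\colon\UnivGen\to\twoI$ via the universal property, and conclude from $\iota\circ r\simeq\id_{\UnivGen}$ that $\iota$ (and hence $h$) is essentially surjective. Your justification for why biadjointability descends to the full sub-2-category is slightly more detailed than the paper's one-line ``observe,'' but otherwise the arguments coincide.
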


Combining all the above statements, we now obtain the following recognition principle for the universal $(I,P)$-biadjointable functor out of $\myC\catop$:

\begin{theorem}
	\label{thm:Recognizing_The_Universal_Biadjointable_Functor}
    An $(I,P)$-biadjointable functor $F\colon \myC\catop \to \UnivGen$ is universal if and only if it is essentially surjective and for every object $a$ of $\myC$ the composite
    \[
        \myC\catop \xrightarrow{\;F\;} \UnivGen \xrightarrow{\;\HOM_{\UnivGen}(F(a),-)\;} \CAT
    \]
    is a free $(I,P)$-biadjointable functor on $\id_{F(a)}$.
\end{theorem}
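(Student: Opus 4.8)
The plan is to reduce everything to the canonical universal biadjointable functor $h\colon\myC\catop\to\Univ{\myC}{I}{P}$ of Theorem~\ref{thm:existence}, for which both properties in the statement are already known: essential surjectivity is Proposition~\ref{prop:Essential_Surjectivity_Universal_BC_Functor}, and freeness of $\HOM_{\Univ{\myC}{I}{P}}(h(a),h(-))$ on $\id_{h(a)}$ is Proposition~\ref{prop:Hom_Cats_Free_BC_Functor}. Throughout I will use freely that $F\colon\myC\catop\to\UnivGen$ is universal precisely if $\UnivGen$, equipped with $F$, corepresents $\FUN_{(I,P)\dbadj}(\myC\catop,-)\colon\CatTwo\to\CatTwo$.

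For the ``only if'' direction I would argue as follows. If $F$ is universal, then $\UnivGen$ (with $F$) corepresents the same $2$-functor as $\Univ{\myC}{I}{P}$ (with $h$) by Theorem~\ref{thm:existence}, so the $2$-categorical Yoneda lemma yields an equivalence $\Psi\colon\Univ{\myC}{I}{P}\iso\UnivGen$ with $\Psi\circ h\simeq F$. Then $F$ is essentially surjective because $h$ is, and $\Psi$ induces an equivalence $\HOM_{\Univ{\myC}{I}{P}}(h(a),h(-))\simeq\HOM_{\UnivGen}(F(a),F(-))$ in $\FUN_{(I,P)\dbadj}(\myC\catop,\CAT)$ carrying $\id_{h(a)}$ to $\id_{F(a)}$; since the left-hand side is free on $\id_{h(a)}$ and freeness transports along such an equivalence (immediate from Definition~\ref{def:Free_Adjointable_Functor}), the right-hand side is free on $\id_{F(a)}$.

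For the ``if'' direction, assume $F$ is essentially surjective and each $\HOM_{\UnivGen}(F(a),F(-))$ is free on $\id_{F(a)}$. By the universal property of $\Univ{\myC}{I}{P}$ the functor $F$ extends to a $2$-functor $\overline F\colon\Univ{\myC}{I}{P}\to\UnivGen$ with $\overline F\circ h\simeq F$; since $F^*=h^*\circ\overline F^*$ and $h^*$ is an equivalence, it suffices to prove that $\overline F$ is an equivalence of $2$-categories. Now $\overline F$ is essentially surjective because $\overline F\circ h\simeq F$ is, so using essential surjectivity of $h$ (Proposition~\ref{prop:Essential_Surjectivity_Universal_BC_Functor}) it remains to show $\overline F$ induces an equivalence $\HOM_{\Univ{\myC}{I}{P}}(h(a),h(b))\iso\HOM_{\UnivGen}(F(a),F(b))$ for all $a,b\in\myC$. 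Whiskering $\overline F$ with $h$ produces a natural transformation
\[
	\overline F_*\colon\HOM_{\Univ{\myC}{I}{P}}(h(a),h(-))\longrightarrow\HOM_{\UnivGen}(F(a),F(-))
\]
of functors $\myC\catop\to\CAT$ whose $b$-component is exactly the functor in question, and which sends $\id_{h(a)}$ to $\id_{F(a)}$ since $\overline F$ is a $2$-functor.

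It therefore remains to see that $\overline F_*$ is an equivalence. First, $\overline F_*$ is a morphism in $\FUN_{(I,P)\dbadj}(\myC\catop,\CAT)$: this is the natural-transformation analogue of Remark~\ref{rmk:Postcompose_With_2Functor}, following from the fact that $2$-functors preserve adjoint morphisms together with their units and counits, and hence preserve Beck--Chevalley transformations. Granting this, both the source of $\overline F_*$ (free on $\id_{h(a)}$ by Proposition~\ref{prop:Hom_Cats_Free_BC_Functor}) and its target (free on $\id_{F(a)}$ by hypothesis) are free $(I,P)$-biadjointable functors, and $\overline F_*$ matches the generating classes; a formal argument then shows $\overline F_*$ is an equivalence: using freeness of the target pick $\psi$ in the opposite direction with $\psi_a(\id_{F(a)})\simeq\id_{h(a)}$, and compare evaluations at the generators (via the equivalences of Definition~\ref{def:Free_Adjointable_Functor}) to deduce $\psi\circ\overline F_*\simeq\id$ and $\overline F_*\circ\psi\simeq\id$. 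Hence every component of $\overline F_*$ is an equivalence of categories, $\overline F$ is fully faithful, and we are done. I expect the bulk of the write-up to be organizational, with the only point needing genuine (if routine) care being the verification that whiskering with $\overline F$ is compatible with the Beck--Chevalley transformations on both sides, so that $\overline F_*$ really lands in $\FUN_{(I,P)\dbadj}(\myC\catop,\CAT)$.
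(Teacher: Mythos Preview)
Your proposal is correct and follows essentially the same approach as the paper: extend $F$ along $h$ to a $2$-functor $\overline F$, check essential surjectivity, and then verify full faithfulness by observing that $\overline F_*$ is a map of free $(I,P)$-biadjointable functors matching the generating identity classes. The paper's write-up is slightly terser—it simply cites Propositions~\ref{prop:Hom_Cats_Free_BC_Functor} and~\ref{prop:Essential_Surjectivity_Universal_BC_Functor} for the ``only if'' direction and does not spell out why $\overline F_*$ lies in $\FUN_{(I,P)\dbadj}(\myC\catop,\CAT)$—but the underlying argument is the same.
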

\begin{proof}
    The `only if'-direction is immediate from Propositions~\ref{prop:Hom_Cats_Free_BC_Functor} and \ref{prop:Essential_Surjectivity_Universal_BC_Functor}. Conversely, assume that $F$ satisfies the two conditions in the theorem. By the universal property of $h\colon \myC\catop \to \Univ{\myC}{I}{P}$, the functor $F$ uniquely extends to a 2-functor $\overline{F}\colon \Univ{\myC}{I}{P} \to \UnivGen$, and it will suffice to show that $\overline{F}$ is an equivalence. Since $F = \overline{F} \circ h$ is essentially surjective by assumption, also $\overline{F}$ is essentially surjective. For fully faithfulness, we must show that for all objects $X,Y \in \Univ{\myC}{I}{P}$ the induced functor
    \[
        \overline{F}\colon \HOM_{\Univ{\myC}{I}{P}}(X,Y) \to \HOM_{\UnivGen}(\overline{F}(X),\overline{F}(Y))
    \]
    is an equivalence of categories. By essential surjectivity of $h\colon \myC\catop \to \Univ{\myC}{I}{P}$, it suffices to show that for every object $a \in \myC$ the induced transformation
    \begin{equation}\label{eq:bar-F-on-hom-cats}
        \overline{F}\colon \HOM_{\Univ{\myC}{I}{P}}(h(a),h(-)) \to \HOM_{\UnivGen}(F(a),F(-))
	\end{equation}
    is an equivalence of $(I,P)$-biadjointable functors $\myC\catop \to \CAT$.

	Observe that the left-hand side is free on the identity class by Proposition~\ref{prop:Hom_Cats_Free_BC_Functor} and the right-hand side is so by assumption on $F$. By 2-functoriality of $\overline{F}$, $(\ref{eq:bar-F-on-hom-cats})$ sends the identity to the identity, so the claim follows by the universal property of free $(I,P)$-biadjointable functors.
\end{proof}

\section{Free semiadditive categories}
\label{sec:Free_Semiadditive}
In this section we will prove the key ingredient for our main theorem: we will identify the free $(I,P)$-biadjointable functors $\myT\catop \to \Cat$ on an object $a \in \myT$ under some standard assumptions on $I,P$:

\begin{convention}\label{conv:most-general}
	Throughout this whole section, we fix a category $\myT$ together with two wide subcategories $I,P\subset \myT$. We write $E$ for the collection of maps in $\myT$ that factor as $e=pi$ with $p\in P$ and $i\in I$. We have the following standing assumptions:
	\begin{enumerate}
		\item $I$ and $P$ are left cancellable and closed under base change.
		\item $E$ is closed under composition (hence a wide subcategory of $\myT$).
		\item Every  map in $I\cap P$ is truncated.
	\end{enumerate}
	In the terminology of \cite[Definition~A.5.9]{Mann2022SixFunctor}, this says that $(I,P)$ is a \emph{suitable decomposition} of $(\myT,E)$. Two different (more restrictive) variations of the above definition have also appeared under the name \emph{Nagata setup} in \cite{Kuijper_Axiomatixation_6FF, Dauser_Kuijper_Uniqueness}.
\end{convention}

\begin{notation}
	To easily distinguish the above classes, we will denote maps in $I$ by `\kern.5pt$\rightarrowmono$,' maps in $P$ by `$\rightarrowepic$,' and maps in $E$ by `$\rightarrowepmo$.' These notations are not meant to invoke any association about the nature of these morphisms (for example, maps in $I$ are not assumed to be monomorphisms or any sort of embeddings). All maps decorated as $\iso$ are equivalences.
\end{notation}

Our proof will proceed via techniques from parametrized category theory \cite{BDGNS2016ExposeI,martiniwolf2021limits}, so it will be convenient to employ some of the standard terminology of the field, cf.~Example~\ref{ex:param}:
\begin{itemize}
	\item A functor $\Xx\colon \myT\catop \to \Cat$ will be referred to as a \textit{$\myT$-category}.
	\item Natural transformations $\Xx \to \Yy$ will be referred to as \textit{$\myT$-functors}, or sometimes simply \textit{functors} for short.
	\item We will say that $\Xx$ is \textit{$I$-cocomplete} if it is left $I$-adjointable, and dually that it is \textit{$P$-complete} if it is right $P$-adjointable. We wish to think of the left adjoints $i_{\sharp}$ and right adjoints $p_*$ as certain \textit{parametrized (co)limits}; accordingly, we will refer to $I$-left adjointable natural transformations as \emph{$I$-cocontinuous $\myT$-functors}, while $P$-right adjointable transformations will be called \emph{$P$-continuous $\myT$-functors.}
	\item We say that $\Xx$ is \textit{$(I,P)$-semiadditive} if it is $(I,P)$-biadjointable; this may be interpreted as asking $I$-colimits to commute with $P$-limits. We will call $(I,P)$-biadjointable natural transformations \emph{$(I,P)$-bicontinuous.}
	\item We denote by
	\[
	\Fun^{I\text-\amalg}_\myT(\Xx,\Yy), \qquad \Fun^{P\text-\Pi}_\myT(\Xx,\Yy), \qquad \Fun^{I\text-\amalg,P\text-\Pi}_\myT(\Xx,\Yy)
	\]
	the categories of $I$-cocontinuous, $P$-continuous, and $(I,P)$-bicontinuous functors, respectively, i.e.~the $\HOM$'s in the 2-{\hskip0pt}categories $\FUN_{(I,\core C)\dbadj}(\myT\catop,\CAT)$, $\FUN_{(\core C,P)\dbadj}(\myT\catop,\CAT)$, and $\FUN_{(I,P)\dbadj}(\myT\catop,\CAT)$, respectively.
\end{itemize}

\begin{remark}
Let us explicitly note that we depart from the standard notational convention in parametrized category theory by denoting the left adjoint to a functor $f^*$ by $f_{\sharp}$: in the literature on parametrized category theory this left adjoint is commonly denoted by $f_!$, which unfortunately clashes with the common notation for the exceptional pushforwards in a 6-functor formalism. We point this out to avoid any confusion which might arise when consulting the references (particularly \cite{CLL_Spans}) used in this section, where $f_!$ is used to denote the left adjoint of $f^*$.
\end{remark}

\begin{remark}
	If $Q=I=P$ (meaning in particular that all maps in $Q$ are truncated), then $(I,P)$-semiadditivity recovers one of the equivalent definitions of \emph{$Q$-semiadditivity} from \cite[Proposition~3.44 and Remark~3.46]{CLL_Spans}. We conclude that, for general $I$ and $P$ as before, any $(I,P)$-semiadditive $C$-category in the above sense is $(I\cap P)$-semiadditive in the sense of \cite{CLL_Spans}.\footnote{\cite{CLL_Spans} works in the more general setting of categories parametrized by a topos $\mathcal B$; the case of $\myC$-categories is obtained by taking $\mathcal B$ to be the presheaf topos $\PSh(C)$ (possibly after passing to a larger universe), see in particular the `user interface' presented in §3.3 of \emph{op.\ cit.}}
\end{remark}

Translated into the language of parametrized category theory, the question addressed in this section is the following: given a $\myT$-category $\Xx$, what is the free $(I,P)$-semiadditive $\myT$-category generated by $\Xx$? Applied to the $\myT$-category $\Xx = \Hom_{\myT}(-,a)\colon \myT\catop \to \Spc \subset \Cat$, this will in particular identify the free $(I,P)$-biadjointable functor generated by an element in degree $a \in C$. To motivate what is about to come, let us briefly recall the situation in the non-parametrized case:

\begin{example}
	Let $\Xx$ be a category. We denote by $\Xx^{\amalg}$ the category obtained by freely adjoining finite coproducts to $\Xx$. The inclusion $\core{\Xx} \hookrightarrow \Xx$ of the groupoid core induces an inclusion $(\core{\Xx})^{\amalg} \hookrightarrow \Xx^{\amalg}$ whose image we refer to as the subcategory $(\Xx^{\amalg})_{\fold}$ of \textit{fold maps}. The free semiadditive category generated by $\Xx$ is then given by the \emph{span category} (to be recalled below)
	\[
		\Span_{\fold,\all}(\Xx^{\amalg})
	\]
	of $\Xx^{\amalg}$, where the backwards maps are fold maps; see \cite[Lemma~C.4]{BachmannHoyois2021Norms}.
\end{example}

The parametrized situation will look similar. Given a $\myT$-category $\Xx$, we may form its \textit{free $E$-cocompletion} $\Xx^{E\text-\amalg}$, recalled below in Definition~\ref{def:Free_N_Cocompletion}. This has certain wide subcategories $(\Xx^{E\text-\amalg})_{P\text-\fold}$ of \textit{$P$-fold maps} and $(\Xx^{E\text-\amalg})_{I}$ of maps \emph{over $I$}. The free $(I,P)$-semiadditive $\myT$-category generated by $\Xx$ is then given by a certain parametrized span category $\CIP{\Xx}{E}{I}{P}$. More precisely, the inclusion $i\colon \Xx \hookrightarrow \CIP{\Xx}{E}{I}{P}$ satisfies the following universal property:

\begin{theorem}\label{thm:biadd}
	Let $I,P\subset E\subset \myT$ be as in Convention~\ref{conv:most-general}. Let $\Xx$ be any $\myT$-category, and let $\Yy$ be an $(I,P)$-semiadditive $\myT$-category. Then restriction along ${i}$ induces an equivalence
	\[
	\Fun_\myT^{I\text-\amalg,P\text-\Pi}(\CIP{\Xx}{E}{I}{P},\Yy)\iso\Fun_\myT(\Xx,\Yy).
	\]
	In other words, ${i}$ exhibits $\CIP{\Xx}{E}{I}{P}$ as the free $(I,P)$-semiadditive $\myT$-category generated by $\Xx$.
\end{theorem}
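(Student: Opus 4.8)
The plan is to establish the universal property in three stages of increasing generality, matching the complexity of the pair $(I,P)$, and working throughout within the framework of parametrized category theory in the sense of \cite{BDGNS2016ExposeI,martiniwolf2021limits}. As a preliminary step one checks that $\CIP{\Xx}{E}{I}{P}$ is itself $(I,P)$-semiadditive: the $I$-colimits and $P$-limits are implemented by covariant pushforward along the two legs of a span, the single Beck--Chevalley conditions hold because composition in a (parametrized) span category is computed by pullback, and the only genuine content is the double Beck--Chevalley condition, which I would verify by unwinding the definition of $\ul\Span$ and performing a direct diagram chase. Given this, it remains to identify restriction along $i\colon\Xx\hookrightarrow\CIP{\Xx}{E}{I}{P}$ with the claimed equivalence.

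\textbf{The case $P\subseteq I$.} Here $I\cap P=P$ and $E=I$. When moreover $\Xx=\ul{*}$ is terminal, the parametrized span category $\CIP{\ul{*}}{I}{I}{P}$ is exactly the one treated in our earlier work, and I would invoke \cite{CLL_Spans}: filtering $P$ by its wide subcategories $P_{\le n}$ of $n$-truncated maps (using hypothesis~(3), which here says maps in $P=I\cap P$ are truncated) and inducting on $n$ identifies it with the free $(I,P)$-semiadditive $\myT$-category on a point. To pass to an arbitrary $\myT$-category $\Xx$ I would adapt the argument of Bachmann--Hoyois \cite{BachmannHoyois2021Norms}: the free $E$-cocompletion $\Xx^{E\text-\amalg}$ is by construction a parametrized left Kan extension built out of representable $\myT$-categories, and the span construction $\CIP{(-)}{E}{I}{P}$ carries this to the corresponding left Kan extension. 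Consequently an $(I,P)$-bicontinuous functor $\CIP{\Xx}{E}{I}{P}\to\Yy$ is determined by restricting first along the $P$-fold and over-$I$ subcategories (which, by the universal property of $\ul\Span$ in its two variances, amounts to an $(I,P)$-semiadditive functor $\Xx^{E\text-\amalg}\to\Yy$) and then along $\Xx\hookrightarrow\Xx^{E\text-\amalg}$ (which, by the universal property of the free $E$-cocompletion, amounts to a plain $\myT$-functor $\Xx\to\Yy$); composing these identifications yields the equivalence. The technical heart of this stage is the compatibility of $\CIP{(-)}{E}{I}{P}$ with the relevant parametrized colimits.

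\textbf{The general case.} This is the main part of the argument. The strategy is to bootstrap: first form the free $(I,I\cap P)$-semiadditive $\myT$-category on $\Xx$ using the previous stage (legitimate since $I\cap P\subseteq I$), then form the free $P$-complete $\myT$-category on that (the purely right-adjointable case, available from the dual of \cite{ElmantoHaugseng2023Distributivity} or from \cite{CLL_Spans}), and finally re-impose $(I,I\cap P)$-semiadditivity; one then checks that the resulting $\myT$-category is $(I,P)$-semiadditive and corepresents $\Fun_\myT(\Xx,-)$ on $(I,P)$-semiadditive targets. The crucial input is the localization result of Theorem~\ref{thm:P-completion-SpanI}, computing the $P$-completion of the $I$-span category and identifying this bootstrapped object with $\CIP{\Xx}{E}{I}{P}$; this in turn rests on Proposition~\ref{prop:Decomp_Weakly_Contractible}, the weak contractibility of the category of $(I,P)$-factorizations $e=pi$ of a morphism $e\in E$, which feeds a cofinality argument showing that the comparison map between the two descriptions is a localization at an explicit class of maps that does not change the underlying parametrized category.

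\textbf{Main obstacle.} I expect the general case to be decisively the hardest part. When neither $I$ nor $P$ contains the other, the interplay of $I$-colimits with $P$-limits is genuinely subtle, and producing the comparison functor between $\CIP{\Xx}{E}{I}{P}$ and the bootstrapped construction, exhibiting it as a localization, and using contractibility of the factorization categories to conclude it is an equivalence, all require real work on the level of the defining twisted-arrow/span diagrams. The other delicate point is verifying the double Beck--Chevalley condition for the bootstrapped object, i.e.\ that it is honestly $(I,P)$-semiadditive rather than merely separately $I$-cocomplete and $P$-complete.
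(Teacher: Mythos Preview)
Your high-level architecture matches the paper's closely: the case $P\subseteq I$ via \cite{CLL_Spans} plus a Bachmann--Hoyois argument, followed by the general case via the localization Theorem~\ref{thm:P-completion-SpanI} and the contractibility of factorization categories (Proposition~\ref{prop:Decomp_Weakly_Contractible}). However, there is a genuine gap in your account of the general case.

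Theorem~\ref{thm:P-completion-SpanI} only tells you that $\gamma\colon(\CIP{\Xx}{I}{I}{I\cap P})^{P\text-\Pi}\to\CIP{\Xx}{E}{I}{P}$ is a localization at an explicit class $\Ww$; it does \emph{not} by itself give an equivalence on functor categories into an $(I,P)$-semiadditive target. To deduce the universal property you still need two further ingredients that you have not identified. First, that every $P$-continuous functor out of $(\CIP{\Xx}{I}{I}{I\cap P})^{P\text-\Pi}$ whose restriction to $\CIP{\Xx}{I}{I}{I\cap P}$ is $I$-cocontinuous automatically inverts the maps in $\Ww$ (this is Proposition~\ref{prop:biadd-implies-invert}, and is the genuine technical heart of the general case: it requires constructing a specific map $\atled\colon r^*i_\sharp\to j_\sharp$ out of the semiadditivity transfer $\mu$, and showing via a delicate Beck--Chevalley manipulation that its adjoint $i_\sharp\to r_*j_\sharp$ is invertible in any $(I,P)$-semiadditive target). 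Second, a criterion (Lemma~\ref{lemma:criterion-bicont}) saying that a $\myT$-functor $\CIP{\Xx}{E}{I}{P}\to\Yy$ is $(I,P)$-bicontinuous iff it is $P$-continuous with $I$-cocontinuous restriction to the smaller span category; this is what lets you match up the conditions on the two sides of $\gamma$. Your phrase ``re-impose $(I,I\cap P)$-semiadditivity'' obscures precisely this: there is no separate imposition step---rather, one shows that descent through $\gamma$ is \emph{automatic} for the functors one cares about. Your ``Main obstacle'' paragraph locates the difficulty in the double Beck--Chevalley condition for the \emph{object} $\CIP{\Xx}{E}{I}{P}$, but that is comparatively routine (Lemma~\ref{lemma:universal-example-is-biadd}); the hard part is the behaviour of \emph{functors out of} the $P$-completion, and your sentence ``a localization at an explicit class of maps that does not change the underlying parametrized category'' suggests you have not yet appreciated that this inversion is where the real work lies.
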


As an application of this theorem (combined with Proposition~\ref{prop:Hom_Cats_Free_BC_Functor}) we will obtain an explicit identification of the $\HOM$-categories in the target of the universal biadjointable functor:

\begin{theorem}\label{thm:free_IP_biadj_on_1}
For an object $a\in C$, the functor
	\begin{equation}\label{eq:free_adj_funct_on_1}
		F = \Span_{P,I}(\myC_{/a}\times_{\myC} E_{/-})\colon \myC^{\op} \to \Cat
	\end{equation}
	is the free $(I,P)$-biadjointable functor $\myC^{\op} \to \Cat$ on the class $\smash{(a\xleftarrow{\;\id\;}a\xrightarrow{\;\id\;}a)}$ in $F(a) = \Span_{P,I}(C_{/a}\times_C E_{/a})$.
\end{theorem}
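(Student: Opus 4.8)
The plan is to deduce this from Theorem~\ref{thm:biadd}, applied to the representable $\myC$-category $\Xx \coloneqq \Hom_\myT(-,a)$, together with the Yoneda lemma. The first and only substantial point is to recognize $F$ as the free $(I,P)$-semiadditive $\myC$-category $\CIP{\Hom_\myT(-,a)}{E}{I}{P}$ on $\Xx$. Indeed, an object of the free $E$-cocompletion $\Hom_\myT(-,a)^{E\text-\amalg}$ lying over $c\in\myC$ consists of an $E$-morphism $u\to c$ together with an object of $\Hom_\myT(u,a)$, i.e.\ a map $u\to a$; and a morphism over $c$ between two such data is a map of the sources commuting with the structure maps to $a$ and to $c$. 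This identifies $\Hom_\myT(-,a)^{E\text-\amalg}$ with the $\myC$-category $c\mapsto \myC_{/a}\times_\myC E_{/c}$, the functoriality in $c$ being given by pullback along $E$ (which is defined since $E$ is closed under base change). Unwinding the definitions of the subcategory of $P$-fold maps and of the subcategory of maps over $I$, one checks that under this identification the parametrized span construction $\CIP{\Hom_\myT(-,a)}{E}{I}{P}$ turns into the fibrewise span category $\Span_{P,I}(\myC_{/a}\times_\myC E_{/-}) = F$, and that the image of $\id_a\in\Hom_\myT(a,a)$ under the unit $i\colon \Hom_\myT(-,a)\hookrightarrow \CIP{\Hom_\myT(-,a)}{E}{I}{P}$ is precisely the span $a\xleftarrow{\;\id\;}a\xrightarrow{\;\id\;}a$.

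Granting this identification, the rest is purely formal. Let $G\colon \myC\catop\to\Cat$ be an arbitrary $(I,P)$-biadjointable functor, equivalently an $(I,P)$-semiadditive $\myC$-category. Then there is a chain
\[
	\HOM_{\FUN_{(I,P)\dbadj}(\myC\catop,\CAT)}(F,G) = \Fun^{I\text-\amalg,P\text-\Pi}_\myT\big(\CIP{\Hom_\myT(-,a)}{E}{I}{P},G\big) \iso \Fun_\myT\big(\Hom_\myT(-,a),G\big) \iso G(a),
\]
in which the first identification is the definition of the $\HOM$-categories in the biadjointable functor $2$-category recalled above, the second equivalence is restriction along the unit $i$ (Theorem~\ref{thm:biadd}), and the last one is the Yoneda lemma, given by evaluation at $\id_a$. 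Chasing $\id_a$ through, the composite sends a biadjointable natural transformation $F\Rightarrow G$ to its component at $i(\id_a)\in F(a)$, which by the previous paragraph is the span $a\xleftarrow{\;\id\;}a\xrightarrow{\;\id\;}a$. By Definition~\ref{def:Free_Adjointable_Functor}, this says exactly that $F$ is the free $(I,P)$-biadjointable functor on that span.

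The step that requires genuine care is the identification of the first paragraph. One must check that the explicit model of the free $E$-cocompletion (Definition~\ref{def:Free_N_Cocompletion}) restricts, on a representable $\myC$-category, to exactly the objects and morphisms described above---rather than to some larger category of formal $E$-colimits---and that the three classes of maps entering the parametrized span construction, namely the $P$-fold maps, the maps over $I$, and all maps, are carried to the expected classes of maps of spans in $\myC_{/a}\times_\myC E_{/-}$, naturally in $c\in\myC$. Once this bookkeeping is in place, the equivalence $F\simeq\CIP{\Hom_\myT(-,a)}{E}{I}{P}$ follows, and with it the theorem; I do not expect any obstacle beyond this.
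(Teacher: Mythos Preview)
Your proposal is correct and follows essentially the same approach as the paper's proof: apply Theorem~\ref{thm:biadd} to the representable $\myC$-category $\ul{a} = \Hom_\myC(-,a)$, combine with the Yoneda lemma, and identify $\CIP{\ul{a}}{E}{I}{P}$ with $F$ by noting that the cartesian unstraightening of $\ul{a}$ is the source map $C_{/a}\to C$. The paper dispatches the identification step in one line (``by direct inspection'') using the description of $\Un^\ct(\ul{a})$, whereas you spell it out more explicitly, but the content is the same.
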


The goal of this section is to provide a proof of these results. We start in §\ref{subsec:Construction_Universal_Example} by giving a precise construction of $\CIP{\Xx}{E}{I}{P}$. The remaining subsections are then dedicated to proving increasingly more general instances of Theorem \ref{thm:biadd}.

\subsection{Construction of the universal example}
\label{subsec:Construction_Universal_Example}
We begin by recalling Macpherson's description of free parametrized (co)completions and our conventions on span categories.

\begin{definition}[Free $N$-cocompletion]\label{def:Free_N_Cocompletion}
	Let $N \subset \myT$ be a left cancellable wide subcategory closed under base change, and let $\Xx$ be a $\myT$-category. The \emph{free $N$-cocompletion of $\Xx$} is the initial $N$-cocomplete $\myT$-category $\Xx^{N\text-\amalg}$ equipped with a $\myT$-functor $j\colon \Xx \to \Xx^{N\text-\amalg}$. More precisely, $\Xx^{N\text-\amalg}$ is $N$-cocomplete, and for any other $N$-cocomplete $\myT$-category $\Yy$, restriction along $j$ induces an equivalence of categories
	\[
		j^*\colon \Fun_\myT^{N\text-\amalg}(\Xx^{N\text-\amalg},\Yy) \iso \Fun_\myT(\Xx,\Yy).
	\]
\end{definition}

\begin{remark}
	\label{rmk:Free_N_Cocompletion}
	The $\myT$-category $\Xx^{N\text-\amalg}$ exists and is unique by \cite[Theorem~7.1.13]{martiniwolf2021limits}, where it is also shown that the inverse to $j^*$ is given by parametrized left Kan extension along $j$ in the sense of \cite[Definition~6.3.1]{martiniwolf2021limits}.
\end{remark}

An explicit model for the free $N$-cocompletion has been described by Macpherson:

\begin{construction}\label{cons:MacPherson_Free_Cocompletion}
	For $N\subset \myT$ as in Definition \ref{def:Free_N_Cocompletion}, let $\Ar_{N}(\myT)$ denote the full subcategory of the arrow category $\mathrm{Ar}(\myT)$ spanned by the maps in $N$. Given a $\myT$-category $\Xx$ with cartesian unstraightening $\Un^\ct(\Xx) \to \myT$, we consider the following pullback diagram
	\[
	\begin{tikzcd}
		\Ar_N(\myT) \times_C \Un^\ct(\Xx) \arrow[r] \arrow[d] \arrow[dr,pullback] & \Un^\ct(\Xx) \arrow[d] \\
		\Ar_N(\myT) \arrow[d,"\ev_1"'] \arrow[r, "\ev_0"'] & \myT\\
		\myT\rlap.
	\end{tikzcd}
	\]
	The left vertical composite is a cartesian fibration, with cartesian edges given by pairs consisting of a pullback square in $\myT$ (i.e.~a {cartesian} edge of $\ev_1\colon\Ar_N(\myT)\to\myT$) and a cartesian edge of $\Unct(\Xx)$.

	The identity section $\text{const}\colon \myT\hookrightarrow \Ar_N(\myT), \, x \mapsto \id_x$ induces a cartesian functor $\bar{\jmath}\colon \Un^\ct(\Xx) = \Un^\ct(\Xx) \times_C C \hookrightarrow \Ar_N(\myT) \times_C \Un^\ct(\Xx)$ over $C$.
\end{construction}

\begin{proposition}[Macpherson]
	\label{prop:MacPherson_Model_Satisfies_UP}
	Given a $\myT$-category $\Xx$, there is an equivalence
	\[
		\Un^\ct(\Xx^{N\text-\amalg}) \simeq \Ar_N(\myT) \times_C \Un^\ct(\Xx)
	\]
	of cartesian fibrations over $\myT$. The functor $\Un^\ct(j)\colon \Un^\ct(\Xx) \to \Un^\ct(\Xx^{N\text-\amalg})$ corresponds to the inclusion $\bar{\jmath}$ under this equivalence.
\end{proposition}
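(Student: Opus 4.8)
The plan is to verify that the right-hand side, regarded as a cartesian fibration over $\myT$ and hence as a $\myT$-category $\Xx'$, satisfies the universal property defining $\Xx^{N\text-\amalg}$ in Definition~\ref{def:Free_N_Cocompletion}; since free $N$-cocompletions are unique (Remark~\ref{rmk:Free_N_Cocompletion}), this yields the asserted equivalence, with the identification of $\Unct(j)$ and $\bar\jmath$ built into the comparison. First I would record that $\ev_1\colon\Ar_N(\myT)\to\myT$ is a cartesian fibration whose cartesian edges are precisely the pullback squares: given $n\colon u\to y$ in $N$ and $f\colon y'\to y$, the pullback $y'\times_y u\to y'$ exists and lies in $N$ since $N$ is closed under base change, and the corresponding square is cartesian over $f$ exactly because it is a pullback. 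Pulling $\Unct(\Xx)\to\myT$ back along $\ev_0$ yields a cartesian fibration over $\Ar_N(\myT)$, and composing with $\ev_1$ yields one over $\myT$ because a composite of cartesian fibrations is again a cartesian fibration; its cartesian edges are the morphisms that are cartesian for the first projection and whose image in $\Ar_N(\myT)$ is $\ev_1$-cartesian, that is, exactly the pairs (pullback square, cartesian edge of $\Unct\Xx$) described in Construction~\ref{cons:MacPherson_Free_Cocompletion}. Straightening, one obtains a $\myT$-category $\Xx'$ with $\Xx'(y)$ the Grothendieck construction of $\Xx$ restricted along the slice $N_{/y}\to\myT$, $(v\xrightarrow{w}y)\mapsto v$, and with restriction along $f\colon y'\to y$ given by reindexing $N_{/y}$ via pullback along $f$, together with the induced pullbacks in $\Xx$.

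Next I would produce the inclusion and check $N$-cocompleteness. The identity section $\const\colon\myT\to\Ar_N(\myT)$ preserves pullback squares, hence is cartesian over $\myT$, so $\bar\jmath$ corresponds to a $\myT$-functor $j'\colon\Xx\to\Xx'$ sending $x\in\Xx(y)$ to $(\id_y,x)$. For each $n\colon u\to y$ in $N$, the restriction $n^*\colon\Xx'(y)\to\Xx'(u)$ admits a left adjoint $n_\sharp$ given by post-composition with $n$, i.e.\ $(v\xrightarrow{w}u,x)\mapsto(v\xrightarrow{nw}y,x)$ (using that $N$ is a wide subcategory); the triangle identities are immediate from the universal property of the pullbacks defining $n^*$, and the Beck--Chevalley condition along maps in $N$ reduces to the pasting law for pullbacks, so $\Xx'$ is $N$-cocomplete. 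The upshot is that every object $(v\xrightarrow{w}y,x)$ of $\Xx'(y)$ is canonically $w_\sharp j'(x)$, i.e.\ $\Xx'$ is generated under $N$-colimits by the image of $j'$. (This is where the standing hypotheses on $N$---in particular left cancellability---enter.)

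It then remains to show that restriction along $j'$ is an equivalence $\Fun_\myT^{N\text-\amalg}(\Xx',\Yy)\iso\Fun_\myT(\Xx,\Yy)$ for every $N$-cocomplete $\Yy$. Since $\Xx'$ is $N$-cocomplete, the universal property of the abstract free $N$-cocompletion provides a unique $N$-cocontinuous $\myT$-functor $\Phi\colon\Xx^{N\text-\amalg}\to\Xx'$ with $\Phi\circ j\simeq j'$, and it suffices to see that $\Phi$ is an equivalence, which may be checked fibrewise. Using the generation statement above together with Martini--Wolf's formula for the inverse of $j^*$ as parametrized left Kan extension along $j$, one matches both $(\Xx^{N\text-\amalg})(y)$ and $\Xx'(y)$ with the Grothendieck construction of $(v\xrightarrow{w}y)\in N_{/y}\mapsto\Xx(v)$ and identifies $\Phi_y$ with the induced comparison, which is an equivalence. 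Equivalently, one can write a candidate inverse to $(j')^*$ directly, sending $F\colon\Xx\to\Yy$ to the $\myT$-functor $(v\xrightarrow{w}y,x)\mapsto w_\sharp F_v(x)$ (the $N$-colimit over $N_{/y}$ of the tautological family, with functoriality supplied by the standard comparison maps among the left adjoints $w_\sharp$), and check by hand that it lands among $N$-cocontinuous functors, restricts to $F$ along $j'$, and---using that every object of $\Xx'$ has the form $w_\sharp j'(x)$---is inverse to $(j')^*$. Under the resulting equivalence $\Xx^{N\text-\amalg}\simeq\Xx'$ of cartesian fibrations over $\myT$, the map $\Unct(j)$ corresponds to $\bar\jmath$ by the construction of $\Phi$.

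The manipulations of (co)cartesian fibrations in the first two steps are routine, and I expect the main obstacle to be the third one: promoting the ``every object is an $N$-colimit of objects in the image of $j'$'' observation to the full $\infty$-categorical universal property (equivalently, matching the abstract parametrized left Kan extension with the explicit model fibre by fibre), where one must genuinely use the Martini--Wolf formalism \cite{martiniwolf2021limits}---or, as in Macpherson's original treatment \cite{MacPherson2022Bivariant}, reprove the relevant parametrized density and Kan-extension statements---rather than merely rearranging fibrations.
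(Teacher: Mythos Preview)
Your proposal is correct and follows essentially the same strategy the paper attributes to Macpherson: verify that the explicit model is $N$-cocomplete and that $\bar\jmath$ exhibits it as the free $N$-cocompletion. The paper's own proof is purely a citation to \cite[Proposition~3.5.2]{MacPherson2022Bivariant} for both of these facts, so your sketch in fact supplies more detail than the paper does; you also correctly identify that the substantive work lies in your third step, which is precisely what Macpherson's cited result provides.
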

\begin{proof}
	Translated to our language, Macpherson shows in \cite[Proposition 3.5.2]{MacPherson2022Bivariant} that the cartesian fibration $\Ar_N(\myT) \times_C \Un^\ct(\Xx) \to \myT$ corresponds to an $N$-cocomplete $\myT$-category, and that the inclusion $\bar{\jmath}$ exhibits it as the unstraightening of the free $N$-cocomplete $\myT$-category on $\Xx$.
\end{proof}

\begin{remark}\label{rmk:Description_Un(X^N-amalg)}
	As a consequence of the proposition, we may explicitly describe the free $N$-cocompletion of $\Xx$ as $\Xx^{N\text-\amalg}(a)=\Un^\ct(\Xx)\times_{\myT} N_{/a}$ for every $a\in\myT$. A generic object of $\Xx^{N\text-\amalg}(a)$ is thus a pair $(X,n\colon b \to a)$ with $X \in \Xx(b)$ and $n \in N$. We may think of this object as a `formal $N$-colimit' $n_{\sharp}(j(X))$ of the `generator' $j(X) \in \Xx^{N\text-\amalg}(b)$. We will generally display morphisms in $\Xx^{N\text-\amalg}(a)$ as diagrams of the form
	\[
	\begin{tikzcd}
		X \arrow[r] & Y\\[-2ex]
		b\arrow[d] \arrow[r] & c\arrow[d]\\
		a\arrow[r,equals] & a\rlap,
	\end{tikzcd}
	\]
	where the morphisms to $a$ are in $N$ (hence so is $b \to c$ by left cancellability) and the morphism $X \to Y$ in $\Un^{\ct}(\Xx)$ maps to $b \to c$.
\end{remark}

\begin{definition}
	We dually define the \textit{free $N$-completion} of $\Xx$ as the initial $N$-complete $\myT$-category $\Xx^{N\text-\Pi}$ equipped with a $\myT$-functor $\Xx \to \Xx^{N\text-\Pi}$, in the sense that restriction along this functor induces an equivalence
	\[
		\Fun_C^{N\text-\Pi}(\Xx^{N\text-\Pi},\Yy) \iso \Fun_C(\Xx,\Yy)
	\]
	for every $N$-complete $\myT$-category $\Yy$.
\end{definition}

\begin{remark}\label{rmk:(co)cart-edges-of-Pi}
	Observe that we have $\Xx^{N\text-\Pi}\simeq\big((\Xx^\op)^{N\text-\amalg}\big){}^\op$ by comparing universal properties. This results in a concrete description of $\Xx^{N\text-\Pi}$:

	Recall that if $\Xx\colon \myT^\op\to\Cat$ is a functor with cocartesian unstraightening $\Un^\cc(\Xx)\to \myT^\op$, then the \emph{cartesian} unstraightening of the functor $\Xx^\op\colon \myT^\op\to\Cat$ is given by $\Un^\cc(\Xx)^\op$, and vice-versa. Applying this once, we get from Proposition \ref{prop:MacPherson_Model_Satisfies_UP} that the cartesian unstraightening of $(\Xx^\op)^{N\text-\amalg}$ sits in a pullback square
	\[
	\begin{tikzcd}
		\Un^\ct\big({(\Xx^\op)^{N\text-\amalg}}\big)\arrow[dr,pullback] & {\Un^\cc(\Xx)}^\op \\
		\Ar_N(\myT)& {\myT}\rlap.
		\arrow[from=1-1, to=1-2]
		\arrow[from=1-1, to=2-1]
		\arrow[from=1-2, to=2-2]
		\arrow["s", from=2-1, to=2-2]
	\end{tikzcd}
	\]
	Applying this a second time, it follows that the \emph{co}cartesian unstraightening of $\Xx^{N\text-\Pi}$ sits in a pullback square
	\[
	\begin{tikzcd}
		\Un^\cc\big({\Xx^{N\text-\Pi}}\big)\arrow[dr,pullback] & {\Un^\cc(\Xx)} \\
		\Ar_N(\myT)^\op& {\myT^\op}\!\rlap.\,
		\arrow[from=1-1, to=1-2]
		\arrow[from=1-1, to=2-1]
		\arrow[from=1-2, to=2-2]
		\arrow["s", from=2-1, to=2-2]
	\end{tikzcd}
	\]
	Morphisms in $\Un^\cc\big({\Xx^{N\text-\Pi}}\big)$ may thus be displayed as diagrams of the form
	\[
	\begin{tikzcd}
		X \arrow[r] & X'\\[-2ex]
		b\arrow[d, "n"'] & b'\arrow[d, "n'"] \arrow[l]\\
		a & a' \arrow[l]\!\rlap,\,
	\end{tikzcd}
	\]
	where the bottom square is a morphism in $\Ar_N(\myT)\catop$ and $X \to X'$ is a morphism in $\Un^{\cc}(\Xx)$ which maps to $b \leftarrow b'$ in $\myT\catop$. Dual to the case of $\Xx^{N\text-\amalg}$, such a morphism is cocartesian if and only if the square in $\myT$ is a pullback square and the morphism $X \to X'$ is a cocartesian lift of $b \leftarrow b'$. Since $\Xx^{N\text-\Pi}$ is $N$-complete, it also admits cartesian morphisms over morphisms in $N$, which are those for which the maps $b \leftarrow b'$ and $X \to X'$ are equivalences.
\end{remark}

Next we will briefly recall our conventions on span categories \cite{Barwick2017SpectralMackey, HHLN2022TwoVariable}.

\begin{definition}
We define an \emph{adequate triple} to be a triple $(\Xx,\Xx_B,\Xx_F)$ of a category $\Xx$ together with two wide subcategories $\Xx_B,\Xx_F\subset \Xx$ of \emph{backwards} and \emph{forwards} maps respectively, such that the base change of a forward map along a backward map exists in $\Xx$ and is again in $\Xx_F$ and vice-versa. A map $(\Xx,\Xx_B,\Xx_F) \to (\Yy,\Yy_B,\Yy_F)$ of adequate triples is a functor $F\colon \Xx\to \Yy$ such that $F(\Xx_B)\subset \Yy_B$, $F(\Xx_F) \subset \Yy_F$, and $F$ preserves pullbacks of forward maps along backwards maps. This defines a category $\AdTrip$, see \cite[Definition 2.1]{HHLN2022TwoVariable} for a formal definition.
\end{definition}

\begin{definition}\label{def:Tw[n]}
	We define $(s,t)\colon \Tw[n]\to [n]\times [n]^{\op}$ to be the cartesian unstraightening\footnote{To avoid confusion we note that sometimes the twisted arrow category is defined to be the \emph{cocartesian} unstraightening of the hom functor. The two conventions differ by taking opposites.} of the hom functor of $n$, also known as the \emph{twisted arrow category} of the $n$-simplex category. We denote the objects of $\Tw[n]$ by $(i\leq j)$. We also define the wide subcategories
	\begin{align*}\Tw[n]^{\mathrm{f}}\kern1pt\null&\coloneqq \Tw[n]\times_{[n]\times[n]^{\op}} [n]\times \core{[n]^{\op}}\\ \Tw[n]^{\mathrm{b}} &\coloneqq \Tw[n]\times_{[n]\times[n]^{\op}} \core{[n]} \times [n]^{\op}.\end{align*}
\end{definition}

\begin{example}\label{ex:picture_of_Tw}
	The category $\Tw[2]$ is the poset
	\[
	\begin{tikzcd}
		[row sep=2.4ex, column sep=-1.5ex]
		& & (0 \leq 2) \ar[ld]\ar[rd]& & \\
		& (0 \leq 1) \ar[ld]\ar[rd]& & (1 \leq 2) \ar[ld]\ar[rd]& \\
		(0 \leq 0) && (1 \leq 1) && (2 \leq 2),
	\end{tikzcd}
	\]
	and $\Tw[n]$ is the obvious generalization. $\Tw[n]^{\mathrm{f}}$ and $\Tw[n]^{\mathrm{b}}$ correspond to the subcategories spanned by maps pointing to the right and left, respectively.
\end{example}

\begin{construction}\label{cons:Span}
By \cite[Theorem~2.13]{HHLN2022TwoVariable} there is a functor
\[
\Span(-)\colon \AdTrip\to \Cat, \quad (\Xx,\Xx_B,\Xx_F)\mapsto \Span(\Xx,\Xx_B,\Xx_F)
\]
which sends an adequate triple $(\Xx,\Xx_B,\Xx_F)$ to the category of \emph{spans} in $\Xx$ with backwards maps in $\Xx_B$ and forwards maps in $\Xx_F$. It is determined by the existence of a natural equivalence
\[
	\Hom_{\Cat}([n],\Span(\Xx,\Xx_B,\Xx_F)) \simeq \Hom_{\AdTrip}(\Tw[n],(\Xx,\Xx_B,\Xx_F)),
\]
i.e.\ the complete Segal space associated to $\Span(\Xx,\Xx_B,\Xx_F)$ is the simplicial space determined by the right-hand side. Informally, we may describe this category as follows: its space of objects agrees with that of $\Xx$, and for all $X,Y\in \Xx$, the space of morphisms from $X$ to $Y$ in $\Span(\Xx,\Xx_B,\Xx_F)$ is given by the space of spans
\[\begin{tikzcd}
	& Z \\
	X && Y
	\arrow["b"', from=1-2, to=2-1]
	\arrow["f", from=1-2, to=2-3]
\end{tikzcd}\]
in $\Xx$ where $b$ is in $\Xx_B$ and $f$ is in $\Xx_F$. Composition of spans is given by forming pullbacks in $\Xx$.
\end{construction}

\begin{notation}
	For brevity, we will write $\Span(\Xx,\Xx_F)\coloneqq\Span(\Xx,\Xx,\Xx_F)$ whenever $\Xx_F\subset\Xx$ is a wide subcategory closed under base change. We will also often write $\Span_{B,F}(\Xx)\coloneqq\Span(\Xx,\Xx_B,\Xx_F)$ and $\Span_F(\Xx)\coloneqq\Span_{\text{all},F}(\Xx)\coloneqq\Span(\Xx,\Xx_F)$.
\end{notation}

We note the following useful property of $\Span(-)$.

\begin{proposition}
The category $\AdTrip$ admits limits, which are computed in $\Cat$, and the functor $\Span\colon \AdTrip \to \Cat$ preserves limits.
\end{proposition}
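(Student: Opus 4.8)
The plan is to verify the universal property of limits directly against the description of $\AdTrip$ as a (non-full) subcategory of $\Fun(\Lambda_2^2,\Cat)$, using the fact that the latter functor category admits all (small) limits computed pointwise. First I would argue that $\AdTrip$ is closed under limits inside $\Fun(\Lambda_2^2,\Cat)$: given a diagram $d\mapsto (\Xx_d,(\Xx_d)_B,(\Xx_d)_F)$ of adequate triples, form the pointwise limit $\Xx\coloneqq\lim_d\Xx_d$, with $\Xx_B\coloneqq\lim_d(\Xx_d)_B$ and $\Xx_F\coloneqq\lim_d(\Xx_d)_F$. One checks that $\Xx_B$ and $\Xx_F$ are again wide (a limit of wide subcategory inclusions is again a wide subcategory inclusion, since this can be detected after evaluating at the objects $[0],[1]$) and that the adequacy condition is inherited: a cospan $b\colon Z\to X$ in $\Xx_B$, $f\colon Y\to X$ in $\Xx_F$ is a compatible family of such cospans in each $\Xx_d$, the pullbacks $Z\times_X Y$ exist in each $\Xx_d$ with the base-changed maps in $(\Xx_d)_F$ resp.\ $(\Xx_d)_B$, and since these pullbacks are preserved by the transition maps (which are morphisms of adequate triples!) they assemble to a pullback square in $\Xx$, whose legs land in $\Xx_B$ and $\Xx_F$ by construction. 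Hence $(\Xx,\Xx_B,\Xx_F)$ is an adequate triple, and it is straightforward to see it is the limit of the diagram in $\AdTrip$: a cone over the diagram in $\AdTrip$ is precisely a cone in $\Fun(\Lambda_2^2,\Cat)$ whose legs preserve the relevant pullbacks, and the induced map to $\Xx$ then automatically preserves them as well since they are detected pointwise. This proves both that $\AdTrip$ has limits and that they are computed in $\Cat$ (via the pointwise formula in $\Fun(\Lambda_2^2,\Cat)$).

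Next I would show $\Span$ preserves these limits. The cleanest route is via the defining natural equivalence
\[
\Hom_{\Cat}\big([n],\Span(\Xx,\Xx_B,\Xx_F)\big)\simeq\Hom_{\AdTrip}\big(\Tw[n],(\Xx,\Xx_B,\Xx_F)\big)
\]
from Construction~\ref{cons:Span}, exhibiting the complete Segal space of $\Span(\Xx,\Xx_B,\Xx_F)$ as $[n]\mapsto\Hom_{\AdTrip}(\Tw[n],-)$. Since a diagram $\{[n]\mapsto\Ww_\bullet^{(d)}\}_d$ of complete Segal spaces has limit computed levelwise (completeness and the Segal condition are limit-santained), it suffices to check that for each fixed $[n]$ the functor $\Hom_{\AdTrip}(\Tw[n],-)\colon\AdTrip\to\Spc$ preserves limits. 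But $\Tw[n]$, $\Tw[n]^{\mathrm f}$, $\Tw[n]^{\mathrm b}$ assemble into a fixed adequate triple, so this is just corepresentability of the limit: $\Hom_{\AdTrip}(A,\lim_d B_d)\simeq\lim_d\Hom_{\AdTrip}(A,B_d)$, which holds in any category with limits. Combining, $\Span(\lim_d-)$ and $\lim_d\Span(-)$ have the same complete Segal space, hence agree.

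The main obstacle, and the step deserving the most care, is the verification that the pointwise limit $(\Xx,\Xx_B,\Xx_F)$ genuinely satisfies the adequacy axiom --- specifically that the base-changes in the various $\Xx_d$ are compatible enough to glue to a pullback in $\Xx$. This is exactly where one uses that the morphisms in the indexing diagram are morphisms of adequate triples (they preserve the relevant pullbacks), not arbitrary functors; without this, the componentwise pullbacks need not be coherent. A secondary point to handle carefully is that one wants limits in $\AdTrip$, not just in the ambient $\Fun(\Lambda_2^2,\Cat)$: one must confirm that the comparison map from any $\AdTrip$-cone is itself a morphism of adequate triples, which as noted reduces to the fact that preservation of the base-change squares can be checked after the (jointly conservative) evaluations at $[0]$ and $[1]$ together with the cone property. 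I do not expect either of these to require genuinely new ideas, but they are the load-bearing checks; the rest is formal manipulation of (co)representability and levelwise limits of Segal spaces.
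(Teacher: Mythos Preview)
Your proposal is correct. The paper itself does not give an argument here but simply cites \cite[Lemma~2.4 and Theorem~2.18]{HHLN2022TwoVariable}; your sketch is essentially an unpacking of what those references establish, namely that the adequacy condition and the relevant pullbacks are stable under limits in $\Fun(\Lambda_2^2,\Cat)$ (using that the transition maps in the diagram are morphisms of adequate triples), and that preservation of limits by $\Span$ follows from the corepresentability formula $\Hom_{\Cat}([n],\Span(-))\simeq\Hom_{\AdTrip}(\Tw[n],-)$ together with the fact that limits of complete Segal spaces are computed levelwise.
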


\begin{proof}
This is a combination of \cite[Lemma 2.4 and Theorem 2.18]{HHLN2022TwoVariable}.
\end{proof}

Having completed our recollections, we now begin our construction of the $\myT$-category $\CIP{\Xx}{E}{I}{P}$. To this end, let $I,P\subset E\subset \myT$ be as in Convention~\ref{conv:most-general}. As previously mentioned, we want to apply the construction of parametrized free cocompletions to the case $N=E$; for this to make sense we first need:

\begin{lemma}
The wide subcategory $E\subset \myT$ is left cancellable and closed under base change.

	\begin{proof}
		The second statement is clear. For the first statement it will then suffice by \cite[Lemma 3.2]{CLL_Spans} that $E$ is closed under diagonals. For this we consider a generic map in $E$ with factorization
		\[
		\begin{tikzcd}
			a\arrow[r, mono, "i"] & b\arrow[r, epic, "p"] & c.
		\end{tikzcd}
		\]
		Its diagonal $\Delta_{pi}$ then factors as
		\[
		\begin{tikzcd}
			a\arrow[r, "\Delta_i"] & a\times_ba\arrow[r, "{\id\times_p\id}"] &[1.5em] a\times_ca\rlap.
		\end{tikzcd}
		\]
		The first map belongs to $I$ as $I$ is left cancellable (hence closed under diagonals). On the other hand, the second map is a base change of $(\id,i)\colon a\to a\times_cb$. The projection $a\times_cb\to a$ is in $P$ as it is a base change of $p\colon b\to c$. By left cancellability, we conclude that $(\id,i)$ also belongs to $P$. Thus, the above diagram expresses $\Delta_{pi}$ as a composite of a map in $I$ followed by a map in $P$, i.e.~$\Delta_{pi}$ is in $E$ as claimed.
	\end{proof}
\end{lemma}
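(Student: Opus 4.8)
The plan is to establish the two assertions in turn, dealing with stability under base change first, since it is needed in order to make sense of some of the pullbacks appearing in the proof of left cancellability.

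For base change, I would start from a map $e\colon a\to c$ in $E$ together with a factorization $a\xrightarrow{i}b\xrightarrow{p}c$, $i\in I$, $p\in P$, and an arbitrary $f\colon c'\to c$. One first pulls back $p$ along $f$ — this is legitimate and stays in $P$ since $P$ is closed under base change — and then pulls back $i$ along the projection $b\times_c c'\to b$, staying in $I$. The pasting lemma identifies the composite of the two resulting projections with the base change of $e$ along $f$ and displays it as a map in $I$ followed by a map in $P$, hence in $E$. In particular $a\times_c a$ now exists for every $e\in E$.

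For left cancellability, I would invoke \cite[Lemma~3.2]{CLL_Spans}: a wide subcategory that is closed under base change is left cancellable precisely when it is closed under diagonals. It therefore suffices to prove that $\Delta_e\colon a\to a\times_c a$ lies in $E$ for every $e=pi\in E$ as above. The key move is to factor $\Delta_e$ through the partial diagonal, as $a\xrightarrow{\Delta_i}a\times_b a\xrightarrow{q}a\times_c a$ with $q$ the canonical comparison map. Then $\Delta_i\in I$ by the same criterion applied to $I$ (which is left cancellable by hypothesis, hence closed under diagonals), so everything reduces to showing $q\in P$.

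This last point is the one I expect to require the most care. The plan is to identify $q$ with a base change of the graph map $(\id_a,i)\colon a\to a\times_c b$ — concretely, the base change along $\id\times_c i\colon a\times_c a\to a\times_c b$ — and then to observe that $(\id_a,i)$ itself lies in $P$: the projection $\pr_1\colon a\times_c b\to a$ is a base change of $p$ and hence lies in $P$, while $\pr_1\circ(\id_a,i)=\id_a$, so left cancellability of $P$ forces $(\id_a,i)\in P$. Since $P$ is closed under base change this gives $q\in P$, and then $\Delta_e=q\circ\Delta_i$ exhibits $\Delta_e$ as a composite of an $I$-map followed by a $P$-map, i.e.\ as an element of $E$. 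The only genuinely non-formal ingredient is the identification of $q$ as a base change of the graph map; the remainder is routine manipulation of pullback squares, provided one is careful to settle the base-change claim first so that $a\times_c a$ and $a\times_c b$ are available.
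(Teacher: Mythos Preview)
Your proposal is correct and follows essentially the same route as the paper: both reduce left cancellability to closure under diagonals via \cite[Lemma~3.2]{CLL_Spans}, factor $\Delta_{pi}$ as $\Delta_i$ followed by the comparison map $a\times_b a\to a\times_c a$, and show the latter lies in $P$ by recognizing it as a base change of the graph $(\id,i)\colon a\to a\times_c b$, which is in $P$ by left cancellability since $\pr_1\colon a\times_c b\to a$ is a base change of $p$. The only difference is that you spell out the base-change argument for $E$ explicitly and are more careful about the existence of the relevant pullbacks, whereas the paper simply declares this ``clear.''
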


\begin{notation}
	Recall from Remark \ref{rmk:Description_Un(X^N-amalg)} that morphisms in the category $\Xx^{E\text-\amalg}(a)$ for $a \in \myT$ take the form
	\[
	\begin{tikzcd}
		X \arrow[r] & Y\\[-2ex]
		b\arrow[d,epmo] \arrow[r,epmo] & c\arrow[d,epmo]\\
		a\arrow[r,equals] & a\rlap,
	\end{tikzcd}
	\]
	where the bottom square is in $E$ and the morphism $X \to Y$ in $\Un^{\ct}(\Xx)$ maps to $b \to c$. We say that this morphism \textit{lies over $I$} if the map $b \to c$ is in $I$. We call it a \textit{$P$-fold map} if the morphism $b \to c$ is in $P$ and the morphism $X \to Y$ is a cartesian lift of $b \to c$. We denote the resulting two wide subcategories of $\Xx^{E\text-\amalg}(a)$ as follows:
	\begin{align*}
		\Xx^{E\text-\amalg}_{P\textup{-fold}}(a)&\coloneqq \Un^\ct(\Xx)_\ct\times_{\myT} E_{/a}\times_EP,\\
		\Xx^{E\text-\amalg}_I(a)&\coloneqq \Un^\ct(\Xx)\times_{\myT} E_{/a}\times_E I;
	\end{align*}
	here $\Un^\ct(\Xx)_\ct \subset \Un^\ct(\Xx)$ denotes the wide subcategory spanned by the cartesian edges (i.e.~this is the right fibration over $\myT$ classified by the functor $\core\Xx\colon C\catop \to \Spc$).
	Using the description of the cartesian edges for $\Un^\ct(\Xx^{E\text-\amalg}) \to C$ given in Construction \ref{cons:MacPherson_Free_Cocompletion}, one immediately checks that these wide subcategories are closed under cartesian transport, resulting in wide $\myT$-subcategories $\Xx^{E\text-\amalg}_{P\textup{-fold}}, \Xx^{E\text-\amalg}_I \subset \Xx^{E\text-\amalg}$.
\end{notation}

\begin{lemma}
	The triple $(\Xx^{E\text-\amalg},\Xx^{E\text-\amalg}_{P\textup{-fold}},\Xx_I^{E\text-\amalg})$ defines an adequate triple in $\myT$-categories, i.e.~it promotes to a functor $\myT^\op\to\AdTrip$.
	\begin{proof}
		Let us first fix $a\in\myT$ and show that $(\Xx^{E\text-\amalg}(a),\Xx^{E\text-\amalg}_{P\textup{-fold}}(a),\Xx_I^{E\text-\amalg}(a))$ is adequate. It follows from the assumptions that $(E,P,I)$ is adequate, so \cite[Proposition~2.6]{HHLN2022TwoVariable} shows that also the triple
		\[
			\big(\Un^\ct(\Xx)\times_{\myT}E,\Un^\ct(\Xx)_\ct\times_{\myT}P,\Un^\ct(\Xx)\times_{\myT}I\big)
		\]
		is adequate, with the projection to $(E,P,I)$ being a map of adequate triples. On the other hand, also $(E_{/a},E_{/a}\times_EP,E_{/a}\times_EI)\to (E,P,I)$ is a map of adequate triples by our assumptions; as $\AdTrip$ has pullbacks (computed in $\Cat$), the claim follows by recognizing $(\Xx^{E\text-\amalg}(a),\Xx^{E\text-\amalg}_{P\textup{-fold}}(a),\Xx_I^{E\text-\amalg}(a))$ as the pullback
		\[
			\big(\Un^\ct(\Xx)\times_{\myT}E,\Un^\ct(\Xx)_\ct\times_{\myT}P,\Un^\ct(\Xx)\times_{\myT}I\big)\times_{(E,P,I)}\big(E_{/a},E_{/a}\times_EP,E_{/a}\times_EI\big).
		\]

		It remains to show that for any $f\colon a\to b$ the structure map $f^*\colon\Xx^{E\text-\amalg}(b)\to\Xx^{E\text-\amalg}(a)$ preserves the requisite pullback squares. For this observe that by the proof of the aforementioned \cite[Proposition~2.6]{HHLN2022TwoVariable}, a square
		\[
			\begin{tikzcd}
				\cdot\arrow[r,"p"]\arrow[d,"i"'] & \cdot\arrow[d,"i'"]\\
				\cdot\arrow[r,"p'"'] & \cdot
			\end{tikzcd}
		\]
		in $\Un^\ct(\Xx)\times_{\myT}E$, where $p,p'$ are cartesian over $P$ and $i,i'$ are maps over $I$, is a pullback square if and only if its image in $E$ is so. As the required pullbacks in $\Un^\ct(\Xx)\times_{\myT}E_{/a}$ are computed componentwise, and since the structure map $\Un^\ct(\Xx)\to \myT$ induces a natural map
		\[
			\big(\Xx^{E\text-\amalg}(-),\Xx^{E\text-\amalg}_{P\textup{-fold}}(-),\Xx_I^{E\text-\amalg}(-)\big)\to \big(E_{/-},E_{/-}\times_EP,E_{/-}\times_EI\big),
		\]
 		it will be enough that $f^*\colon E_{/b}\to E_{/a}$ preserves pullbacks (of maps in $P$ along maps in $I$), which is obvious.
	\end{proof}
\end{lemma}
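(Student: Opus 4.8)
The plan is to split the claim into its two halves: (i) for each fixed $a\in\myT$, the triple $\bigl(\Xx^{E\text-\amalg}(a),\Xx^{E\text-\amalg}_{P\textup{-fold}}(a),\Xx^{E\text-\amalg}_I(a)\bigr)$ is an adequate triple of categories; and (ii) for every $f\colon a\to b$ the restriction functor $f^*\colon\Xx^{E\text-\amalg}(b)\to\Xx^{E\text-\amalg}(a)$ is a \emph{morphism} of adequate triples. Together these say precisely that the assignment $a\mapsto\bigl(\Xx^{E\text-\amalg}(a),\dots\bigr)$ refines $\Xx^{E\text-\amalg}\colon\myT^\op\to\Cat$ to a functor $\myT^\op\to\AdTrip$, since being a morphism of adequate triples is exactly the extra condition (preservation of the distinguished pullbacks) on top of preserving the two marked wide subcategories.

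For (i), I would first record that $(E,P,I)$ — with backward maps $P$ and forward maps $I$ — is an adequate triple, which is immediate from our standing assumptions (Convention~\ref{conv:most-general}) together with the preceding lemma (closure of $E$ under base change), ensuring that the relevant pullbacks taken in $\myT$ lie in $E$. Feeding this adequate triple and the cartesian fibration $\Un^\ct(\Xx)\times_\myT E\to E$ into \cite[Proposition~2.6]{HHLN2022TwoVariable} yields the adequate triple $\bigl(\Un^\ct(\Xx)\times_\myT E,\, \Un^\ct(\Xx)_\ct\times_\myT P,\, \Un^\ct(\Xx)\times_\myT I\bigr)$ — backward maps the cartesian edges over $P$ (i.e.\ the $P$-fold maps), forward maps the arbitrary edges over $I$ — together with a morphism of adequate triples down to $(E,P,I)$. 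Independently, $(E_{/a},E_{/a}\times_E P,E_{/a}\times_E I)\to(E,P,I)$ is a morphism of adequate triples, directly from the base-change closure of $I$ and $P$ and the composition closure of $E$. Since $\AdTrip$ has pullbacks, computed in $\Cat$ (by \cite[Lemma~2.4 and Theorem~2.18]{HHLN2022TwoVariable}, as recorded above), the fibre product
\[
\bigl(\Un^\ct(\Xx)\times_\myT E,\, \Un^\ct(\Xx)_\ct\times_\myT P,\, \Un^\ct(\Xx)\times_\myT I\bigr)\times_{(E,P,I)}\bigl(E_{/a},E_{/a}\times_E P,E_{/a}\times_E I\bigr)
\]
is again an adequate triple, and unwinding the definitions identifies it with $\bigl(\Xx^{E\text-\amalg}(a),\Xx^{E\text-\amalg}_{P\textup{-fold}}(a),\Xx^{E\text-\amalg}_I(a)\bigr)$, using $\Un^\ct(\Xx)\times_\myT E\times_E E_{/a}=\Un^\ct(\Xx)\times_\myT E_{/a}$ together with the evident matching identifications on the distinguished wide subcategories (cf.\ the explicit description of $\Xx^{E\text-\amalg}(a)$ recalled above).

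For (ii), I would observe that $f^*$ manifestly carries $P$-fold maps to $P$-fold maps and maps over $I$ to maps over $I$, since both classes are cut out by conditions stable under cartesian transport (being cartesian, respectively lying over a map in $P$ or in $I$). The only genuine point is that $f^*$ sends the distinguished pullback squares to distinguished pullback squares, and here I would reduce to the base: by (the proof of) \cite[Proposition~2.6]{HHLN2022TwoVariable}, a square in $\Un^\ct(\Xx)\times_\myT E$ whose backward edges are cartesian over $P$ and whose forward edges lie over $I$ is a pullback if and only if its image in $E$ is; the pullbacks required inside $\Xx^{E\text-\amalg}(a)=\Un^\ct(\Xx)\times_\myT E_{/a}$ are computed componentwise along the fibre product; and the structure map $\Un^\ct(\Xx)\to\myT$ induces a natural map down to $(E_{/-},E_{/-}\times_E P,E_{/-}\times_E I)$. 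Thus it suffices that $f^*\colon E_{/b}\to E_{/a}$ preserve pullbacks of maps in $P$ along maps in $I$, which is immediate from the pasting law for pullbacks.

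The main obstacle is essentially organisational rather than conceptual: all of the genuinely coherent input — that pulling an adequate triple back along a (co)cartesian fibration again produces an adequate triple, with the correct recognition criterion for its pullback squares — is supplied by \cite[Proposition~2.6]{HHLN2022TwoVariable}, so the remaining work is to keep careful track of which squares in $\Un^\ct(\Xx)\times_\myT E_{/a}$ count as distinguished pullbacks and to verify that this is detected on the $E_{/-}$-component. One should also pause to confirm that the several evident functors in play are genuinely morphisms of adequate triples, not merely functors preserving the two marked subcategories, since the definition of $\AdTrip$ builds in the preservation of exactly these pullbacks.
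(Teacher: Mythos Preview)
Your proposal is correct and follows essentially the same route as the paper's proof: both parts invoke \cite[Proposition~2.6]{HHLN2022TwoVariable} to build the adequate triple upstairs, realize the triple at $a$ as a pullback in $\AdTrip$ over $(E,P,I)$, and then use the pullback-detection criterion from the same proposition to reduce preservation of distinguished pullbacks by $f^*$ to the base $E_{/-}$. The only differences are cosmetic---you spell out a bit more (e.g.\ the cited limits in $\AdTrip$, the pasting law) and add the cautionary final paragraph, but the argument is the same.
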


\begin{definition}
	Let $I,P\subset E\subset\myT$ be as in Convention~\ref{conv:most-general}. For any $\myT$-category $\Xx$, we write $\CIP{\Xx}{E}{I}{P}$ for the composite
	\[
		\myT^\op\xrightarrow{(\Xx^{E\text-\amalg},\Xx^{E\text-\amalg}_{P\textup{-fold}},\Xx_I^{E\text-\amalg})}\AdTrip\xrightarrow{\Span}\Cat.
	\]
\end{definition}

More concretely, this means a morphism in $\CIP{\Xx}{E}{I}{P}(a)$ can be pictured as a pair
\[
	\begin{tikzcd}
		X & Y\arrow[l, "\ct"']\arrow[r] & Z\\[-2ex]
		b\arrow[d,epmo] & \arrow[d,epmo]\arrow[l, epic, "p"']c\arrow[r,mono, "i"] & d\arrow[d,epmo]\\
		a\arrow[r,equals] & a\arrow[r,equals] & a
	\end{tikzcd}
\]
with the vertical maps in $E$, the horizontal maps in the middle row in $P$ and $I$ as indicated, and such that $Y\to X$ is a cartesian edge of $\Un^\ct(\Xx)$ over $p$ while $Y\to Z$ is any edge over $i$.

Having introduced the definition of $\CIP{\Xx}{E}{I}{P}$ for a $\myC$-category $\Xx$, we can now explain how Theorem~\ref{thm:free_IP_biadj_on_1} follows from Theorem~\ref{thm:biadd}.

\begin{proof}[Proof of Theorem~\ref{thm:free_IP_biadj_on_1}, assuming Theorem~\ref{thm:biadd}]
Write $\ul{a}$ for the presheaf represented by $a\in \myC$, which we may view as a $\myC$-category. Combining Theorem~\ref{thm:biadd} with the usual Yoneda lemma, $\CIP{\ul{a}}{E}{I}{P}$ will then be the free $(I,P)$-biadjointable functor generated by the image of $\id_a\in\ul{a}(a)=\Hom(a,a)$ under the inclusion $\ul a\hookrightarrow\CIP{\ul{a}}{E}{I}{P}$.

Recall now that the source functor $s\colon C_{/a} \to C$ is the cartesian unstraightening of $\ul a$. The result now follows since by direct inspection $\CIP{\ul{a}}{E}{I}{P}$ is precisely the functor (\ref{eq:free_adj_funct_on_1}).
\end{proof}

\begin{remark}\label{rmk:unstraigthening-of-free}
	Using \cite[Theorem~3.9]{HHLN2022TwoVariable} we can describe the cocartesian unstraightening of $\CIP{\Xx}{E}{I}{P}$: it is given by the pullback
	\[
		\Span_{P\textup{-pb},I\textup{-fw}}(\Ar_E(\myT))\times_{\Span_I(\myT)}\Span_{\ct,\textup{all}}(\Un^\ct(\Xx)),
	\]
 	where $\Ar_E(\myT)_{P\textup{-pb}}\subset\Ar_E(\myT)$ denotes the wide subcategory given by those squares
	\[
		\begin{tikzcd}
			a\arrow[d,epmo]\arrow[r] & b\arrow[d,epmo]\\
			c\arrow[r] & d
		\end{tikzcd}
	\]
	such that the induced map $a\to b\times_dc$ is in $P$, while $\Ar_E(\myT)_{I\textup{-fw}}$ is the subcategory given by the squares of the form
	\[
		\begin{tikzcd}
			a\arrow[d,epmo]\arrow[r,mono] & b\arrow[d,epmo]\\
			c\arrow[r,"\sim"'] & d\rlap.
		\end{tikzcd}
	\]
	We may therefore picture morphisms in the unstraightening of $\CIP{\Xx}{E}{I}{P}$ as spans
	\[
		\begin{tikzcd}[column sep=small,row sep=small]
			X &[-.75em]&[-.5em]\arrow[ll,"\ct"'] Y\arrow[rr] && Z\\[-.5ex]
			a\arrow[dd,epmo] &&\arrow[ll] b\arrow[dl,epic]\arrow[dd,epmo]\arrow[rr,mono] && c\arrow[dd,epmo]\\[-1em]
			& \cdot\arrow[ul,dashed]\arrow[dr,dashed]\arrow[dl,phantom,"\llcorner"{very near start,xshift=3pt,yshift=4pt}]\\
			a'&&\arrow[ll] b'\arrow[rr,equals] && c'\!\rlap.\,
		\end{tikzcd}
	\]
	By the aforementioned \cite[Theorem~3.9]{HHLN2022TwoVariable} such an edge is cocartesian if and only if it is a backwards morphism corresponding to a cartesian edge, i.e.~if and only if it is of the form
	\[
		\begin{tikzcd}[column sep=small,row sep=small]
			X &[-.75em]&[-.5em]\arrow[ll,"\ct"'] Y\arrow[rr,equals] && Y\\[-.5ex]
			a\arrow[dd,epmo] &&\arrow[ll] b\arrow[ddll,phantom,"\llcorner"{very near start}]\arrow[dd,epmo]\arrow[rr,equals] && b\arrow[dd,epmo]\\[-1em]
			& \phantom\cdot\\
			a'&&\arrow[ll] b'\arrow[rr,equals] && b'\!\rlap.\,
		\end{tikzcd}
	\]
\end{remark}

\begin{example}\label{ex:unst_of_Span_of_slices}
Using the previous remark, we can make the cocartesian unstraightening of $\CIP{\ul a}{E}{I}{P}$ completely explicit: as noted above, the cartesian unstraightening $\Unct(\ul{a})$ of $\ul a$ is given by the functor $\myC_{/a}\to \myC$, and so the cocartesian unstraightening of $\CIP{\ul a}{E}{I}{P}$ is given by the target map
	\[
	\Span(\ev_1)\colon\Span_{\text{$P$-pb},\text{$I$-fw}}(\Ar_E(C)\times_CC_{/a})\to C^\op,
	\]
	where the pullback is taken over the source map $\ev_0\colon\Ar_E(C)\to C$.
\end{example}

\begin{construction}
	Note that $\smash{\CIP{\Xx}{E}{I}{P}}$ comes with an inclusion $\Xx^{I\text-\amalg}\hookrightarrow\CIP{\Xx}{E}{I}{P}$ induced by the inclusion $\Xx^{I\text-\amalg}\hookrightarrow\Xx^{E\text-\amalg}$.
	Using that the cocartesian unstraightening of $\Xx$ can be computed from its \emph{cartesian} unstraightening as the span category $\Span_{\ct,\fw}\big(\Un^\ct(\Xx)\big)$ \cite[Corollary~3.18]{HHLN2022TwoVariable},
	where $\Un^\ct(\Xx)_{\fw}$ denotes the subcategory of those maps inverted by $\Un^\ct(\Xx)\to \myT$, we obtain a similar inclusion $\Xx^{P\text-\Pi}\hookrightarrow\CIP{\Xx}{E}{I}{P}$. These inclusions fit into a commutative diagram
	\[
		\begin{tikzcd}
			\Xx\arrow[r,hook,"{j}"]\arrow[d,hook,"{j}"'] & \Xx^{I\text-\amalg}\arrow[d,hook]\\
			\Xx^{P\text-\Pi}\arrow[r,hook] & \CIP{\Xx}{E}{I}{P}\rlap,
		\end{tikzcd}
	\]
	and we denote the diagonal composite by ${i}\colon\Xx\hookrightarrow\CIP{\Xx}{E}{I}{P}$.
\end{construction}

\begin{lemma}\label{lemma:universal-example-is-biadd}
	The $\myT$-category $\CIP{\Xx}{E}{I}{P}$ is $(I,P)$-semiadditive. Moreover, the inclusion $\Xx^{I\text-\amalg}\hookrightarrow\CIP{\Xx}{E}{I}{P}$ is $I$-cocontinuous, while $\Xx^{P\text-\Pi}\hookrightarrow\CIP{\Xx}{E}{I}{P}$ is $P$-continuous.
	\begin{proof}
		It suffices to verify the assumptions of \cite[Proposition~4.4]{CLL_Spans}. This is completely analogous to the special case $\Xx=1$ proven in Corollary~4.12 of \emph{op.~cit.}, and we therefore omit the details.
	\end{proof}
\end{lemma}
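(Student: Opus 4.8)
The plan is to reduce everything to the general recognition criterion \cite[Proposition~4.4]{CLL_Spans}, which, given an adequate triple of $\myT$-categories subject to suitable hypotheses, produces precisely the conclusion we want: the associated parametrized span category is $(I,P)$-semiadditive, and the two evident sub-$\myT$-categories of ``formal $I$-colimits'' and ``formal $P$-limits'' include $I$-cocontinuously, resp.\ $P$-continuously. It therefore suffices to check that the adequate triple $(\Xx^{E\text-\amalg},\Xx^{E\text-\amalg}_{P\textup{-fold}},\Xx^{E\text-\amalg}_I)$ constructed above satisfies these hypotheses.

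These hypotheses are mostly matters of bookkeeping, the relevant adjoints being visible at the level of $E$-indices. For $i\colon a\to b$ in $I$, the restriction $i^*\colon\Xx^{E\text-\amalg}(b)\to\Xx^{E\text-\amalg}(a)$ is pullback of the $E$-index along $i$ together with the induced cartesian transport on $\Un^\ct(\Xx)$, and a left adjoint $i_{\sharp}$ is obtained by instead post-composing the $E$-index with $i$ --- which is legitimate since $E$ is closed under composition (Convention~\ref{conv:most-general}) --- leaving the $\Un^\ct(\Xx)$-component unchanged. Using left cancellability of $I$ one checks that $i_{\sharp}$ preserves $P$-fold maps, maps over $I$, and the pullback squares used to compose spans, so that it descends to a functor $\CIP{\Xx}{E}{I}{P}(a)\to\CIP{\Xx}{E}{I}{P}(b)$, with unit and counit the evident graph-type spans. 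By the self-duality of span categories, the very same post-composition recipe applied to $p\colon a\to b$ in $P$ furnishes a \emph{right} adjoint $p_*$ to $p^*$. Unwinding the definition of the inclusions $\Xx^{I\text-\amalg}\hookrightarrow\CIP{\Xx}{E}{I}{P}$ and $\Xx^{P\text-\Pi}\hookrightarrow\CIP{\Xx}{E}{I}{P}$, these adjoints restrict to the parametrized $I$-colimits of $\Xx^{I\text-\amalg}$ and the parametrized $P$-limits of $\Xx^{P\text-\Pi}$ --- since post-composition of indices is compatible with the inclusions of $I$- and $P$-indices into $E$-indices --- which gives the claimed $I$-cocontinuity and $P$-continuity.

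The genuinely delicate point, and the content that \cite[Proposition~4.4]{CLL_Spans} packages, is the verification of the Beck--Chevalley conditions: for every pullback square in $\myT$ with both legs in $I$ (resp.\ both in $P$, resp.\ one in $I$ and one in $P$) the induced square of restriction functors on $\CIP{\Xx}{E}{I}{P}$ is horizontally left adjointable (resp.\ vertically right adjointable, resp.\ biadjointable). Each Beck--Chevalley map is again computed by composing spans, so this reduces to the assertion that the pullback squares involved in these span compositions are preserved by the restriction functors $f^*$; that in turn follows from $I$, $P$ and $E$ being closed under base change and left cancellable, together with the explicit description of the cartesian edges of $\Un^\ct(\Xx^{E\text-\amalg})$ over $\myT$ from Construction~\ref{cons:MacPherson_Free_Cocompletion} and Remark~\ref{rmk:Description_Un(X^N-amalg)}. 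I expect the subtlest piece to be extracting the double Beck--Chevalley equivalence from horizontal left adjointability plus vertical right adjointability of the intermediate square, as in Definition~\ref{def:adj_squares}. However, none of these verifications uses anything about $\Xx$ beyond the formal properties of its cartesian unstraightening and the behaviour of cartesian edges, with $E_{/-}$ systematically replaced by $\Xx^{E\text-\amalg}$; they are thus identical to those carried out for the terminal $\myT$-category $\Xx=1$ in \cite[Corollary~4.12]{CLL_Spans} (whose extra hypothesis $P\subseteq I$ plays no role in the relevant step). I would accordingly just record that the hypotheses of \cite[Proposition~4.4]{CLL_Spans} are met and cite \emph{loc.\ cit.}\ for the remaining bookkeeping.
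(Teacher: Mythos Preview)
Your proposal is correct and takes exactly the same approach as the paper: reduce to \cite[Proposition~4.4]{CLL_Spans} and observe that the verification of its hypotheses for general $\Xx$ is the same as for $\Xx=1$ done in \cite[Corollary~4.12]{CLL_Spans}. The paper's proof is just the final sentence of your proposal; your additional exposition of how the adjoints and Beck--Chevalley maps arise is accurate but is precisely the bookkeeping that both you and the paper defer to \emph{op.~cit.}
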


\begin{corollary}\label{cor:cart-of-CIP}
	The cocartesian fibration $\Un^\cc\big(\CIP{\Xx}{E}{I}{P}\big)\to \myT^\op$ admits cartesian lifts over $P^\op$, given  by the spans of the form
	\[
		\begin{tikzcd}[column sep=small,row sep=small]
			X &[-.75em]&[-.5em]\arrow[ll,equals] X\arrow[rr,equals] && X\\[-.5ex]
			b\arrow[dd,epmo] &&\arrow[ll,equals] b\arrow[dd,epmo]\arrow[rr,equals] && b\arrow[dd,epmo]\\[-1em]
			& \phantom\cdot\\
			a'&&\arrow[ll,epic]b'\arrow[rr,equals] && b'\!\rlap.\,
		\end{tikzcd}
	\]
	\begin{proof}
		This is immediate from the previous lemma, using the  identification of the cartesian edges of $\Xx^{P\text-\Pi}$ from Remark~\ref{rmk:(co)cart-edges-of-Pi}.
	\end{proof}
\end{corollary}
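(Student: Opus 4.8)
The plan is to extract both assertions from Lemma~\ref{lemma:universal-example-is-biadd} together with the description of the cartesian edges of $\Un^\cc(\Xx^{P\text-\Pi})$ in Remark~\ref{rmk:(co)cart-edges-of-Pi}. First, existence of cartesian lifts over $P^\op$ is just a reformulation of $P$-completeness: by Lemma~\ref{lemma:universal-example-is-biadd} the $\myT$-category $\CIP{\Xx}{E}{I}{P}$ is $(I,P)$-semiadditive, in particular $P$-complete, which means precisely that $\CIP{\Xx}{E}{I}{P}(p)$ admits a right adjoint for every $p\in P$. Under the standard correspondence between right adjoints of pushforward functors and cartesian transport in a cocartesian fibration, this says exactly that $\Un^\cc(\CIP{\Xx}{E}{I}{P})\to\myT^\op$ admits cartesian lifts over $P^\op$, computed by the functors $p_*$.

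To pin down their shape, I would use the other half of Lemma~\ref{lemma:universal-example-is-biadd}, namely that the inclusion $\Xx^{P\text-\Pi}\hookrightarrow\CIP{\Xx}{E}{I}{P}$ is $P$-continuous: since both sides are $P$-complete, this inclusion commutes with the functors $p_*$, hence on cocartesian unstraightenings it sends cartesian edges over $P^\op$ to cartesian edges over $P^\op$. Remark~\ref{rmk:(co)cart-edges-of-Pi} identifies the cartesian edges of $\Un^\cc(\Xx^{P\text-\Pi})$ over $P^\op$ as those whose two ``horizontal'' legs are equivalences; transporting such an edge into the span model of $\Un^\cc(\CIP{\Xx}{E}{I}{P})$ from Remark~\ref{rmk:unstraigthening-of-free} produces precisely a span of the displayed form, the only non-trivial leg being the given map $p\in P$.

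What remains, and is the step I expect to require the most care, is to see that \emph{every} cartesian lift over $P^\op$ has this shape, not just those whose target lies in the (non-essentially-surjective) image of $\Xx^{P\text-\Pi}$. Since cartesian lifts are unique up to equivalence, it suffices to check that for an arbitrary object $(X,e\colon b\to b')$ of $\CIP{\Xx}{E}{I}{P}(b')$ and an arbitrary $p\colon b'\to a'$ in $P$, the span obtained by post-composing the structure map $e$ with $p$ (a map in $E$, since $E$ is closed under composition) is cartesian. Its $\Un^\ct(\Xx)$-component is an identity, and its $\Ar_E(\myT)$-component is the square with horizontal legs $\id_b$ and $p$; the latter is a $P$-pullback square in the sense of Remark~\ref{rmk:unstraigthening-of-free} --- this is where left cancellability of $P$ enters, to see that the comparison map $b\to b\times_{a'}b'$ lies in $P$ --- and it is a cocartesian edge of $\ev_1\colon\Ar_E(\myT)\to\myT$ over $p$. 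The dual of the cocartesian-edge criterion recalled in Remark~\ref{rmk:unstraigthening-of-free} (that is, \cite[Theorem~3.9]{HHLN2022TwoVariable}) then identifies the pure-backward span built from these two cocartesian pieces as a cartesian edge of $\Un^\cc(\CIP{\Xx}{E}{I}{P})\to\myT^\op$.

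The conceptual content sits entirely in the cited lemma and remarks; the actual work is the bookkeeping matching the square model of $\Un^\cc(\Xx^{P\text-\Pi})$ to the span model of $\Un^\cc(\CIP{\Xx}{E}{I}{P})$ so that ``cartesian edge of the former'' becomes literally ``span of the displayed form'', and isolating the correct dual of the HHLN criterion needed to treat targets outside the image of $\Xx^{P\text-\Pi}$.
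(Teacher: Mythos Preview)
Your first two steps match the paper's approach exactly: existence of cartesian lifts from $P$-completeness (part of Lemma~\ref{lemma:universal-example-is-biadd}), and their shape on the image of $\Xx^{P\text-\Pi}$ from $P$-continuity of the inclusion together with Remark~\ref{rmk:(co)cart-edges-of-Pi}. You are also right to flag that $\Xx^{P\text-\Pi}\hookrightarrow\CIP{\Xx}{E}{I}{P}$ is not essentially surjective (only objects $(X,e)$ with $e\in P$ are hit), so something extra is needed for general targets; the paper's one-line proof leaves this implicit.

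However, your proposed fix in step~3 via ``the dual of \cite[Theorem~3.9]{HHLN2022TwoVariable}'' does not work as stated. That theorem identifies \emph{cocartesian} edges of a span-fibration as certain backward spans; taking opposites (swapping backward and forward classes), the dual statement would identify \emph{cartesian} edges as certain \emph{forward} spans. But the base here is $\myT^\op\simeq\Span_{\all,\iota}(\myT)$, whose forward class consists only of equivalences, so the span-fibration machinery only supplies the cocartesian structure and is silent about cartesian lifts over $P^\op$. The displayed edges you want are \emph{backward} spans, and the fact that the $\Ar_E(\myT)$-component is an $\ev_1$-cocartesian (post-composition) square rather than a pullback square is precisely what distinguishes them from the cocartesian edges---no form of the HHLN criterion detects this. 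The cartesian lifts over $P^\op$ arise from a genuinely different source, namely the right adjoints $p_*$ to the restriction functors, not from the span-fibration structure.

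A correct completion is either to cite the explicit description of $p_*$ implicit in \cite[Proposition~4.4]{CLL_Spans} (through which Lemma~\ref{lemma:universal-example-is-biadd} is proved), or to verify directly that post-composition $(X,e)\mapsto(X,pe)$ is right adjoint to $p^*$ on $\CIP{\Xx}{E}{I}{P}$, by matching spans $(Y,f)\leftarrow\cdot\rightarrow(X,pe)$ in the fiber over $a'$ with spans $p^*(Y,f)\leftarrow\cdot\rightarrow(X,e)$ in the fiber over $b'$. Your observation about left cancellability of $P$ is indeed the key ingredient in that verification, but the HHLN reference is a red herring.
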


\subsection{The special case \texorpdfstring{\for{toc}{$P\subset I$}\except{toc}{$\bm{P\subset I}$}}{P ⊂ I}}
As a first step towards the proof of Theorem~\ref{thm:biadd}, we will now use the results of \cite{CLL_Spans} to prove the special case of our theorem where we assume that $I = E$, so that $P\subset I$ consists solely of truncated maps.

\begin{proposition}\label{prop:free-semiadd}
	Assume that $P\subset I$. For any $\myT$-category $\Xx$, the inclusion ${i}\colon\Xx\hookrightarrow\CIP{\Xx}{I}{I}{P}$ is the initial example of a map from $\Xx$ to an $(I,P)$-semiadditive $\myT$-category: for every $(I,P)$-semiadditive $\myT$-category $\Yy$, restriction along $i$ induces an equivalence
	\[
	{i}^*\colon \Fun_\myT^{{I\text-\amalg,P\text-\Pi}}(\CIP{\Xx}{I}{I}{P},\Yy)\iso\Fun_\myT(\Xx,\Yy).
	\]
\end{proposition}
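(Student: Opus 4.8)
The plan is to reduce the assertion to the case $\Xx=1$, where it is \cite[Corollary~4.12]{CLL_Spans} — proven there by filtering $P$ through its wide subcategories $P_{\le n}$ of $n$-truncated morphisms and inducting on $n$ — and then to promote this to an arbitrary $\myT$-category $\Xx$ by a parametrized Kan extension argument modelled on Bachmann--Hoyois' description of free non-parametrized semiadditive categories as span categories. For the reduction step, note that since $E=I$ the inclusion ${i}$ factors as $j\colon\Xx\hookrightarrow\Xx^{I\text-\amalg}$ followed by the inclusion $\Xx^{I\text-\amalg}\hookrightarrow\CIP{\Xx}{I}{I}{P}$. By Remark~\ref{rmk:Free_N_Cocompletion} the map $j$ exhibits $\Xx^{I\text-\amalg}$ as the free $I$-cocomplete $\myT$-category on $\Xx$, so restriction along $j$ gives an equivalence $\Fun_\myT^{I\text-\amalg}(\Xx^{I\text-\amalg},\Yy)\iso\Fun_\myT(\Xx,\Yy)$; and by Lemma~\ref{lemma:universal-example-is-biadd} the second inclusion is $I$-cocontinuous, so that restricting an $I$-cocontinuous $P$-continuous functor along it lands in $I$-cocontinuous functors. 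It therefore suffices to prove that restriction along $\Xx^{I\text-\amalg}\hookrightarrow\CIP{\Xx}{I}{I}{P}$ induces an equivalence
\[
\Fun_\myT^{I\text-\amalg,P\text-\Pi}\big(\CIP{\Xx}{I}{I}{P},\Yy\big)\iso\Fun_\myT^{I\text-\amalg}\big(\Xx^{I\text-\amalg},\Yy\big),
\]
i.e.\ that $\CIP{\Xx}{I}{I}{P}$ is the free $(I,P)$-semiadditive $\myT$-category generated, relative to $I$-cocontinuous functors, by the $I$-cocomplete $\myT$-category $\Xx^{I\text-\amalg}$; that $\CIP{\Xx}{I}{I}{P}$ is $(I,P)$-semiadditive and that the relevant adjoints exist was already recorded in Lemma~\ref{lemma:universal-example-is-biadd}, so only the universal property is at stake.

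To pass from $\Xx=1$ to general $\Xx$, I would change base along the unstraightening of $\Xx$. One equips a base category assembled from $\Xx$ — concretely, $\Un^\ct(\Xx)$ (or an appropriate variant thereof) together with the wide subcategories of morphisms lying over $I$ and over $P$, which again satisfy Convention~\ref{conv:most-general}, using that a cartesian edge over a truncated morphism is again truncated — with the two induced classes, applies the already-established terminal case over this base, and transports it along the change-of-base adjunction: the pullback of an $(I,P)$-semiadditive $\myT$-category is again semiadditive for the induced triple, the left Kan extension carries the terminal example to $\CIP{\Xx}{I}{I}{P}$ together with its inclusion of $\Xx$, the adjunction restricts to an equivalence between the respective $\infty$-categories of $I$-cocontinuous $P$-continuous functors, and $\myT$-functors out of $\Xx$ are identified with the corresponding category of sections over the base. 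Chaining these equivalences yields the universal property of $\CIP{\Xx}{I}{I}{P}$ for arbitrary $\Xx$.

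\textbf{Main obstacle.} The technical heart is this last, purely bookkeeping step: one must make the change-of-base identification precise — matching $\CIP{\Xx}{I}{I}{P}$ with the terminal example over the base built from $\Xx$, including its subcategories of $P$-fold backward maps and of forward maps — and, most delicately, verify both that the change-of-base adjunction restricts to an equivalence of the subcategories of $I$-cocontinuous $P$-continuous functors and that this base change correctly recovers all of the $\myT$-category $\Xx$ rather than merely its underlying $\myT$-groupoid. I expect to carry this out using the explicit descriptions of the cocartesian unstraightenings (Remark~\ref{rmk:unstraigthening-of-free}, Example~\ref{ex:unst_of_Span_of_slices}) together with the limit-preservation of $\Span(-)$ recorded above, reducing everything to manipulations with slice categories and factorizations; should this bookkeeping prove unwieldy, the alternative is to rerun the truncation-level induction of \cite{CLL_Spans} directly in the present $\Xx$-relative setting.
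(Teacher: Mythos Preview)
Your reduction to the relative universal property of $\Xx^{I\text-\amalg}\hookrightarrow\CIP{\Xx}{I}{I}{P}$ is fine, but the proposed change-of-base argument along $\pi\colon T'\coloneqq\Un^\ct(\Xx)\to\myT$ does not go through, and the obstacle you flag is genuine rather than bookkeeping. Concretely, $T'$-functors $1\to\pi^*\Yy$ correspond to arbitrary maps $T'\to\Un^\ct(\Yy)$ over $\myT$, whereas $\myT$-functors $\Xx\to\Yy$ correspond only to those preserving cartesian edges; the free object over $T'$ has no mechanism to enforce this restriction. On the other side, the Macpherson model gives $\Un^\ct(\Xx^{I\text-\amalg})=\Ar_I(\myT)\times_\myT T'$, which is not the unstraightening of the free $I'$-cocompletion of $1$ over $T'$: the latter is $\Ar_{I'}(T')$, whose objects record a target living in $T'$ rather than merely in $\myT$. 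So neither the input $\Fun_\myT(\Xx,\Yy)$ nor the intermediate object $\Xx^{I\text-\amalg}$ is recovered by pushing the terminal case along $\pi$, and no amount of slice-category manipulation repairs this.

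The paper instead runs the actual parametrized Bachmann--Hoyois product trick, which is quite different from base change. One forms the square
\[
\begin{tikzcd}
1^{I\text-\amalg}\times\Xx\arrow[r,hook,"\ell"]\arrow[d,"\alpha"'] & \CIP{1}{I}{I}{P}\times\Xx\arrow[d,"\beta"]\\
\Xx^{I\text-\amalg}\arrow[r,hook,"k"'] & \CIP{\Xx}{I}{I}{P}
\end{tikzcd}
\]
with $\alpha,\beta$ the unique extensions of $j,i$ that are $I$-cocontinuous in the first variable, and proves that $\beta^*$ is an equivalence between $\Fun_\myT^{I\text-\amalg}(\CIP{\Xx}{I}{I}{P},\Yy)$ and the functors out of $\CIP{1}{I}{I}{P}\times\Xx$ that are $I$-cocontinuous in the first variable. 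The crux is a Beck--Chevalley equivalence $\alpha_\sharp\ell^*\simeq k^*\beta_\sharp$, obtained from the backward/forward factorization system on spans together with the observation that the restriction of $\beta$ to backward maps is a pointwise right fibration; from this one deduces that $\beta_\sharp$ lands in $I$-cocontinuous functors and that the unit of $\beta_\sharp\dashv\beta^*$ is invertible there. This is done first for cocomplete $\Yy$ and then descended to merely $I$-cocomplete $\Yy$ via the parametrized coYoneda embedding. Combined with the terminal case (Theorem~\ref{thm:semiadd_on_a_point_P_in_I}) and the auxiliary identification $\alpha^*\colon\Fun_\myT^{I\text-\amalg}(\Xx^{I\text-\amalg},\Yy)\simeq\Fun_\myT^{I\text-\amalg,\text{any}}(1^{I\text-\amalg}\times\Xx,\Yy)$ of Lemma~\ref{lemma:adjoin-univ-prop-N-pi}, this yields the result.
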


\begin{remark}
Note that because $P\subset I$, a functor $F\colon \Xx\to \Yy$ between $(I,P)$-semiadditive $\myT$-categories which preserves $I$-colimits automatically preserves $P$-limits by \cite[Corollary~3.17]{CLL_Spans}. We will therefore drop the decoration `$P\text-\Pi$' to the functor category above for the remainder of this subsection.
\end{remark}

We begin by recalling that the case of the terminal $\myC$-category $\Xx = 1$ is the content of \cite[Theorem 5.43]{CLL_Spans}:
\begin{theorem}\label{thm:semiadd_on_a_point_P_in_I}
Assume that $P\subset I$. Then
\[
 \CIP{1}{I}{I}{P} = \Span(I_{/-},I_{/-}\times_IP, I_{/-})
\]
is the free $I$-cocomplete $P$-semiadditive $\myT$-category generated by the identity section.\qed
\end{theorem}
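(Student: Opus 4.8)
This is \cite[Theorem~5.43]{CLL_Spans}, so I will only outline the strategy of that proof. The plan is to filter $P$ by the wide subcategories $P_{\le n}\subset P$ of $n$-truncated maps; by assumption~(3) of Convention~\ref{conv:most-general} every map in $P\subset I\cap P$ is truncated, so $P=\bigcup_{n\ge-2}P_{\le n}$. One proves by induction on $n$ that the identity section exhibits $\Span(I_{/-},I_{/-}\times_I P_{\le n},I_{/-})$ as the free $(I,P_{\le n})$-semiadditive $\myT$-category, i.e.\ that restriction along it induces an equivalence $\Fun_\myT^{I\text-\amalg}(\Span(I_{/-},I_{/-}\times_I P_{\le n},I_{/-}),\Yy)\iso\Fun_\myT(1,\Yy)$ for every such $\Yy$ (note that by the preceding Remark the $I$-cocontinuous functors out of the source automatically preserve $P$-limits). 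Passing to the filtered colimit over $n$ then yields the statement for all of $P$.

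For the base case $n=-2$, the class $P_{\le-2}$ consists only of equivalences, so $(I,P_{\le-2})$-semiadditivity reduces to plain $I$-cocompleteness and the backward legs of $\Span(I_{/-},I_{/-}\times_I P_{\le-2},I_{/-})$ are equivalences. Hence this span $\myT$-category is just $1^{I\text-\amalg}$, which is the free $I$-cocompletion of the terminal $\myT$-category by Macpherson's description (Proposition~\ref{prop:MacPherson_Model_Satisfies_UP}); this settles the base case.

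For the inductive step I would adapt the Hopkins--Lurie ambidexterity induction on truncatedness \cite{hopkinsLurie2013ambidexterity} to the $\myT$-parametrized setting, following the approach of \cite{CLL_Spans}. The crucial point is that for an $(I,P_{\le n-1})$-semiadditive target $\Yy$ and an $n$-truncated $p\in P$, the diagonal of $p$ is $(n-1)$-truncated, so the already-available $(n-1)$-truncated parametrized (co)limits suffice to define a norm map $\Nm_p\colon p_\sharp\to p_*$; being $(I,P_{\le n})$-semiadditive is then equivalent to all these norms being equivalences. One checks that in the span $\myT$-category the relevant norm maps \emph{are} equivalences, since both $p_\sharp$ and $p_*$ are computed by the same pullback of spans, so that $\Span(I_{/-},I_{/-}\times_I P_{\le n},I_{/-})$ is $(I,P_{\le n})$-semiadditive. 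Moreover, restriction along the inclusion of the $(n-1)$-stage identifies $I$-cocontinuous functors out of the $P_{\le n}$-span category with those out of the $P_{\le n-1}$-span category that additionally invert these norms; since $\Yy$ is $(I,P_{\le n})$-semiadditive the extra condition is automatic, and the inductive hypothesis gives the universal property at stage $n$. The main obstacle is exactly this step: making the norm maps and the associated parametrized integration coherent, and verifying that the span construction inverts them, requires the full parametrized ambidexterity formalism of \cite{CLL_Spans} (itself modelled on the non-parametrized span description of free semiadditive categories in \cite[Lemma~C.4]{BachmannHoyois2021Norms}).

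Finally, the colimit over $n$ is routine: the objects of the span $\myT$-categories are independent of $n$ and their morphism spaces increase to those of $\Span(I_{/-},I_{/-}\times_I P,I_{/-})$, so $\Span(I_{/-},I_{/-}\times_I P,I_{/-})\simeq\operatorname*{colim}_n\Span(I_{/-},I_{/-}\times_I P_{\le n},I_{/-})$. Since $\Fun_\myT^{I\text-\amalg}(-,\Yy)$ turns this filtered colimit into a limit while $\Fun_\myT(1,\Yy)$ is unchanged, the stagewise equivalences assemble to the desired equivalence, identifying $\CIP{1}{I}{I}{P}=\Span(I_{/-},I_{/-}\times_I P,I_{/-})$ as the free $I$-cocomplete $P$-semiadditive $\myT$-category on the identity section.
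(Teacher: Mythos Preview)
Your proposal is correct and matches the paper exactly: the paper does not prove this statement but simply cites it as \cite[Theorem~5.43]{CLL_Spans} (hence the \qed on the theorem), and your outline of the truncation-filtration induction is precisely the strategy the authors describe in the introduction when summarizing that earlier work.
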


We will generalize this result to any $\myT$-category $\Xx$ by adapting the argument from \cite[Lemma C.4]{BachmannHoyois2021Norms} to the parametrized setting. This will require one further lemma:

\begin{lemma}\label{lemma:adjoin-univ-prop-N-pi}
	Write $\alpha\colon 1^{I\text-\amalg}\times\Xx\to\Xx^{I\text-\amalg}$ for the unique functor which is $I$-cocontinuous in the first variable (meaning that the corresponding map from $1^{I\text-\amalg}$ into the internal hom $\ul\Fun(\Xx,\Xx^{I\text-\amalg})$ is $I$-cocontinuous) and restricts to the inclusion ${j}\colon \Xx\hookrightarrow\Xx^{I\text-\amalg}$. Then restriction and left Kan extension along $\alpha$ define mutually inverse equivalences
	\[
		\alpha^*\colon\Fun_\myT^\textup{$I$-$\amalg$}(\Xx^{I\text-\amalg},\Yy)\leftrightarrows\Fun_\myT^\textup{$I$-$\amalg$, any}(1^{I\text-\amalg}\times\Xx,\Yy)\noloc \alpha_{\sharp}
	\]
	for every $I$-cocomplete $\myT$-category $\Yy$, where the right-hand side denotes the full subcategory of those functors that preserve $I$-colimits in the first variable.
	\begin{proof}
		Note first that $\alpha^*$ indeed lands in $\Fun_\myT^\textup{$I$-$\amalg$, any}(1^{I\text-\amalg}\times\Xx,\Yy)$ as $\alpha$ preserves $I$-colimits in the first variable. The resulting map is then an equivalence as its postcomposition with the equivalence
		\begin{equation}\label{eq:res-N-Pi}
			\res\colon\Fun_\myT^\textup{$I$-$\amalg$, any}(1^{I\text-\amalg}\times\Xx,\Yy)\iso\Fun_\myT(\Xx,\Yy)
		\end{equation}
		recovers the equivalence
		\begin{equation}\label{eq:res-kappa}
			j^*\colon \Fun_\myT^{I\text-\amalg}(\Xx^{I\text-\amalg},\Yy)\iso \Fun_\myT(\Xx,\Yy)
		\end{equation}
		which defines $\Xx^{I\text-\amalg}$.

		It remains to show that the left Kan extension along $\alpha$ exists on the subcategory $\Fun_\myT^\textup{$I$-$\amalg$, any}(1^{I\text-\amalg}\times\Xx,\Yy)$ and that it indeed lands in $\Fun_\myT^\textup{$I$-$\amalg$}(\Xx^{I\text-\amalg},\Yy)$. Both claims follow at once by recalling from Remark \ref{rmk:Free_N_Cocompletion} that the inverse to $(\ref{eq:res-kappa})$ is given by left Kan extension, as is the left adjoint to $(\ref{eq:res-N-Pi})$ by the special case $\Xx=1$ of the same remark.
	\end{proof}
\end{lemma}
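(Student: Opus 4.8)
The plan is to deduce the equivalence purely formally from the two universal properties already in play—that of $\Xx^{I\text-\amalg}$ (Remark~\ref{rmk:Free_N_Cocompletion}) and that of $1^{I\text-\amalg}$—by factoring the comparison functor $\alpha$ so that both defining equivalences become visible. Write $\iota\colon 1\hookrightarrow 1^{I\text-\amalg}$ for the unit of the free $I$-cocompletion of the terminal $\myT$-category, sending the terminal object to the generator. The clause that $\alpha$ \emph{restricts to $j$} says precisely that $\alpha\circ(\iota\times\id_\Xx)=j$, and hence, passing to precomposition functors, that $j^*=(\iota\times\id_\Xx)^*\circ\alpha^*$. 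Here the first factor is the restriction
\[
	\res\coloneqq(\iota\times\id_\Xx)^*\colon\Fun_\myT^\textup{$I$-$\amalg$, any}(1^{I\text-\amalg}\times\Xx,\Yy)\to\Fun_\myT(\Xx,\Yy),
\]
which is an equivalence: transposing across the internal-hom adjunction $\Fun_\myT(1^{I\text-\amalg}\times\Xx,\Yy)\simeq\Fun_\myT(1^{I\text-\amalg},\ul\Fun(\Xx,\Yy))$ (the target $\ul\Fun(\Xx,\Yy)$ being $I$-cocomplete since $\Yy$ is), a functor that is $I$-cocontinuous in the first variable corresponds to an $I$-cocontinuous functor out of $1^{I\text-\amalg}$, and restricting along $\iota$ is exactly the defining equivalence of $1^{I\text-\amalg}$.

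Granting this factorization, the argument runs in two short steps. First I would check that $\alpha^*$ is well defined, i.e.\ that $\alpha^*F=F\circ\alpha$ is $I$-cocontinuous in the first variable whenever $F$ is $I$-cocontinuous; this is immediate, since $\alpha$ preserves $I$-colimits in the first variable by construction and $F$ preserves all $I$-colimits. Second, since $\res\circ\alpha^*=j^*$ with both $\res$ and $j^*$ equivalences—the latter being the universal property defining $\Xx^{I\text-\amalg}$—the two-out-of-three property forces $\alpha^*$ to be an equivalence as well. This already establishes the first assertion of the lemma.

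It then remains to identify the inverse of $\alpha^*$ with left Kan extension $\alpha_{\sharp}$ along $\alpha$. By Remark~\ref{rmk:Free_N_Cocompletion}, the inverse of $j^*$ is parametrized left Kan extension $j_{\sharp}$ along $j$; and, applying the same remark in the first variable to the $\myT$-category $1$ (internally to $\ul\Fun(\Xx,\Yy)$), the inverse of $\res$ is left Kan extension $(\iota\times\id_\Xx)_{\sharp}$ along $\iota\times\id_\Xx$. Thus $(\alpha^*)^{-1}=(j^*)^{-1}\circ\res=j_{\sharp}\circ\res$. Using the factorization $j=\alpha\circ(\iota\times\id_\Xx)$ and functoriality of left Kan extension, $j_{\sharp}=\alpha_{\sharp}\circ(\iota\times\id_\Xx)_{\sharp}$, whence $\alpha_{\sharp}=j_{\sharp}\circ\bigl((\iota\times\id_\Xx)_{\sharp}\bigr)^{-1}=j_{\sharp}\circ\res=(\alpha^*)^{-1}$. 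In particular this simultaneously shows that $\alpha_{\sharp}$ exists on $\Fun_\myT^\textup{$I$-$\amalg$, any}(1^{I\text-\amalg}\times\Xx,\Yy)$ and lands in $\Fun_\myT^\textup{$I$-$\amalg$}(\Xx^{I\text-\amalg},\Yy)$, as required.

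The main obstacle is the bookkeeping around the phrase \emph{$I$-cocontinuous in the first variable}: one must consistently pass back and forth across the internal-hom adjunction and verify that under this identification $\res$ and its left-Kan-extension inverse are precisely the structure maps governing the free $I$-cocompletion $1^{I\text-\amalg}$, so that Remark~\ref{rmk:Free_N_Cocompletion} applies verbatim. One also has to confirm that the intermediate left Kan extension $(\iota\times\id_\Xx)_{\sharp}$ genuinely lands in the functors that are $I$-cocontinuous in the first variable, legitimizing the composability $j_{\sharp}=\alpha_{\sharp}\circ(\iota\times\id_\Xx)_{\sharp}$; this is again exactly the content of the $\Xx=1$ case of that remark.
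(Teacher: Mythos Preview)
Your proof is correct and follows essentially the same approach as the paper's: both argue that $\alpha^*$ is an equivalence via the factorization $j^*=\res\circ\alpha^*$ together with two-out-of-three, and then identify the inverse with left Kan extension by invoking Remark~\ref{rmk:Free_N_Cocompletion} for both $j$ and (in the $\Xx=1$ case) $\iota\times\id_\Xx$. Your version is simply more explicit about the factorization $j=\alpha\circ(\iota\times\id_\Xx)$ and the resulting formula $\alpha_\sharp=j_\sharp\circ\res$, which the paper leaves implicit.
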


\begin{proof}[Proof of Proposition~\ref{prop:free-semiadd}]
	We contemplate the following commutative diagram
	\begin{equation}\label{diag:THE-diagram}
		\begin{tikzcd}
			\Xx\arrow[dr, "{m}"{description},hook]\\
			&[-1em]1^{I\text-\amalg}\times\Xx\arrow[r, "\ell", hook]\arrow[d, "\alpha"'] & \CIP{1}{I}{I}{P}\times\Xx\arrow[d, "\beta"]\\
			&\Xx^{I\text-\amalg}\arrow[r, hook, "{k}"'] & \CIP{\Xx}{I}{I}{P}
		\end{tikzcd}
	\end{equation}
	where ${m},\ell,{k}$ are the canonical inclusions and $\alpha,\beta$ are the unique functors preserving $I$-colimits in the first variable which restrict on $\Xx = 1 \times \Xx$ to the canonical inclusions $j\colon \Xx \hookrightarrow \Xx^{I\text-\amalg}$ and $i\colon \Xx \hookrightarrow \CIP{\Xx}{I}{I}{P}$, respectively; here $\beta$ is well-defined by Theorem \ref{thm:semiadd_on_a_point_P_in_I}. We claim that for every $I$-cocomplete $\Yy$ restriction along $\beta$ defines an equivalence
	\[
		\beta^*\colon \Fun_\myT^{I\text-\amalg}(\CIP{\Xx}{I}{I}{P},\Yy) \iso \Fun_\myT^{I\text-\amalg,\text{any}}(\CIP{1}{I}{I}{P}\times\Xx,\Yy).
	\]
	Momentarily assuming this, we may immediately deduce the result: if $\Yy$ is even $(I,P)$-semiadditive, then also the restriction functor
	\[
	(\ell{m})^*\colon\Fun_\myT^{I\text-\amalg,\text{any}}(\CIP{1}{I}{I}{P}\times\Xx,\Yy)\to\Fun_\myT(\Xx,\Yy)
	\]
	is an equivalence by Theorem~\ref{thm:semiadd_on_a_point_P_in_I}, hence so is the composite ${i}^*=(\ell{m})^*\beta^*$ as desired. It thus remains to prove the claim, which we do in several steps.

	\textit{Step 1.} Assume first that $\Yy$ is a cocomplete $\myT$-category (with respect to a larger universe); in particular, $\alpha^*$ and $\beta^*$ admit left adjoints given by left Kan extension \cite[Theorem~7.1.13]{martiniwolf2021limits}. We will show that the Beck--Chevalley transformation $\alpha_{\sharp}\ell^*\to{k}^*\beta_{\sharp}$ of functors $\Fun_\myT(\CIP{1}{I}{I}{P}\times\Xx,\Yy)\to\Fun_\myT(\Xx^{I\text-\amalg},\Yy)$ is invertible. For this observe that the categories on the right of \eqref{diag:THE-diagram} carry parametrized factorization systems (i.e.~factorization systems in each degree, such that the restriction maps preserve both classes of the factorization systems): for $\CIP{\Xx}{I}{I}{P}$ we take the standard factorization system on spans consisting of the backwards and forwards maps \cite[Proposition~4.9]{HHLN2022TwoVariable}, while for $\CIP{1}{I}{I}{P}\times\Xx$ we take the standard factorization system on spans and the trivial factorization system on $\Xx$ where only the invertible maps belong to the left class. Note that $\ell$ and ${k}$ are precisely the inclusions of the right halves of these factorization systems. By \cite[Lemma~B.6${}^{\op}$]{LLP} it will therefore suffice to show that the restriction of $\beta$ to the left halves of these factorization systems is a (pointwise) right fibration. But unwinding definitions we see that for each $a \in \myT$ the map $\beta(a)$ sits in a commutative diagram
	\[
	\begin{tikzcd}[column sep=small]
		I_{/a}^P\times\core\Xx(a)\arrow[rr, "{\beta(a)}"]\arrow[dr, "\pr"'] && I_{/a}^P\times_{\myT}\Un^{\ct}(\Xx)_\text{ct}\arrow[dl,"\pr"]\\
		& I_{/a}^P,
	\end{tikzcd}
	\]
	where $I_{/a}^P \subset I_{/a}$ is the wide subcategory on maps in $P$, so the claim follows from left-cancellability of right fibrations.

	\textit{Step 2.} Let us now show that $\beta_{\sharp}$ restricts to a functor
	\begin{equation}\label{eq:beta-lower-*-complete}
		\Fun_{\myT}^{I\text-\amalg,\text{any}}(\CIP{1}{I}{I}{P}\times\Xx,\Yy)\to\Fun_\myT^{I\text-\amalg}(\CIP{\Xx}{I}{I}{P},\Yy),
	\end{equation}
	where $\Yy$ is still assumed to be cocomplete. As ${k}\colon\Xx^{I\text-\amalg}\hookrightarrow\CIP{\Xx}{I}{I}{P}$ is essentially surjective and $I$-cocontinuous, it will be enough to check that $k^*\beta_{\sharp}$ lands in $\Fun_\myT^{I\text-\amalg}(\Xx^{I\text-\amalg},\Yy)$. Since ${k}^*\beta_{\sharp}\simeq\alpha_{\sharp}\ell^*$ by Step 1 and since $\ell^*$ obviously restricts to $\Fun_\myT^{I\text-\amalg,\text{any}}\!$, this follows from Lemma~\ref{lemma:adjoin-univ-prop-N-pi}.

	\textit{Step 3.} We will now show that the map $(\ref{eq:beta-lower-*-complete})$ is actually an equivalence, with inverse $\beta^*$. Note first that $\ell^*\beta^*\simeq{k}^*\alpha^*$ is conservative as ${k}$ is essentially surjective and $\alpha^*$ is an equivalence (after restricting to functors satisfying suitable colimit preservation conditions) by Lemma~\ref{lemma:adjoin-univ-prop-N-pi}. We conclude that $\beta^*$ is conservative, and so it will suffice that the unit $\eta\colon\id \to \beta^*\beta_{\sharp}$ is an equivalence on $\Fun_\myT^{I\text-\amalg,\text{any}}(\CIP{1}{I}{I}{P}\times\Xx,\Yy)$. As $\ell$ is essentially surjective, it will be enough to show that $\ell^*\eta$ is invertible. By \cite[Lemma~C.2]{CLL_Adams} this map fits into a commutative diagram
	\[
		\begin{tikzcd}
			\ell^*\arrow[d,"\eta"']\arrow[r,"\ell^*\eta"] &[2em] \ell^*\beta^*\beta_{\sharp}\arrow[d,"\sim"]\\
			\alpha^*\alpha_{\sharp}\ell^*\arrow[r, "\alpha^*\BC_{\sharp}"', "\sim"]& \alpha^*{k}^*\beta_{\sharp}\rlap.
		\end{tikzcd}
	\]
	Since the unit of $\alpha_{\sharp}\dashv\alpha^*$ is invertible when fed an $I$-cocontinuous functor by another application of the previous lemma, the claim follows by $2$-out-of-$3$.

	\textit{Step 4.} We will now relax the assumption that $\Yy$ be cocomplete, and instead assume merely that $\Yy$ is $I$-cocomplete: we shall show that the restriction functor
	\begin{equation}\label{eq:beta-upper-*-equiv}
		\beta^*\colon \Fun_\myT^{I\text-\amalg}(\CIP{\Xx}{I}{I}{P},\Yy) \iso \Fun_\myT^{I\text-\amalg,\text{any}}(\CIP{1}{I}{I}{P}\times\Xx,\Yy)
	\end{equation}
	is an equivalence, thereby finishing the proof of the proposition. For this we will fully faithfully $I$-cocontinuously embed $\Yy$ into a cocomplete $\myT$-category $\Zz$, e.g.~via the parametrized coYoneda embedding (see \cite[Corollary~4.7.16]{martini2021yoneda} and \cite[Proposition~5.2.9${}^\op$]{martiniwolf2021limits}). We then only have to show that the equivalence
	\[\beta^*\colon \Fun_\myT^{I\text-\amalg}(\CIP{\Xx}{I}{I}{P},\Zz) \iso \Fun_\myT^{I\text-\amalg,\text{any}}(\CIP{1}{I}{I}{P}\times\Xx,\Zz)\]
	produced in Step 3 restricts to an equivalence \eqref{eq:beta-upper-*-equiv}. Let $F\colon\CIP{\Xx}{I}{I}{P}\to\Zz$ be an $I$-cocontinuous functor such that $F\beta$ (hence also $F{i}$) factors through $\Yy$. We need to show that $F$ itself factors through $\Yy$. As ${k}\colon\Xx^{I\text-\amalg}\hookrightarrow\CIP{\Xx}{I}{I}{P}$ is essentially surjective, it will be enough that $F{k}$ factors accordingly. But $F{k}$ is an $I$-colimit preserving functor extending $F{i}$, so the claim follows by applying the universal property of $\Xx^{I\text-\amalg}$ for functors into both $\Yy$ and $\Zz$.
\end{proof}

\subsection{\texorpdfstring{$\except{toc}{\bm{\CIP{\Xx}{E}{I}{P}}}\for{toc}{\CIP{\Xx}{E}{I}{P}}$}{Span P-fold, I} as a localization}\label{subsec:biadd-full}
We now begin with the proof of the general version of Theorem~\ref{thm:biadd}. In particular, throughout this section $I,P\subset E\subset \myT$ will be arbitrary wide subcategories satisfying the assumptions of Convention~\ref{conv:most-general}.

Observe that the special case considered in the previous subsection shows that $\CIP{\Xx}{I}{I}{I\cap P}$ is the free $(I,I\cap P)$-semiadditive $\myT$-category on $\Xx$. We will use this as a stepping stone towards understanding the free $(I,P)$-semiadditive $\myT$-category on $\Xx$. Note first that $\CIP{\Xx}{I}{I}{I\cap P}$ does not have the required $P$-limits, so as a first approximation to $\CIP{\Xx}{E}{I}{P}$ we may freely add $P$-limits to obtain the $\myT$-category $\big(\CIP{\Xx}{I}{I}{I\cap P}\big){}^{P\text-\Pi}\!$. Since $\CIP{\Xx}{E}{I}{P}$ is $P$-complete, there is a unique $P$-continuous functor
\[
\gamma\colon \big(\CIP{\Xx}{I}{I}{I\cap P}\big){}^{P\text-\Pi}\to \CIP{\Xx}{E}{I}{P}
\]
which restricts to the inclusion $\CIP{\Xx}{I}{I}{I\cap P}\hookrightarrow\CIP{\Xx}{E}{I}{P}$. Our key result is Theorem~\ref{thm:P-completion-SpanI} below, which shows that our first approximation is not too far off---it will turn out that $\gamma$ is a localization at an explicit class of maps.

In order to describe this class of maps, let us give an explicit description of the cocartesian unstraightening of $\big(\CIP{\Xx}{I}{I}{I\cap P}\big){}^{P\text-\Pi}$.

\begin{definition}
	Consider the full subcategory $\PAr_{I,P}(\myT) \subset \Fun([2],\myT)$ on those pairs of maps $(c \rightarrowmono b \rightarrowepic a)$ where the first map is in $I$ and the second map is in $P$ as indicated. Restriction along $d_1\colon [1] \to [2]$ defines a `composition map'
	\[
	\comp\colon \PAr_{I,P}(\myT) \to \Ar_E(\myT).
	\]

	We will display morphisms in $\PAr_{I,P}(\myT)$ as follows:
	\[
	\begin{tikzcd}
		c\arrow[r]\arrow[mono,d] & c'\arrow[d, mono]\\
		b\arrow[r]\arrow[d, epic] & b'\arrow[d,epic]\\
		a\arrow[r] & a'\!\rlap.\,
	\end{tikzcd}
	\]
	Similar to before we write $\PAr_{I,P}(\myT)_{P \cap I\textup{-pb}}$ for the wide subcategory on those morphisms for which the map $c' \to c \times_b b'$ is in $P \cap I$, and we write $\PAr_{I,P}(\myT)_{I\textup{-fw}}$ for the wide subcategory on those morphisms for which $a \to a'$ and $b \to b'$ are equivalences in $\myT$ (so that $c\to c'$ is in $I$ by left cancellability).
\end{definition}

The functor $\comp$ may be regarded as a functor over $C$ via the evaluation maps $\ev_2\colon \PAr_{I,P}(\myT) \to \myT$ and $\ev_1\colon \Ar_E(\myT) \to \myT$. Taking the pullback along $\Un^{\ct}(\Xx) \to \myT$ results in a map
\begin{equation}\label{eq:gamma-unspanned}
	\comp\times_{\myT}\id\colon \PAr_{I,P}(\myT)\times_{\myT} \Un^{\ct}(\Xx) \to \Ar_E(\myT)\times_{\myT} \Un^{\ct}(\Xx).
\end{equation}

\begin{lemma}\label{lem:description_of_gamma}
	The unstraightening of $\big(\CIP{\Xx}{I}{I}{I\cap P}\big){}^{P\text-\Pi}$ is given by
	\[
		\Span_{P\cap I\textup{-pb},I\textup{-fw}}\big({\PAr_{I,P}(\myT)}\big)\times_{\Span_I(\myT)}\Span_{\ct,\textup{all}}\big(\Un^\ct(\Xx)\big).
	\]
	Furthermore, the functor
	\[
	\Un^\cc(\gamma)\colon \Un^\cc\big(\CIP{\Xx}{I}{I}{I\cap P}^{P\text-\Pi}\big) \to \Un^{\cc}\big(\CIP{\Xx}{E}{I}{P}\big)
	\]
	is naturally equivalent to $\Span$ applied to the functor \eqref{eq:gamma-unspanned}.
\end{lemma}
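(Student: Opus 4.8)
The plan is to deduce the first claim by combining the explicit description of $\Un^\cc\big(\CIP{\Xx}{I}{I}{I\cap P}\big)$ from Remark~\ref{rmk:unstraigthening-of-free} (applied with $E$ and $P$ there replaced by $I$ and $I\cap P$; this is legitimate since $(\myT,I,I\cap P)$ again satisfies Convention~\ref{conv:most-general}) with the description of free $P$-completions from Remark~\ref{rmk:(co)cart-edges-of-Pi}, and then to read off the second claim by tracking the universal functor $\gamma$ through these models.

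For the first claim: Remark~\ref{rmk:unstraigthening-of-free} identifies $\Un^\cc\big(\CIP{\Xx}{I}{I}{I\cap P}\big)$ with
\[
\Span_{(I\cap P)\textup{-pb},I\textup{-fw}}\big(\Ar_I(\myT)\big)\times_{\Span_I(\myT)}\Span_{\ct,\textup{all}}\big(\Un^\ct(\Xx)\big),
\]
and Remark~\ref{rmk:(co)cart-edges-of-Pi}, applied to $\Yy\coloneqq\CIP{\Xx}{I}{I}{I\cap P}$, identifies $\Un^\cc\big(\Yy^{P\text-\Pi}\big)$ with the pullback of $\Un^\cc(\Yy)\to\myT^\op$ along the source projection $\Ar_P(\myT)^\op\to\myT^\op$. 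The structural input needed to unscramble the resulting iterated pullback is the tautological identification of adequate triples $\PAr_{I,P}(\myT)\simeq\Ar_I(\myT)\times_\myT\Ar_P(\myT)$ obtained by gluing the target of the $I$-leg to the source of the $P$-leg; under it the ``$P\cap I$-pb'' class corresponds to those squares that are $(I\cap P)$-pb on the $\Ar_I$-factor and honest pullbacks on the $\Ar_P$-factor, while the ``$I$-fw'' class corresponds to the product of the ``$I$-fw'' class on $\Ar_I$ with the equivalences on $\Ar_P$ (with $c\to c'$ landing in $I$ by left cancellability). Feeding this identification through the limit-preservation of $\Span\colon\AdTrip\to\Cat$ then converts the iterated pullback into
\[
\Span_{P\cap I\textup{-pb},I\textup{-fw}}\big(\PAr_{I,P}(\myT)\big)\times_{\Span_I(\myT)}\Span_{\ct,\textup{all}}\big(\Un^\ct(\Xx)\big),
\]
which is the first assertion. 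Since pinning down the opposites and the evaluation maps across two nested applications of Macpherson's model (Construction~\ref{cons:MacPherson_Free_Cocompletion}) is somewhat delicate, in practice I would instead verify this identification directly on objects and morphisms: an object is the datum of some $X\in\Xx(c)$ together with a composable pair $c\rightarrowmono b\rightarrowepic a$ (a free-$I$-cocompletion datum $(X,i)$ carrying a $P$-fold leg $p$ on top), and morphisms unwind to exactly the spans appearing in the picture for $\CIP{\Xx}{E}{I}{P}$ in Remark~\ref{rmk:unstraigthening-of-free}, but with each single $E$-leg replaced by such an $(I,P)$-composable pair.

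For the second claim, recall that $\gamma$ is the unique $P$-continuous functor restricting along the unit of the $P$-completion to the inclusion $\CIP{\Xx}{I}{I}{I\cap P}\hookrightarrow\CIP{\Xx}{E}{I}{P}$. On objects it sends the formal $P$-limit $p_*i_\sharp(X)$, encoded by $(X,\,c\rightarrowmono b\rightarrowepic a)$, to the object $(pi)_!(X)$ of $\CIP{\Xx}{E}{I}{P}(a)$ encoded by $(X,\,c\rightarrowepmo a)$ with $c\rightarrowepmo a$ the composite --- i.e.\ $\gamma$ acts on the underlying arrow data exactly by $\comp\colon\PAr_{I,P}(\myT)\to\Ar_E(\myT)$. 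It therefore suffices to check that $\comp$, and hence its base change $\comp\times_\myT\id$ of~\eqref{eq:gamma-unspanned}, is a morphism of adequate triples --- carrying ``$P\cap I$-pb'' into ``$P$-pb'' and ``$I$-fw'' into ``$I$-fw'' and preserving the relevant pullbacks, which is a short diagram chase using left cancellability of $I$ and $P$ --- so that $\Span(\comp\times_\myT\id)$ is defined, and that $\Span(\comp\times_\myT\id)$ is itself $P$-continuous (its cartesian lifts over $P^\op$ being the spans of Corollary~\ref{cor:cart-of-CIP}) and again restricts to the inclusion $\CIP{\Xx}{I}{I}{I\cap P}\hookrightarrow\CIP{\Xx}{E}{I}{P}$. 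The uniqueness clause of the universal property of $(-)^{P\text-\Pi}$ then forces $\Un^\cc(\gamma)\simeq\Span(\comp\times_\myT\id)$.

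The step I expect to be the main obstacle is the identification of adequate triples in the middle paragraph --- in particular verifying that the defining condition of $\PAr_{I,P}(\myT)_{P\cap I\textup{-pb}}$, namely that the induced map $c'\to c\times_b b'$ lie in $P\cap I$, is precisely the conjunction of the $(I\cap P)$-pb condition on the $I$-leg with the pullback condition on the $P$-leg --- together with the purely clerical task of keeping all the source/target evaluation maps and opposites consistent through the two-fold iteration; everything else follows formally from the universal properties of the free (co)completions already established and from limit-preservation of $\Span$.
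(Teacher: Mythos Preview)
Your approach is exactly the paper's: combine Remark~\ref{rmk:unstraigthening-of-free} (applied to $(I,I\cap P)$) with Remark~\ref{rmk:(co)cart-edges-of-Pi}, then use $\Yy^\op\simeq\Span_{\all,\iota}(\Yy)$ and limit-preservation of $\Span$ to collapse the iterated pullback $\Ar_P(\myT)^\op\times_{\myT^\op}\Span_{(I\cap P)\text{-pb},I\text{-fw}}(\Ar_I(\myT))\times_{\Span_I(\myT)}\Span_{\ct,\all}(\Un^\ct(\Xx))$ into the claimed form; then pin down $\Un^\cc(\gamma)$ by the uniqueness clause of the $P$-completion.

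There is, however, a concrete mistake in your identification of adequate triples. You assert that the backward class $\PAr_{I,P}(\myT)_{P\cap I\text{-pb}}$ corresponds to ``$(I\cap P)$-pb on the $\Ar_I$-factor \emph{and honest pullbacks on the $\Ar_P$-factor}.'' This is wrong: since $\Ar_P(\myT)^\op\simeq\Span_{\all,\iota}(\Ar_P(\myT))$, the backward class on the $\Ar_P$-factor is \emph{all} morphisms, with no pullback condition whatsoever. Indeed the defining condition ``$c'\to c\times_b b'$ lies in $P\cap I$'' only mentions the top square, so it cannot be a conjunction with any condition on the bottom square. You even flag this identification as the main obstacle, and if you carried your stated version through you would obtain a strictly smaller category than the correct unstraightening. (The pullback-on-$\Ar_P$ condition you have in mind singles out the \emph{cocartesian} edges of $\Un^\cc(\Yy^{P\text-\Pi})$, not the backward morphisms of the span description.)

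Second, for the identification of $\Un^\cc(\gamma)$ you list two of the three properties the paper checks---$P$-continuity (preservation of cartesian edges over $P^\op$) and restriction to the inclusion---but omit the first: that $\Span(\comp\times_\myT\id)$ preserves \emph{cocartesian} edges over $\myT^\op$. Without this you do not yet know the functor straightens to a $\myT$-functor, so you cannot invoke the universal property of $(-)^{P\text-\Pi}$. The check is immediate from the explicit description of cocartesian edges in Remarks~\ref{rmk:unstraigthening-of-free} and~\ref{rmk:(co)cart-edges-of-Pi}, but it must be said.
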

More concretely, this means that the objects of the cocartesian unstraightening of $\big(\CIP{\Xx}{I}{I}{I\cap P}\big){}^{P\text-\Pi}$ are given by pairs $(X,c\rightarrowmono b \rightarrowepic a)$, where $X\in \Xx(c) \subset \Unct(\Xx)$, while morphisms are given by spans
\[\begin{tikzcd}[column sep=small]
		X && {X'} &[4em] {X''} \\[-2ex]
		c & {c\times_bb'} & {c'} & {c''} \\
		b && {b'} & {b'} \\
		a && {a'} & {a'\!\rlap.\,}
		\arrow["{\mathrm{ct}}"', from=1-3, to=1-1]
		\arrow[from=1-3, to=1-4]
		\arrow[mono, from=2-1, to=3-1]
		\arrow[from=2-2, to=2-1]
		\arrow["\llcorner"{very near start,xshift=5pt}, phantom, from=2-2, to=3-1]
		\arrow[mono, from=2-2, to=3-3]
		\arrow[both, from=2-3, to=2-2]
		\arrow[mono, from=2-3, to=2-4]
		\arrow[mono, from=2-3, to=3-3]
		\arrow[mono, from=2-4, to=3-4]
		\arrow[epic, from=3-1, to=4-1]
		\arrow[from=3-3, to=3-1]
		\arrow[equals, from=3-3, to=3-4]
		\arrow[epic, from=3-3, to=4-3]
		\arrow[epic, from=3-4, to=4-4]
		\arrow[from=4-3, to=4-1]
		\arrow[equals, from=4-3, to=4-4]
\end{tikzcd}\]
The functor $\Un^{\cc}(\gamma)$ is given on objects by sending $(X,c\rightarrowmono b \rightarrowepic a)$ to $(X, c \rightarrowepmo a)$.

\begin{proof}
	Using Remark~\ref{rmk:unstraigthening-of-free}, we may explicitly describe the cocartesian unstraightening of $\big(\CIP{\Xx}{I}{I}{I\cap P}\big){}^{P\text-\Pi}$ as the iterated pullback
	\[
	\Ar_P(\myT)^{\op}\times_{\myT^{\op}} \Span_{P\cap I\textup{-pb},I\textup{-fw}}\big({\Ar_I(\myT)}\big)\times_{\Span_I(\myT)}\Span_{\ct,\textup{all}}\big(\Un^\ct(\Xx)\big).
	\]
	Using that $\Yy^{\op}\simeq \Span(\Yy,\iota \Yy)$ for all $\Yy$, identifying $\Ar_P(\myT) \times_{\myT} \Ar_I(\myT) \simeq \PAr_{I,P}(\myT)$, and using that $\Span$ commutes with limits then gives the claimed description of this unstraightening. The same remark lets us identify the unstraightening of $\CIP{\Xx}{E}{I}{P}$ with the span category of $\Ar_E(\myT) \times_\myT \Un^{\ct}(\Xx)$. It thus remains to show that $\Un^{\cc}(\gamma)$ agrees with the map $\Span(\comp \times_{\myT} \id)$.

	By Remark~\ref{rmk:unstraigthening-of-free} the cocartesian edges of the target are precisely the maps of the form
	\[
	\begin{tikzcd}[column sep=small,row sep=small]
		X &[-.75em]&[-.5em]\arrow[ll,"\ct"'] Y\arrow[rr,equals] && Y\\[-.5ex]
		c\arrow[dd,epmo] &&\arrow[ll] c'\arrow[ddll,phantom,"\llcorner"{very near start}]\arrow[dd,epmo]\arrow[rr,equals] && c'\arrow[dd,epmo]\\[-1em]
		& \phantom\cdot\\
		a&&\arrow[ll] a' \arrow[rr,equals] && a'\!\rlap.\,
	\end{tikzcd}
	\]
	On the other hand, combining the same remark with Remark~\ref{rmk:(co)cart-edges-of-Pi} shows that the cocartesian edges of the source are precisely those of the form
	\[
	\begin{tikzcd}[column sep=small,row sep=small]
		X &[-.75em]&[-.5em]\arrow[ll,"\ct"'] Y\arrow[rr,equals] && Y\\[-.5ex]
		c\arrow[dd,mono] &&\arrow[ll] c'\arrow[ddll,phantom,"\llcorner"{very near start}]\arrow[dd,mono]\arrow[rr,equals] && c'\arrow[dd,mono]\\[-1em]
		& \phantom\cdot\\
		\arrow[dd,epic] b&&\arrow[ll]\arrow[dd,epic]\arrow[ddll,phantom,"\llcorner"{very near start}] b'\arrow[rr,equals] && b'\arrow[dd,epic]\\[-1em]
		& \phantom\cdot\\
		a&&\arrow[ll] a'\arrow[rr,equals] && a'\!\rlap.\,
	\end{tikzcd}
	\]
	It immediately follows that $\Span(\comp\times_{\myT}\id)$ preserves cocartesian edges over $\myT\catop$. Similarly one deduces from Remark~\ref{rmk:(co)cart-edges-of-Pi} together with Corollary~\ref{cor:cart-of-CIP} that it preserves cartesian edges over $P^\op$. Finally, it follows immediately from the definitions that its restriction to $\Un^{\cc}\big(\CIP{\Xx}{I}{I}{I\cap P}\big)$ is the inclusion. Since these three properties uniquely determine the unstraightening of $\gamma$ by the universal property of $\smash{(-)^{P\text-\Pi}}$, this finishes the proof of the lemma.
\end{proof}

With the explicit description of $\gamma$ at hand, we may now state the main result of this subsection.

\begin{definition}\label{def:inverted-morphisms}
	We write $\Ww$ for the class of maps in $\big(\CIP{\Xx}{I}{I}{I\cap P}\big){}^{P\text-\Pi}$ given in degree $a \in \myT$ by the spans
	\begin{equation}\label{diag:we}
		\begin{tikzcd}
			X\arrow[r,equals] & X\arrow[r,equals] & X\\[-2ex]
			c\arrow[d, mono] & \arrow[l, equals] c\arrow[d, mono]\arrow[r, equals] & c\arrow[d, mono]\\
			b\arrow[d, epic] & \arrow[l, epic]\arrow[d,epic] b'\arrow[r, equals] & b'\arrow[d,epic]\\
			a &\arrow[l,equals] a\arrow[r, equals] & a\rlap.
		\end{tikzcd}
	\end{equation}
	Left cancellability implies that the map $c\to c\times_{b} b'$ is in both $I$ and $P$, so this indeed defines a map in $\big(\CIP{\Xx}{I}{I}{I\cap P}\big){}^{P\text-\Pi}$.
\end{definition}

\begin{theorem}\label{thm:P-completion-SpanI}
	Let $\Xx$ be any $\myT$-category. Then the $\myT$-functor
	\[
	\gamma\colon \big(\CIP{\Xx}{I}{I}{I\cap P}\big){}^{P\text-\Pi}\to
	\CIP{\Xx}{E}{I}{P}
	\]
	is a localization at the class of maps $\Ww$.
\end{theorem}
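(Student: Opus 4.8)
The plan is to verify directly that $\gamma$ has the universal property of the localization at $\Ww$, i.e.\ that for every $\myT$-category $\Zz$ restriction along $\gamma$ is a fully faithful functor $\Fun_\myT(\CIP{\Xx}{E}{I}{P},\Zz)\to\Fun_\myT\big(\big(\CIP{\Xx}{I}{I}{I\cap P}\big){}^{P\text-\Pi},\Zz\big)$ with essential image the functors inverting $\Ww$. First one checks that $\gamma$ inverts $\Ww$: by Lemma~\ref{lem:description_of_gamma} we have $\Un^\cc(\gamma)\simeq\Span(\comp\times_\myT\id)$, and the composition functor $\comp$ sends both the source $(X,c\rightarrowmono b\rightarrowepic a)$ and the target $(X,c\rightarrowmono b'\rightarrowepic a)$ of a generating morphism of $\Ww$ (Definition~\ref{def:inverted-morphisms}) to the single object $(X,c\rightarrowepmo a)$ — the two $(I,P)$-factorizations have the same composite $c\to a$ since the comparison square over $a$ commutes — while it carries the backward leg of the span to an identity, since the only non-invertible map in that leg, namely $b'\rightarrowepic b$, occurs at the level of the intermediate object that $\comp$ forgets. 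Hence $\gamma$ sends every generator of $\Ww$ to an identity, so it factors through the localization $L\colon\big(\CIP{\Xx}{I}{I}{I\cap P}\big){}^{P\text-\Pi}\to\big(\CIP{\Xx}{I}{I}{I\cap P}\big){}^{P\text-\Pi}[\Ww^{-1}]$ by some functor $\bar\gamma$, and $\bar\gamma$ is essentially surjective because $\comp$, and hence $\Span(\comp\times_\myT\id)\simeq\Un^\cc(\gamma)$, is surjective on objects (every morphism of $E$ admits an $(I,P)$-factorization by the very definition of $E$). Thus the entire content of the theorem is that $\bar\gamma$ is fully faithful, equivalently that $\Un^\cc(\gamma)$ already exhibits its target as the localization of its source at (the cocartesian saturation of) $\Ww$.

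Since $I$ and $P$ are closed under base change, the class $\Ww$ is stable under base change, hence under the structure functors of the $\myT$-category $\big(\CIP{\Xx}{I}{I}{I\cap P}\big){}^{P\text-\Pi}$; this allows one to analyse the parametrized localization at $\Ww$ after passing to cocartesian unstraightenings, reducing the theorem to the $\infty$-categorical statement that $\Span(\comp\times_\myT\id)$ is a localization at $\Ww$. The decisive input is Proposition~\ref{prop:Decomp_Weakly_Contractible}: the fibres of $\comp\colon\PAr_{I,P}(\myT)\to\Ar_E(\myT)$ are precisely the categories of $(I,P)$-factorizations of a fixed $e\in E$, and these are weakly contractible; the factor $\Un^\ct(\Xx)$ appearing in $\comp\times_\myT\id$ is merely carried along and does not affect these fibres. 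I would then feed this into a recognition criterion for localizations expressed through weak contractibility of comma categories. One cannot take the shortcut of invoking that a (co)cartesian fibration with weakly contractible fibres localizes onto its base, because $\comp$ is in general neither a cartesian nor a cocartesian fibration — and this is exactly what makes the argument delicate.

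The heart of the proof is therefore to verify the comma-category criterion directly inside the span category. For a fixed object of $\Un^\cc(\CIP{\Xx}{E}{I}{P})$, presented as a pair $(Z,c\rightarrowepmo a)$, one must show that the comma category of $\Span(\comp\times_\myT\id)$ above it becomes weakly contractible after inverting the preimage of $\Ww$. The plan is to do this by ``lifting a span one level at a time'': because the classes $\PAr_{I,P}(\myT)_{P\cap I\textup{-pb}}$ and $\PAr_{I,P}(\myT)_{I\textup{-fw}}$ are detected after applying $\comp$ (this was part of checking that the relevant triples are adequate), a span in $\Ar_E(\myT)\times_\myT\Un^\ct(\Xx)$ together with the pullbacks witnessing its composability lifts to $\PAr_{I,P}(\myT)\times_\myT\Un^\ct(\Xx)$ as soon as one chooses $(I,P)$-factorizations of the morphisms occurring in it, and the space of such choices is weakly contractible by Proposition~\ref{prop:Decomp_Weakly_Contractible} — one factorization category per ``$I$-then-$P$'' segment, the contributions being glued together along the combinatorial shape of the span. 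Granting this weak contractibility of comma categories, the recognition criterion yields that $\Span(\comp\times_\myT\id)$, hence $\bar\gamma$, is a localization at $\Ww$; combined with the essential surjectivity from the first paragraph, $\bar\gamma$ is an equivalence, which is the assertion of the theorem. I expect this last step — making the span combinatorics and the contractibility of the factorization choices fit together — to be the main obstacle and the source of the ``intricate'' character of the argument, whereas the reductions above are comparatively formal.
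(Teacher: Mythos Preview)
Your proposal has the right starting ingredients but is missing the two key technical tools that make the argument go through, and the part you flag as ``the main obstacle'' is precisely where the argument, as sketched, breaks down.

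First, knowing that $\comp$ has weakly contractible fibres (Proposition~\ref{prop:Decomp_Weakly_Contractible}) is not enough. What one actually needs is that $\comp$ is a \emph{universal localization}, i.e.\ remains a localization after arbitrary base change; this is what allows the backwards-map part of $\Span(\comp\times_\myT\id)$ to be a localization, since it arises as such a base change. The paper establishes this via Hinich's Key Lemma: a functor $F$ is a universal localization provided $\Fun([n],F)$ has weakly contractible fibres for \emph{all} $n$, not just $n=0$. The passage from $n=0$ to general $n$ is nontrivial---it requires rerunning the factorization-category argument inside $\Fun([n],\myT)$, in particular constructing compatible $(I,P)$-factorizations of an entire $[n]$-indexed family of maps in $E$. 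Your proposal never addresses this; the phrase ``one factorization category per segment, glued along the combinatorial shape of the span'' gestures at the difficulty but does not supply an argument.

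Second, your plan to verify a ``comma-category criterion directly inside the span category'' is where things go wrong. Standard criteria of this type (Quillen's Theorem~A and its relatives) do not apply here because, as you correctly note, $\comp$ is neither cartesian nor cocartesian; and computing comma categories inside $\Span$ is genuinely hard, since hom spaces in span categories are themselves groupoids of spans and pullbacks. The paper sidesteps this entirely via the \emph{Separation of Variables} criterion from \cite[Theorem~4.1.7]{CHLL_Bispans}: to show a map of span categories is a localization, it suffices to check separately that (A) the induced map on forward morphisms is a right fibration, and (B) the induced map on backward morphisms is a localization. Part~(A) is an easy direct check; part~(B) is exactly the base change of $\comp$ discussed above, hence a localization once one knows $\comp$ is a universal localization. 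Without this decomposition, the span-level combinatorics you anticipate are not merely ``intricate'' but, as far as I can see, not actually carried out in your sketch.
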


The proof will occupy the remainder of the subsection. The key ingredient will be the fact that the composition functor $\comp\colon \PAr_{I,P}(\myT) \to \Ar_E(\myT)$ is a \textit{universal localization}, where we recall that a (non-parametrized) functor $F\colon \Xx\to \Yy$ is called a universal localization if for every $\Zz\to \Yy$ the base change $F\times_{\Yy} \Zz\colon \Xx\times_{\Yy} \Zz \to \Zz$ is a localization. We will employ the following sufficient criterion for universal localizations due to Hinich:

\begin{proposition}[Hinich's Key Lemma]\label{prop:Hinich_key_lemma}
	Let $F\colon\Xx\to\Yy$ be a functor such that $\Fun([n],F)$ has weakly contractible fibers for every $n\ge0$. Then $F$ is a universal localization.
	\begin{proof}
		This is the content of \cite[Lemma~1.5]{hinich-localization}; as this preprint has been retracted because of an unrelated error, we repeat Hinich's elegant argument for the reader's convenience.

		Observe first that if $\Zz\to\Yy$ is any functor, then $\Fun([n],\Xx\times_\Yy\Zz)\to\Fun([n],\Zz)$ is a base change of $\Fun([n],F)$ and hence again has weakly contractible fibers. Replacing $F$ by $\pr\colon\Xx\times_\Yy\Zz\to\Zz$, it will therefore suffice to show that $F$ is a localization.

		For this, we recall from \cite[Theorem~3.8]{mazel-gee-Rezk-nerve} that the localization of any relative category $(\Cc,\Ww)$ can be computed in terms of the \emph{Rezk nerve} $N_\text{R}(\Cc,\Ww)$, which is the simplicial space given in degree $n$ by the geometric realization (i.e.\ localization at all morphisms) of the wide subcategory $\Fun_\Ww([n],\Cc)\subset\Fun([n],\Cc)$ spanned by the morphisms which are pointwise in $\Ww$. More precisely, \emph{loc.\ cit.}\ shows that
		the unique left adjoint $L\colon\PSh(\Delta)\to\Cat$ extending $\Delta\hookrightarrow\Cat$ sends the inclusion $N_\text{R}(\Cc,\core\Cc)\hookrightarrow N_\text{R}(\Cc,\Ww)$ to a localization. Specializing to $\Cc=\Xx,\Ww=F^{-1}(\core\Yy)$ and using further that the right adjoint $\Cc\mapsto N_\text{R}(\Cc,\core\Cc)$ of $L$ is fully faithful, we are therefore reduced to showing that $L$ sends $N_\text{R}(F)\colon N_\text{R}(\Xx,F^{-1}(\core\Yy))\to N_\text{R}(\Yy,\core\Yy)$ to an equivalence.

		To this end, we will show that already $N_\text{R}(F)$ is an equivalence in $\PSh(\Delta)$, i.e.~each of the individual functors
		\begin{equation}\label{eq:hinich-levelwise}
			F\circ{-}\colon\Fun_{F^{-1}(\core\Yy)}([n],\Xx)\to\core\Fun([n],\Yy)
		\end{equation}
		is a localization. As the target is a groupoid, $(\ref{eq:hinich-levelwise})$ is a cartesian fibration by \cite[Proposition 3.3.1.8]{lurie2009HTT}, so it will suffice to show that its fibers are weakly contractible, see e.g.\ \cite[Lemma~5.5]{HHLN2022TwoVariable}. But $(\ref{eq:hinich-levelwise})$ is simply the base change of $\Fun([n],F)$ along $\core\Fun([n],\Yy)\hookrightarrow\Fun([n],\Yy)$, so this follows directly from our assumption.
	\end{proof}
\end{proposition}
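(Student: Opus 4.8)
The plan is to first observe that the hypothesis is stable under base change, which reduces the ``universal'' statement to the bare statement that $F$ is a localization, and then to compute that localization through the Rezk nerve. For the reduction, note that $\Fun([n],-)$ commutes with pullbacks, so for any $\Zz\to\Yy$ the map $\Fun([n],\pr)$ associated to $\pr\colon\Xx\times_\Yy\Zz\to\Zz$ is itself a base change of $\Fun([n],F)$ and hence again has weakly contractible fibers. Thus the hypothesis is inherited by every base change $\pr$, and it suffices to prove that \emph{any} functor $F$ satisfying it is a localization, necessarily at the class $\Ww\coloneqq F^{-1}(\core\Yy)$ of morphisms sent into the core.

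I would then invoke the computation of $(\infty,1)$-localizations via Rezk nerves from \cite{mazel-gee-Rezk-nerve}: the left adjoint $L\colon\PSh(\Delta)\to\Cat$ extending $\Delta\hookrightarrow\Cat$ sends the inclusion $N_{\mathrm R}(\Cc,\core\Cc)\hookrightarrow N_{\mathrm R}(\Cc,\Ww)$ to the localization $\Cc\to\Cc[\Ww^{-1}]$, where $N_{\mathrm R}(\Cc,\Ww)_n$ is the geometric realization of the wide subcategory $\Fun_\Ww([n],\Cc)\subset\Fun([n],\Cc)$ on pointwise-$\Ww$ morphisms. Specializing to $\Cc=\Xx$ and $\Ww=F^{-1}(\core\Yy)$, and using that the right adjoint $\Cc\mapsto N_{\mathrm R}(\Cc,\core\Cc)$ of $L$ is fully faithful, the claim that $F$ is the localization at $\Ww$ reduces to showing that $L$ sends the induced map $N_{\mathrm R}(F)\colon N_{\mathrm R}(\Xx,\Ww)\to N_{\mathrm R}(\Yy,\core\Yy)$ to an equivalence.

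Finally I would prove the stronger statement that $N_{\mathrm R}(F)$ is already a level-wise equivalence in $\PSh(\Delta)$, i.e.\ that each map $F\circ{-}\colon\Fun_\Ww([n],\Xx)\to\core\Fun([n],\Yy)$ is a localization. The key identification is $\Fun_\Ww([n],\Xx)\simeq\Fun([n],\Xx)\times_{\Fun([n],\Yy)}\core\Fun([n],\Yy)$, which exhibits $F\circ{-}$ as the base change of $\Fun([n],F)$ along the core inclusion $\core\Fun([n],\Yy)\hookrightarrow\Fun([n],\Yy)$. Since its target is a groupoid, $F\circ{-}$ is a cartesian fibration by \cite[Proposition~3.3.1.8]{lurie2009HTT}; its fibers coincide with the fibers of $\Fun([n],F)$ and are therefore weakly contractible by hypothesis; and a cartesian fibration with weakly contractible fibers is inverted by geometric realization \cite[Lemma~5.5]{HHLN2022TwoVariable}. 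Hence $F\circ{-}$ is a localization, as needed.

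The main obstacle is conceptual rather than computational: recognizing that localizations are governed by the Rezk nerve, and seeing that the hypothesis is precisely what forces each level of $N_{\mathrm R}(F)$ to be a fiberwise weakly contractible (co)cartesian fibration over a groupoid. Once the pullback identification of $\Fun_\Ww([n],\Xx)$ is in place, the remaining steps are routine applications of the cited lemmas; the only point demanding care is keeping track of which class of maps is being inverted at each stage, and verifying that $F\circ{-}$ really is the base change of $\Fun([n],F)$ to the core.
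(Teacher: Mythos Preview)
Your proposal is correct and follows essentially the same approach as the paper's proof: reduce to the non-universal case via base change stability of the hypothesis, invoke the Rezk nerve computation of localizations from \cite{mazel-gee-Rezk-nerve}, and then verify that $N_{\mathrm R}(F)$ is a levelwise equivalence by recognizing each level map as a base change of $\Fun([n],F)$ to the core and appealing to the fact that a cartesian fibration over a groupoid with weakly contractible fibers is a localization. The arguments, and even the cited references, coincide.
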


Aiming to apply this result to the functor $\comp\colon \PAr_{I,P}(\myT) \to \Ar_E(\myT)$, we start by showing that the fibers of this functor are weakly contractible; we may interpret this as saying that any map in $E$ has a weakly contractible category of factorizations into a morphism in $I$ followed by a morphism in $P$. This seems to be a well-known result, and is also a special case of \cite[Theorem~4.21]{Liu_Zheng_Gluing}; for the reader's convenience, we provide a simple proof of this fact:

\begin{proposition}
	\label{prop:Decomp_Weakly_Contractible}
	For a morphism $e\colon c \rightarrowepmo a$ in $E$, the fiber $\comp^{-1}(e)$ is cofiltered, hence weakly contractible.
\end{proposition}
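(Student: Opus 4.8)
The plan is to make the fiber $\comp^{-1}(e)$ explicit and then verify the elementary criterion for cofilteredness. Unwinding the definitions of $\PAr_{I,P}(\myT)$ and of $\comp$, an object of the fiber over $e\colon c\rightarrowepmo a$ is a factorization $c\xrightarrow{\,i\,}b\xrightarrow{\,p\,}a$ of $e$ with $i\in I$ and $p\in P$, and a morphism is a map $b\to b'$ compatible with the maps out of $c$ and into $a$. Since $e$ lies in $E$, at least one such factorization exists, so $\comp^{-1}(e)$ is nonempty; by the standard criterion (cf.\ \cite[Proposition~5.3.1.13]{lurie2009HTT} and its proof, applied to $\comp^{-1}(e)^\op$) it then suffices to show that any two objects of $\comp^{-1}(e)$ receive maps from a common object, and that any two parallel morphisms become equal after precomposition with a suitable morphism.

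Given two objects $c\xrightarrow{i_j}b_j\xrightarrow{p_j}a$ for $j=1,2$, the plan is to form the pullback $b_1\times_ab_2$, which exists since $P$ is closed under base change, and whose two projections lie in $P$. The induced map $\delta\colon c\to b_1\times_ab_2$ lies in $E$ by left cancellability of $E$, since $\pr_1\circ\delta=i_1\in E$ and $\pr_1\in E$. Choosing a factorization $\delta=p'\circ i'$ with $i'\in I$ and $p'\in P$, the maps $c\xrightarrow{i'}b'\xrightarrow{\,p'\,}b_1\times_ab_2\to a$ and $b'\xrightarrow{\,p'\,}b_1\times_ab_2\to b_j$ then exhibit a new object of $\comp^{-1}(e)$ (its structure map to $a$ being a composite of maps in $P$) together with morphisms to $b_1$ and $b_2$.

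For parallel morphisms $f,g\colon b_x\rightrightarrows b_y$ between objects $c\xrightarrow{i_x}b_x\xrightarrow{p_x}a$ and $c\xrightarrow{i_y}b_y\xrightarrow{p_y}a$, I would first observe that the diagonal $\Delta\colon b_y\to b_y\times_ab_y$ lies in $P$: it is a section of $\pr_1$, which is a base change of $p_y\in P$, and $P$ is left cancellable. Hence the equalizer $E'\coloneqq b_x\times_{b_y\times_ab_y}b_y$, formed along $(f,g)$ and $\Delta$, exists, with $q\colon E'\to b_x$ a base change of $\Delta$, hence in $P$. The map $i_x$ lifts through $q$ to a map $c\to E'$ which again lies in $E$ by left cancellability; factoring it as $c\xrightarrow{i''}b''\xrightarrow{p''}E'$ produces an object of $\comp^{-1}(e)$ whose structure map to $a$ is the composite $b''\xrightarrow{p''}E'\xrightarrow{q}b_x\xrightarrow{p_x}a$ of maps in $P$, together with a morphism $q\circ p''\colon b''\to b_x$ which equalizes $f$ and $g$ because $q$ does. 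Together with a routine check that all the maps produced are genuine morphisms of the fiber (i.e.\ compatible with the structure maps), this shows that $\comp^{-1}(e)$ is cofiltered, and a cofiltered $\infty$-category is weakly contractible.

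I expect the main obstacle to be the bookkeeping of membership in $I$, $P$ and $E$: the equalizer in the third step is formed inside $\myT$, which is not assumed to admit all pullbacks, so one must first recognize the relevant diagonal as a morphism in $P$ before that pullback is even available, and then repeatedly invoke left cancellability of $E$ (together with the decomposition $E=P\circ I$) to keep all the constructed maps inside the correct classes. The verification that these maps are compatible with the structure maps, and hence really define morphisms in the fiber, is straightforward but somewhat tedious.
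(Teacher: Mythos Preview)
Your constructions are correct, but the reduction step has a genuine gap: the criterion ``nonempty, any two objects receive maps from a common object, and any two parallel morphisms can be equalized'' characterizes cofilteredness for ordinary $1$-categories, not for $\infty$-categories. Since $C$ is an arbitrary $\infty$-category, the fiber $\comp^{-1}(e)$ is a genuine $\infty$-category, and cofilteredness requires that every map from a \emph{finite simplicial set} extend to a cone. For instance, the $\infty$-groupoid $S^2$ is connected and simply connected, hence satisfies your two conditions, but is not filtered (equivalently, not weakly contractible). The cited passage in HTT does not supply the criterion you state.

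The fix is a small repackaging, and amounts to the paper's argument. Observe that your constructions actually produce honest limits in the larger category $\COMP^{-1}(e)$ of factorizations $c \rightarrowepmo b \rightarrowepic a$ with first leg merely in $E$: your $b_1 \times_a b_2$ is the product there, your $E'$ is the equalizer there, and $(c \xrightarrow{\,e\,} a = a)$ is terminal. Hence $\COMP^{-1}(e)$ has all finite limits. Now for an \emph{arbitrary} finite diagram $K \to \comp^{-1}(e)$, take its limit in $\COMP^{-1}(e)$ and apply your refining step (factor the first leg as a map in $I$ followed by one in $P$) to the cone point; the resulting object of $\comp^{-1}(e)$, together with its map to the limit, furnishes the desired cone over $K$. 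The paper argues via pullbacks rather than products and equalizers to exhibit finite limits in $\COMP^{-1}(e)$, but otherwise this is the same proof.
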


\begin{proof}
	By left cancellability of $E$, we see that we may identify $\comp^{-1}(e)$ with the fiber over $e$ of the analogous map $\comp\colon \PAr_{I,P}(E) \to \Ar(E)$. We will compare this to the fiber of the similarly defined composition map $\COMP \colon \PAr_{E,P}(E) \to \Ar(E)$ from the category of factorizations $c \rightarrowepmo b \rightarrowepic a$. Note that the fully faithful inclusion $\PAr_{I,P}(E) \subset \PAr_{E,P}(E)$ induces a fully faithful inclusion $\comp^{-1}(e) \subset \COMP^{-1}(e)$ on fibers over $e$.

	We will start by showing that $\COMP^{-1}(e)$ admits finite limits. For this, note that we may identify $\COMP^{-1}(e)$ with the full subcategory of the double slice $E_{c/\mskip-5mu/a} \coloneqq (E_{/a})_{e/}$ spanned by the objects of the form
	\[
	\begin{tikzcd}[column sep=small]
		c\arrow[rr, epmo]\arrow[dr, epmo, "e"'] && b\rlap,\arrow[dl, epic]\\
		& a
	\end{tikzcd}
	\]
	i.e.~those where the map to $a$ is contained in $P\subset E$. This clearly has a terminal object given by $(\begin{tikzcd}[cramped, column sep=small] c\arrow[r,epmo,"\smash{e}"] &[2pt] a\arrow[r,equals] & a\end{tikzcd})$. Moreover, the double slice $E_{c/\mskip-5mu/a}$ has pullbacks (computed in $E$), and it therefore only remains to show that $\COMP^{-1}(e)$ is stable under pullbacks. Unravelling definitions, the only non-trivial statement is that for a cospan
	\[
	\begin{tikzcd}
		b_1\arrow[d,epic]\arrow[r,epmo] & b_3\arrow[d,epic] & \arrow[l,epmo']  b_2\arrow[d,epic]\\
		a &\arrow[l,equals] a\arrow[r,equals] & a
	\end{tikzcd}
	\]
	the induced map $b_1\times_{b_3}b_2 \to a$ is again in $P$. By left cancellability the maps to $b_3$ are in fact in $P$, so this follows at once by observing that the map in question factors as
	$b_1\times_{b_3}b_2\rightarrowepic b_1\rightarrowepic a$,
	where the first map is in $P$ since it is a base change of $b_2\rightarrowepic b_3$. This shows that $\COMP^{-1}(e)$ has finite limits.

	To show cofilteredness of $\comp^{-1}(e)$, consider now a finite category $K$, and let $X\colon K \to \comp^{-1}(e) \subset \COMP^{-1}(e)$ be any diagram. By taking the limit in $\COMP^{-1}(e)$, we may extend $X$ to a functor $\overline{X}\colon K^{\triangleleft} \to \COMP^{-1}(e)$. We now claim that any object of $\COMP^{-1}(e)$ admits a map from an object of $\comp^{-1}(e)$: applying this to the image of the initial object of $K^{\triangleleft}$ will then produce the desired extension of $X$ to a diagram $K^\triangleleft\to\comp^{-1}(e)$.

	To prove the claim, consider any object of $\COMP^{-1}(e)$ consisting of the solid arrow part of the following diagram:
	\[
	\begin{tikzcd}
		c\arrow[r,equals,dashed]\arrow[mono,dashed,d] & c\arrow[d,epmo, "f"]\\
		b'\arrow[r, epic,dashed, "q"]\arrow[d, epic, dashed, "pq"'] & b\arrow[d,epic, "p"]\\
		a\arrow[r,equals,dashed] & a\rlap.
	\end{tikzcd}
	\]
	Factoring $f$ as a map in $I$ followed by a map in $P$ produces the dashed part, yielding the desired element of $\comp^{-1}(e)$.
\end{proof}

\begin{proposition}\label{prop:comp_univ_loc}
	The functor $\comp\colon \PAr_{I,P}(\myT) \to \Ar_E(\myT)$ is a universal localization.
\end{proposition}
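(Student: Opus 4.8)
The plan is to verify the hypothesis of Hinich's Key Lemma (Proposition~\ref{prop:Hinich_key_lemma}): one must show that $\Fun([n],\comp)$ has weakly contractible fibers for every $n\ge 0$. The case $n=0$ is exactly Proposition~\ref{prop:Decomp_Weakly_Contractible}, so the work lies in the case of general $n$, which I would reduce to the case $n=0$ applied to the functor category $\Fun([n],\myT)$ in place of $\myT$.

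The first step is to record the identifications
\[
  \Fun\big([n],\PAr_{I,P}(\myT)\big)\simeq\PAr_{I_n,P_n}\big(\Fun([n],\myT)\big),\qquad
  \Fun\big([n],\Ar_E(\myT)\big)\simeq\Ar_{E_n}\big(\Fun([n],\myT)\big),
\]
where $I_n$, $P_n$, and $E_n$ denote the classes of natural transformations in $\Fun([n],\myT)$ that are \emph{pointwise} in $I$, $P$, and $E$; under these identifications $\Fun([n],\comp)$ becomes the composition functor for the pair $(I_n,P_n)$. This is a formal consequence of the facts that $\PAr_{I,P}(\myT)\subset\Fun([2],\myT)$ and $\Ar_E(\myT)\subset\Fun([1],\myT)$ are full subcategories cut out by pointwise conditions, together with $\Fun([n],\Fun([k],\myT))\simeq\Fun([k],\Fun([n],\myT))$ and the fact that $\comp$ itself is given by restriction along $d_1\colon[1]\to[2]$.

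The key step is to check that $(\Fun([n],\myT),I_n,P_n)$ again satisfies Convention~\ref{conv:most-general}, and, crucially, that its associated class of maps admitting a factorization into an $I_n$-map followed by a $P_n$-map is precisely $E_n$. Left cancellability and closure under base change of $I_n$ and $P_n$ are immediate since limits in $\Fun([n],\myT)$ are computed pointwise, and maps in $I_n\cap P_n$ are truncated by taking the maximum of the (finitely many) pointwise truncation levels. The substantial point is that every pointwise-$E$ transformation in $\Fun([n],\myT)$ admits a pointwise factorization into an $I_n$-map followed by a $P_n$-map. I would prove this by induction along the spine $[1]\cup_{[0]}\cdots\cup_{[0]}[1]\iso[n]$, which exhibits $\Fun([n],\myT)$ as an iterated pullback of copies of $\Ar(\myT)$ over $\myT$, and hence reduces the construction to iterating the following lifting problem: given a morphism $\rho\colon e\to e'$ in $\Ar_E(\myT)$ (that is, a commuting square with $E$-maps on the sides) together with a factorization $c'\rightarrowmono b'\rightarrowepic a'$ of its target, produce a factorization of $e$ together with a morphism of factorizations over $\rho$. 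To solve it, pull back $b'\rightarrowepic a'$ along the bottom edge of the square to obtain a map $a\times_{a'}b'\rightarrowepic a$ in $P$; observe that the induced map $c\to a\times_{a'}b'$ lies in $E$ by left cancellability of $E$ (established above); factor it as $c\rightarrowmono b\rightarrowepic a\times_{a'}b'$; and take $b\rightarrowepic a$ to be the composite $b\rightarrowepic a\times_{a'}b'\rightarrowepic a$, with the structure map $b\to b'$ given by $b\rightarrowepic a\times_{a'}b'\to b'$. Starting from any factorization of the top term of the spine chain and solving this lifting problem repeatedly from the top down then yields the desired pointwise factorization; in particular $E_n$ coincides with the class of factorizable maps, and closure of this class under composition follows since $E$ is closed under composition.

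Granting all of this, $\Fun([n],\comp)$ is identified with the composition functor of the triple $(\Fun([n],\myT),I_n,P_n)$, which satisfies Convention~\ref{conv:most-general}, so applying Proposition~\ref{prop:Decomp_Weakly_Contractible} to this triple shows that every fiber of $\Fun([n],\comp)$ is cofiltered, hence weakly contractible. Hinich's Key Lemma then gives that $\comp$ is a universal localization. The main obstacle is the inductive factorization of pointwise-$E$ transformations: each individual step is an elementary pullback-and-factor manipulation, but some care is needed to assemble the steps into a genuine factorization in $\Fun([n],\myT)$, which is precisely what the spine presentation of $[n]$ buys us.
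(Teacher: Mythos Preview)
Your proposal is correct and follows essentially the same approach as the paper: reduce to Hinich's Key Lemma, identify $\Fun([n],\comp)$ with the composition functor for the pointwise classes $(I_n,P_n)$ on $\Fun([n],\myT)$, verify Convention~\ref{conv:most-general} for this new triple, and invoke Proposition~\ref{prop:Decomp_Weakly_Contractible}. Your inductive pullback-and-factor construction for producing pointwise factorizations is exactly the one the paper uses (the paper phrases it as a raw induction on the index rather than via the spine presentation, but the step is literally the same: pull back $b'\rightarrowepic a'$, observe the induced map to the pullback is in $E$ by left cancellability, factor it, and compose).
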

\begin{proof}
	By Proposition~\ref{prop:Hinich_key_lemma} we need to show that the fibers of $\Fun([n],\comp)$ are weakly contractible for every $n\ge 0$. The case $n=0$ is the content of the previous proposition; we claim that the case for general $n$ is also an instance of the same proposition, this time applied to $\myT' \coloneqq \Fun([n],\myT)$ equipped with the wide subcategories
	\begin{align*}
		I' &\coloneqq \Fun([n],\myT) \times_{\myT^{\times (n + 1)}} I^{\times (n + 1)}\\
		P' &\coloneqq \Fun([n],\myT) \times_{\myT^{\times (n + 1)}} P^{\times (n + 1)}\\ 
		E' &\coloneqq \Fun([n],\myT) \times_{\myT^{\times (n + 1)}} E^{\times (n + 1)}.
	\end{align*}
	To see this, first note that the assumptions from Convention \ref{conv:most-general} are still satisfied: the only non-trivial fact is the decomposition of an arbitrary morphism
	\begin{equation}\label{diag:generic-E'}
		\begin{tikzcd}
			c_n\arrow[d,"\gamma_n"']\arrow[r,epmo,"\epsilon_n"] & a_n\arrow[d,"\alpha_n"]\\
			c_{n-1}\arrow[d,phantom,"\vdots"{near start}]\arrow[r,epmo,"\epsilon_{n-1}"] & a_{n-1}\arrow[d,phantom,"\vdots"{near start}]\\
			c_1\arrow[d,"\gamma_1"']\arrow[r,epmo,"\epsilon_1"] & a_1\arrow[d,"\alpha_1"]\\
			c_0\arrow[r,epmo,"\epsilon_0"] & a_0
		\end{tikzcd}
	\end{equation}
	in $E'$ as a composition of a morphism in $I'$ followed by one in $P'$. We will proceed inductively, starting by factoring $\epsilon_0$ as $c_0\rightarrowmono b_0\rightarrowepic a_0$. Assume now we have already constructed a factorization as depicted on the left in the following diagram:
	\[
		\begin{tikzcd}[cramped]
			c_{n}\arrow[d,phantom,"\vdots"{near start}]\arrow[rr,epmo,] && a_{n}\arrow[d,phantom,"\vdots"{near start}]\\
			c_k\arrow[d,"\gamma_k"']\arrow[rr,epmo,"\epsilon_k"] && a_k\arrow[d,"\alpha_k"]\\
			c_{k-1}\arrow[d,phantom,"\vdots"{near start}]\arrow[r,mono] & b_{k-1}\arrow[d,phantom,"\vdots"{near start}]\arrow[r,epic] & a_{k-1}\arrow[d,phantom,"\vdots"{near start}]\\
			c_1\arrow[d,"\gamma_1"']\arrow[r,mono] & b_1\arrow[d,"\beta_1"{description}]\arrow[r,epic] & a_1\arrow[d,"\alpha_1"]\\
			c_0\arrow[r,mono] & b_0\arrow[r,epic] & a_0
		\end{tikzcd}
		\qquad\qquad
		\begin{tikzcd}[cramped,row sep=small]
			c_k\arrow[dd,"\gamma_k"']\arrow[r,mono] & b_k\arrow[dd,"\beta_k"{description}, dashed]\arrow[dr,epic]\arrow[rr,epic,dashed] &[-1.5em]&[-1em] a_k\arrow[dd,"\alpha_k"]\\
			&& a_k\times_{a_{k-1}}b_{k-1}\arrow[dl]\arrow[ur,epic,yshift=-3pt]\arrow[dr,pullback,xshift=-1pt,yshift=1pt]
			\\
			c_{k-1}\arrow[r,mono] & b_{k-1}\arrow[rr,epic] && a_{k-1}
		\end{tikzcd}
	\]
	We then factor the induced map $c_k\to a_{k}\times_{a_{k-1}}b_{k-1}$ (which is in $E$ by left cancellability) as $c_k \rightarrowmono b_k \rightarrowepic a_{k}\times_{a_{k-1}}b_{k-1}$, and we define the maps $\beta_k\colon b_k\to b_{k-1}$ and $b_k\rightarrowepic a_k$ as the dashed composites depicted in the diagram on the right. This completes the inductive step and hence the construction of the desired factorization of $(\ref{diag:generic-E'})$ into a map in $I'$ followed by one in $P'$.

	Proposition~\ref{prop:Decomp_Weakly_Contractible} now shows that the fibers of $\PAr_{I',P'}(\myT') \to \Ar_{E'}(\myT')$ are weakly contractible. Since this map agrees with $\Fun([n],\comp)\colon \Fun([n],\PAr_{I,P}(\myT)) \to \Fun([n],\Ar_E(\myT))$, this finishes the proof.
\end{proof}

\begin{proof}[Proof of Theorem~\ref{thm:P-completion-SpanI}]
	Lemma~\ref{lem:description_of_gamma} shows that \[\gamma\colon(\CIP{\Xx}{I}{I}{I\cap P})^{P\text-\Pi}\to\CIP{\Xx}{E}{I}{P}\] is given in degree $a\in\myT$ by the functor of span categories
	\[
	\left\{\,
	\begin{tikzcd}[cramped]
		\cdot &\arrow[l,"\ct"']\cdot\arrow[r]&\cdot\\[-2ex]
		\cdot\arrow[d,mono] & \cdot\arrow[l,epic]\arrow[d,mono]\arrow[r,mono] & \arrow[d,mono]\\
		\cdot\arrow[d,epic] &\arrow[l,epic]\cdot\arrow[d,epic]\arrow[r,"\sim"] & \cdot\arrow[d,epic]\\
		a&\arrow[l,equals]a\arrow[r,equals]&a
	\end{tikzcd}
	\,\right\}\longrightarrow
	\left\{\,
	\begin{tikzcd}[cramped]
		\cdot &\arrow[l,"\ct"']\cdot\arrow[r]&\cdot\\[-2ex]
		\cdot\arrow[dd,epmo] & \cdot\arrow[l,epic]\arrow[dd,epmo]\arrow[r,mono] & \arrow[dd,epmo]\\
		\vphantom\cdot\\
		a&\arrow[l,equals]a\arrow[r,equals]&a
	\end{tikzcd}
	\,\right\}
	\]
	induced by $d_1\colon[1]\to[2]$. We will use the `Separation of Variables' criterion from \cite[Theorem~4.1.7]{CHLL_Bispans}, which reduces the theorem to proving the following two statements:
	\begin{enumerate}
		\item[(A)] The induced map on forward maps is a right fibration.
		\item[(B)] The induced map on backward maps is a localization at the maps of the form
		\begin{equation}\label{diag:we-repeated}
			\begin{tikzcd}
				\cdot&\arrow[l,equals]\cdot\\[-2ex]
				\cdot\arrow[d,mono]&\arrow[l,equals]\cdot\arrow[d,mono]\\
				\cdot\arrow[d,epic]\arrow[r,epic]&\cdot\arrow[d,epic]\\
				a&\arrow[l,equals] a\rlap.
			\end{tikzcd}
		\end{equation}
	\end{enumerate}
	For the first statement, observe that the map on forwards is a base change of the analogous map for $\Xx=1$, so we may assume without loss of generality that $\Xx$ is terminal. Consider then any map in $E_{/a}\times_EI$ (i.e.~the forwards maps of $\CIP{1}{E}{I}{P}$) as depicted on the left, together with a lift (i.e.~a factorization) of its target, assembling into the solid part of the diagram on the right:
	\[
	\begin{tikzcd}
		c\arrow[r, mono, "i"]\arrow[dd,epmo] & c'\arrow[dd,epmo] && c\arrow[r, mono, "i"]\arrow[d,mono,dashed, "ji"'] & c'\arrow[d,mono,"j"]\\
		& && b\arrow[r,equals,dashed]\arrow[d,epic,dashed,"p"'] & b\arrow[d,epic,"p"]\\
		a\arrow[r, equals] & a && a\arrow[r, equals,dashed] & \hskip1pta\rlap.\hskip1pt
	\end{tikzcd}
	\]
	The dashed part is then obviously the essentially unique lift of the given arrow.

	For the second claim we observe that the induced map on backward maps is the base change of $\comp\colon\PAr_{I,P}(\myT)\to \Ar_E(\myT)$ along
	$\Un^\ct(\Xx)_\ct\times_{\myT}E_{/a}\times_EP\rightarrowepic E_{/a}\to\Ar_E(\myT)$. Since the former is a universal localization by Proposition~\ref{prop:comp_univ_loc}, the map in question is a  localization. It clearly precisely inverts the maps of the form
	\[
	\begin{tikzcd}
		\cdot\arrow[r,"\sim"] &\cdot\\[-2ex]
		\cdot\arrow[r,"\sim"]\arrow[d,mono] &\cdot\arrow[d,mono]\\
		\cdot\arrow[r,epic]\arrow[d,epic] &\cdot\arrow[d,epic]\\
		a \arrow[r,equals] & a\rlap,
	\end{tikzcd}
	\]
	and as these are in turn precisely the maps factoring as a composite of an equivalence and a map of the form $(\ref{diag:we-repeated})$, the claim follows.
\end{proof}

\subsection{Proof of the general result}\label{sec:General_Parametrized_Span}
In this subsection we will finally prove the universal property of $\CIP{\Xx}{E}{I}{P}$. To set this up, observe that for any $(I,P)$-semiadditive $\myT$-category $\Yy$ and any functor $F\colon\Xx\to\Yy$ we can uniquely extend $F$ to an $I$-cocontinuous functor $\CIP{\Xx}{I}{I}{I\cap P}\to\Yy$, which we can then further extend to a $P$-continuous functor $\smash{\big(\CIP{\Xx}{I}{I}{I\cap P}\big){}^{P\text-\Pi}\to\Yy}$. Our first big goal will be to show that this extension descends through the localization $\big(\CIP{\Xx}{I}{I}{I\cap P}\big){}^{P\text-\Pi}\to\CIP{\Xx}{E}{I}{P}$ established in the previous subsection:

\begin{proposition}\label{prop:biadd-implies-invert}
	Let $\Yy$ be any $(I,P)$-semiadditive $\myT$-category, and let \[F\colon \big(\CIP{\Xx}{I}{I}{I\cap P}\big){}^{P\text-\Pi}\to\Yy\] be a $P$-continuous functor whose restriction to $\CIP{\Xx}{I}{I}{I\cap P}$ is $I$-cocontinuous. Then $F$ inverts the class $\Ww$ from Definition \ref{def:inverted-morphisms}.
\end{proposition}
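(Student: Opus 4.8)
The plan is to make a morphism $w\in\Ww$ completely explicit and then to recognise its image under $F$ as a composite of maps that the $(I,P)$-semiadditivity of $\Yy$ forces to be invertible: a \emph{mixed Beck--Chevalley equivalence} in the sense of Definition~\ref{def:Adjointable_Functor}(3) and an \emph{ambidexterity equivalence} $\Nm_\delta\colon\delta_\sharp\iso\delta_*$ for a truncated morphism $\delta\in I\cap P$. Here I use that every $(I,P)$-semiadditive $\myT$-category is automatically $(I\cap P)$-semiadditive (cf.\ Section~\ref{sec:Free_Semiadditive}), so that the norm map $\Nm_f$ is an equivalence for every $f\in I\cap P$ by \cite{hopkinsLurie2013ambidexterity}; recall also that all maps in $I\cap P$ are truncated by Convention~\ref{conv:most-general}.

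First I would unwind $w$. Combining Definition~\ref{def:inverted-morphisms} with the description of $\big(\CIP{\Xx}{I}{I}{I\cap P}\big){}^{P\text-\Pi}$ from Lemma~\ref{lem:description_of_gamma}, a morphism $w\in\Ww$ in degree $a\in\myT$ runs from $p_*(Z)$ to $(pq)_*(Z')$, where $p\colon b\to a$ and $q\colon b'\to b$ are maps in $P$, where $Z=(X,i)\in\CIP{\Xx}{I}{I}{I\cap P}(b)$ and $Z'=(X,j)\in\CIP{\Xx}{I}{I}{I\cap P}(b')$ for maps $i\colon c\to b$ and $j\colon c\to b'$ in $I$ with $i=q\circ j$ and an object $X\in\Xx(c)$ (we suppress the inclusions of $\CIP{\Xx}{I}{I}{I\cap P}$ into its free $P$-completion), and where $w$ is obtained by applying $p_*$ to the morphism $\tilde w\colon Z\to q_*(Z')$ in $\big(\CIP{\Xx}{I}{I}{I\cap P}\big){}^{P\text-\Pi}(b)$ induced, on the level of the span description, by the morphism $\delta\colon c\to c\times_b b'$ over $b'$, which lies in $I\cap P$ by left cancellability (Definition~\ref{def:inverted-morphisms}). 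Since $F$ is $P$-continuous we have $F(w)=p_*^{\Yy}F(\tilde w)$, so it suffices to prove $F(\tilde w)$ is an equivalence.

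Next I would compute $F(\tilde w)$. Write $F_0$ for the restriction of $F$ to $\CIP{\Xx}{I}{I}{I\cap P}$ and $G\colon\Xx\to\Yy$ for its further restriction along $\Xx\hookrightarrow\CIP{\Xx}{I}{I}{I\cap P}$. Since $Z$ and $Z'$ are $i_\sharp$ and $j_\sharp$, respectively, of the object of $\CIP{\Xx}{I}{I}{I\cap P}(c)$ determined by $X$, the $P$-continuity of $F$ and the $I$-cocontinuity of $F_0$ provide canonical equivalences $F(Z)\simeq i^{\Yy}_\sharp G(X)$ and $F(q_*Z')\simeq q^{\Yy}_* j^{\Yy}_\sharp G(X)$, under which $F(\tilde w)$ becomes a morphism $i^{\Yy}_\sharp G(X)\to q^{\Yy}_* j^{\Yy}_\sharp G(X)$. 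The heart of the matter is to identify this with the following composite. Let $g\colon c\times_b b'\to c$ and $r\colon c\times_b b'\to b'$ be the two projections, so that $g\in P$, $r\in I$, $g\delta=\id_c$ and $r\delta=j$, whence $j_\sharp=r_\sharp\delta_\sharp$. The square
\[
\begin{tikzcd}
c\times_b b'\arrow[r,"r"]\arrow[d,"g"'] & b'\arrow[d,"q"]\\
c\arrow[r,"i"'] & b
\end{tikzcd}
\]
is a pullback exhibiting $r$ as a base change of $i\in I$ along $q\in P$, so $(I,P)$-semiadditivity of $\Yy$ supplies a mixed Beck--Chevalley equivalence $i^{\Yy}_\sharp g^{\Yy}_*\simeq q^{\Yy}_* r^{\Yy}_\sharp$. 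Finally, $\Nm^{\Yy}_\delta\colon\delta^{\Yy}_\sharp\iso\delta^{\Yy}_*$ is an equivalence (as $\delta\in I\cap P$ is truncated and $\Yy$ is $(I\cap P)$-semiadditive) and $g^{\Yy}_*\delta^{\Yy}_*\simeq(g\delta)^{\Yy}_*=\id$, hence $g^{\Yy}_*\delta^{\Yy}_\sharp\simeq\id$. Composing the resulting equivalences $i^{\Yy}_\sharp\simeq i^{\Yy}_\sharp g^{\Yy}_*\delta^{\Yy}_\sharp\simeq q^{\Yy}_* r^{\Yy}_\sharp\delta^{\Yy}_\sharp=q^{\Yy}_* j^{\Yy}_\sharp$, and checking that $F(\tilde w)$ agrees with this map, we conclude that $F(\tilde w)$, and therefore $F(w)$, is an equivalence.

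The main obstacle is precisely this last identification: matching the combinatorially defined morphism $\tilde w$ (and hence $w$) with the displayed composite of a mixed Beck--Chevalley equivalence and the ambidexterity equivalence for $\delta$. Carrying it out requires chasing through the span-category structure on $\CIP{\Xx}{I}{I}{I\cap P}$, the formula for $i_\sharp$ in a free $E$-cocompletion, the description of $q_*$ in a free $P$-completion as cartesian transport (Remark~\ref{rmk:(co)cart-edges-of-Pi}, Corollary~\ref{cor:cart-of-CIP}), and the explicit description of the $(I\cap P)$-semiadditive structure of $\CIP{\Xx}{I}{I}{I\cap P}$ from \cite{CLL_Spans}; as elsewhere in this section, it is convenient to first reduce to the case where $\Xx$ is corepresentable, so that one is manipulating a single object of $\Yy$. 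Once the identification is in place, everything else is formal adjunction calculus together with the defining properties of $(I,P)$-semiadditive $\myT$-categories.
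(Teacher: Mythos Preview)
Your proposal is correct and follows essentially the same strategy as the paper: reduce to the case $p=\id$ by $P$-continuity, then recognise $F(\tilde w)$ as the adjoint of a map built from the norm $\mu$ for $\delta\in I\cap P$ together with the double Beck--Chevalley equivalence for the pullback square, which is invertible by $(I,P)$-semiadditivity of $\Yy$.

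The main organisational difference is how the ``key identification'' you flag is carried out. The paper introduces an auxiliary natural transformation $\atled\colon r^*i_\sharp\to j_\sharp$ (your $q^*i_\sharp\to j_\sharp$) built from the norm map for $\delta$, and proves two lemmas about it: first, that in any $(I,P)$-semiadditive target $\atled$ adjoins to an equivalence $i_\sharp\iso r_*j_\sharp$ (this is where the double Beck--Chevalley enters); second, that $\atled$ is compatible with $I$-cocontinuous functors via the Beck--Chevalley isomorphisms for $i_\sharp$ and $j_\sharp$. These two facts combine to show that the unstraightening $\Un^\cc(F_0)$ sends a certain span to a \emph{cartesian} edge over $P^\op$. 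The proof then concludes by a triangle argument in the unstraightening: $\tilde w$ sits in a commuting triangle with two maps that $F$ sends to cartesian lifts of the same arrow, forcing the third (fibrewise) edge to be an equivalence. This cartesian-edge packaging avoids having to track the identification of $F(\tilde w)$ with your displayed composite through all of the Beck--Chevalley isomorphisms for $F_0$ simultaneously; in effect it separates the two ingredients (naturality of $\atled$, and invertibility of its adjoint) cleanly. Your direct chase would prove the same thing, but the paper's decomposition makes the bookkeeping more modular.
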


\begin{remark}\label{rmk:int-why-cont-is-invert}
To build some intuition let us first explain why $F$ sends the source and target of a map
\[
\begin{tikzcd}
	X\arrow[r,equals] & X\arrow[r,equals] & X\\[-2ex]
	c\arrow[d,mono,"i"'] & \arrow[l,equals] c\arrow[r,equals]\arrow[d,mono,"j"] & c\arrow[d,mono]\\
	b\arrow[d,epic,"p"'] & \arrow[l,epic,"r"'] b'\arrow[r,equals]\arrow[d,epic,"q"] & b'\arrow[d,epic]\\
	a &\arrow[l,equals]a\arrow[r,equals] & a\rlap.
\end{tikzcd}
\]
in $\Ww$ to equivalent objects. We will for simplicity restrict to the case that $\Xx$ is terminal, the general case is similar. First note that the morphism above is given by applying $p_*\colon (\CIP{\Xx}{I}{I}{I\cap P})^{P\text-\Pi}(b)\to (\CIP{\Xx}{I}{I}{I\cap P})^{P\text-\Pi}(a)$ to the morphism
\[
\begin{tikzcd}
	c\arrow[d,mono,"i"'] & \arrow[l,equals] c\arrow[r,equals]\arrow[d,mono,"j"] & c\arrow[d,mono]\\
	b\arrow[d,equals] & \arrow[l,epic,"r"'] b'\arrow[r,equals]\arrow[d,epic,"r"] & b'\arrow[d,epic]\\
	b &\arrow[l,equals]b\arrow[r,equals] & b\rlap.
\end{tikzcd}
\] Since $F$ is $P$-continuous, it therefore suffices to only argue for the case that $p=\id$ and hence $q=r$. We now form the pullback
\begin{equation}\label{diag:pb-P-lim}
	\begin{tikzcd}
			c\arrow[dd, mono, "j"']\arrow[dr,dashed,shorten=-3pt,xshift=1pt,yshift=-1pt,"\delta"{description}]\arrow[rr,equals] &[-1em]& c\arrow[dd,mono, "i"]\\[-1em]
		& \,d\arrow[dr,pullback,xshift=-3pt,yshift=3pt]\arrow[dl,mono,"k"{description}]\arrow[ur,dashed,epic, "s"{description}]\\
		b'\arrow[rr,epic,"r"'] && b
	\end{tikzcd}
\end{equation}
and observe that $\delta$ belongs to $I\cap P$ by left cancellability. We may then compute
\[\hskip-9pt\hfuzz=9.02pt
	\smash{F(r,j)\simeq r_*F(\id,j)\simeq r_*k_{\sharp}F(\id,\delta)\overset{(*)}{\simeq} i_{\sharp}s_*F(\delta)\overset{(\dagger)}{\simeq} i_{\sharp}s_*\delta_*F(\id,\id)\simeq i_{\sharp}F(\id,\id)\simeq F(\id,i)}
\]
where $(*)$ uses the double Beck--Chevalley condition while $(\dagger)$ uses that the restriction of $F$ is also $(I\cap P)$-continuous by $(I\cap P)$-semiadditivity of $\CIP{\Xx}{I}{I}{I\cap P}$ \cite[Corollary~3.17]{CLL_Spans}.
\end{remark}

While the remark shows that $F(r,j)$ and $F(\id,i)$ are abstractly equivalent, showing that also the map $F(r,j) \to F(\id,i)$ induced by the given morphism in $\Ww$ is an equivalence is significantly more involved, and this requires some preparations.

\begin{construction}
	Assume we are given a commutative diagram
	\begin{equation*}
		\begin{tikzcd}[column sep=small]
			& c\arrow[dl,mono, bend right=10pt, "j"']\arrow[dr,mono,"i", bend left=10pt]\\
			b'\arrow[rr,epic,"r"'] && b
		\end{tikzcd}
	\end{equation*}
	in $\myT$, which we expand to the diagram $(\ref{diag:pb-P-lim})$ as before. Note once more that $\delta$ belongs to $I\cap P$; \cite[Construction~3.3]{CLL_Spans} therefore gives us for any $(I\cap P)$-semiadditive $\myT$-category a specific map $\mu\colon\id\to\delta_{\sharp}\delta^*$ that exhibits $\delta_{\sharp}$ also as \emph{right} adjoint to $\delta^*$.

	For any $I$-cocomplete $(I\cap P)$-semiadditive category $\Yy$, we then define a natural transformation $\atled\colon r^*i_{\sharp}\to j_{\sharp}$ as the composite
	\[
		r^*i_{\sharp}\xrightarrow[\smash{\raise3pt\hbox{\ensuremath{\scriptstyle\sim}}}]{\;\BC_{\sharp}^{-1}\;} k_{\sharp}s^*\xrightarrow{\;\mu\;} k_{\sharp}\delta_{\sharp}\delta^*s^*\xrightarrow[\smash{\raise3pt\hbox{\ensuremath{\scriptstyle\sim}}}]{\;\tau^*\;} k_{\sharp}\delta_{\sharp} \xrightarrow[\smash{\raise3pt\hbox{\ensuremath{\scriptstyle\sim}}}]{\;\sigma_{\sharp}\;} j_{\sharp}
	\]
	where the equivalences $\sigma_{\sharp}$ and $\tau^*$ are induced by the homotopies $k\delta\simeq j$ and $s\delta\simeq\id$, respectively, from $(\ref{diag:pb-P-lim})$.
\end{construction}

\begin{example}\label{ex:atled}
	Let $X\in\Xx(b')$ be arbitrary. Then one directly computes that the map $\atled\colon r^*i_{\sharp}(\id,X)\to j_{\sharp}(\id,X)$ in $\CIP{\Xx}{I}{I}{I\cap P}(c)$ is given by the span
	\[
	\begin{tikzcd}
		s^*X &\arrow[l,"\ct"']\delta^*s^*X\arrow[r, "\tau^*", "\sim"'] & X\\[-2ex]
		d\arrow[d,mono,"k"'] &\arrow[l,"\delta"',both] \arrow[d,mono,"j"]c\arrow[r,equal] & c\arrow[d,mono,"j"]\\
		b' &\arrow[l,equals]b'\arrow[r,equals] & b'\!\rlap.\,
	\end{tikzcd}
	\]
	Namely, as explained in \cite[Remark~4.9]{CLL_Spans}, the component of the natural transformation $\mu\colon \id\to \delta_{\sharp}\delta^*$ on the object $(\id_{d},s^*X)$ is given by the span
		\[
	\begin{tikzcd}[column sep=small,row sep=small]
		s^*X &[-.75em]&[-.5em]\arrow[ll,"\ct"'] \delta^*s^*X\arrow[rr,equals] &&  \delta^*s^*X\\[-.5ex]
		d\arrow[dd,equals] &&\arrow[ll, "\delta"', both] b'\arrow[dd, "\delta", both]\arrow[rr,equals] && b'\arrow[dd,"\delta", both]\\[-1em]
		& \phantom\cdot\\
		d&&\arrow[ll,equals] d\arrow[rr,equals] && d\rlap.
	\end{tikzcd}
	\]
	Applying $k_{\sharp}$, which postcomposes vertically by $k$ in the bottom span, and identifying appropriately we get the span above.
\end{example}

\begin{lemma}\label{lemma:biadd-atled-invertible}
	Assume $\Yy$ is even $(I,P)$-semiadditive. Then $\atled$ adjoins to an equivalence $i_{\sharp}\to r_*j_{\sharp}$.
	\begin{proof}
		Plugging in the definitions and appealing to 2-out-of-3, we have to show that the composite
		\[
			i_{\sharp}\xrightarrow{\;\eta\;} r_*r^*i_{\sharp} \xrightarrow[\smash{\raise3pt\hbox{\ensuremath{\scriptstyle\sim}}}]{\;\BC_{\sharp}^{-1}\;} r_*k_{\sharp}s^*\xrightarrow{\;\mu\;} r_*k_{\sharp}\delta_{\sharp}\delta^*s^*
		\]
		is an equivalence. We can forget that $\mu$ was a specific choice of a unit $\id\to\delta_{\sharp}\delta^*$ and can instead replace it by a generic unit map $\eta\colon\id\to\delta_*\delta^*$. Moreover, we can check the claim after precomposing with the equivalence $s_*\delta_*\simeq\id$; altogether we are therefore reduced to showing that the composite of the top row in the diagram
		\[
			\begin{tikzcd}
				\arrow[drr, dashed, bend right=15pt]i_{\sharp}s_*\delta_*\arrow[r, "\eta"] & {\color{blue}r_*r^*}i_{\sharp}s_*\delta_*\arrow[r,"\BC_{\sharp}^{-1}"] &  r_*k_{\sharp}{\color{red}s^*s_*}\delta_*\arrow[d,"\epsilon"]\arrow[r,"\eta"] & r_*k_{\sharp}{\color{blue}\delta_*\delta^*}{\color{red}s^*s_*}\delta_*\arrow[d,"\epsilon"]\\
				&& \arrow[dr, bend right=15pt, "="']r_*k_{\sharp}\delta_*\arrow[r,"\eta"] & r_*k_{\sharp}{\color{blue}\delta_*}{\color{violet}\delta^*}{\color{red}\delta_*}\arrow[d,"\epsilon"]\\
				&&& r_*k_{\sharp}\delta_*
			\end{tikzcd}
		\]
		is invertible; here we colored the adjoints introduced by the units or killed by the counits in blue and red, respectively, for clarity. The triangle commutes by the triangle identity, while the square commutes simply by naturality. The dashed composite is induced by the double Beck--Chevalley map $i_{\sharp}s_*\to r_*k_{\sharp}$ (by definition of the latter), so it is invertible by $(I,P)$-semiadditivity of $\Yy$. On the other hand, the composite of the rightmost column is invertible as it is a possible choice of counit for the adjoint equivalence $\delta^*s^*\dashv s_*\delta_*$. The claim follows by $2$-out-of-$3$.
	\end{proof}
\end{lemma}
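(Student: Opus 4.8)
The plan is to transpose $\atled\colon r^*i_\sharp\to j_\sharp$ across the adjunction $r^*\dashv r_*$ and reduce the statement to a diagram chase that in the end invokes $(I,P)$-semiadditivity of $\Yy$ in the form of a double Beck--Chevalley equivalence. The transpose in question is $i_\sharp\xrightarrow{\eta_r}r_*r^*i_\sharp\xrightarrow{r_*\atled}r_*j_\sharp$, and since in the definition of $\atled$ the maps $\tau^*$ and $\sigma_\sharp$ are equivalences (being induced by the homotopies $s\delta\simeq\id$ and $k\delta\simeq j$ from $(\ref{diag:pb-P-lim})$), two-out-of-three reduces us to proving that
\[
	i_\sharp\xrightarrow{\ \eta_r\ }r_*r^*i_\sharp\xrightarrow{\ r_*\BC_\sharp^{-1}\ }r_*k_\sharp s^*\xrightarrow{\ r_*k_\sharp\mu\ }r_*k_\sharp\delta_\sharp\delta^*s^*
\]
is an equivalence; here $\BC_\sharp$ is the Beck--Chevalley map of the pullback square in $(\ref{diag:pb-P-lim})$, invertible by left $I$-adjointability.

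Next I would trade the bespoke data for generic adjunction data. Since $\delta\in I\cap P$ and $\Yy$ is $(I\cap P)$-semiadditive (as it is $(I,P)$-semiadditive, cf.\ \cite[Corollary~3.17]{CLL_Spans}), the norm $\delta_\sharp\to\delta_*$ is an equivalence, so $\delta_\sharp$ may be replaced by $\delta_*$ and $\mu$ may be regarded simply as a unit $\eta_\delta\colon\id\to\delta_*\delta^*$ of $\delta^*\dashv\delta_*$, forgetting its origin in \cite[Construction~3.3]{CLL_Spans}. Furthermore $s\delta\simeq\id$ yields $s_*\delta_*\simeq(s\delta)_*\simeq\id$, so I may precompose the displayed composite with this equivalence as a harmless normalization; the task becomes to show that
\[
	i_\sharp s_*\delta_*\xrightarrow{\ \eta_r\ }r_*r^*i_\sharp s_*\delta_*\xrightarrow{\ \BC_\sharp^{-1}\ }r_*k_\sharp s^*s_*\delta_*\xrightarrow{\ \eta_\delta\ }r_*k_\sharp\delta_*\delta^*s^*s_*\delta_*
\]
is invertible.

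To finish I would extend this to a commutative diagram by composing further with the counits $\epsilon_s\colon s^*s_*\to\id$ and $\epsilon_\delta\colon\delta^*\delta_*\to\id$, arriving at $r_*k_\sharp\delta_*$. The composite of these two counit maps is invertible, being a choice of counit for the adjoint equivalence $\delta^*s^*\dashv s_*\delta_*$. Using the triangle identity for $s^*\dashv s_*$ on the triangular piece and naturality of $\epsilon_\delta$ on the remaining square, the composite around the outside of the diagram is identified with the double Beck--Chevalley map $\BC_{\sharp,*}\colon i_\sharp s_*\to r_*k_\sharp$ of the pullback square of $i\in I$ along $r\in P$ in $(\ref{diag:pb-P-lim})$, whiskered by $\delta_*$; this is an equivalence by \eqref{it:Biadjointable} of Definition~\ref{def:adj_squares} together with the $(I,P)$-semiadditivity of $\Yy$. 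Two-out-of-three then shows the top row, hence the transpose of $\atled$, is an equivalence.

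The step I expect to be the main obstacle is precisely this last diagram chase: one must lay out the hexagon carefully, check commutativity of the triangular and square subdiagrams, and — the genuinely fiddly part — recognize the resulting diagonal as the double Beck--Chevalley map in the exact normalization that \eqref{it:Biadjointable} of Definition~\ref{def:adj_squares} declares an equivalence, tracking the handedness of every mate involved. A minor preliminary point is to verify that passing from $\delta_\sharp$ to $\delta_*$ through the norm turns the particular unit $\mu$ of \cite[Construction~3.3]{CLL_Spans} into an honest unit for $\delta^*\dashv\delta_*$, so that the reduction in the second paragraph is legitimate.
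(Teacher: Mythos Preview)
Your proposal is correct and follows essentially the same route as the paper: the same initial reduction via $\tau^*,\sigma_\sharp$, the same replacement of $\mu$ by a generic unit for $\delta^*\dashv\delta_*$, the same precomposition with $s_*\delta_*\simeq\id$, and the same endgame of recognizing the double Beck--Chevalley map and invoking $2$-out-of-$3$ against the invertible counit of the adjoint equivalence $\delta^*s^*\dashv s_*\delta_*$. The only discrepancy is that in the paper's diagram the triangle is the triangle identity for $\delta^*\dashv\delta_*$ (not $s^*\dashv s_*$) and the square commutes by naturality of $\epsilon_s$; your attribution swaps these roles, but either arrangement of the hexagon works and the argument is otherwise identical.
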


\begin{lemma}
	Let $F\colon\Yy\to\Zz$ be any $I$-cocontinuous functor of $(I,I\cap P)$-semiadditive categories. Then we have a commutative diagram
	\begin{equation}\label{diag:atled-natural}
		\begin{tikzcd}
			r^*i_{\sharp}F\arrow[d,"\atled"']\arrow[r,"\BC_{\sharp}", "\sim"'] & r^*Fi_{\sharp}\arrow[r,"\sim"] & Fr^*i_{\sharp}\arrow[d,"F(\atled)"]\\
			j_{\sharp}F\arrow[rr,"\BC_{\sharp}"', "\sim"] && Fj_{\sharp}\rlap.
		\end{tikzcd}
	\end{equation}
	\begin{proof}
		We contemplate the diagram
		\[
			\begin{tikzcd}
				r^*i_{\sharp}F\arrow[drrrr,phantom,"(*)"]\arrow[d,"\BC_{\sharp}^{-1}"']\arrow[r,"\BC_{\sharp}"] & r^*Fi_{\sharp}\arrow[rrr,"\sim"] &&& Fr^*i_{\sharp}\arrow[d,"F(\BC_{\sharp}^{-1})"]\\
				k_{\sharp}s^*F\arrow[rr,"\sim"]\arrow[d,"\mu"'] && k_{\sharp}Fs^*\arrow[rr, "\BC_{\sharp}"]\arrow[dl,"\mu"{description}]\arrow[rd,"F(\mu)"{description}]\arrow[d,phantom,"(\dagger)"] && Fk_{\sharp}s^*\arrow[d,"F(\mu)"]\\
				k_{\sharp}\delta_{\sharp}\delta^*s^*F\arrow[r,"\sim"']\arrow[d,"\tau^*"']\arrow[drr,phantom,"(\ddagger)"] & k_{\sharp}\delta_{\sharp}\delta^*Fs^*\arrow[r,"\sim"'] &
				k_{\sharp}\delta_{\sharp}F\delta^*s^*\arrow[r,"\BC_{\sharp}"']\arrow[d,"F(\tau^*)"{description}] &
				k_{\sharp}F\delta_{\sharp}\delta^*s^*\arrow[r,"\BC_{\sharp}"'] &
				Fk_{\sharp}\delta_{\sharp}\delta^*s^*\arrow[d,"F(\tau^*)"]\\
				k_{\sharp}\delta_{\sharp}F\arrow[rr,equal]\arrow[d,"\sigma_{\sharp}"'] && k_{\sharp}\delta_{\sharp}F\arrow[r,"\BC_{\sharp}"']\arrow[d,"\sigma_{\sharp}"{description}]\arrow[rrd,phantom,"(\text{§})"] & k_{\sharp}F\delta_{\sharp}\arrow[r,"\BC_{\sharp}"'] & Fk_{\sharp}\delta_{\sharp}\arrow[d,"F(\sigma_{\sharp})"]\\
				j_{\sharp}F\arrow[rr,equal] && j_{\sharp}F \arrow[rr,"\BC_{\sharp}"'] && Fj_{\sharp}
			\end{tikzcd}
		\]
		where the left and rightmost column spell out the definitions of $\atled$ and $F(\atled)$, respectively.

		The subdiagram $(*)$ commutes by \cite[Lemma~2.2.4]{CSY2022TeleAmbi}, the diagram $(\dagger)$ commutes by \cite[Proposition~3.7(3)]{CLL_Spans}, commutativity of $(\ddagger)$ is an instance of functoriality of $F$, and $(\text{§})$ commutes by the same argument, combined with \cite[Lemma~C.5]{CLL_Adams}. As all the remaining squares commute by naturality, this then witnesses commutativity of $(\ref{diag:atled-natural})$.
	\end{proof}
\end{lemma}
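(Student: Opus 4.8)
The plan is to unwind both $\atled$ and $F(\atled)$ into their defining four-step composites
\[
r^*i_{\sharp}\xrightarrow{\BC_{\sharp}^{-1}}k_{\sharp}s^*\xrightarrow{\mu}k_{\sharp}\delta_{\sharp}\delta^*s^*\xrightarrow{\tau^*}k_{\sharp}\delta_{\sharp}\xrightarrow{\sigma_{\sharp}}j_{\sharp}
\]
and to organize the two composites, together with the comparison maps relating the $\Yy$-side with the $\Zz$-side, into one large grid with five rows indexed by the intermediate stages $r^*i_{\sharp}F$, $k_{\sharp}s^*F$, $k_{\sharp}\delta_{\sharp}\delta^*s^*F$, $k_{\sharp}\delta_{\sharp}F$, $j_{\sharp}F$; along each row the functor $F$ is transported from its innermost position to the outermost, one factor at a time, by the appropriate Beck--Chevalley equivalence (and, at the very top, the canonical identification $r^*F\simeq Fr^*$ coming from naturality of the $\myT$-functor $F$). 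The left edge of the grid then reads off $\atled$, the right edge reads off $F(\atled)$, and the outer top and bottom edges are precisely the two comparison maps in the statement, so it suffices to check that every rectangle of the grid commutes.

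Four of these rectangles are the interesting ones. The rectangle over the step $\BC_{\sharp}^{-1}$ expresses the compatibility of the Beck--Chevalley equivalence $r^*i_{\sharp}\simeq k_{\sharp}s^*$ with transporting a colimit-preserving functor across it, and I would deduce it from the general composition/whiskering compatibility of Beck--Chevalley transformations, e.g.~\cite[Lemma~2.2.4]{CSY2022TeleAmbi}. The rectangle over $\mu$ is the statement that the wrong-way unit $\mu\colon\id\to\delta_{\sharp}\delta^*$ of \cite[Construction~3.3]{CLL_Spans} is preserved by $F$; this is where the hypothesis genuinely matters, since $F$ is $I$-cocontinuous between $(I,I\cap P)$-semiadditive categories and hence also $(I\cap P)$-continuous, so that (using $\delta\in I\cap P$) \cite[Proposition~3.7(3)]{CLL_Spans} applies. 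The rectangle over $\tau^*$ (the equivalence induced by $s\delta\simeq\id$) should be a direct consequence of functoriality of $F$ together with naturality of the Beck--Chevalley maps, and the rectangle over $\sigma_{\sharp}$ (induced by $k\delta\simeq j$) is the same, once one knows that $F$ is compatible with the identification $k_{\sharp}\delta_{\sharp}\simeq(k\delta)_{\sharp}$ in the presence of the $\BC_{\sharp}$'s, for which I would invoke \cite[Lemma~C.5]{CLL_Adams}. Every remaining cell of the grid is then a naturality square.

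I expect the difficulty to be organizational rather than conceptual: the care lies in choosing the intermediate vertices and the exact shape of the Beck--Chevalley maps between consecutive rows so that the $\BC_{\sharp}^{-1}$-rectangle truly reduces to the abstract lemma cited above, and in checking that the relevant ``Beck--Chevalley of a composite'' coherences are being used in exactly the form in which they hold. The main substantive input beyond bookkeeping is the preservation of $\mu$ by $(I,I\cap P)$-semiadditive functors; granting that and the cited compatibilities, the argument becomes a routine, if sizeable, diagram chase.
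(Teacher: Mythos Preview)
Your proposal is correct and matches the paper's approach essentially exactly: the paper builds precisely the five-row grid you describe, labels the four key subrectangles $(*)$, $(\dagger)$, $(\ddagger)$, $(\text{\S})$, and justifies them with the very same citations you give (\cite[Lemma~2.2.4]{CSY2022TeleAmbi}, \cite[Proposition~3.7(3)]{CLL_Spans}, functoriality of $F$, and \cite[Lemma~C.5]{CLL_Adams}), with all remaining cells dismissed as naturality.
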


\begin{proposition}\label{prop:cartesian-edges-out-of-thin-air}
	Let $F\colon\CIP{\Xx}{I}{I}{I\cap P}\to\Yy$ be an $I$-cocontinuous functor with $(I,P)$-semiadditive target, and consider its cocartesian unstraightening $\Un^\cc(F)\colon\Un^\cc\big(\CIP{\Xx}{I}{I}{I\cap P}\big)\to\Un^\cc(\Yy)$. Then $\Un^\cc(F)$ sends any span of the form
	\begin{equation}\label{eq:will-become-cart}
		\begin{tikzcd}
			X &\arrow[l,equals] X\arrow[r,equals] & X\\[-2ex]
			c\arrow[d,mono,"i"'] &\arrow[l,equals] c\arrow[d,mono,"j"]\arrow[r,equals] & c\arrow[d,mono,"j"]\\
			b&\arrow[l,epic,"r"]b'\arrow[r,equals] & b'
		\end{tikzcd}
	\end{equation}
	to a cartesian edge.
	\begin{proof}
		One directly computes that $(\ref{eq:will-become-cart})$ is the composite of a cocartesian lift of $r$ followed by the map $\atled$ (as computed in Example~\ref{ex:atled}). By the previous lemma, its image in $\Un^\cc(\Yy)$ then factors (up to equivalence) as the composite of a cocartesian lift of $r$ followed by the map $\atled\colon r^*i_{\sharp}Y\to j_{\sharp}Y$ in the fiber over $b'$ for some $Y\in\Yy(c)$. Factoring the image as a fiberwise map followed by a cartesian edge instead, the fiberwise component is then the adjoined map $i_{\sharp}Y\to r_*j_{\sharp}Y$, which is an equivalence by Lemma~\ref{lemma:biadd-atled-invertible}; thus the composite is cartesian, as claimed.
	\end{proof}
\end{proposition}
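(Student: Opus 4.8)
The plan is to exhibit the span \eqref{eq:will-become-cart} as a composite of a cocartesian edge over $\myT^\op$ and a single fiberwise map, carry this factorization through $\Un^\cc(F)$, and then recognize the image as a cartesian edge using the $(I,P)$-semiadditivity of $\Yy$. Throughout I write $X\in\Xx(c)$ for the object carried by the span, and I form the pullback \eqref{diag:pb-P-lim} associated to the triangle $i=rj$, so that $\delta\in I\cap P$ and $r^*i_{\sharp}\simeq k_{\sharp}s^*$ by the Beck--Chevalley equivalence for $i\in I$.

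First I would check, by direct inspection of the span pictures in Remarks~\ref{rmk:unstraigthening-of-free} and~\ref{rmk:(co)cart-edges-of-Pi}, that \eqref{eq:will-become-cart} decomposes in $\Un^\cc\big(\CIP{\Xx}{I}{I}{I\cap P}\big)$ as a cocartesian lift of $r$ followed by a fiberwise map over $b'$. Concretely, the cocartesian lift of $r$ carries the source $\big(X,c\xrightarrow{i}b\big)$, which we read as $i_{\sharp}X$ over $b$, to $r^*i_{\sharp}X\simeq k_{\sharp}s^*X$ over $b'$; the remaining map to the target $\big(X,c\xrightarrow{j}b'\big)=j_{\sharp}X$ is then precisely the transformation $\atled\colon r^*i_{\sharp}X\to j_{\sharp}X$ whose value is computed as a span in Example~\ref{ex:atled}.

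Next I would apply $\Un^\cc(F)$ to this factorization. Since $F$ is a $\myT$-functor, $\Un^\cc(F)$ is a morphism of cocartesian fibrations over $\myT^\op$ and therefore preserves cocartesian edges, so the cocartesian lift of $r$ is sent to a cocartesian lift of $r$ in $\Un^\cc(\Yy)$. The fiberwise factor $\atled$ is sent to its image under $F$ on the fiber over $b'$, which by the preceding lemma (commutativity of \eqref{diag:atled-natural}, applicable because $\CIP{\Xx}{I}{I}{I\cap P}$ and $\Yy$ are $(I,I\cap P)$-semiadditive and $F$ is $I$-cocontinuous) agrees, up to the evident Beck--Chevalley equivalences, with the transformation $\atled\colon r^*i_{\sharp}Y\to j_{\sharp}Y$ of $\Yy$, where $Y\in\Yy(c)$ denotes the image of $X$ under $F$. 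The point of routing everything through cocartesian lifts and this one fiberwise map is that the argument uses only $I$-cocontinuity of $F$, never that $F$ preserves $P$-limits. Thus the image of \eqref{eq:will-become-cart} factors, up to equivalence, as a cocartesian lift of $r$ followed by $\atled\colon r^*i_{\sharp}Y\to j_{\sharp}Y$ in $\Yy(b')$.

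Finally I would verify that this composite is cartesian. It lies over $r\in P$, and since $\Yy$ is $P$-complete the restriction $r^*$ admits a right adjoint $r_*$, so cartesian lifts over $r$ exist in $\Un^\cc(\Yy)$. By the standard classification of edges in a cocartesian fibration, a morphism over $r$ from $i_{\sharp}Y$ to $j_{\sharp}Y$ is classified by the fiberwise map $r^*i_{\sharp}Y\to j_{\sharp}Y$ — namely our $\atled$ — and it is cartesian exactly when the adjoint map $i_{\sharp}Y\to r_*j_{\sharp}Y$ is an equivalence. This is precisely the content of Lemma~\ref{lemma:biadd-atled-invertible}, which applies because $\Yy$ is $(I,P)$-semiadditive; hence the image is a cartesian edge. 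The main obstacle is the middle step, where one must match the abstractly defined image $F(\atled)$ with the concrete transformation $\atled$ of $\Yy$ across the various Beck--Chevalley comparisons — this is exactly why the naturality square \eqref{diag:atled-natural} was isolated in advance; once it is available, the decomposition in the first step and the cartesian-edge criterion in the last step are routine cocartesian-fibration manipulations.
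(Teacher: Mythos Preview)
Your proposal is correct and follows essentially the same approach as the paper: decompose \eqref{eq:will-become-cart} as a cocartesian lift of $r$ followed by $\atled$, transport through $\Un^\cc(F)$ using preservation of cocartesian edges and the naturality square \eqref{diag:atled-natural}, then recognize the image as cartesian via the adjoint criterion and Lemma~\ref{lemma:biadd-atled-invertible}. Your write-up is somewhat more explicit about each step, but the logical structure is identical.
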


\begin{proof}[Proof of Proposition~\ref{prop:biadd-implies-invert}]
	Let us fix a generic map
	\begin{equation*}
		\begin{tikzcd}
			X &\arrow[l,equals] X\arrow[r,equals] & X\\[-2ex]
			c\arrow[d,mono,"i"'] &\arrow[l,equals] c\arrow[d,mono,"j"]\arrow[r,equals] & c\arrow[d,mono,"j"]\\
			b\arrow[d,epic,"p"']&\arrow[l,epic,"r"']\arrow[d,epic,"q"]b'\arrow[r,equals] & b'\arrow[d,epic,"q"]\\
			a &\arrow[l,equals] a\arrow[r,equals] & a\rlap.
		\end{tikzcd}
	\end{equation*}
	in $\Ww$. We first observe that this is the image under $p_*$ of the map
	\begin{equation}\label{diag:to-be-inverted}
		\begin{tikzcd}
			X &\arrow[l,equals] X\arrow[r,equals] & X\\[-2ex]
			c\arrow[d,mono,"i"'] &\arrow[l,equals] c\arrow[d,mono,"j"]\arrow[r,equals] & c\arrow[d,mono,"j"]\\
			b\arrow[d,equals]&\arrow[l,epic,"r"']\arrow[d,epic,"r"]b'\arrow[r,equals] & b'\arrow[d,epic,"r"]\\
			b &\arrow[l,equals] b\arrow[r,equals] & b\rlap.
		\end{tikzcd}
	\end{equation}
	Since the given $\myT$-functor $F$ is $P$-continuous, it then suffices to show that it inverts maps of the form $(\ref{diag:to-be-inverted})$, i.e.~we may assume that $p=\id$ and $r=q$.

	Consider the map $\Un^\cc\big(\CIP{\Xx}{I}{I}{I\cap P}^{P\text-\Pi}\big)\to\Un^\cc(\Yy)$ obtained by unstraightening $F$. The assumption that $F$ is $P$-continuous translates to saying that $F$ preserves cartesian edges over $P^\op$; in particular $F$ sends the map
	\begin{equation}\label{diag:actually-cartesian}
		\begin{tikzcd}
			X &\arrow[l,equals] X\arrow[r,equals] & X\\[-2ex]
			c\arrow[d,mono,"j"'] &\arrow[l,equals] c\arrow[d,mono,"j"]\arrow[r,equals] & c\arrow[d,mono,"j"]\\
			b'\arrow[d,epic,"r"']&\arrow[l,equals]\arrow[d,equals]b'\arrow[r,equals] & b'\arrow[d,equals]\\
			b &\arrow[l,epic,"r"] b'\arrow[r,equals] & b'\rlap.
		\end{tikzcd}
	\end{equation}
	to a cartesian edge. On the other hand, the previous proposition implies that $F$ also sends the map
	\begin{equation}\label{diag:wannabe-cartesian}
		\begin{tikzcd}
			X &\arrow[l,equals] X\arrow[r,equals] & X\\[-2ex]
			c\arrow[d,mono,"i"'] &\arrow[l,equals] c\arrow[d,mono,"j"]\arrow[r,equals] &c\arrow[d,mono,"j"]\\
			b\arrow[d,equals]&\arrow[l,epic,"r"']\arrow[d,equals]b'\arrow[r,equals] & b'\arrow[d,equals]\\
			b &\arrow[l,epic,"r"] b'\arrow[r,equals] & b'\rlap.
		\end{tikzcd}
	\end{equation}
	in $\Un^\cc\big(\CIP{\Xx}{I}{I}{I\cap P}^{P\text-\Pi}\big)$ to a cartesian edge. A direct computation now shows that we have a commutative diagram
	\[
		\begin{tikzcd}[column sep=small]
			(\id_a,i;X)\arrow[dr, bend right=10pt,"(\ref{diag:wannabe-cartesian})"']\arrow[rr,"(\ref{diag:to-be-inverted})"] && (r,j;X)\arrow[dl, bend left=10pt, "(\ref{diag:actually-cartesian})"]\\
			& (\id_c,j;X)
		\end{tikzcd}
	\]
	in $\Un^\cc\big(\CIP{\Xx}{I}{I}{I\cap P}^{P\text-\Pi}\big)$. By the above, the maps $(\ref{diag:actually-cartesian})$ and $(\ref{diag:wannabe-cartesian})$ are sent to cartesian edges. As $(\ref{diag:to-be-inverted})$ is a lift of the identity map $\id_b \colon b \to b$, it therefore has to be sent to an equivalence by uniqueness of cartesian edges.
\end{proof}

Thanks to the above, we can now extend any $F\colon\Xx\to\Yy$ with $(I,P)$-semiadditive target to $\bar F\colon\CIP{\Xx}{E}{I}{P}\to\Yy$. To show that this extension is indeed $(I,P)$-bicontinuous, we will need the following criterion:

\begin{lemma}\label{lemma:criterion-bicont}
	Let $\Xx$ be any $\myT$-category, let $\Yy$ be $(I,P)$-semiadditive, and let $F\colon\CIP{\Xx}{E}{I}{P}\to\Yy$ be any $\myT$-functor. Then $F$ is $(I,P)$-bicontinuous if and only if both of the following conditions are satisfied:
	\begin{enumerate}
		\item $F$ restricts to an $I$-cocontinuous functor $\CIP{\Xx}{I}{I}{I\cap P}\to\Yy$.
		\item $F$ is $P$-continuous.
	\end{enumerate}
	\begin{proof}
		The conditions are obviously necessary. To see that they are also sufficient, we only have to show that any $F$ satisfying these two conditions is in fact $I$-cocontinuous.

		For this let $j\colon a\rightarrowmono a'$ be a map in $I$. We have to show that for any $X\in \Xx(c)$ and any map $e\colon c\rightarrowepmo a$ in $E$, the Beck--Chevalley map $j_{\sharp}F(e,X)\to F(j_{\sharp}(e,X)) \simeq F((je,X))$ is an equivalence. We factor $je$ (which is again a map in $E$) as a map in $I$ followed by a map in $P$, yielding the solid part of the following commutative diagram:		
		\[			
		\begin{tikzcd}				c\arrow[dd, epmo, "e"']\arrow[dr,dashed,shorten=-3pt,xshift=1pt,yshift=-1pt,"\delta"{description}]\arrow[rr,mono] &[-1em]& b\arrow[dd,epic, "p"]\\[-1em]
				& d\arrow[dr,pullback,xshift=-3pt,yshift=3pt]\arrow[dl,dashed,epic,"q"{description}]\arrow[ur,dashed,mono, "k"{description}]\\
				a\arrow[rr,mono,"j"'] && a'\llap.
			\end{tikzcd}
		\]
		Forming the pullback of $j$ by $p$ and considering the induced map yields the dashed part of the diagram. The map $k$ is in $I$ as it is a pullback of $j$, so also $\delta$ belongs to $I$ by left cancellability. On the other hand, $q$ belongs to $P$ as it is a base change of $p$. Note that we may write $(e,X)\simeq q_*(\delta,X)$, where $(\delta,X)$ is now an object of $\CIP{\Xx}{I}{I}{I\cap P}$. Using this identification we can show the Beck--Chevalley map is an equivalence on this object. To motivate this, let us again prove something weaker first, namely that both sides are abstractly equivalent. For this we simply compute:
		\[
			\smash{j_{\sharp}F\big(q_*(\delta,X)\big)\simeq j_{\sharp}q_*F(\delta,X)\overset{(*)}{\simeq} p_*k_{\sharp}F(\delta,X)\simeq F\big(p_*k_{\sharp}(\delta,X)\big)\overset{(*)}{\simeq} F\big(j_{\sharp}q_*(\delta,X)\big),}
		\]
		where the identifications $(*)$ come from the double Beck--Chevalley equivalences in $\Yy$ and $\CIP{\Xx}{E}{I}{P}$, respectively.

		For the actual proof, we instead contemplate the diagram
		\[
			\begin{tikzcd}[cramped]
				j_{\sharp}q_*F\arrow[r, "F(\eta)"] & j_{\sharp}q_*F(k^*k_{\sharp})\arrow[r,"\sim"] &[1.5em] j_{\sharp}q_*k^*F(k_{\sharp})\arrow[r, "\BC_*^{-1}"] & j_{\sharp}j^*p_*F(k_{\sharp})\arrow[r, "\epsilon"] & p_*F(k_{\sharp})\\
				\arrow[u,"\BC_*^F"] j_{\sharp}F(q_*)\arrow[d,"\BC^F_{\sharp}"']\arrow[r, "F(\eta)"] & \arrow[urr,phantom,"(*)"]\arrow[u,"\BC_*^F"]j_{\sharp}F(q_*k^*k_{\sharp})\arrow[d,"\BC^F_{\sharp}"']\arrow[r,"F(\BC_*^{-1})"] & j_{\sharp}F(j^*p_*k_{\sharp})\arrow[r,"\sim"]\arrow[d,"\BC^F_{\sharp}"] &  \arrow[u,"\BC_*^F"]j_{\sharp}j^*F(p_*k_{\sharp})\arrow[r,"\epsilon"] & F(p_*k_{\sharp})\arrow[u,"\BC_*^F"']\\
				F(j_{\sharp}q_*)\arrow[r, "F(\eta)"'] & F(j_{\sharp}q_*k^*k_{\sharp})\arrow[r, "F(\BC_*^{-1})"'] & F(j_{\sharp}j^*p_*k_{\sharp})\arrow[urr,"F(\epsilon)"', bend right=10pt]\arrow[ur,phantom,"(\dagger)"{xshift=20pt}]
			\end{tikzcd}
		\]
		where we denote the Beck--Chevalley transformation commuting $F$ past a left adjoint by $\BC_{\sharp}^F$ for clarity, and $\BC_\ast^F$ for the dual case of commuting $F$ past a right adjoint. Here the subdiagram $(*)$ commutes by \cite[Lemma~2.2.4(2)]{CSY2022TeleAmbi}, the triangle $(\dagger)$ commutes by \cite[Lemma~2.2.3(4)]{CSY2022TeleAmbi}, and all the remaining little squares commute by naturality. Thus, the whole diagram commutes.

		We want to show that the lower left vertical map $j_{\sharp}F(q_*)\to F(j_{\sharp}q_*)$ is invertible when evaluated at the object $(\delta,X)$. The bottom composite $F(j_{\sharp}q_*)\to F(p_*k_{\sharp})$ precisely spells out the definition of $F(\BC_{{\sharp},*})$, so it is invertible, as is the Beck--Chevalley map $F(p_*k_{\sharp})\to p_*F(k_{\sharp})$ by assumption. By $2$-out-of-$3$ it will suffice to show the invertibility of the top left composite $j_{\sharp}F(q_*)\to j_{\sharp}q_*F\to p_*F(k_{\sharp})$ when evaluated at $(\delta,X)$. Note that the first map is again invertible by $P$-continuity of $F$, so it suffices that the second map (i.e.~the composite of the top row) is invertible.

		The proof will proceed by comparing this composite to the double Beck--Chevalley map in $\Yy$, for which we consider the following diagram:
		\[
			\begin{tikzcd}
				& & j_{\sharp}q_*k^*k_{\sharp}F\arrow[d,"\BC_{\sharp}^F"']\arrow[r,"\BC_*^{-1}"] &[.5em] j_{\sharp}j^*p_*k_{\sharp}F\arrow[r,"\epsilon"]\arrow[d,"\BC^F_{\sharp}"] & p_*k_{\sharp}F\arrow[d,"\BC_{\sharp}^F"]\\
				j_{\sharp}q_*F\arrow[r, "F(\eta)"']\arrow[rru,"\eta", bend left=15pt] & j_{\sharp}q_*F(k^*k_{\sharp})\arrow[r,"\sim"'] & j_{\sharp}q_*k^*F(k_{\sharp})\arrow[r, "\BC_*^{-1}"'] & j_{\sharp}j^*p_*F(k_{\sharp})\arrow[r, "\epsilon"']& p_*F(k_{\sharp})\rlap.
			\end{tikzcd}
		\]
		Here the triangle on the left again commutes by general facts about Beck--Chevalley maps, while the two squares commute by naturality. The composite $j_{\sharp}q_*F\to p_*k_{\sharp}F$ of the top row is the double Beck--Chevalley map in $\Yy$, hence invertible. On the other hand, the rightmost vertical map is invertible when evaluated at $(\delta,X)\in\CIP{\Xx}{I}{I}{I\cap P}\subset\CIP{\Xx}{E}{I}{P}$ by assumption. Thus, also the bottom composite $j_{\sharp}q_*F\to p_*F(k_{\sharp})$ is invertible when evaluated at $(\delta,X)$, as claimed.
	\end{proof}
\end{lemma}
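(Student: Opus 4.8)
The plan is to prove the non-trivial implication, the conditions being clearly necessary: the inclusion $\CIP{\Xx}{I}{I}{I\cap P}\hookrightarrow\CIP{\Xx}{E}{I}{P}$ is $I$-cocontinuous, and a $(I,P)$-bicontinuous functor is in particular $I$-cocontinuous and $P$-continuous. For the converse, recall from Lemma~\ref{lemma:universal-example-is-biadd} that $\CIP{\Xx}{E}{I}{P}$ is $(I,P)$-semiadditive, so $(I,P)$-bicontinuity of $F$ means precisely that $F$ is $I$-cocontinuous and $P$-continuous; granting $P$-continuity from (2), it remains to show that $F$ is $I$-cocontinuous. Fix $j\colon a\rightarrowmono a'$ in $I$. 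Since every object of $\CIP{\Xx}{E}{I}{P}(a)$ has the form $(e\colon c\rightarrowepmo a,X)$ with $X\in\Xx(c)$, and $j_{\sharp}(e,X)\simeq(je,X)$, it suffices to check that the Beck--Chevalley map $j_{\sharp}F(e,X)\to F(je,X)$ is invertible for all such $(e,X)$.

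First I would carry out a geometric reduction. Factoring $je\in E$ as $c\rightarrowmono b\xrightarrow{p}a'$ with $p\in P$, pulling $j$ back along $p$ to obtain $d$ together with projections $q\colon d\rightarrowepic a$ (in $P$, being a base change of $p$) and $k\colon d\rightarrowmono b$ (in $I$, being a base change of $j$), and letting $\delta\colon c\to d$ be the induced map, left cancellability yields $\delta\in I$ and $q\delta=e$. Hence $(e,X)\simeq q_*(\delta,X)$, where now $(\delta,X)$ is an object of the sub-$\myT$-category $\CIP{\Xx}{I}{I}{I\cap P}\subset\CIP{\Xx}{E}{I}{P}$. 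At the level of abstract equivalences the claim is then immediate: commuting $F$ past $q_*$ by (2), rewriting $j_{\sharp}q_*\simeq p_*k_{\sharp}$ by the double Beck--Chevalley equivalence in $\Yy$ for the pullback square with horizontal maps $j,k$ and vertical maps $q,p$, commuting $F$ past $k_{\sharp}$ on $(\delta,X)$ by (1), commuting $F$ past $p_*$ again, and finally applying the double Beck--Chevalley equivalence in $\CIP{\Xx}{E}{I}{P}$, one identifies $j_{\sharp}F(e,X)$ with $F(je,X)$.

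The real content is to verify that the Beck--Chevalley map \emph{itself} realizes this chain of equivalences. The plan is to paste a large commutative diagram with three rows, respectively $F(j_{\sharp}q_*\,\cdot\,)$, $j_{\sharp}F(q_*\,\cdot\,)$ and $j_{\sharp}q_*F(\,\cdot\,)$, connected by the comparison transformations $\BC_{\sharp}^{F}$ and $\BC_{*}^{F}$ that commute $F$ past the left adjoint $j_{\sharp}$ and the right adjoint $q_*$, and whose columns insert a chosen unit $\eta\colon\id\to k^*k_{\sharp}$, the Beck--Chevalley equivalence $\BC_{*}^{-1}$ for the pullback square, and a counit $\epsilon$. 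All cells will commute by naturality except for one instance of the compatibility of two Beck--Chevalley maps along a shared square (handled by \cite[Lemma~2.2.4]{CSY2022TeleAmbi}) and one triangle identity (handled by \cite[Lemma~2.2.3]{CSY2022TeleAmbi}); a second, smaller diagram then compares the resulting top composite $j_{\sharp}q_*F\to p_*k_{\sharp}F$ with the double Beck--Chevalley map of $\Yy$. The proof then concludes by $2$-out-of-$3$: the bottom composite of the big diagram spells out $F(\BC_{\sharp,*})$, hence is invertible because $\CIP{\Xx}{E}{I}{P}$ is $(I,P)$-semiadditive; the maps pulling $F$ past $p_*$ and past $k_{\sharp}$ at $(\delta,X)$ are invertible by (2) and (1) respectively; and the double Beck--Chevalley map of $\Yy$ is invertible since $\Yy$ is $(I,P)$-semiadditive. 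The main obstacle will be exactly this diagrammatic bookkeeping—pinning down the correct units and counits so that the outer boundary of the diagram is \emph{literally} the map $j_{\sharp}F(e,X)\to F(je,X)$, rather than merely some equivalence between its source and target.
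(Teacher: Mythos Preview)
Your proposal is correct and follows essentially the same approach as the paper: the same geometric reduction via the factorization of $je$ and the pullback square producing $q,k,\delta$, the same identification $(e,X)\simeq q_*(\delta,X)$, the same three-row diagram connected by $\BC_\sharp^F$ and $\BC_*^F$ with the cells justified via \cite[Lemmas~2.2.3 and 2.2.4]{CSY2022TeleAmbi}, and the same second diagram comparing the top composite to the double Beck--Chevalley map in $\Yy$, concluding by $2$-out-of-$3$. Your identification of the main obstacle as the diagrammatic bookkeeping is also exactly where the paper expends its effort.
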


Putting all the pieces together, we now get:

\begin{proof}[Proof of Theorem~\ref{thm:biadd}]
	By Theorem~\ref{thm:P-completion-SpanI}, precomposition with the canonical functor $\gamma$ induces an equivalence
	\[
		\Fun_\myT(\CIP{\Xx}{E}{I}{P},\Yy)\iso \Fun_\myT^{\Ww^{-1}}\big(\CIP{\Xx}{I}{I}{I\cap P}{}^{P\text-\Pi},\Yy\big)
	\]
	where the right-hand side denotes the full subcategory spanned by the functors inverting the maps in the collection $\Ww$ from Definition~\ref{def:inverted-morphisms}. As
	\[\gamma\colon \big(\CIP{\Xx}{I}{I}{I\cap P}\big){}^{P\text-\Pi}\to
	\CIP{\Xx}{E}{I}{P}\] is $P$-continuous and essentially surjective, this further restricts to an equivalence between the full subcategories of $P$-continuous functors on both sides. By commutativity of
	\[
		\begin{tikzcd}[column sep=small]
			&[-1em] \CIP{\Xx}{I}{I}{I\cap P}\arrow[dl,"{j}"', bend right=10pt]\arrow[dr,hook,bend left=10pt]\\
			(\CIP{\Xx}{I}{I}{I\cap P})^{P\text-\Pi}\arrow[rr,"\gamma"'] && \CIP{\Xx}{E}{I}{P}\rlap,
		\end{tikzcd}
	\]
	we then deduce that $\gamma^*$ restricts to an equivalence between
	\begin{enumerate}
		\item[(A)] the full subcategory of $\Fun_\myT(\CIP{\Xx}{E}{I}{P},\Yy)$ spanned by the $P$-continuous functors with $I$-cocont\-inuous restriction to $\CIP{\Xx}{I}{I}{I\cap P}$, \textit{and}
		\item[(B)] the full subcategory of $\Fun_\myT\big(\CIP{\Xx}{I}{I}{I\cap P}^{P\text-\Pi},\Yy\big)$ spanned by the $P$-continuous functors with $I$-cocontinuous restriction to $\CIP{\Xx}{I}{I}{I\cap P}$ and that invert the maps in $\Ww$.
	\end{enumerate}

	By Lemma~\ref{lemma:criterion-bicont}, the full subcategory (A) consists equivalently of the $(I,P)$-bicontinuous functors, while Proposition~\ref{prop:biadd-implies-invert} shows that the invertibility condition in (B) is vacuous, given the other two conditions.

	By the universal property of $(-)^{P\text-\Pi}$, restriction then induces an equivalence between the full subcategory (B) and $\Fun_\myT^{I\text-\amalg}(\CIP{\Xx}{I}{I}{I\cap P},\Yy)$. By Proposition~\ref{prop:free-semiadd}, this is further equivalent to $\Fun_\myT(\Xx,\Yy)$ via restriction along the inclusion. Altogether, we see that restriction along
	\[
		\begin{tikzcd}[column sep=small,cramped]
			\Xx\arrow[r,hook]&\CIP{\Xx}{I}{I}{I\cap P}\arrow[r,hook]&
			\big(\CIP{\Xx}{I}{I}{I\cap P}\big)^{P\text-\Pi}\arrow[r,"\gamma"]&[.25em]\CIP{\Xx}{E}{I}{P}
		\end{tikzcd}
	\]
	induces an equivalence $\Fun_\myT^{I\text-\amalg,P\text-\Pi}(\CIP{\Xx}{E}{I}{P},\Yy)\iso\Fun_\myT(\Xx,\Yy)$. As this composite is simply the inclusion $\Xx\hookrightarrow\CIP{\Xx}{E}{I}{P}$, the theorem follows.
\end{proof}

\section{Universality of span 2-categories}\label{sec:main-results}

In this section we will prove Theorem~\ref{thm:IntroMainTheorem} from the introduction on the universal property of 2-categories of spans.
 
\subsection{Span 2-categories}\label{subsec:span2}
We start by constructing our 2-categories of spans, following \cite{HaugsengSpans}.

Unlike everything else in this paper, this construction will rely on a specific model of 2-categories, and we begin by recalling this model:

\begin{definition}
	A \emph{double category} is a complete Segal object in complete Segal spaces, i.e.~a bisimplicial space
	\[
		\twoC\colon\Delta^\op\times\Delta^\op\to\Spc
	\]
	such that the simplicial spaces $\twoC(n,-),\twoC(-,n)\colon\Delta^\op\to\Spc$ are complete Segal spaces for every $[n]\in\Delta$. We will write $\twoC_{n}$ for the complete Segal space $\twoC(-,n)$, viewed as a 1-category.

	We will moreover call $\twoC(0,0)$ the space of \emph{objects}, $\twoC(1,0)$ the space of \emph{horizontal arrows}, and $\twoC(0,1)$ the space of \emph{vertical arrows}. We will refer to points of $\twoC(1,1)$ as \emph{squares}.
\end{definition}

\begin{remark}
	The above definition follows \cite{Nuiten2021Straightening}. It is a bit more restrictive than the usual definition of double categories, which would only require completeness in one of the two simplicial directions. However, as the above still captures all of our examples, while avoiding having to form completions of various Segal spaces down the line, it will be more convenient for our purposes.
\end{remark}

\begin{definition}
	A \emph{2-category} is a double category $\twoC$ such that the functor $\twoC(0,-)\colon\Delta^\op\to\Spc$ is constant. Equivalently, this means that for every $n\ge 0$ the degeneracy $\twoC_0\to\twoC_n$ is essentially surjective (hence an equivalence on groupoid cores).
\end{definition}

\begin{remark}
	In \cite{lurieGoodwillie}, Lurie shows that the category $\Cat_2$ of 2-categories in the sense of the previous definition admits various alternative presentations, for example via a model category of \emph{scaled simplicial sets} or via a model category of $(1,1)$-categories strictly enriched in simplicial sets (viewed through the eyes of the Joyal model structure). Moreover, Haugseng shows in \cite{haugseng-comparison} that $\Cat_2$ is also equivalent to the category of categories enriched (in the higher categorical sense \cite{GepnerHaugseng2015Enriched}) in categories.
\end{remark}

\begin{definition}
	Let $\twoC$ be a double category. The \emph{horizontal fragment} of $\twoC$ is the subobject $\twoD\subset\twoC$ such that $\twoD_n\subset\twoC_n$ is the essential image of the degeneracy map $\twoC_0\to\twoC_n$.
\end{definition}

It is not hard to check that the horizontal fragment is a 2-category, and that it is the \emph{maximal 2-subcategory} of $\twoC$, meaning that this construction gives rise to a right adjoint to the inclusion of 2-categories into double categories, see e.g.~\cite[proof of Lemma~A.2.1]{BlansBlom}.

We now turn to the construction of the 2-category of iterated spans.

\begin{definition}
We define $\LambdaRune_n$ to be the full subcategory of $\Tw[n]$ spanned by those objects $(i\leq j)$ such that $j\leq i+1$. In the picture of $\Tw[n]$ from Example~\ref{ex:picture_of_Tw}, $\LambdaRune_n$ corresponds to the bottom zig-zag, i.e.~for $n=2$ this is given by all objects but $(0\le 2)$.
\end{definition}

\begin{definition}[{\cite[Definition~5.8]{HaugsengSpans}}]
If $C$ is any category with pullbacks, we define $\SpanDbl(C)$ as the subobject of the bisimplicial space ${\hom(\Tw(-)\times\Tw(-),C)}$ given in degree $(m,n)$ by those functors $\Tw[m]\times\Tw[n]\to C$ that are right Kan extended from $\LambdaRune_m\times\LambdaRune_n$.
\end{definition}

\begin{remark}\label{rmk:make_cart_functs_explicit}
	By \cite[Lemma~5.6 and Remark~5.7]{HaugsengSpans}, the above right Kan extension always exists and $\SpanDbl(C)(m,n)$ consists equivalently of those $F\colon \Tw[m]\times\Tw[n]\to C$ such that for each $0\le i\le j\le m$ the functor $F(i\le j,-)\colon\Tw[n]\to C$ preserves pullbacks of maps in $\Tw[n]^\text{f}$ along maps in $\Tw[n]^\text{b}$, and similarly if we fix the second variable.

	In other words, if we write $\Fun^\ct(\Tw[m],C)$ for the full subcategory of functors preserving pullbacks of the above form, then $\SpanDbl(C)(m,n)$ consists precisely of those functors that curry to objects in $\Fun^\ct(\Tw[m],\Fun^\ct(\Tw[n],C))$ or equivalently to objects in $\Fun^\ct(\Tw[n],\Fun^\ct(\Tw[m],C))$.
\end{remark}

\begin{proposition}[{cf.~\cite[Proposition~5.14]{HaugsengSpans}}]\label{prop:SpanDbl-is-double-cat}
	If $C$ is any category with pullbacks, then $\SpanDbl(C)$ is a double category.
	\begin{proof}
		By symmetry it will be enough to show that $\SpanDbl(C)(-,m)$ is a complete Segal space for every $m\ge0$. By the previous remark, this is the simplicial space
		\[
			[n]\mapsto\Hom^\ct\big({\Tw[n]},\Fun^\ct(\Tw[m],C)\big).
		\]
		It now suffices to observe that for any category $D$ with pullbacks the functor $[n]\mapsto\Hom^\ct(\Tw[n],D)$ is a complete Segal space; namely, in light of the previous remark it is the complete Segal space defining the span category $\Span(D)$ as recalled in Construction~\ref{cons:Span}.
	\end{proof}
\end{proposition}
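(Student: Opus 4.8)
The plan is to verify directly the two conditions in the definition of a double category, namely that the simplicial spaces $\SpanDbl(C)(n,-)$ and $\SpanDbl(C)(-,n)$ are complete Segal spaces for every $[n]\in\Delta$ (that $\SpanDbl(C)$ is a bisimplicial space at all is part of its definition as a subobject of $\hom(\Tw(-)\times\Tw(-),C)$, so nothing extra is needed there). Since both the ambient bisimplicial space $\hom(\Tw(-)\times\Tw(-),C)$ and the right-Kan-extension condition from $\LambdaRune_m\times\LambdaRune_n$ that cuts out $\SpanDbl(C)$ are invariant under swapping the two copies of $\Tw$, it suffices to treat a single simplicial direction: I would show that $\SpanDbl(C)(-,m)$ is a complete Segal space for each fixed $m\ge 0$.

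To that end, first I would invoke Remark~\ref{rmk:make_cart_functs_explicit} to rewrite this simplicial space as $[n]\mapsto\Hom^\ct(\Tw[n],D)$, where $D\coloneqq\Fun^\ct(\Tw[m],C)$ is the full subcategory of $\Fun(\Tw[m],C)$ on the functors preserving pullbacks of maps in $\Tw[m]^{\mathrm{f}}$ along maps in $\Tw[m]^{\mathrm{b}}$. Next I would note that $D$ admits pullbacks: since $C$ has pullbacks, $\Fun(\Tw[m],C)$ has pullbacks, computed pointwise; and because limits commute with limits, the pointwise pullback of two pullback-preserving functors again preserves pullbacks, so $D$ is closed under pullbacks inside $\Fun(\Tw[m],C)$.

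Finally, for any category $D$ with pullbacks the triple $(D,D,D)$ is adequate, and by the defining property of the span construction recalled in Construction~\ref{cons:Span} the simplicial space
\[
	[n]\longmapsto\Hom_{\AdTrip}\big((\Tw[n],\Tw[n]^{\mathrm{b}},\Tw[n]^{\mathrm{f}}),(D,D,D)\big)=\Hom^\ct(\Tw[n],D)
\]
is precisely the complete Segal space underlying the span category $\Span(D)$; in particular it is a complete Segal space. Applying this with $D=\Fun^\ct(\Tw[m],C)$ then completes the argument. I do not anticipate a genuine obstacle here, as the substance of the statement has been front-loaded into Remark~\ref{rmk:make_cart_functs_explicit} and Construction~\ref{cons:Span}; the only point needing a line of justification is the stability of $\Fun^\ct(\Tw[m],C)$ under pullbacks, which is immediate from the pointwise computation of limits in a functor category.
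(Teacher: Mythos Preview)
Your proposal is correct and follows essentially the same route as the paper's proof: reduce by symmetry to one simplicial direction, use Remark~\ref{rmk:make_cart_functs_explicit} to identify $\SpanDbl(C)(-,m)$ with $[n]\mapsto\Hom^\ct(\Tw[n],D)$ for $D=\Fun^\ct(\Tw[m],C)$, and then recognize this as the complete Segal space underlying $\Span(D)$. You additionally spell out why $D$ has pullbacks, which the paper leaves implicit.
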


\begin{definition}
	Let $C$ be a category with pullbacks. We define $\SpanTwo(C)$ as the horizontal fragment of the double category $\SpanDbl(C)$.
\end{definition}

The above agrees with Haugseng's definition in \cite[Definition~5.16]{HaugsengSpans}; note that there the horizontal fragment is denoted by $U_\text{seg}$.

\begin{remark}
	Unravelling the definitions, the groupoid $\SpanTwo(C)(0,0)$ of objects of $\SpanTwo(C)$ agrees with the underlying groupoid of $C$. The hom space between two objects $x,y$ is given by the space of diagrams $x\gets z\to y$; finally, the space of 2-cells to another such map $x\gets z'\to y$ is given by the space of diagrams
	\[
		\begin{tikzcd}
			x\arrow[d,equals] & \arrow[l] z\arrow[r] & y\arrow[d,equals]\\
			x\arrow[d,equals] & \arrow[l]\arrow[u] z''\arrow[d]\arrow[r] & y\arrow[d,equals]\\
			x & \arrow[l]z'\arrow[r] & y\rlap.
		\end{tikzcd}
	\]
	Collapsing the identities in the left and right column, this is indeed given by a span of spans, as one would expect.
\end{remark}

For our main application we will need the following refined variant:

\begin{construction}
	Let $I,P\subset E\subset C$ be wide subcategories closed under base change.

	We fix a pullback-preserving embedding $C\hookrightarrow C'$ into a category with all pullbacks; for example, we could take the Yoneda embedding. We then define $\SpanDblP{C}{E}{I}{P}$ as the sub-double category of $\SpanDbl(C')$ with $\SpanDblP{C}{E}{I}{P}(1,1)$ given by the subspace of diagrams
	\begin{equation}\label{diag:squares-in-SPAN}
		\begin{tikzcd}
			\cdot & \arrow[l] \cdot\arrow[r,epmo] & \cdot\\
			\arrow[u]\cdot\arrow[d,epmo] & \arrow[dl,phantom,"\scriptstyle\text{$I$-pb}"]\arrow[l]\arrow[u]\cdot\arrow[d,epmo]\arrow[r,epmo]\arrow[ur,phantom,"\scriptstyle\text{$P$-pb}"] & \cdot\arrow[u]\arrow[d,epmo]\\
			\cdot &\arrow[l]\cdot\arrow[r,epmo] & \cdot
		\end{tikzcd}
	\end{equation}
	where all objects are in $C$, various maps belong to the subcategory $E$ as indicated, the notation `$P$-pb' means that the induced map into the pullback belongs to $P$, and similarly for `$I$-pb.' We omit the straightforward diagram chase that this is indeed a sub-double category, i.e.~that the above diagrams are closed under horizontal and vertical pasting (but cf.~Remark~\ref{rmk:unstraigthening-of-free}).

	We can equivalently describe this as a subobject of $(m,n)\mapsto\hom(\Tw[m]\times\Tw[n],C)$. In particular, the above is independent of the choice of embedding $C\hookrightarrow C'$, and this gives rise to a functor from $\iota_1\BAdjTrip$ into the 1-category of double categories (taking $E$ to be the subcategory generated by $I$ and $P$).

	We then define $\SpanTwoP{C}{E}{I}{P}$ as the horizontal fragment of this double category, yielding a functor $\iota_1\BAdjTrip\to\Cat_2$. In other words,  $\SpanTwoP{C}{E}{I}{P}$ is the subcategory of $\SpanTwo(C')$ with objects the objects of $C$, $1$-morphisms the spans of the form $x\gets z\rightarrowepmo y$, and 2-cells those of the form
	\[
		\begin{tikzcd}
			  & z\arrow[dl]\arrow[dr,epmo]\\
			x & \arrow[u,epic]\arrow[l]z''\arrow[r,epmo]\arrow[d,mono] & y\rlap,\\
			  & \arrow[ul]z'\arrow[ur,epmo']
		\end{tikzcd}
	\]
	i.e.\ all maps to $y$ are in $E$, the upward-pointing morphism is in $P$ and the downward-pointing morphism is in $I$.
\end{construction}

Having constructed the 2-category $\SpanTwoP{C}{E}{I}{P}$, we will now study it in more detail. We begin with its underlying category.

\begin{lemma}\label{lemma:span2-underlying-1}
The underlying 1-category of $\SpanTwoP{C}{E}{I}{P}$ is $\Span(C,E)$.
\end{lemma}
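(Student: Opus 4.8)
The underlying 1-category of a 2-category $\twoC$ is obtained by restricting the bisimplicial space $\twoC$ along the inclusion $\Delta^\op \hookrightarrow \Delta^\op\times\Delta^\op$, $[n]\mapsto([n],[0])$; for $\SpanTwoP{C}{E}{I}{P}$, which is the horizontal fragment of $\SpanDblP{C}{E}{I}{P}$, this is the same as the corresponding restriction of $\SpanDblP{C}{E}{I}{P}$ itself, since the horizontal fragment only modifies the simplicial levels in the \emph{vertical} direction and leaves the row $\twoC(-,0)$ untouched. So I would begin by identifying the complete Segal space $[n]\mapsto \SpanDblP{C}{E}{I}{P}(n,0)$.

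\textbf{Key steps.} First, unwind the definition: by Remark~\ref{rmk:make_cart_functs_explicit} (adapted to the subobject cutting out the classes $I,P,E$), $\SpanDblP{C}{E}{I}{P}(n,0)$ is the space of functors $F\colon \Tw[n]\times\Tw[0]\to C'$, i.e.\ functors $\Tw[n]\to C'$, whose images land in $C$ with forward maps in $E$ and which preserve pullbacks of forward maps in $E$ along backward maps (the $P$-pb/$I$-pb conditions are vacuous here since $\Tw[0]$ is trivial, so no square in the vertical direction appears). But this is exactly the defining data of an $n$-simplex of the span category: by Construction~\ref{cons:Span}, the complete Segal space associated to $\Span(C,C,E)=\Span(C,E)$ is $[n]\mapsto \Hom_{\AdTrip}(\Tw[n],(C,C,E))$, which in turn (since $\AdTrip$ is a subcategory of $\Fun(\Lambda^2_2,\Cat)$, or more directly by the characterisation in terms of pullback-preserving functors $\Tw[n]\to C$) unwinds to precisely the space described above. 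Second, confirm that the embedding $C\hookrightarrow C'$ does not affect this identification: a functor $\Tw[n]\to C'$ whose vertices and forward/backward edges all land in $C$ and which preserves the relevant pullbacks automatically has those pullbacks computed in $C$, since $C\hookrightarrow C'$ preserves pullbacks and is fully faithful. Hence the row $\SpanDblP{C}{E}{I}{P}(-,0)$ is the complete Segal space of $\Span(C,E)$. Third, pass to the horizontal fragment: taking the horizontal fragment does not change $\twoC(-,0)$ (the degeneracy $\twoC_0\to\twoC_0$ is the identity), so $\iota_1\SpanTwoP{C}{E}{I}{P}$, defined as the $[0]$-th vertical slice $\SpanTwoP{C}{E}{I}{P}(-,0)$, agrees with $\Span(C,E)$ as a complete Segal space, hence as a category.

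\textbf{Main obstacle.} The only real content is matching the combinatorial description of $\SpanDbl$-data on $\Tw[n]\times\Tw[0]$ with Haugseng--Hebestreit--Linskens--Nuiten's description of the Segal space of $\Span(C,E)$ via $\AdTrip$; this is essentially bookkeeping, but one must be a little careful that the pullback-preservation conditions built into $\SpanDbl$ (maps in $\Tw[n]^{\mathrm f}$ along maps in $\Tw[n]^{\mathrm b}$) coincide with the adequate-triple condition used to define $\Span(C,E)$, and that restricting to the sub-double category with forward maps in $E$ corresponds on the nose to replacing $(C,C,C)$ by the adequate triple $(C,C,E)$. Once this dictionary is set up the rest is immediate, so I expect this lemma to be short.
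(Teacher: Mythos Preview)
Your proposal is correct and follows essentially the same approach as the paper: restrict the bisimplicial space to $\Delta^\op\times\{[0]\}$, observe that the horizontal fragment leaves this row unchanged, and identify the resulting simplicial space with the complete Segal space $[n]\mapsto\Hom_{\AdTrip}(\Tw[n],(C,C,E))$ defining $\Span(C,E)$. The paper's proof is terser (it omits the explicit remark about the horizontal fragment and the embedding $C\hookrightarrow C'$), but the argument is the same.
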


\begin{proof}
The underlying 1-category of $\SpanTwoP{C}{E}{I}{P}$ is obtained by restricting the bisimplicial space to $\Delta^\op\times \{[0]\}$, i.e.\ it is given by the sub-simplicial space of $[n]\mapsto\Hom^{\ct}(\Tw[n],C)$ spanned by the cartesian diagrams sending $\Tw[n]^\text{f}$ to $E$, i.e.\ the morphisms of adequate triples $\Tw[n] \to (C,C,E)$. This is precisely the complete Segal space defining $\Span(C,E)$ recalled in Construction~\ref{cons:Span}.
\end{proof}

We write $h$ for the composite
\[
\myC^{\op}\hookrightarrow \Span(C,E)\to \SpanTwoP{C}{E}{I}{P}.
\]
Our goal will be to exhibit $h$ as the universal $(I,P)$-biadjointable functor out of $C^{\op}$, whenever the assumptions of Convention~\ref{conv:most-general} are satisfied. To begin we show the following:

\newcommand{\cramp}[2]{\setbox0=\hbox{\ensuremath{#1}}\setbox1=\hbox{\ensuremath{#2}}\null\hskip.5\wd1\hskip-.5\wd0\copy0\hskip-.5\wd0\hskip.5\wd1\null}

\begin{proposition}
	\label{prop:Span2_Biadjointable}
	The inclusion $h\colon\myC\catop \hookrightarrow\SpanTwoP{\myC}{E}{I}{P}$ is $(I,P)$-biadjointable.
\end{proposition}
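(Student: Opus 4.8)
The plan is to exhibit the adjoints and the Beck--Chevalley data completely explicitly. Recall from Lemma~\ref{lemma:span2-underlying-1} that $h$ sends a morphism $f\colon x\to y$ of $\myC$ to the span $(y\xleftarrow{f}x\xrightarrow{\id}x)$, regarded as a $1$-morphism $h(y)\to h(x)$; this is legitimate since $\id_x\in E$. For $i\in I$ I would take the left adjoint of $h(i)$ to be $i_\sharp\coloneqq(x\xleftarrow{\id}x\xrightarrow{i}y)\colon h(x)\to h(y)$, and for $p\in P$ the right adjoint of $h(p)$ to be $p_*\coloneqq(x\xleftarrow{\id}x\xrightarrow{p}y)\colon h(x)\to h(y)$; both are well-formed $1$-morphisms because $I,P\subseteq E$. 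The assertion to prove is that $i_\sharp\dashv h(i)$ and $h(p)\dashv p_*$, and that these adjunctions satisfy the Beck--Chevalley conditions of Definition~\ref{def:Adjointable_Functor}.

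For the adjunctions, composition of $1$-morphisms in $\SpanTwoP{\myC}{E}{I}{P}$ is computed by pullback (Lemma~\ref{lemma:span2-underlying-1}), so $h(i)\circ i_\sharp=(x\xleftarrow{}x\times_y x\xrightarrow{}x)$ and $i_\sharp\circ h(i)=(y\xleftarrow{i}x\xrightarrow{i}y)$, and dually for $p$. The units and counits are then the evident spans of spans, built from identities and from the (co)diagonals of $i$ and of $p$; their $P$-legs and $I$-legs lie in $P$ and $I$ respectively because these classes contain all equivalences and, being left cancellable, are closed under diagonals --- the same observation used above in the proof that $E$ is left cancellable (cf.\ also \cite[Lemma~3.2]{CLL_Spans}). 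The triangle identities then follow by a routine diagram chase in the model of \S\ref{subsec:span2}. The conceptually cleanest way to organise this is to observe that $i_\sharp$ and $h(i)$ (resp.\ $p_*$ and $h(p)$) are the companion and the conjoint of one and the same vertical arrow of the double category $\SpanDblP{\myC}{E}{I}{P}$, so that the adjunction --- together with all of its coherences --- is produced formally by the companion--conjoint calculus for double categories.

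It remains to check the Beck--Chevalley conditions. Given a pullback square in $\myC$ whose horizontal legs lie in $I$, the two $1$-morphisms compared by the Beck--Chevalley $2$-cell $\BC_\sharp\colon j_\sharp g^*\to f^* i_\sharp$ are each computed by forming a pullback in $\myC$, and the defining property of the given square identifies these two pullbacks canonically; hence $j_\sharp g^*$ and $f^* i_\sharp$ are literally the same span. A short computation shows that $\BC_\sharp$, which is assembled from the units and counits above, is the identity under this identification, so in particular an equivalence; this proves left $I$-adjointability, and right $P$-adjointability is entirely dual. For the double Beck--Chevalley condition, given a pullback square with horizontal legs $i,j\in I$ and vertical legs $p,q\in P$, the composites $i_\sharp q_*$ and $p_* j_\sharp$ are both equal to the span $(x'\xleftarrow{\id}x'\xrightarrow{iq}y)$ --- here one uses the commutativity $iq=pj$ of the square together with closure of $E$ under composition --- and the double Beck--Chevalley $2$-cell again unwinds to the identity. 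Assembling these three facts gives $(I,P)$-biadjointability of $h$.

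The only genuinely nontrivial point is the model-level bookkeeping hidden in the last two paragraphs: verifying the triangle identities, and verifying that the canonical (double) Beck--Chevalley $2$-cells really do reduce to identities, rather than merely comparing isomorphic objects. I would carry this out either by a direct (if tedious) computation with the defining simplicial spaces of $\SpanDbl$, or, more efficiently, by appealing to the companion--conjoint formalism for double categories, in which the triangle identities are automatic and the Beck--Chevalley cells attached to pullback squares are formal.
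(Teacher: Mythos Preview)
Your proposal is correct and follows essentially the same approach as the paper: both construct the adjoints $i_\sharp$ and $p_*$ explicitly as the ``forward'' spans, exhibit the unit and counit as the evident spans of spans built from diagonals (using left cancellability to check the legs land in the correct classes), and then verify by direct computation that the (double) Beck--Chevalley maps reduce to identities once the relevant composites are identified via the given pullback square. The paper spells out the Beck--Chevalley composite explicitly (and cites \cite[Lemma~12.3]{HaugsengSpans} and \cite[Proposition~3.3.1, Proposition~3.4.6]{Stefanich2020Correspondences} for the adjunction and Beck--Chevalley steps), whereas you gesture at the bookkeeping and offer the companion--conjoint calculus as an alternative organisation; the underlying argument is the same.
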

\begin{proof}
	A simple calculation shows that given a map $x\rightarrowmono y$ in $I$, the two 2-morphisms
	\[\begin{tikzcd}
		\vphantom{y}& x &&& x \\[.3ex]
		y & x & y & x & x & x \\[.3ex]
		& y &&& \cramp{x\times_yx}{x}
		\arrow[mono, from=1-2, to=2-1]
		\arrow[equals, from=1-2, to=2-2]
		\arrow[mono, from=1-2, to=2-3]
		\arrow[equals, from=1-5, to=2-4]
		\arrow[equals, from=1-5, to=2-6]
		\arrow[equals, from=2-1, to=3-2]
		\arrow[mono, from=2-2, to=2-1]
		\arrow[mono, from=2-2, to=2-3]
		\arrow[mono, from=2-2, to=3-2]
		\arrow[equals, from=2-3, to=3-2]
		\arrow[equals, from=2-5, to=1-5]
		\arrow[equals, from=2-5, to=2-4]
		\arrow[equals, from=2-5, to=2-6]
		\arrow[mono,"\Delta"{description}, from=2-5, to=3-5]
		\arrow["{\pr_1}", from=3-5, to=2-4]
		\arrow["{\pr_2}"', from=3-5, to=2-6]
	\end{tikzcd}\]
	provide a counit and unit, respectively, exhibiting the span $x= x \rightarrowmono y$ as a left adjoint of $y\leftarrowmono x= x$, see also \cite[Lemma~12.3]{HaugsengSpans} or \cite[Proposition~3.3.1]{Stefanich2020Correspondences}. The dual 2-morphisms show that maps in $P$ have right adjoints. To show that $h$ is left $I$-adjointable we compute that given a pullback square
	\[
	\begin{tikzcd}
		x' \dar[swap]{j}  \rar{g} \drar[pullback] & x \dar{i} \\
		y' \rar[swap]{f} & y
	\end{tikzcd}
	\]
	in $\myC$, the Beck--Chevalley transformation is given by the following composite (to be read from the top to the bottom)
	\[
		\begin{tikzcd}[column sep=scriptsize]
			& x'\arrow[dl, "j"']\arrow[dr,epmo,"g"]\\[.4em]
			y' &\arrow[l] x'\arrow[u,equals]\arrow[r,epmo]\arrow[d,mono,"{(g,\id)}"{description}] & x\\[.4em]
			&\arrow[ul, "j\circ\pr_2"]\arrow[dl,"j\circ\pr_2"']\arrow[dr, "\pr_1",epmo] x\times_yx'\arrow[ur,epmo',"\pr_1"']\\[.4em]
			y' &\arrow[l] x\times_y x'\arrow[r,epmo]\arrow[u,equals]\arrow[d,mono] & x\\[.4em]
			& \arrow[ul,"\pr_2"] x\times_y y'\arrow[ur,"\pr_1"',epmo']
		\end{tikzcd}
	\]
	of 2-morphisms in $\SpanTwoP{\myC}{E}{I}{P}$. We identify $x\times_y y'$ on the bottom with $x'$. Under this identification the bottom-most downwards-pointing map $x\times_y x'\to x'$ is simply $\pr_2$, and so the composite 2-morphism is given by the 1-morphism $\id_{x'} = \pr_2\circ (g,\id_{x'})$ in $C$, viewed as a downwards map. In particular, the Beck--Chevalley transformation is an equivalence, also see \cite[Proposition~3.4.6]{Stefanich2020Correspondences}. A dual calculation shows that $h$ is right $P$-adjointable. The calculation that the double Beck--Chevalley transformation for a square
	\[
	\begin{tikzcd}
		x' \dar[swap]{q}  \rar{j} \drar[pullback] & x \dar{p} \\
		y' \rar[swap]{i} & y
	\end{tikzcd}
	\]
	is an equivalence is similar.
\end{proof}

We next identify the $\HOM$-categories of $\SpanTwoP{C}{E}{I}{P}$, with the goal of applying Theorem \ref{thm:Recognizing_The_Universal_Biadjointable_Functor}. This will require a slight digression, which gives us a handle on the unstraightening of the functor
\[
\HOM_{\twoC}(-,-)\colon (\iota_1\twoC)^{\op}\times \iota_1\twoC\to \Cat
\]
of 1-categories associated to a 2-category $\twoC$.

\begin{definition}[{\cite[Definition~10.4.1.1]{GR2017studyDAG}, \cite[Definition~3.4.16]{Loubaton_effectivity}}]
Given a 2-category $\twoC$, we define the double category $\SQ^\lax(\twoC)$ via
\[
\SQ^{\lax}(\twoC)\colon \Delta^{\op}\times \Delta^{\op}\to \Spc, \quad (n,m)\mapsto \hom_{\Cat_2}([n]\boxtimes [m],\twoC),
\] where $-\boxtimes -$ denotes the Gray tensor product. We define the \emph{oplax arrow category} $\Ar^{\oplax}(\twoC)$ to be the 1-category $\SQ^{\lax}(\twoC)(1,-)$, i.e.~$\Ar^\oplax(\twoC)$ represents the functor $\hom([1]\boxtimes -,\twoC)\colon\Cat^\op\to\Spc$. Note that the two face maps give functors
\[
\Ar^{\oplax}(\twoC)\to \iota_1\twoC\times \iota_1\twoC.
\]
\end{definition}

\begin{theorem}[{\cite[Theorem 7.21]{HHLN2022TwoVariable}}]\label{thm:unstraighten-enriched-hom}
The functor $\Ar^{\oplax}(\twoC)\to \iota_1\twoC\times \iota_1\twoC$ is an orthofibration and it straightens to the restricted $\HOM$ functor
\[
\HOM_\twoC(-,-)\colon \iota_1 \twoC^{\op} \times \iota_1\twoC\to \Cat.
\]
In particular, pulling back to $\{A\}\times\iota_1\twoC$ for any $A\in\twoC$ yields the cocartesian unstraightening of the 1-functor $\HOM_\twoC(A,-)\colon\iota_1\twoC\to\Cat$.\qed
\end{theorem}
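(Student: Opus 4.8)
The plan is to establish the two assertions in turn — that the functor $q\colon\Ar^{\oplax}(\twoC)\to\iota_1\twoC\times\iota_1\twoC$ is an orthofibration, and that its straightening is the restricted $\HOM$ functor — after which the final clause follows formally. The starting point is to unwind the Gray tensor product. Spelling out $[1]\boxtimes[n]$, a functor $[1]\boxtimes[n]\to\twoC$ is the datum of two $n$-simplices $A_0\to\dots\to A_n$ and $B_0\to\dots\to B_n$ in $\iota_1\twoC$, a $1$-morphism $f_i\colon A_i\to B_i$ for each $i$, and a coherent family of $2$-cells filling the resulting squares in the oplax direction; in particular a morphism of $\Ar^{\oplax}(\twoC)$ is exactly an oplax square. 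The two coface maps $[0]\to[1]$ induce the source and target functors $\Ar^{\oplax}(\twoC)\to\iota_1\twoC$, so an object of $\Ar^{\oplax}(\twoC)$ is a $1$-morphism $f\colon A\to B$ of $\twoC$, and the fibre of $q$ over $(A,B)$ is the simplicial space $[n]\mapsto\hom([1]\boxtimes[n],\twoC)$ of such diagrams collapsing both $[1]$-directions to $\id_A$ and $\id_B$; the cylinder picture identifies this with the complete Segal space of $\HOM_\twoC(A,B)$.

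Next I would recognise the orthofibration by identifying its (co)cartesian edges. The clean way is to observe that $\SQ^{\lax}(\twoC)$ is a double category (proved from the Segal conditions for $\twoC$, in the spirit of Proposition~\ref{prop:SpanDbl-is-double-cat}) in which every horizontal morphism $f\colon A\to B$ admits a companion and a conjoint, both carried by $f$ together with the unit and counit $2$-cells of the trivial adjunction $f\dashv f$ in $\twoC$. For a double category equipped with such a framing, the functor from the category of horizontal arrows (with its vertical morphisms) to the product of two copies of the category of objects is an orthofibration, with cocartesian transport given by whiskering with companions — concretely, postcomposition $g\circ(-)$ — and cartesian transport by whiskering with conjoints — precomposition $(-)\circ a$ — the orthogonality axiom holding because these two whiskerings commute strictly. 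Alternatively one verifies directly that a morphism of $\Ar^{\oplax}(\twoC)$ over $(\id_A,g)$ with invertible $2$-cell is cocartesian and one over $(a,\id_B)$ with invertible $2$-cell is cartesian, the lifting properties reducing via the Segal condition to contractibility of an explicit space of fillers, and that a general morphism $(a,b,\alpha)$ factors as a cartesian lift of $a$ followed by postcomposition with $b$ carrying $\alpha$.

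It then remains to compute the straightening. An orthofibration over $\iota_1\twoC\times\iota_1\twoC$ is determined by the functor $\iota_1\twoC^{\op}\times\iota_1\twoC\to\Cat$ recording its fibres together with cocartesian and cartesian transport; by the previous paragraphs these are $(A,B)\mapsto\HOM_\twoC(A,B)$, with $g\colon B\to B'$ going to postcomposition and $a\colon A'\to A$ going to precomposition, which is precisely the data of $\HOM_\twoC(-,-)$. The last sentence is then immediate: pulling an orthofibration $\Ee\to\Bb\times\Cc$ back along $\{A\}\times\Cc$ yields a plain cocartesian fibration, since over the point $\{A\}$ the cartesian structure in the $\Bb$-direction becomes trivial, and its straightening is the restriction $\HOM_\twoC(A,-)\colon\iota_1\twoC\to\Cat$.

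I expect the real work, and hence the main obstacle, to be the bookkeeping with the Gray tensor product inside the model of $2$-categories in use (complete Segal objects in complete Segal spaces): since $\boxtimes$ is not the cartesian product, one must either fix an explicit bisimplicial formula for $[1]\boxtimes[n]$ or argue entirely through its universal property, and both the claim that $\SQ^{\lax}(\twoC)$ is a double category with the stated framing and the claim that the edges above are genuinely (co)cartesian come down to checking that various squares of mapping spaces are pullbacks. There is no single test $2$-category through which one could reduce, so this has to be carried out by hand — which is exactly the content of \cite[Theorem~7.21]{HHLN2022TwoVariable}, and the reason we cite it rather than reprove it here.
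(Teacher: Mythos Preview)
The paper does not prove this statement at all: it is stated with a \qed and attributed entirely to \cite[Theorem~7.21]{HHLN2022TwoVariable}. Your proposal is therefore not comparable to any argument in the paper --- you have sketched an outline of how one might prove the cited result, and you correctly recognise at the end that the actual work lives in the reference, which is precisely why the paper cites it rather than reproving it. There is nothing to correct; if anything, your sketch is more than what the paper asks for.
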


For the span 2-categories constructed above, we actually have a completely explicit description of the oplax arrow categories:

\begin{proposition}\label{prop:aroplax}
Let $I,P\subset E\subset C$ be closed under base change. Then there exists a natural equivalence
\[
	\Ar^\oplax{\big(\SpanTwoP{C}{E}{I}{P}\big)}\simeq \SpanDblP{C}{E}{I}{P} (1,-)
\]
of categories over $\Span(C,E)\times\Span(C,E)$.
\end{proposition}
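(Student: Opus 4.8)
The plan is to unwind both sides into explicit combinatorial data and match them directly. By construction $\Ar^\oplax\big(\SpanTwoP{C}{E}{I}{P}\big)=\SQ^\lax\big(\SpanTwoP{C}{E}{I}{P}\big)(1,-)$, so its $m$-simplices form the space $\hom_{\Cat_2}\big([1]\boxtimes[m],\SpanTwoP{C}{E}{I}{P}\big)$, naturally in $[m]\in\Delta$. The first step is to remove the Gray tensor: since $\SpanTwoP{C}{E}{I}{P}$ is by definition the horizontal fragment of the double category $\SpanDblP{C}{E}{I}{P}$, and the horizontal fragment is right adjoint to the inclusion of $2$-categories into double categories, restriction identifies this space with the space of maps of double categories $[1]\boxtimes[m]\to\SpanDblP{C}{E}{I}{P}$, where $[1]\boxtimes[m]$ is regarded as a double category with only identity vertical morphisms. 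On the other hand, the $(1,m)$-bisimplices of $\SpanDblP{C}{E}{I}{P}$ are by definition the maps of double categories out of the walking $(1,m)$-grid. So the proposition reduces to comparing maps of double categories out of $[1]\boxtimes[m]$ with maps out of the $(1,m)$-grid, naturally in $[m]$ and compatibly with the two projections to $\Span(C,E)$ — on the span double category side these come from restricting along the inclusions $\Tw[0]\hookrightarrow\Tw[1]$ at $(0\le 0)$ and $(1\le 1)$, on the other side from source and target together with Lemma~\ref{lemma:span2-underlying-1}.

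For the comparison itself I would use the explicit description of $\SpanDbl(C')$ in bidegree $(k,l)$ as the space of functors $\Tw[k]\times\Tw[l]\to C'$ preserving pullbacks of forward maps along backward ones in each variable separately (Remark~\ref{rmk:make_cart_functs_explicit}). Because $\LambdaRune_1=\Tw[1]$, a $(1,m)$-bisimplex of $\SpanDblP{C}{E}{I}{P}$ is just a functor $\Tw[1]\times\Tw[m]\to C$ that is cartesian in the $\Tw[m]$-variable (with no condition in the $\Tw[1]$-variable) and satisfies the membership and $I$-/$P$-pullback conditions; currying, this is an $m$-simplex of the span $1$-category of a category $\Ee$ of admissible spans $x\leftarrow z\rightarrowepmo y$, equipped with wide subcategories of forward and backward maps cut out by the $I$- and $P$-pullback conditions. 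On the other side, unwinding the generators-and-relations presentation of the $2$-category $[1]\boxtimes[m]$ (two composable chains of objects, the connecting $1$-morphisms, and the oplax-naturality $2$-cells of the elementary squares), a map of double categories $[1]\boxtimes[m]\to\SpanDblP{C}{E}{I}{P}$ amounts to two $m$-simplices of $\Span(C,E)$, connecting spans with forward leg in $E$, and oplax squares of spans whose filling $2$-cells are precisely the $2$-morphisms of $\SpanTwoP{C}{E}{I}{P}$ (upward leg in $P$, downward in $I$). A diagram chase — identifying the central objects of the twisted-arrow grids with the apexes of the spans-of-spans, and commutativity of the grids with coherence of the oplax squares, exactly as in the treatment of ordinary squares around Remark~\ref{rmk:unstraigthening-of-free} — matches the two descriptions; crucially, the $I$- and $P$-pullback conditions on a grid are exactly what forces the associated filling $2$-cell to be admissible in $\SpanTwoP{C}{E}{I}{P}$.

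To finish, I would assemble the levelwise identifications into an equivalence of simplicial spaces. Both $\SpanDblP{C}{E}{I}{P}(1,-)$ and $\Ar^\oplax\big(\SpanTwoP{C}{E}{I}{P}\big)$ are complete Segal spaces (the former because $\SpanDblP{C}{E}{I}{P}$ is a double category, cf.\ Proposition~\ref{prop:SpanDbl-is-double-cat}), so one may alternatively reduce to degrees $m\le 1$, matching objects (which on both sides are spans $x\leftarrow z\rightarrowepmo y$, living over $(x,y)$) and mapping spaces relative to fixed endpoints; naturality in $(C,I,P)\in\iota_1\BAdjTrip$ is then inherited from the functoriality of all the constructions involved. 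I expect the main obstacle to be the bookkeeping in the second step: translating cleanly and functorially between the twisted-arrow picture underlying $\SpanDbl$ and the Gray-tensor picture of oplax squares, and verifying uniformly in the simplicial degree that the $I$- and $P$-pullback conditions correspond exactly to admissibility of the $2$-cells in $\SpanTwoP{C}{E}{I}{P}$.
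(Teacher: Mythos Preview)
Your approach is genuinely different from the paper's, and the gap you flag in your last paragraph is real and is precisely where the work lies.

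The paper's proof is much shorter and more conceptual. It first observes (in the example immediately preceding the proof) that every vertical morphism in $\SpanDblP{C}{E}{I}{P}$ admits a \emph{companion}, namely the same span regarded as a horizontal morphism, with the witnessing squares being the evident degeneracies; moreover every horizontal morphism arises this way. It then invokes a general result of Loubaton \cite[Proposition~3.4.23]{Loubaton_effectivity}: the $\SQ^\lax$-construction restricts to an equivalence onto the full subcategory of double categories with this companion property. Since taking horizontal fragments is left inverse to $\SQ^\lax$ \cite[Proposition~2.2.52]{Ruit2025Thesis}, it is fully faithful on that subcategory, so the identity on $\SpanTwoP{C}{E}{I}{P}$ lifts uniquely to an equivalence $\SpanDblP{C}{E}{I}{P}\simeq\SQ^\lax\big(\SpanTwoP{C}{E}{I}{P}\big)$ of double categories. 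Evaluating at $(1,-)$ gives the proposition.

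Your direct combinatorial matching is in principle viable, but the step you single out as ``the main obstacle'' --- identifying maps of double categories out of $[1]\boxtimes[m]$ (viewed as a $2$-category, hence with trivial vertical direction) with maps out of the $(1,m)$-grid, functorially in $m$ --- is exactly what the companion machinery packages. These are genuinely different source objects, and the fact that they corepresent the same functor on $\SpanDblP{C}{E}{I}{P}$ is not formal: it relies on the special feature of the span double category that vertical and horizontal morphisms are interchangeable via companions. Carrying out your diagram chase rigorously and coherently across all simplicial degrees would amount to reproving this instance of Loubaton's theorem by hand. The paper's route buys you all of that in one citation; your route would work but is substantially more labor, and as written the proposal stops short of actually executing it.
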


To prove this, we will need some basics from double category theory.

\begin{definition}
	Let $\twoC$ be a double category, and let $f\colon x\to y$ be a {vertical morphism}, i.e.~an element of $\twoC_{0,1}$. A horizontal morphism $\bar f\colon x\to y$ (with the same source and target) is called a \emph{companion} of $f$ if there exist elements of the form
	\[
		\begin{tikzcd}
			x\arrow[r,equals]\arrow[d,equals] & x\arrow[d, "f"]
			&&
			x\arrow[d, "f"']\arrow[r, "\bar f"] & y\arrow[d,equals]
			\\
			x\arrow[r, "\bar f"'] & y
			&&
			y\arrow[r,equals] & y
		\end{tikzcd}
	\]
	in $\twoC_{1,1}$ such that the composites
	\[
		\begin{tikzcd}
			x\arrow[r,equals]\arrow[d,equals] & x\arrow[d, " f"{description}]\arrow[r, "\bar f"]
			& y\arrow[d,equals]
			\\
			x\arrow[r, "\bar f"'] & y\arrow[r,equals] & y
		\end{tikzcd}
		\qquad\qquad
		\begin{tikzcd}
			x\arrow[r,equals]\arrow[d,equals] & x\arrow[d, " f"]\\
			\arrow[d, " f"'] x\arrow[r, "\bar f"{description}] & y\arrow[d,equals]\\
			y\arrow[r,equals] & y
		\end{tikzcd}
	\]
	are the obvious degeneracies of $\bar f$ and $f$, respectively.
\end{definition}

\begin{example}[{cf.~\cite[Example~2.5.6]{Ruit2025Thesis}}]
	Let $I,P\subset E\subset C$ be as above and consider any span in $C$ of the form $x\gets z\rightarrowepmo y$, viewed as a vertical morphism in $\SpanDblP{C}{E}{I}{P}$. Then the same span, viewed as a horizontal morphism, is a companion as witnessed by the squares
	\[
		\begin{tikzcd}
			\vphantom{y}x &\arrow[l, equals] x\arrow[r, equals] & x\\
			\arrow[u,equals]\arrow[d,equals] x & \arrow[l]z\arrow[u]\arrow[d,equals]\arrow[r,equals] & \arrow[u]z\arrow[d,epmo]\\
			x & \arrow[l]z\arrow[r,epmo] & y
		\end{tikzcd}
		\qquad\qquad
		\begin{tikzcd}
			x & \arrow[l]z\arrow[r,epmo] & y\\
			z\arrow[u]\arrow[d,epmo] & z\arrow[l,equals]\arrow[u,equals]\arrow[d,epmo]\arrow[r,epmo] & y\arrow[u,equals]\arrow[d,equals]\\
			y & \arrow[l,equals] y\arrow[r,equals] & y
		\end{tikzcd}
	\]
	(these are indeed elements of $\big(\SpanDblP{C}{E}{I}{P}\big){}_{1,1}$ as the bottom left and top right squares are even honest pullbacks in both cases). Note that these witnesses are simply the obvious degenerate squares in $\Span(C,E)$.
\end{example}

\begin{proof}[Proof of Proposition~\ref{prop:aroplax}]
By the previous example, every vertical morphism in $\SpanDblP{C}{E}{I}{P}$ has a companion, and every horizontal morphism arises this way. By \cite[Proposition 3.4.23]{Loubaton_effectivity}, the $\SQ^\lax$-construction induces an equivalence onto the category of double categories with this property. As taking horizontal fragments is left inverse to $\SQ^\lax$ \cite[Proposition~2.2.52]{Ruit2025Thesis}, we in particular see that taking horizontal fragments is fully faithful when restricted to this subcategory. We conclude that the identity lifts uniquely to an equivalence
\[
	\SpanDblP{C}{E}{I}{P}\simeq\SQ^\lax\big(\SpanTwoP{C}{E}{I}{P}\big)
\]
and the claim follows by evaluating both sides at $(1,-)$.
\end{proof}

\subsection{Proof of Theorem~\ref{thm:IntroMainTheorem}}

Combining everything we have done so far, we obtain Theorem~\ref{thm:IntroMainTheorem} from the introduction:

\begin{theorem}\label{thm:main-theorem}
	Let $I,P\subset E\subset C$ be as in Convention~\ref{conv:most-general}. Then the inclusion
	\begin{equation*}
		h\colon C^\op\hookrightarrow\Span(C,E)\hookrightarrow\SpanTwoP{C}{E}{I}{P}
	\end{equation*}
	is the initial $(I,P)$-biadjointable functor, i.e.~restriction along $h$ induces an equivalence
	\[
		\FUN(\SpanTwoP{C}{E}{I}{P},\twoD)\iso\FUN_{(I,P)\dbadj}(C^\op,\twoD)
	\]
	of 2-categories for every 2-category $\twoD$.
\end{theorem}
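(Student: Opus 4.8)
The plan is to verify the hypotheses of the recognition principle, Theorem~\ref{thm:Recognizing_The_Universal_Biadjointable_Functor}. By Proposition~\ref{prop:Span2_Biadjointable} the functor $h$ is $(I,P)$-biadjointable, and it is essentially surjective since it is the identity on objects (the objects of $\SpanTwoP{C}{E}{I}{P}$ are by construction those of $C$). Thus it remains to check that for every object $a\in C$ the functor
\[
  \HOM_{\SpanTwoP{C}{E}{I}{P}}\bigl(h(a),h(-)\bigr)\colon C^\op\longrightarrow\Cat
\]
is the free $(I,P)$-biadjointable functor on the class $\id_{h(a)}$. By Theorem~\ref{thm:free_IP_biadj_on_1} this free functor is $F=\Span_{P,I}(C_{/a}\times_C E_{/-})$, generated by the span $a\xleftarrow{\;\id\;}a\xrightarrow{\;\id\;}a$, so it will suffice to identify the displayed functor with $F$ in a way respecting the distinguished classes.

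For this I would compare cocartesian unstraightenings. First, Theorem~\ref{thm:unstraighten-enriched-hom} identifies the cocartesian unstraightening of $\HOM_{\SpanTwoP{C}{E}{I}{P}}(h(a),-)\colon\iota_1\SpanTwoP{C}{E}{I}{P}=\Span(C,E)\to\Cat$ (using Lemma~\ref{lemma:span2-underlying-1}) with the pullback of the orthofibration $\Ar^{\oplax}\bigl(\SpanTwoP{C}{E}{I}{P}\bigr)\to\Span(C,E)\times\Span(C,E)$ along $\{h(a)\}\times\id$. By Proposition~\ref{prop:aroplax} this oplax arrow category is equivalent, over $\Span(C,E)\times\Span(C,E)$, to $\SpanDblP{C}{E}{I}{P}(1,-)$. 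Restricting the target variable along the standard inclusion $C^\op\hookrightarrow\Span(C,E)$ (through which $h$ factors by construction) and using that cocartesian unstraightening commutes with base change, the cocartesian unstraightening of $\HOM_{\SpanTwoP{C}{E}{I}{P}}(h(a),h(-))\colon C^\op\to\Cat$ becomes the pullback of $\SpanDblP{C}{E}{I}{P}(1,-)$ along $\{a\}\times\bigl(C^\op\hookrightarrow\Span(C,E)\bigr)$.

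The remaining, and main, step is to identify this pullback, as a cocartesian fibration over $C^\op$, with the target map $\Span_{P\textup{-pb},I\textup{-fw}}(\Ar_E(C)\times_C C_{/a})\to C^\op$ of Example~\ref{ex:unst_of_Span_of_slices}, which by Theorem~\ref{thm:free_IP_biadj_on_1} is $\Un^\cc(F)$. On both sides the objects are the spans $a\gets z\to y$ with forward leg in $E$; unwinding the $3\times3$ diagrams \eqref{diag:squares-in-SPAN} defining $\SpanDblP{C}{E}{I}{P}$ on one side and the spans of squares of Remark~\ref{rmk:unstraigthening-of-free} on the other, the morphisms on the two sides are the same data, with the `$I$-pb' and `$P$-pb' decorations in \eqref{diag:squares-in-SPAN} corresponding exactly to the wide subcategories $\Ar_E(C)_{I\textup{-fw}}$ and $\Ar_E(C)_{P\textup{-pb}}$; moreover on both sides the cocartesian edges are those represented by the `purely backwards' squares. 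One then checks that this equivalence lies over $C^\op$ and carries the span $a\xleftarrow{\;\id\;}a\xrightarrow{\;\id\;}a$, which represents $\id_{h(a)}$, to the distinguished generator of $F$. Hence $\HOM_{\SpanTwoP{C}{E}{I}{P}}(h(a),h(-))$ is free on $\id_{h(a)}$, and Theorem~\ref{thm:Recognizing_The_Universal_Biadjointable_Functor} yields the asserted equivalence for every 2-category $\twoD$.

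I expect this final matching of the two cocartesian fibrations over $C^\op$ to be the main obstacle: although the two sides are built from essentially the same combinatorial ingredients, checking that the source/target conventions of the $\SQ^{\lax}$-construction, the pullback conditions on the central object of \eqref{diag:squares-in-SPAN}, and the classes of cocartesian edges all line up is a delicate---if ultimately routine---diagram chase.
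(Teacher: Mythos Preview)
Your proposal is correct and follows essentially the same route as the paper: apply the recognition principle Theorem~\ref{thm:Recognizing_The_Universal_Biadjointable_Functor}, identify the $\HOM$-categories via Proposition~\ref{prop:aroplax} and Theorem~\ref{thm:unstraighten-enriched-hom}, and match the resulting cocartesian fibration over $C^\op$ with that of Example~\ref{ex:unst_of_Span_of_slices}. One small point worth noting: for the step of checking that $\id_{h(a)}$ goes to the distinguished generator, the paper avoids unwinding the equivalence of Theorem~\ref{thm:unstraighten-enriched-hom} directly and instead observes that in the cocartesian unstraightening of $\iota\HOM(h(a),-)\simeq\Span(C,E)_{h(a)/}$ the identity corresponds to the initial object, which any equivalence must preserve---so it suffices to identify the initial object in the subfibration of cocartesian edges, an easy direct computation.
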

\begin{proof}
	We have seen in Proposition~\ref{prop:Span2_Biadjointable} that $h$ is $(I,P)$-biadjointable. By the recognition result proven in Theorem~\ref{thm:Recognizing_The_Universal_Biadjointable_Functor}, it will therefore be enough to show that $h$ is essentially surjective and that for every $x\in C$ the composite
	\[
		\begin{tikzcd}[cramped]
			C^\op\arrow[r,hook,"h"]&[-.5em]\SpanTwoP{C}{E}{I}{P}\arrow[r,"{\HOM(x,-)}"] &[2em] \Cat
		\end{tikzcd}
	\]
	is the free $(I,P)$-biadjointable functor generated by the class $\id_x$.

	The first statement is clear. For the second one we use the  description of the orthocartesian unstraightening of $\HOM$ obtained in Proposition \ref{prop:aroplax}: it is given by the category of spans in (a subcategory of) $\Fun(\Tw[1],C)$ whose morphisms are of the form
		\[
			\begin{tikzcd}
				\cdot & \arrow[l] \cdot\arrow[r,epmo] & \cdot\\
				\arrow[u]\cdot\arrow[d,epmo] & \arrow[dl,phantom,"\scriptstyle\text{$P$-pb}"]\arrow[l]\arrow[u]\cdot\arrow[d,epmo]\arrow[r,epmo]\arrow[ur,phantom,"\scriptstyle\text{$I$-pb}"] & \cdot\arrow[u]\arrow[d,epmo]\\
				\cdot &\arrow[l]\cdot\arrow[r,epmo] & \cdot\rlap;
			\end{tikzcd}
		\]
		note that this diagram is flipped compared to the description of the squares in $\SpanDblP{C}{E}{I}{P}$ given above since we stick to the usual convention here that objects in an arrow category are displayed as vertical maps.

		In the above description, the structure maps to $\Span(C,E)\times\Span(C,E)$ are given by projecting to the top and bottom row. In particular, restricting to $\{x\}\times\Span(C,E)$ we arrive at the category of spans of the form
		\begin{equation}\label{diag:un-enr-hom-as-spans}
			\begin{tikzcd}
				x & \arrow[l,equals] x\arrow[r,equals] & x\\
				\arrow[u]\cdot\arrow[d,epmo] & \arrow[dl,phantom,"\scriptstyle\text{$P$-pb}"]\arrow[l]\arrow[u]\cdot\arrow[d,epmo]\arrow[r,mono] & \cdot\arrow[u]\arrow[d,epmo]\\
				\cdot &\arrow[l]\cdot\arrow[r,epmo] & \cdot\rlap;
			\end{tikzcd}
		\end{equation}
		in $C_{/x}\times_C\Ar_E(C)$. Pulling this back to $C^\op$ then precisely recovers the description of the unstraightening of the free $(I,P)$-biadjointable functor generated by $x=x=x$ given in Example~\ref{ex:unst_of_Span_of_slices}.

		It remains to check that the identity is indeed sent to $x=x=x$ under the above chain of equivalences. While unravelling all the definitions would be a pain (in particular for the equivalence from Theorem~\ref{thm:unstraighten-enriched-hom}), we can use the following trick: in the cocartesian unstraightening of $\iota\HOM(x,-)$ (which is just the slice $\Span(C,E)_{x/}$), the identity of $x$ corresponds to the initial object. As any equivalence preserves initial objects, it will therefore suffice to identify the initial object in the subfibration of $(\ref{diag:un-enr-hom-as-spans})$ spanned by the cocartesian edges. By \cite[Theorem~3.1]{HHLN2022TwoVariable}, this subcategory is precisely given by the maps of the form
		\[
			\begin{tikzcd}
				x & \arrow[l,equals] x\arrow[r,equals] & x\\
				\arrow[u]\cdot\arrow[d,epmo] & \arrow[dl,phantom,"\llcorner"{very near start}]\arrow[l]\arrow[u]\cdot\arrow[d,epmo]\arrow[r,equals] & \cdot\arrow[u]\arrow[d,epmo]\\
				\cdot &\arrow[l]\cdot\arrow[r,epmo] & \cdot
			\end{tikzcd}
		\]
		and it is then an easy exercise to check that $x=x=x$ is indeed initial in this category: the pullback condition becomes equivalent to the bottom middle vertical map being invertible, and we see that for a generic object $x\gets z\rightarrowepmo y$ the unique map from $x=x=x$ is given by
		\[
			\begin{tikzcd}[anchor=south, baseline=.575em]
				\vphantom{y}x &[-.1em] \arrow[l,equals] x\arrow[r,equals] &[-.1em] x\\[-.15em]
				\arrow[u,equals]x\arrow[d,equals] & \arrow[l] z\vphantom{y} \arrow[u]\arrow[d,equals]\arrow[r,equals] & z\arrow[u]\arrow[d,epmo]\\[-.15em]
				x &\arrow[l]z\arrow[r,epmo] & y\rlap.
			\end{tikzcd}\qedhere
		\]
\end{proof}

Let us separately record two special cases of this theorem. The first one concerns the universal property of the 2-category $\smash{\SpanThreeHalves}(C,I)\coloneqq\SpanTwoP{C}{I}{I}{\core C}$ where all our 2-cells must be forward maps. In this case we recover the universal property proven by Macpherson \cite{MacPherson2022Bivariant} and Stefanich \cite{Stefanich2020Correspondences}:

\begin{corollary}
	If $I \subset \myC$ is a left cancellable wide subcategory closed under pullbacks, then the inclusion $h\colon \myC\catop \hookrightarrow \SpanThreeHalves(\myC,I)$ is the initial left $I$-adjointable functor.\qed
\end{corollary}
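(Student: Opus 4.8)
The plan is to deduce this immediately from Theorem~\ref{thm:main-theorem} by specializing to $P=\core\myC$. First I would observe that by definition $\SpanThreeHalves(\myC,I)=\SpanTwoP{\myC}{I}{I}{\core\myC}$, so the statement concerns the triple $(\myC,I,\core\myC)$, and I need to check that this triple satisfies Convention~\ref{conv:most-general}. This is routine: $I$ is left cancellable and stable under base change by hypothesis, and $\core\myC$ trivially is; the class $E$ of maps factoring as a map in $\core\myC$ after a map in $I$ is just $I$ again (since $\core\myC$ consists of equivalences), so it is a wide subcategory and in particular closed under composition; and $I\cap\core\myC=\core\myC$ consists of equivalences, which are $(-2)$-truncated.

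Next I would note that, when $P=\core\myC$, unwinding Definition~\ref{def:Adjointable_Functor} shows that an $(I,\core\myC)$-biadjointable functor $\myC\catop\to\twoD$ is exactly a left $I$-adjointable one: the right $\core\myC$-adjointability condition is vacuous, and the double Beck--Chevalley condition only ranges over pullback squares whose vertical legs are equivalences, hence is automatically satisfied. Likewise, $(I,\core\myC)$-biadjointable natural transformations are precisely the left $I$-adjointable ones. Consequently $\FUN_{(I,\core\myC)\dbadj}(\myC\catop,\twoD)$ coincides with the locally full sub-$2$-category of $\FUN(\myC\catop,\twoD)$ spanned by the left $I$-adjointable functors and transformations.

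Combining these two observations, Theorem~\ref{thm:main-theorem} applied to the triple $(\myC,I,\core\myC)$ yields exactly the desired equivalence $\FUN(\SpanThreeHalves(\myC,I),\twoD)\iso\FUN_{(I,\core\myC)\dbadj}(\myC\catop,\twoD)$ for every 2-category $\twoD$, exhibiting $h$ as the initial left $I$-adjointable functor. I do not anticipate any genuine obstacle; the only point requiring a moment's care is the identification $E=I$ in this degenerate case, which is what makes condition (2) of Convention~\ref{conv:most-general} hold and lets Theorem~\ref{thm:main-theorem} apply verbatim.
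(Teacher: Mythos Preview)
Your proposal is correct and is precisely the argument the paper has in mind: the corollary is marked with an immediate \qed because it is exactly the specialization of Theorem~\ref{thm:main-theorem} to $P=\core\myC$, together with the observation (already recorded earlier in the paper) that $(I,\core\myC)$-biadjointability coincides with left $I$-adjointability.
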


As the other extreme, we can consider the case where $I=P=E=C$, giving an affirmative answer to \cite[Conjecture~2.25]{Ben-Moshe_Transchromatic}, see also \cite[Remark~4.2.5]{hopkinsLurie2013ambidexterity}:

\begin{corollary}
	If $\myC$ is a category with pullbacks such that every morphism is truncated, then the inclusion $h\colon \myC\catop \hookrightarrow \SpanTwo(\myC)$ is the initial biadjointable functor.\qed
\end{corollary}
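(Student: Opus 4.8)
The plan is to deduce this as the instance $I=P=E=\myC$ of Theorem~\ref{thm:main-theorem}. First I would check that this triple satisfies the hypotheses of Convention~\ref{conv:most-general}. Closure of $I=P=\myC$ under base change holds precisely because $\myC$ admits pullbacks; left cancellability of $\myC$ inside itself is vacuous; the class $E$ of maps factoring as $p\circ i$ with $i\in I$, $p\in P$ is all of $\myC$ (write $f=f\circ\id$), which is trivially closed under composition; and condition (3), that every map in $I\cap P=\myC$ be truncated, is exactly the standing hypothesis on $\myC$. So Theorem~\ref{thm:main-theorem} applies and gives that $h\colon\myC^\op\hookrightarrow\SpanTwoP{\myC}{\myC}{\myC}{\myC}$ is the initial $(\myC,\myC)$-biadjointable functor.

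It then remains to make two bookkeeping identifications. On the one hand, $\FUN_{(\myC,\myC)\dbadj}(\myC^\op,\twoD)$ is by Definition~\ref{def:Adjointable_Functor} the $(\infty,2)$-category of $\myC$-biadjointable functors and natural transformations, which (since $I=P$) is exactly what the statement calls the biadjointable functors. On the other hand, I would identify $\SpanTwoP{\myC}{\myC}{\myC}{\myC}$ with Haugseng's $\SpanTwo(\myC)$: since $\myC$ already has all pullbacks we may take the auxiliary embedding $\myC\hookrightarrow\myC'$ in the construction to be the identity, and with $I=P=E=\myC$ every morphism of $\myC$ lies in $E$, while the conditions ``$I$-pb'' and ``$P$-pb'' in diagram~\eqref{diag:squares-in-SPAN} are automatically satisfied because the relevant comparison maps into pullbacks are morphisms of $\myC=I=P$. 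Hence all the defining subobject conditions on $\SpanDblP{\myC}{\myC}{\myC}{\myC}$ trivialize in every bidegree, giving $\SpanDblP{\myC}{\myC}{\myC}{\myC}=\SpanDbl(\myC)$ as double categories, and passing to horizontal fragments yields $\SpanTwoP{\myC}{\myC}{\myC}{\myC}=\SpanTwo(\myC)$.

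Combining these, restriction along $h$ induces an equivalence $\FUN(\SpanTwo(\myC),\twoD)\iso\FUN_{(\myC,\myC)\dbadj}(\myC^\op,\twoD)$ for every $2$-category $\twoD$, which is precisely the assertion that $h\colon\myC^\op\hookrightarrow\SpanTwo(\myC)$ is the initial biadjointable functor. There is no genuine obstacle here beyond Theorem~\ref{thm:main-theorem} itself; the only point requiring a moment's care is the last identification, i.e.\ verifying that the decoration and pullback conditions built into the refined span $2$-category become vacuous when $I=P=E=\myC$.
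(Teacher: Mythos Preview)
Your proposal is correct and follows exactly the approach the paper intends: the corollary is stated with a \qed and no separate proof because it is, as the paper notes just before it, simply the case $I=P=E=\myC$ of Theorem~\ref{thm:main-theorem}. You have merely (and correctly) spelled out the routine verifications that Convention~\ref{conv:most-general} is satisfied and that $\SpanTwoP{\myC}{\myC}{\myC}{\myC}=\SpanTwo(\myC)$, which the paper leaves implicit.
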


\begin{remark}
	There is also another natural approach one could try to use to construct the universal biadjointable functor; we briefly sketch this construction, and then explain why it is not clear---albeit quite likely---that this has the correct universal property. For simplicity, we will restrict to the fully biadjointable case and assume that $C$ has finite limits, although this construction easily adapts to the general setting by similar embedding arguments as above.

	We proceed via the general formalism of \emph{categories of kernels} associated to 3-functor formalisms from \cite{HeyerMann}, which will produce a $\Cat$-enriched category for us. We begin by equipping $\Span(\myC)$ with the symmetric monoidal structure induced from the cartesian structure on $\myC$; by \cite[Proposition~2.4.1]{HeyerMann} this is a closed symmetric monoidal structure with internal homs given by $\ul\hom(x,y)=x\times y$, so we may view $\Span(C)$ as a category enriched over itself by \cite[Corollary~7.4.10]{GepnerHaugseng2015Enriched} or \cite[Example~C.1.12]{HeyerMann}. We then change the enrichment along the lax (symmetric) monoidal functor $\Span(C)\to\Cat, x\mapsto\Span(C_{/x})$ from \cite[Lemma~4.2.2]{HeyerMann}, yielding a $\Cat$-enriched category $\SpanTwo^\text{enr}(C)$. It is not hard to show that the underlying $1$-category of this recovers $\Span(C)$ (see in particular Lemma~C.3.5 of \emph{op.\ cit.}), and that the resulting inclusion $C^{\op}\hookrightarrow\SpanTwo^\text{enr}(C)$ is biadjointable. Therefore the universal property gives us a 2-functor $f\colon\SpanTwo(C)\to\SpanTwo^\text{enr}(C)$ that is the identity on objects. Moreover, by construction the individual morphism categories on the right-hand side are given by
	\begin{equation}\label{eq:hom_in_enr}
	\HOM_{\SpanTwo^{\text{enr}}}(x,y)\simeq\Span(C_{/x\times y})\simeq\Span(C_{/x}\times_CC_{/y})
	\end{equation}
	as one would want, and so the morphism categories on both sides of the functor $f$ are \emph{abstractly} equivalent. This makes it seem very likely that this 2-functor is in fact an equivalence, but this is not yet a rigorous proof.

	In order to show this, note that it suffices by our recognition principle to make the equivalence \eqref{eq:hom_in_enr} functorial in $x \in C\catop$. While the functoriality in $(\core C)^\op$ is automatic from the construction of $\SpanTwo^\text{enr}(C)$, we expect that the full functoriality in $x$ would require a discussion of the interaction between transfer of enrichments and the enriched Yoneda embedding, which does not yet seem to have appeared in the literature.
\end{remark}

\section{Lax monoidal universality}
\label{sec:Lax_Monoidal_Universality}

Assume that $\myC$ comes equipped with a symmetric monoidal structure such that $I$ and $P$ are closed under tensor products and such that the functor $\otimes\colon \myC \times \myC \to \myC$ preserves pullbacks along morphisms in $I \times I$ as well as pullbacks along morphisms in $P \times P$. We will show that in this case the universality of $\SpanTwoP{\myC}{E}{I}{P}$ extends to the symmetric monoidal and lax symmetric monoidal settings. The main application of this is Theorem \ref{thm:6FF}: the construction of 6-functor formalisms out of the data of suitable functors $C\catop \to \CMon(\Cat)$.

Throughout this section, we work with the convention that symmetric monoidal categories are commutative monoids in $\Cat$, which we define as product-preserving functors $\myC^{\otimes}\colon\Span(\Fin) \to \Cat$, denoted by $\textbf{n} \mapsto \myC^{\otimes}(\textbf{n})$ with $\textbf{n} \coloneqq \{1, \dots, n\}$; this is equivalent to the usual definition in terms of functors out of $\Fin_* \simeq \Span_{\mathrm{inj},\all}(\Fin)$ via restriction and right Kan extension, see \cite[Proposition~C.1]{BachmannHoyois2021Norms}. As \cite[Corollary 5.1.15]{BHS_Algebraic_Patterns} shows, also the natural notions of lax symmetric monoidal functors agree in both contexts. We similarly define symmetric monoidal 2-categories as commutative monoids in $\Cat_2$.

The assumptions on $I$ and $P$ guarantee that $\myC^{\otimes}$ refines to a product-preserving functor $(\myC^{\otimes},I^{\otimes},P^{\otimes})\colon\Span(\Fin)\to\BAdjTrip$, i.e.\ a commutative monoid in $\BAdjTrip$. It is clear from the construction that $\SpanTwo\colon\BAdjTrip\to\CatTwo$ preserves finite products, and so postcomposition defines a product-preserving functor
\begin{equation}\label{eq:SpanTwo-sym-mon-structure}
	\begin{aligned}
		\SpanTwoP{\myC}{E}{I}{P}^\otimes\colon\Span(\Fin)&\to\CatTwo
		\\\textbf{n}&\mapsto \SpanTwoP{\myC^\otimes(\textbf{n})}{E^{\otimes}(\textbf{n})}{I^{\otimes}(\textbf{n})}{P^{\otimes}(\textbf{n})},
	\end{aligned}
\end{equation}
turning $\SpanTwoP{\myC}{E}{I}{P}$ into a symmetric monoidal 2-category. By naturality, the inclusion $h\colon\myC^\op\to\SpanTwoP{\myC}{E}{I}{P}$ acquires the structure of a symmetric monoidal $2$-functor.

We will show that this inclusion is universal among \textit{lax symmetric monoidal biadjointable functors}, to be defined below. Our strategy is to reduce to the non-monoidal universality of span 2-categories. Since lax symmetric monoidal 2-functors between two symmetric monoidal 2-categories are defined in terms of their 2-cocartesian unstraightenings, the crucial ingredient for this reduction step is that the 2-cocartesian unstraightening of the functor $\SpanTwoP{\myC}{E}{I}{P}^\otimes\colon\Span(\Fin) \to\CatTwo$ is itself a span 2-category, namely of the cartesian unstraightening of $\myC\colon \Span(\Fin) \to \Cat$. We establish this in Section \ref{subsec:Unstraightening_Span_2_Cats}. We deduce in Section \ref{subsec:Lax_Monoidal_Universality} the lax symmetric monoidal universal property of $\SpanTwoP{\myC}{E}{I}{P}$. The application to 6-functor formalisms appears in Section \ref{sec:6FF}.

\subsection{Unstraightening span 2-categories}
\label{subsec:Unstraightening_Span_2_Cats}

As an auxiliary result, we show in this section that the cocartesian unstraightening of a diagram of span 2-categories is itself a span 2-category. We start with a brief recollection of 2-categorical (un)straightening and 2-cocartesian fibrations between 2-categories.

\begin{definition}
	Let $p\colon\twoC\to\twoD$ be a 2-functor. A 1-morphism $f\colon x\to y$ in $\twoC$ is called \emph{2-cocartesian} if the square
	\[
	\begin{tikzcd}
		\HOM_{\twoC}(y,z)\arrow[r, "-\circ f"]\arrow[d,"p"'] &[1em] \HOM_{\twoC}(x,z)\arrow[d,"p"]\\
		\HOM_{\twoD}(p(y),p(z))\arrow[r, "-\circ p(f)"'] & \HOM_{\twoD}(p(x),p(z))
	\end{tikzcd}
	\]
	of categories is a pullback for every $z\in\twoC$.

	We say that $p$ is a \emph{2-cocartesian fibration} if all of the following conditions are satisfied:
	\begin{enumerate}
		\item It admits all 2-cocartesian lifts of 1-morphisms, i.e.~for every $x\in\twoC$ and $\bar f\colon p(x)\to \bar y$ in $\twoD$ there exists a 2-cocartesian 1-morphism $f\colon x\to y$ such that $\bar f$ factors as $p(f)$ followed by an equivalence.
		\item For every $x,y\in\twoC$, the map $p\colon\HOM_{\twoC}(x,y)\to\HOM_{\twoD}(p(x),p(y))$ is a \emph{cartesian} fibration. We call the cartesian edges of this fibration \emph{cartesian 2-morphisms} (of $\twoC$).
		\item The cartesian 2-morphisms in $\twoC$ are stable under whiskering with arbitrary 1-morphisms.
	\end{enumerate}
	If $p$ is merely assumed to satisfy (2) and (3), we say $p$ is a \emph{homwise cartesian fibration}.
\end{definition}

\begin{construction}
	We write $(\CatTwo)_{/\twoD}^\text{cc}$ for the 1-subcategory spanned by the 2-cocartesian fibrations and those maps over $\twoD$ that preserve 2-cocartesian 1-morphisms as well as cartesian 2-morphisms. The $\CatTwo$-tensoring of $(\CatTwo)_{/\twoD}$ obviously restricts to a $\CatTwo$-tensoring of $(\CatTwo)_{/\twoD}^\text{cc}$, turning the latter into a locally full 3-subcategory of $(\CatTwo)_{/\twoD}$.
\end{construction}

\begin{proposition}[{\cite[Section 6]{Nuiten2021Straightening}}]
	There is a natural (un)straightening equivalence $(\CatTwo)_{/\twoD}^\textup{cc}\simeq\FUN(\twoD,\CatTwo)$ of 3-categories.
\end{proposition}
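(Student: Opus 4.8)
This is the case $n=2$ of the straightening equivalence for complete Segal objects, and the quickest route is to cite \cite[Section~6]{Nuiten2021Straightening}; I sketch the shape of the argument one would give in case a more self-contained account is preferred. Throughout I would use the model of §\ref{subsec:span2}: a $2$-category $\twoD$ is a complete Segal object $[m]\mapsto\twoD(m,-)$ in $\CAT$ whose value at $[0]$ is a space (equivalently, a category enriched in $\CAT$), and $\CatTwo$ is such a gadget one categorical level higher, i.e.\ a complete Segal object in $(\infty,2)$-categories; the $(\infty,3)$-category $\FUN(\twoD,\CatTwo)$ is the corresponding cotensor. The plan is to bootstrap the statement from the $(\infty,1)$-categorical straightening theorem in its $2$-categorically enhanced form, which for a $1$-category $\Cc$ identifies $(\CAT)_{/\Cc}^\cc$ --- cocartesian fibrations over $\Cc$, cocartesian-morphism-preserving functors over $\Cc$, and natural transformations over $\Cc$ --- with $\FUN(\Cc,\CAT)$ (cf.\ \cite{lurie2009HTT}, \cite{GepnerHaugseng2015Enriched}, \cite{HHLN2022TwoVariable}).

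First I would translate the definition of $2$-cocartesian fibration above into the enriched-Segal language: a $2$-functor $p\colon\twoC\to\twoD$ is a morphism of complete Segal objects $[m]\mapsto(\twoC(m,-)\to\twoD(m,-))$ in $\CAT$, and conditions (1)--(3) say precisely that in each simplicial degree this is a cartesian fibration of $1$-categories (namely the one recording the local functors on $\HOM$'s), that the Segal maps and the structure maps carry cartesian edges to cartesian edges, and that the underlying object-and-composition data assembles into a cocartesian fibration --- in other words $p$ is a ``$2$-cocartesian fibration'' exactly when this levelwise data realises the $(\infty,2)$-analogue of a cocartesian fibration in Nuiten's inductive sense. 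Granting this dictionary, the comparison functor is built by applying the $(\infty,1)$-(un)straightening equivalence levelwise along $\Delta^\op$ to a $2$-functor $F\colon\twoD\to\CatTwo$ (respectively to a $2$-cocartesian fibration $p$), using naturality of Lurie's equivalence in the base and its compatibility with the Segal and completeness conditions to see that the levelwise outputs glue into a complete Segal object over $\twoD$ of the required shape; naturality in the base then promotes the comparison to a $3$-functor between the two $(\infty,3)$-categories in play.

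It then remains to check that this $3$-functor is an equivalence, for which it suffices to verify essential surjectivity together with fully faithfulness on $\HOM$-$2$-categories, the lower levels of the enrichment following by the same levelwise reduction. Essential surjectivity is precisely the claim that every $2$-cocartesian fibration arises as a compatible family of the kind just described, and fully faithfulness is obtained by matching morphisms, $2$-morphisms and $3$-morphisms in $(\CatTwo)_{/\twoD}^\cc$ (i.e.\ $2$-functors over $\twoD$ preserving $2$-cocartesian $1$-cells and cartesian $2$-cells, natural transformations over $\twoD$, and modifications over $\twoD$) level by level with natural transformations and modifications of $2$-functors $\twoD\to\CatTwo$, invoking the $(\infty,1)$-case in each degree. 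I expect the real difficulty to be not any individual step but the bookkeeping that makes the bootstrap rigorous: one must phrase the $(\infty,1)$-straightening equivalence functorially enough in the base $1$-category to glue it along the simplicial diagram $[m]\mapsto\twoD(m,-)$, while simultaneously tracking the extra simplicial direction carried by $\CatTwo$ and the variance mismatch between cocartesian $1$-cells and cartesian $2$-cells --- that is, one must actually prove the dictionary of the previous paragraph. This is exactly what \cite[Section~6]{Nuiten2021Straightening} carries out, so in the paper I would ultimately defer to that reference.
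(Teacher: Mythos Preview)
The paper gives no proof of this proposition: it is stated with the attribution \cite[Section~6]{Nuiten2021Straightening} and then used as a black box. Your proposal likewise ultimately defers to that reference, so the two approaches coincide; the expository sketch you offer is extra, and whether it faithfully reflects Nuiten's actual argument is not something the paper weighs in on.
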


We now come to unstraightenings of diagrams of span 2-categories.

\begin{construction}
	\label{cons:Unstraightening_Biadjointable_Triple}
	Let $J$ be a category and let $\Cc\colon J\to\BAdjTrip$ be a functor such that for each $j \in J$ the triple $\Cc(j) = (C(j), I(j),P(j))$ satisfies the assumptions from Convention \ref{conv:most-general}. Composition with $\SpanTwo(-)$ results in a functor 
	\[
		\SpanTwo(\Cc(-)) := \SpanTwoP{C(-)}{E(-)}{I(-)}{P(-)}\colon J\to\CatTwo,
	\]
	together with a natural transformation $C(-)\catop \Rightarrow \SpanTwo(\Cc(-))$. Passing to unstraightenings results in a morphism in $(\CatTwo)_{/J}^\text{cc}$ of the form
	\[
		{\widetilde{C}}\catop \simeq \Un^{\cc}(C(-)\catop) \hookrightarrow \Un^{\cc}\!\big(\SpanTwo(\Cc(-))\big),
	\]
	where $\widetilde{C} := \Un^{\ct}(C)$ denotes the cartesian unstraightening of $C$. Let $p\colon \widetilde{C} \to J\catop$ denote the resulting cartesian fibration. We may similarly consider wide subcategories
	\[
		\widetilde{I},\widetilde{P},\widetilde{E} \; \subset \; \widetilde{C}\times_{J \catop} \iota J\catop \; \subset \; \widetilde{C},
	\]
	defined to be the cartesian unstraightenings of the functors $E(-),P(-),I(-)\colon \iota (J^{\op}) \to \Cat$. In other words, a morphism lies in one of the categories above if, up to equivalence, it lies in a single fiber $\widetilde{C}_j\simeq C(j)$ and belongs there to either $I(j)$, $P(j)$, or $E(j)$, respectively.
\end{construction}

\begin{lemma}
	\label{lem:Unstraightening_Biadjointable_Triple}
	In the situation of Construction \ref{cons:Unstraightening_Biadjointable_Triple}, the quadruple $(\widetilde{C},\widetilde{E},\widetilde{I},\widetilde{P})$ satisfies the assumptions from Convention \ref{conv:most-general}.
\end{lemma}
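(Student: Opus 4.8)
The plan is to verify the three conditions of Convention~\ref{conv:most-general} for $(\widetilde C,\widetilde E,\widetilde I,\widetilde P)$ by reducing each to the corresponding condition for the fibers $(C(j),E(j),I(j),P(j))$, which hold by assumption. The key structural fact to exploit is that all four of $\widetilde I,\widetilde P,\widetilde E$ are \emph{vertical} subcategories of the cartesian fibration $p\colon\widetilde C\to J^\op$, in the sense that every morphism in them lies (up to equivalence) in a single fiber; moreover $p$ restricted to $\widetilde C\times_{J^\op}\iota J^\op$ is a \emph{right} fibration onto $\iota J^\op$ with fibers the cores $\iota C(j)$, since $\widetilde I,\widetilde P,\widetilde E$ were defined as unstraightenings of functors $\iota(J^\op)\to\Cat$. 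This gives us a clean dictionary: a square in $\widetilde C$ with all four edges vertical is a pullback square iff it lies in a single fiber $C(j)$ and is a pullback there; and base changes of vertical maps along vertical maps, if they exist fiberwise, are computed fiberwise.

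First I would establish that $\widetilde I$ and $\widetilde P$ are closed under base change in $\widetilde C$. Given a cospan in $\widetilde C$ consisting of an arbitrary map $f\colon x\to y$ and a map $i\colon z\to y$ in $\widetilde I$, I claim the pullback exists. Since $i$ is vertical, say lying in $C(j)$, I can factor $f$ through its cartesian lift: write $f$ as a cartesian morphism $f^{\mathrm{cart}}\colon f^*y\to y$ over $p(f)$ followed by a vertical map $x\to f^*y$ in $C(j')$ where $j'=p(x)$. Pulling back $i$ along the cartesian part uses that $C(f)\colon C(j)\to C(j')$ (the structure functor, note $p$ is a \emph{cartesian} fibration over $J^\op$ so this goes the right way) preserves pullbacks along $I$—which is exactly the content of $\Cc$ landing in $\BAdjTrip$—so $C(f)(i)\in I(j')$, and then pulling back along the remaining vertical map uses closure of $I(j')$ under base change in $C(j')$. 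The resulting pullback is again vertical and in $\widetilde I$, and one checks the universal property using that mapping spaces in $\widetilde C$ decompose via the (co)cartesian structure. The argument for $\widetilde P$ is identical, and the same bookkeeping shows that the base change of a map in $\widetilde P$ along a map in $\widetilde I$ is again in $\widetilde P$ and vice versa. Left cancellability of $\widetilde I$ and $\widetilde P$: if $gf$ and $g$ both lie in $\widetilde I$, then since $\widetilde I$-maps are vertical, comparing images under $p$ forces $p(f)$ to be an equivalence, so $f$ is equivalent to a vertical map in some $C(j)$; then $gf,g\in I(j)$ and left cancellability in $C(j)$ gives $f\in I(j)\subset\widetilde I$.

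Next, condition (2): every map in $\widetilde E$ factors as a map in $\widetilde I$ followed by a map in $\widetilde P$. Since $\widetilde E$-maps are vertical, a map $e\colon x\to y$ in $\widetilde E$ lies in some $C(j)$ with $e\in E(j)$, so by the hypothesis on the fiber it factors as $i$ followed by $p$ with $i\in I(j)\subset\widetilde I$ and $p\in P(j)\subset\widetilde P$; this factorization in $C(j)$ is a factorization in $\widetilde C$. In particular this shows the $E$ associated to $(\widetilde C,\widetilde I,\widetilde P)$—the maps factoring as $\widetilde P\circ\widetilde I$—contains $\widetilde E$; conversely any such composite of vertical maps over the same object is vertical and lies in $E(j)$, so the two notions of $\widetilde E$ agree, and closure of $\widetilde E$ under composition follows from closure of each $E(j)$ together with verticality (two composable vertical maps landing in the same fiber compose there). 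Finally, condition (3): a map in $\widetilde I\cap\widetilde P$ lies in some $C(j)$ and belongs to $I(j)\cap P(j)$, hence is $n$-truncated in $C(j)$ for some $n$; since the fiber inclusion $C(j)\hookrightarrow\widetilde C$ preserves and detects pullbacks between vertical maps (as discussed above), the diagonal of this map computed in $\widetilde C$ agrees with the one in $C(j)$, and inductively the same holds for all iterated diagonals, so the map is $n$-truncated in $\widetilde C$ as well.

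The main obstacle I anticipate is purely organizational rather than conceptual: making the ``everything relevant is vertical, hence reduces to a fiber'' principle precise requires care with the cartesian fibration $p$, in particular being attentive that $p$ is a cartesian (not cocartesian) fibration over $J^\op$ so the structure functors $C(f)$ point in the direction needed for the base-change arguments, and checking that pullbacks of vertical maps along vertical maps, when they exist in the fibers, genuinely compute pullbacks in the total category $\widetilde C$. This last point is the one place a small lemma is needed; it follows from the description of mapping spaces in a (co)cartesian fibration, or alternatively from the fact that $\widetilde C\to J^\op$ restricted to vertical maps is detected by the fiber inclusions being fully faithful, but it should be spelled out. Everything else is a routine fiberwise translation of the hypotheses already packaged into the statement that $\Cc$ factors through $\BAdjTrip$ with each $\Cc(j)$ satisfying Convention~\ref{conv:most-general}.
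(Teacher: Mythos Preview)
Your proposal is correct and follows essentially the same approach as the paper: factor an arbitrary map as a fiberwise morphism followed by a $p$-cartesian morphism, use that cartesian squares are automatically pullbacks for the cartesian part, and use fiberwise closure together with the $\BAdjTrip$ hypothesis (transition functors preserve the relevant pullbacks, so fiberwise pullbacks along $I$ or $P$ are pullbacks in $\widetilde C$) for the fiberwise part. One small clarification: the condition that the transition functors preserve pullbacks along $I$ is needed not for the cartesian square (which is always a pullback) but precisely for the step you flag at the end, namely to ensure that the fiberwise pullback is a pullback in $\widetilde C$; once you reassign that justification, your argument matches the paper's exactly.
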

\begin{proof}
	It is clear that $\widetilde{I}$ and $\widetilde{P}$ are left cancellable and that morphisms in $\widetilde{E}$ are closed under composition. To show that $\widetilde{I}$ is closed under base change, consider a morphism $i\colon x \to y$ in $\widetilde{I}$ and an arbitrary morphism $\phi\colon y' \to y$ in $\widetilde{C}$. Letting $f\colon py' \to py$ be the image of $\phi$ in $J\catop$, we may factor $\phi$ as a composite $y' \to f^*y \to y$ of a fiberwise morphism followed by a cartesian morphism. By assumption on $\Cc$, the pullback $x' := f^*x \times_{f^*y} y'$ exists in the fiber $\widetilde{C}_{py} \simeq C(py)$, and the projection map $i'\colon x' \to y'$ lies in $I(py) \subset \widetilde{I}$. We claim that a pullback of $i$ and $\phi$ in $\widetilde{C}$ is given by the outer rectangle
	\begin{equation}
	\label{eq:Pullback_Unstraightening}
	\begin{tikzcd}
		x' \dar[swap]{i'} \rar & f^*x \dar{f^*i} \rar & x \dar{i} \\
		y' \rar & f^*y \rar & y.
	\end{tikzcd}
	\end{equation}
	Indeed, the right-hand square is easily seen to be a pullback square in light of the cartesianness of its two horizontal morphisms \cite[proof of Proposition 2.6]{HHLN2022TwoVariable}, and the left-hand square is a pullback square in $\widetilde{C}$ because the corresponding square in $\widetilde{C}_{py} = C(py)$ is a pullback which is preserved by the functor $C(py) \to C(z)$ for every morphism $z \to py$ in $J$. This shows that $\widetilde{I}$ is closed under base change, and that the inclusion $C(j) \hookrightarrow \widetilde{C}$ preserves pullbacks for all $j \in J$. The argument for $\widetilde{P}$ proceeds dually. The fact that $\widetilde{I} \cap \widetilde{P}$ consists of truncated morphisms now follows at once from the analogous assumption in each category $C(j)$.
\end{proof}

In the situation of Construction \ref{cons:Unstraightening_Biadjointable_Triple}, the functor $p\colon \widetilde{C} \to J$ sends each of the classes $\widetilde{I}$, $\widetilde{P}$ and $\widetilde{E}$ to equivalences, so induces a 2-functor
\[
	q := \SpanTwo(p) \colon \SpanTwoP{\widetilde{C}}{\widetilde{E}}{\widetilde{I}}{\widetilde{P}} \to \SpanTwoP{J\catop}{\core{J\catop}}{\core{J\catop}}{\core{J\catop}} \simeq J.
\]

\begin{lemma}
	\label{lem:Span2-cocart-fib}
	The 2-functor $q$ is a 2-cocartesian fibration. Moreover, all its 2-cocartesian 1-morphisms lie in the image of the inclusion ${\widetilde{C}}\catop \hookrightarrow \SpanTwoP{\widetilde{C}}{\widetilde{E}}{\widetilde{I}}{\widetilde{P}}$.
\end{lemma}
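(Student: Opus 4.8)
The plan is to verify directly the three conditions in the definition of a 2-cocartesian fibration for $q=\SpanTwo(p)$, taking as inputs the cartesian fibration $p\colon\widetilde C\to J\catop$ and the explicit description of the hom-categories of span 2-categories provided by Proposition~\ref{prop:aroplax} (equivalently Theorem~\ref{thm:unstraighten-enriched-hom}). Throughout I use the identification $\SpanTwoP{J\catop}{\core{J\catop}}{\core{J\catop}}{\core{J\catop}}\simeq J$, under which $q$ sends a 1-morphism of $\SpanTwoP{\widetilde C}{\widetilde E}{\widetilde I}{\widetilde P}$ to the arrow of $J$ covered by its backward leg.

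First I would produce the 2-cocartesian lifts; this also yields the ``moreover'' clause. Given $(j,X)\in\widetilde C$ and a morphism $f\colon j\to l$ in $J$, viewed as an arrow $l\to j$ in $J\catop$, choose a $p$-cartesian lift $\bar f\colon f^*X\to X$ in $\widetilde C$ and let $\phi_f\colon(j,X)\to(l,f^*X)$ be its image under the inclusion $\widetilde C\catop\hookrightarrow\SpanTwoP{\widetilde C}{\widetilde E}{\widetilde I}{\widetilde P}$, i.e.\ the span $(j,X)\xleftarrow{\bar f}(l,f^*X)\xrightarrow{=}(l,f^*X)$. I claim $\phi_f$ is 2-cocartesian. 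Recall from Proposition~\ref{prop:aroplax} that a 1-morphism $(j,X)\to(m,W)$ of $\SpanTwoP{\widetilde C}{\widetilde E}{\widetilde I}{\widetilde P}$ is a span whose backward leg is an arbitrary morphism of $\widetilde C$ and whose forward leg lies in $\widetilde E$ (hence is a morphism of the fibre $C(m)$ belonging to $E(m)$), that a 2-morphism between two of them is a span of spans whose vertical legs lie in $\widetilde P$ and $\widetilde I$ (hence, being confined to a single fibre, in $P(m)$ and $I(m)$), and that precomposition with $\phi_f$ simply postcomposes the backward leg with $\bar f$. The defining square for $\phi_f$ --- the map on $\HOM(-,(m,W))$ induced by $-\circ\phi_f$, sitting over $-\circ f\colon\HOM_J(l,m)\to\HOM_J(j,m)$ --- then unwinds, on objects, to the universal property of the $p$-cartesian morphism $\bar f$: a backward leg $(m,Z)\to(j,X)$ over $j\to m$ together with a factorization of $j\to m$ through $f$ is the same datum as a backward leg $(m,Z)\to(l,f^*X)$ over the residual arrow $l\to m$. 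The same cartesianness of $\bar f$ handles the 2-cells (spans of spans), so the square is a pullback of 1-categories and $\phi_f$ is 2-cocartesian. By uniqueness of cocartesian lifts, every 2-cocartesian 1-morphism over $f$ is equivalent to $\phi_f$, proving the ``moreover'' clause.

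It remains to show that $q$ is a homwise cartesian fibration, i.e.\ conditions (2) and (3). For fixed $(j,X),(l,Y)$, the factorization through $\phi_f$ from the previous paragraph identifies the fibre of $q\colon\HOM((j,X),(l,Y))\to\HOM_J(j,l)$ over an arrow $f\colon j\to l$ with the hom-category $\HOM(f^*X,Y)$ of the fibre 2-category $\SpanTwoP{C(l)}{E(l)}{I(l)}{P(l)}$; as $f$ ranges over the groupoid $\HOM_J(j,l)$ these assemble --- using the functoriality of $\SpanTwo(-)$, the cartesian transport of $\widetilde C$, and Corollary~\ref{cor:cart-of-CIP} (together with Lemma~\ref{lem:Unstraightening_Biadjointable_Triple}) --- into the cartesian unstraightening of a functor $\HOM_J(j,l)\catop\to\Cat$, and in particular into a cartesian fibration; this last point genuinely uses the fibration structure of $\widetilde C$ and is not automatic from $\HOM_J(j,l)$ being a space. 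The resulting cartesian 2-morphisms of $q$ are the spans of spans whose data is (up to equivalence) cartesian transport along $\widetilde C$, and these are stable under whiskering by an arbitrary 1-morphism of $\SpanTwoP{\widetilde C}{\widetilde E}{\widetilde I}{\widetilde P}$, since whiskering is given by pullback-composition of spans, which preserves cartesian morphisms of $\widetilde C$. This yields conditions (2) and (3) and finishes the proof.

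I expect the main obstacle to be the bookkeeping in the second paragraph: pushing the pullback-square condition through the identification (Proposition~\ref{prop:aroplax}) of the hom-categories of $\SpanTwoP{\widetilde C}{\widetilde E}{\widetilde I}{\widetilde P}$ with oplax arrow categories, and hence span categories, and verifying that $p$-cartesianness of $\bar f$ really produces a pullback of 1-categories --- i.e.\ that it is compatible with the span-of-spans 2-cells and not merely a bijection on objects. A secondary point that needs care is identifying the fibres of $q$ on hom-categories and checking that the resulting functor over the $\infty$-groupoid $\HOM_J(j,l)$ is an honest cartesian fibration; since a functor to an $\infty$-groupoid need not be one, this leans on the explicit cartesian transport coming from $\widetilde C$ and the naturality of the $\SpanTwo(-)$ construction.
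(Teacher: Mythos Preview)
Your argument for the 2-cocartesian lifts is essentially the paper's: pick a $p$-cartesian arrow $\bar f$ in $\widetilde C$, show the backward span it represents is 2-cocartesian by reducing the required pullback of hom-categories to the cartesianness of $\bar f$. The paper makes this precise by writing the hom-categories as $\Span_{\widetilde I,\widetilde P}$ of appropriate slices and then using that $\Span$ preserves pullbacks together with the slice identity $\widetilde C_{/Y}\simeq\widetilde C_{/X}\times_{(J\catop)_{/p(X)}}(J\catop)_{/p(Y)}$, which is exactly the cartesianness of $\bar f$; your sketch would benefit from phrasing it this way rather than arguing separately ``on objects'' and ``on 2-cells.''

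Where you go astray is in conditions (2) and (3). You claim that a functor to an $\infty$-groupoid need not be a cartesian fibration, and then invoke Corollary~\ref{cor:cart-of-CIP} and cartesian transport in $\widetilde C$ to manufacture one. This is both unnecessary and based on a false premise: every functor to a groupoid \emph{is} a cartesian fibration, with the equivalences as cartesian edges (given any equivalence $\bar f\colon\bar x\to p(y)$ in the base, the identity $\id_y$ is a cartesian lift). Since $J$ is a 1-category, each $\HOM_J(j,l)$ is a groupoid, so $q$ is automatically a homwise cartesian fibration and the cartesian 2-morphisms are precisely the invertible ones---trivially stable under whiskering. This is the paper's one-line argument, and it replaces the entire second half of your proposal. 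Your invocation of Corollary~\ref{cor:cart-of-CIP} is in any case misplaced: that result concerns cartesian lifts of the fibration $\Un^\cc\big(\CIP{\Xx}{E}{I}{P}\big)\to C^\op$, not the hom-level map $q$ induces.
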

\begin{proof}
	Since $J$ is a 1-category, its hom categories are groupoids. Since any functor to a groupoid is a cartesian fibration with cartesian morphisms given by the equivalences, we conclude that $q$ is automatically a homwise cartesian fibration. For the 2-cocartesian lifts, consider a cartesian morphism $f\colon Y \to X$ in $\widetilde{C}$ with respect to $p\colon \widetilde{C} \to J\catop$. To finish the proof, it remains to show that the corresponding left-pointing span $X \xleftarrow{} Y \xrightarrow{=} Y$ in $\SpanTwoP{\widetilde{C}}{\widetilde{E}}{\widetilde{I}}{\widetilde{P}}$ is 2-cocartesian with respect to $q$, i.e.\ that for every $Z \in \widetilde{C}$ the square
	\[\begin{tikzcd}
		{\HOM_{\SpanTwoP{\widetilde{C}}{\widetilde{E}}{\widetilde{I}}{\widetilde{P}}}(Y,Z)} & {\HOM_{\SpanTwoP{\widetilde{C}}{\widetilde{E}}{\widetilde{I}}{\widetilde{P}}}(X,Z)} \\
		{\HOM_{J}(p(Y),p(Z))} & {\HOM_{J}(p(X),p(Z))}
		\arrow["{- \circ f}", from=1-1, to=1-2]
		\arrow[from=1-1, to=2-1, "q"']
		\arrow[from=1-2, to=2-2, "q"]
		\arrow["{-\circ p(f)}"', from=2-1, to=2-2]
	\end{tikzcd}\]
	is a pullback square of categories. Using the description of the hom categories of $\SpanTwo$ obtained in the proof of Theorem \ref{thm:main-theorem}, we may identify this with the outer square in the diagram
	\[\hskip-7.8pt\hfuzz=7.8pt\begin{tikzcd}[cramped]
		{\Span_{\widetilde{I},\widetilde{P}}(\widetilde{C}_{/Y})\times_{\Span_{\widetilde{I},\widetilde{P}}}(\widetilde{C})\Span_{\widetilde{I},\widetilde{P}}(\widetilde{E}_{/Z})} &[-1em] {\Span_{\widetilde{I},\widetilde{P}}(\widetilde{C}_{/X})\times_{\Span_{\widetilde{I},\widetilde{P}}(\widetilde{C})}\Span_{\widetilde{I},\widetilde{P}}(\widetilde{E}_{/Z})} \\
		{\core((J\catop)_{/p(Y)}) \times_{\core(J\catop)} \{Z\} } & {\core((J\catop)_{/p(X)}) \times_{\core(J)} \{Z\}},
		\arrow[from=1-1, to=1-2]
		\arrow[from=1-1, to=2-1]
		\arrow[from=1-2, to=2-2]
		\arrow[from=2-1, to=2-2]
	\end{tikzcd}\]
	where in the bottom row one should think of $\core(J\catop)$ as $\Span_{\core J\catop, \core J\catop}(J\catop)$ and similarly for $\core((J\catop)_{/p(Y)})$. Since span categories preserve pullbacks, it thus remains to show that $\widetilde{C}_{/Y} \simeq \widetilde{C}_{/X} \times_{(J\catop)_{/p(X)}} (J\catop)_{/p(Y)}$, which is immediate from the fact that $f$ is cartesian with respect to $p\colon \widetilde{C} \to J\catop$.
\end{proof}

We are interested in characterizing biadjointable functors out of $\widetilde{C}$. This will rely on the following auxiliary result on adjointable squares in 2-cocartesian fibrations.

\begin{lemma}\label{lem:adjointability_in_cocart_fib}
Let $p\colon \twoC \to J$ be a 2-cocartesian fibration and suppose 
\begin{equation}\label{eq:diagram_adj_cocart}
	\begin{tikzcd}
	x & y & z \\
	{x'} & {y'} & {z'}
	\arrow[tail, from=1-1, to=1-2]
	\arrow[from=1-1, to=2-1]
	\arrow[from=1-2, to=1-3]
	\arrow[from=1-2, to=2-2]
	\arrow[from=1-3, to=2-3]
	\arrow[tail, from=2-1, to=2-2]
	\arrow[from=2-2, to=2-3]
\end{tikzcd}\end{equation}
is a commutative diagram in $\twoC$ living over the diagram 
\[\begin{tikzcd}
	i & j & j \\
	i & j & j
	\arrow["f", from=1-1, to=1-2]
	\arrow[equals, from=1-1, to=2-1]
	\arrow[equals, from=1-2, to=1-3]
	\arrow[equals, from=1-2, to=2-2]
	\arrow[equals, from=1-3, to=2-3]
	\arrow["f", from=2-1, to=2-2]
	\arrow[equals, from=2-2, to=2-3]
\end{tikzcd}\]
in $J$ such that the tailed edges are 2-cocartesian. Then the left square is left adjointable if and only if $x\to x'$ has a left adjoint in $\twoC_i$. In this case, the right square is left adjointable in $\twoC_j$ if and only if the outer square is left adjointable in $\twoC$. Similarly, the dual statements concerning right adjointability hold.
\end{lemma}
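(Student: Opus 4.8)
The plan is to work entirely with the 2-cocartesian fibration structure of $p$, reducing every assertion to two soft facts: 2-functors preserve adjunctions of $1$-morphisms, and whiskering a $2$-morphism by a 2-cocartesian $1$-morphism is conservative on appropriately fibered $2$-morphisms.

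First I would record the setup. Along the $1$-morphism $f\colon i\to j$ there is a pushforward 2-functor $f_!\colon\twoC_i\to\twoC_j$, characterised by the existence of 2-cocartesian edges $\eta_c\colon c\to f_!c$ over $f$ for each $c\in\twoC_i$ (see \cite[Section~6]{Nuiten2021Straightening}). Since $x\to y$ and $x'\to y'$ are 2-cocartesian over $f$, I may identify $y\simeq f_!x$, $y'\simeq f_!x'$ and these two horizontal maps with $\eta_x,\eta_{x'}$; the universal property of 2-cocartesian edges then forces the given fiberwise map $y\to y'$ to be $f_!v$, where $v\coloneqq(x\to x')$, while $b,b',z,z',v_z$ all live in the fiber $\twoC_j$. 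Two bookkeeping points will be used throughout: (i) a left (or right) adjoint in $\twoC$ of a morphism lying over $\id_j$ automatically lies over $\id_j$, since its image under $p$ is adjoint to $\id_j$ in the $1$-category $J$ and hence an equivalence --- so for such morphisms, admitting a left adjoint in $\twoC$ and in $\twoC_j$ amount to the same thing; and (ii) for $w\in\twoC_j$ the functor $(-)\circ\eta_{x'}\colon\HOM_{\twoC}(f_!x',w)\to\HOM_{\twoC}(x',w)$ restricts to an equivalence between the full subcategory of morphisms over $\id_j$ and the full subcategory of morphisms over $f$, which is precisely the pullback square characterising 2-cocartesianness of $\eta_{x'}$.

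For the first claim, ``only if'' is immediate from the definition of a left adjointable square. For ``if'', suppose $v$ has a left adjoint $u$ in $\twoC_i$. Applying the 2-functor $f_!$ yields an adjunction $f_!u\dashv f_!v$ in $\twoC_j$ whose unit and counit are $f_!$ of those of $u\dashv v$; as $f_!v$ is the vertical map $y\to y'$, this map admits a left adjoint. It remains to see that the Beck--Chevalley $2$-morphism $\BC$ of the left square is invertible. Both its source and target are morphisms $x'\to f_!x$ over $f$, so by (ii) they transport along $(-)\circ\eta_{x'}$ to morphisms $f_!x'\to f_!x$ in $\twoC_j$, namely to $f_!u$ in both cases; and a direct diagram chase --- using that composition of morphisms over $\id_i$ into an object of $\twoC_j$ with the edges $\eta$ is computed by $f_!$, under which the left square becomes the functoriality square of $f_!$ at $v$ --- identifies the transported $\BC$ with the composite of the unit $f_!u\Rightarrow f_!u\,f_!v\,f_!u$ and the counit $f_!u\,f_!v\,f_!u\Rightarrow f_!u$ of $f_!u\dashv f_!v$, i.e.\ an instance of a triangle identity, hence an equivalence. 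I expect this coherence identification to be the main obstacle, since it is the one place where one must be explicit about composition in, and left adjoints in, the total space of a 2-cocartesian fibration; everything else is formal.

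The second claim then follows by pasting. Assuming the left square is left adjointable, the outer square is the horizontal composite of the left and right squares (the diagram \eqref{eq:diagram_adj_cocart} commutes, so $v_z b a\simeq b' a' v_x$), and by the standard compatibility of Beck--Chevalley maps with composition of squares its Beck--Chevalley map factors, up to equivalence, as the composite $u_z b' a'\Rightarrow b u_y a'\Rightarrow b a u_x$, in which the first arrow is $\BC_{\textup{right}}$ whiskered by $a'$ and the second is $\BC_{\textup{left}}$ whiskered by $b$, where $u_x,u_y,u_z$ denote chosen left adjoints. The second arrow is invertible by hypothesis, so the outer Beck--Chevalley map is an equivalence if and only if $\BC_{\textup{right}}$ whiskered by $a'$ is. But $\BC_{\textup{right}}$ is a $2$-morphism between $1$-morphisms $y'\to z$ lying over $\id_j$ and $a'=\eta_{x'}$ is 2-cocartesian over $f$, so by the conservativity in (ii) this holds if and only if $\BC_{\textup{right}}$ itself is invertible; together with (i) applied to $v_z$ (so that having a left adjoint in $\twoC$ and in $\twoC_j$ coincide) this shows that the outer square is left adjointable in $\twoC$ if and only if the right square is left adjointable in $\twoC_j$. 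The right-adjointable statements follow by the same argument, now using that $f_!$ sends the adjunction $v\dashv r$ for a right adjoint $r$ of $v$ in $\twoC_i$ to the adjunction $f_!v\dashv f_!r$ in $\twoC_j$, and that dual Beck--Chevalley maps paste and whisker in the same way.
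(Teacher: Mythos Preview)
Your argument is correct and takes a genuinely different route from the paper's. The paper proves both parts via the corepresentability of adjointable squares by $[1]\times\Adj$ (Proposition~\ref{prop:corep-badj}): for the first claim it straightens $p$ to a functor $\twoC(-)\colon J\to\CAT_2$, observes that the adjunction $\Adj\to\twoC(i)$ together with the transition map $\twoC(i)\to\twoC(j)$ unstraightens to a map $[1]\times\Adj\to\twoC$ of 2-cocartesian fibrations whose restriction to $[1]\times[1]$ is the left square; for the second it uses that $\FUN(\Adj,\twoC)\to\FUN(\Adj,J)$ is again a 2-cocartesian fibration (citing \cite{AHM_FreeFibrations}), and factors the curried map $[1]\to\FUN(\Adj,\twoC)$ as a cocartesian edge followed by a fiberwise one to produce $[2]\times\Adj\to\twoC$. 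By contrast you work directly with the Beck--Chevalley maps: for the left square you identify the transported $\BC$ with a triangle identity of the adjunction $f_!u\dashv f_!v$, and for the second statement you use the standard pasting decomposition of $\BC$ together with the conservativity of whiskering by a 2-cocartesian edge. Your approach is more elementary in that it avoids the external input from \cite{AHM_FreeFibrations} and never leaves the definition of the fibration; the paper's approach is slicker once that machinery is available and sidesteps the explicit diagram chase you flag as the main obstacle. One small point: for the ``only if'' direction of the first claim you should explicitly invoke your observation (i), since left adjointability of the square a priori only gives a left adjoint of $v$ in $\twoC$, not in $\twoC_i$.
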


\begin{proof}
We will write $\twoC(-)\colon J\to \CAT_2$ for the functor associated to $p\colon \twoC\to J$ by straightening. We begin with the proof of the first statement. By definition the morphism $x\to x'$ extends to a functor $\Adj \to \twoC(i)$. We can then consider the commutative diagram in $\CAT_2$ of the form
\[
\begin{tikzcd}
	\Adj \dar[swap] \rar[equal] & \Adj \dar \\
	\twoC(i) \rar & \twoC(j),
\end{tikzcd}
\]
By interpreting this as a natural transformation from $\const_{\Adj}$ to the restriction of $\twoC(-)$ to $\{f\}\colon [1]\to J$ and unstraightening we obtain  a map
\[
\begin{tikzcd}
	{[1]} \times \Adj \dar \rar[dashed] & \twoC \dar \\
	{[1]} \rar{\{f\}} & J.
\end{tikzcd}
\]
of 2-cocartesian fibrations. By unwinding the construction, the restriction of the top map to $[1] \times [1]$ encodes the left-hand square in the above decomposition, which by Proposition \ref{prop:corep-badj} is therefore left adjointable.

Now we prove the second statement. Suppose that the right square is left adjointable in $\twoC_j$, hence also in $\twoC$. Then the outer square is left adjointable, since such squares compose. On the other hand, suppose that the outer square is left adjointable. By Proposition \ref{prop:corep-badj} the outer square extends to a functor $F\colon [1] \times \Adj \to \twoC$, which we curry and consider as a functor $F\colon [1]\to \FUN(\Adj,\twoC)$. We note that the functor $\FUN(\Adj,\twoC)\to \FUN(\Adj,J)$ is again a 2-cocartesian fibration (\cite[Corollary 3.5.6]{AHM_FreeFibrations} for $\mathbb{K} = J$ with the maximal marking)  and that a natural transformation is cocartesian if and only if all of its components are. Therefore we may decompose $F$ as a composite of a cocartesian edge followed by a fiberwise edge, and obtain a functor $\bar{G}\colon[2]\times \Adj \to \twoC$ whose restriction to $\{0 \leq 2\} \times \Adj$ agrees with $F$. By uniqueness of cocartesian edges this must also extend the original diagram  \eqref{eq:diagram_adj_cocart}. We conclude that the right square extends to a functor $[1]\times \Adj \to \twoC$, and so is left adjointable by Proposition~\ref{prop:corep-badj} again.
\end{proof}

We can now establish a criterion for biadjointability of functors out of ${\widetilde{C}}\catop$ into a 2-cocartesian fibration over $J$. For this we require the following notation:

\begin{notation}\label{not:fibr_lax_tran}
Let $\twoC(-),\twoD(-)\colon J\to \CAT_2$ be two functors and let $\phi\colon \Un^\cc(\twoC)\to \Un^{\cc}(\twoD)$ be a functor over $J$. Given any morphism $f\colon i\to j$ in $J$ and any object $X\in \Un^\cc(\twoC)$ in the fiber over $i\in J$, we can factor the image of the cocartesian edge $X\rightarrowmono \twoC(f)(X)$ under $\phi$ into a cocartesian edge in $\Un^{\cc}(\twoD)$ followed by a fiberwise edge  
\[
\phi(X) \rightarrowmono \twoD(f)\phi(X) \xrightarrow{\,\phi_f\,} \phi\twoC(f)(X)
\]
which we denote by $\phi_f$. In fact, an easy argument shows that the edges $\phi_f$ assemble into a natural transformation of the form $\phi_f\colon \twoD(f)\phi_i \Rightarrow \phi_j\twoC(f)$, see \cite[Construction~3.2.4]{HHLNother} for an analogous construction.
\end{notation}

\begin{lemma}\label{lem:fiberwise_biadjointability}
	Let $\twoD\colon J \to \CAT_2$ be a functor. A functor $\phi\colon {\widetilde{C}}\catop \to \Un^\cc(\twoD)$ over $J$ is $(\widetilde{I},\widetilde{P})$-biadjointable if and only if the following two conditions are satisfied:
	\begin{enumerate}
		\item\label{it:fiberwise_biadj} The restriction $\phi_j\colon C(j)\catop \to \twoD(j)$ is $(I(j),P(j))$-biadjointable for every $j \in J$.
		\item\label{it:transition_biadj} For each morphism $f\colon j \to j'$ in $J$, the canonical natural transformation $\phi_f\colon \twoD(f) \circ \phi_j \Rightarrow \phi_{j'} \circ C(f)$ is $(I(j),P(j))$-biadjointable.
	\end{enumerate}
	Similarly, a natural transformation $\epsilon\colon \phi \Rightarrow \phi'$ between two $(\widetilde{I},\widetilde{P})$-biadjointable functors is $(\widetilde{I},\widetilde{P})$-biadjointable if and only if each fiberwise restriction $\epsilon_j\colon \phi_j \Rightarrow \phi_j'$ is $(I(j),P(j))$-biadjointable.
\end{lemma}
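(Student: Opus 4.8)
The key observation is that, by Lemma~\ref{lem:Span2-cocart-fib}, the inclusion $\widetilde{C}\catop\hookrightarrow\SpanTwoP{\widetilde{C}}{\widetilde{E}}{\widetilde{I}}{\widetilde{P}}$ sends all fiberwise-cartesian edges of $\widetilde{C}\to J\catop$ to $2$-cocartesian $1$-morphisms of the $2$-cocartesian fibration $q\colon\SpanTwoP{\widetilde{C}}{\widetilde{E}}{\widetilde{I}}{\widetilde{P}}\to J$. Since a morphism $i\colon x\to y$ in $\widetilde{I}$ lies in a single fiber $\widetilde{C}_j\simeq C(j)$, and a generic base change of $i$ along an arbitrary map $\phi\colon y'\to y$ in $\widetilde C$ decomposes as in \eqref{eq:Pullback_Unstraightening} into a fiberwise pullback (living in $C(py)$) followed by a pullback along a cartesian edge, I would argue that checking $(\widetilde{I},\widetilde{P})$-biadjointability of $\phi$ against a generic such pullback square splits into two conditions: one coming from the fiberwise pullback square, and one coming from the pullback along the cartesian edge. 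I would organize the argument so that the former contributes precisely condition~\eqref{it:fiberwise_biadj} and the latter precisely condition~\eqref{it:transition_biadj}, via Lemma~\ref{lem:adjointability_in_cocart_fib}.

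More precisely: first I would reduce to verifying left $\widetilde{I}$-adjointability, right $\widetilde{P}$-adjointability, and the double Beck--Chevalley condition separately, treating the left case in detail (the right case being dual, and the double Beck--Chevalley case being a straightforward variant). Fix $i\colon x\to y$ in $\widetilde{I}$ over $j\in J$ and $\phi\colon y'\to y$ in $\widetilde{C}$ over $f\colon j'\to j$ in $J\catop$. Using \eqref{eq:Pullback_Unstraightening} I would factor the associated square in $\Un^\cc(\twoD)$ after applying $\phi$ as a horizontal pasting of two squares: the left one is (up to the identification of fibers) the image under $\phi_j$ of a pullback square in $C(j)$ with horizontal maps in $I(j)$, and the right one lives over the edge $f$ in $J$ with the horizontal legs $2$-cocartesian. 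By Lemma~\ref{lem:adjointability_in_cocart_fib}, left adjointability of the left square is equivalent to $\phi_j(i^*)$ admitting a left adjoint in $\twoD(j)$ — i.e.\ exactly what condition~\eqref{it:fiberwise_biadj} provides — and, granting this, left adjointability of the outer (composite) square in $\twoD$ is equivalent to left adjointability of the right square in $\twoD(j)$; and the latter is exactly the Beck--Chevalley condition appearing in the definition of $\phi_f$ being left $I(j)$-adjointable, i.e.\ condition~\eqref{it:transition_biadj}. Since every map $\phi$ in $\widetilde C$ arises as such a composite, and conversely every $i\in\widetilde{I}$ arises this way, the two conditions together are equivalent to left $\widetilde{I}$-adjointability. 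The double Beck--Chevalley condition is handled identically, now pasting a $P$-pullback on top of an $I$-pullback and invoking the biadjointability clause of Lemma~\ref{lem:adjointability_in_cocart_fib}; alternatively one may note that $\widetilde I\cap\widetilde P$-pullbacks again lie in a single fiber, reducing the mixed base change to the fiberwise statement plus the transition statement exactly as before.

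For the statement about natural transformations, I would argue in the same spirit: a natural transformation $\epsilon\colon\phi\Rightarrow\phi'$ over $J$ restricts fiberwise to $\epsilon_j\colon\phi_j\Rightarrow\phi_j'$, and the Beck--Chevalley condition for $\epsilon$ against a map $i\colon x\to y$ in $\widetilde{I}$ — which lies entirely in the fiber $C(j)$ — is, by naturality and the fact that $\epsilon$ is a map over $J$ (hence compatible with the cocartesian edges defining the $\phi_f$), literally the Beck--Chevalley condition for $\epsilon_j$ against $i$ in $\twoD(j)$. No transition condition appears here because there is no base change in $\widetilde{I}$ that crosses fibers. Thus $\epsilon$ is $(\widetilde{I},\widetilde{P})$-biadjointable iff each $\epsilon_j$ is $(I(j),P(j))$-biadjointable.

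\textbf{The main obstacle.} The genuinely delicate point is the bookkeeping that identifies the ``right square over $f$'' appearing in the pasting decomposition with the Beck--Chevalley square whose invertibility defines $\phi_f$ being left $I(j)$-adjointable. This requires being careful that the transition natural transformation $\phi_f$ of Notation~\ref{not:fibr_lax_tran} is the one obtained by factoring cocartesian edges, and that Lemma~\ref{lem:adjointability_in_cocart_fib} is being applied to exactly the $2$-cocartesian $1$-morphisms supplied by Lemma~\ref{lem:Span2-cocart-fib}; once the diagram is set up correctly the equivalence is immediate, but getting the indices and variances to line up (especially passing between $\widetilde C$ and $\widetilde C\catop$, and between $J$ and $J\catop$) is where the care is needed. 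I would expect this to take a page of diagram-chasing that is conceptually routine but notationally heavy, and I would likely suppress most of it by saying ``unwinding definitions'' after setting up the key pasting diagram.
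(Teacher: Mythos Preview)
Your approach is the same as the paper's: decompose the pullback in $\widetilde{C}$ via \eqref{eq:Pullback_Unstraightening}, handle the fiberwise square via condition~\eqref{it:fiberwise_biadj}, and analyze the square over the cartesian edge via Lemma~\ref{lem:adjointability_in_cocart_fib} to extract condition~\eqref{it:transition_biadj}; the mixed Beck--Chevalley condition is purely fiberwise since both $\widetilde{I}$ and $\widetilde{P}$ consist of fiberwise maps, and the statement on natural transformations follows for the same reason.

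Two points to sharpen. First, the opening reference to Lemma~\ref{lem:Span2-cocart-fib} and the span $2$-category is a red herring: the $2$-cocartesian fibration in which you work is $\Un^\cc(\twoD)\to J$, and the span $2$-category plays no role in this lemma. Second, and more substantively, your claim that the ``right square'' has $2$-cocartesian horizontal legs after applying $\phi$ is not quite right. The $p$-cartesian edges of $\widetilde{C}$ become cocartesian in $\widetilde{C}\catop$, but $\phi$ is only a functor over $J$ and need not preserve them. The paper therefore performs a \emph{further} decomposition of the image of this square in $\Un^\cc(\twoD)$ (displayed as \eqref{eq:decomp_of_cocart_biadj}), factoring each such leg as $2$-cocartesian-then-fiberwise; Lemma~\ref{lem:adjointability_in_cocart_fib} is applied to \emph{that} pasting, whose fiberwise right-hand square is precisely the naturality square of $\phi_f$. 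You allude to this factoring in your ``main obstacle'' paragraph, so you have the right idea, but it belongs in the argument itself. (Also: there is no ``biadjointability clause'' in Lemma~\ref{lem:adjointability_in_cocart_fib}; for the double Beck--Chevalley your ``alternatively'' is in fact the actual argument, and you should drop the other option.)
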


\begin{proof}
	Since morphisms in $\widetilde{I}$ and $\widetilde{P}$ lie in single fibers of $\widetilde{C}$, the condition that $\phi$ sends morphisms in $\widetilde{I}$ to right adjoints reduces to a fiberwise condition, which is captured by \eqref{it:fiberwise_biadj}. It remains to analyze the Beck--Chevalley conditions.

	We focus on left $\widetilde{I}$-adjointability first. By the proof of Lemma \ref{lem:Unstraightening_Biadjointable_Triple}, any pullback square in $\widetilde{C}$ of a morphism $i\colon x \to y$ in $\widetilde{I}$ can be decomposed as in \eqref{eq:Pullback_Unstraightening}:
	\[
	\begin{tikzcd}
		x' \dar[swap]{i'} \rar \drar[pullback] & C(f)(x) \dar[swap]{f^*i} \rar \drar[pullback] & x \dar{i} \\
		y' \rar & C(f)(y) \rar & y,
	\end{tikzcd}
	\]
	where $f\colon j' \to j$ is the image in $J$ of the maps $x' \to x$ and $y' \to y$, the left-hand square is a pullback in the fiber $C(py')$, and the horizontal morphisms in the right-hand square are $p$-cartesian. We may thus analyze the two squares individually.

	For the left-hand square, this is a pullback square in the fiber $C(py')$, so $\phi$ sends it to a left adjointable square in $\Un^\cc(\twoD)$ if and only if $\phi_{py'}$ sends it to a left adjointable square in $\Dd(py')$. This is exactly the Beck--Chevalley condition for the fiberwise left $I$-adjointability of $\phi_{py'}$, captured by \eqref{it:fiberwise_biadj}.

	For the right-hand square, observe that its image under $\phi$ decomposes as
	\begin{equation}\label{eq:decomp_of_cocart_biadj}
		\begin{tikzcd}
		{\phi_j(y)} & {\twoD(f)\phi_j(y)} & {\phi_{j'}(C(f)(y))} \\
		{\phi_j(x)} & {\twoD(f)\phi_j(x)} & {\phi_{j'}(C(f)(x)),}
		\arrow["{\phi_j(i)}"', from=1-1, to=2-1]
		\arrow[tail, from=1-1, to=1-2]
		\arrow[tail, from=2-1, to=2-2]
		\arrow["{\twoD(f)\phi_j(i)}"', from=1-2, to=2-2]
		\arrow["{\phi_f(y)}", from=1-2, to=1-3]
		\arrow["{\phi_f(x)}"', from=2-2, to=2-3]
		\arrow["{\phi_{j'}(C(f)(i))}", from=1-3, to=2-3]
	\end{tikzcd}\end{equation}
	where the tailed edges are 2-cocartesian. Observe that the condition that the right-hand square is vertically left adjointable for all $f\colon j\to j'$ is precisely condition \eqref{it:transition_biadj}, while the left-hand square is left adjointable by Lemma~\ref{lem:adjointability_in_cocart_fib}. Therefore the outer square is left adjointable, establishing left $\widetilde{I}$-adjointability.

	The argument for right $\widetilde{P}$-adjointability is dual. Finally, since every pullback of a morphism in $\widetilde{I}$ by a morphism in $\widetilde{P}$ lies in a single fiber $C(j)$, the mixed Beck--Chevalley condition reduces to the fiberwise case.

	This shows that conditions \eqref{it:fiberwise_biadj} and \eqref{it:transition_biadj} are sufficient. Conversely, suppose $\phi$ is $(\widetilde{I},\widetilde{P})$-biadjointable. Then condition \eqref{it:fiberwise_biadj} is immediate. For condition  \eqref{it:transition_biadj}, we note that $(\widetilde{I},\widetilde{P})$-biadjointability implies that the outer square in \eqref{eq:decomp_of_cocart_biadj} is left adjointable for all $i\colon x\to y$ in $\tilde{I}$ and $f\colon j\to j'$ in $J$. By Lemma~\ref{lem:adjointability_in_cocart_fib} the left square of \eqref{eq:decomp_of_cocart_biadj} is therefore also always left adjointable, proving that $\phi_f$ is $I(j)$-left adjointable for all $f\colon j\to j'$. Arguing dually for $\widetilde{P}$ completes the proof that condition  \eqref{it:transition_biadj} holds. 

	Finally for the statement about natural transformations it suffices to observe that since $\widetilde{I}$ and $\widetilde{P}$ consist solely of fiberwise morphisms, a natural transformation is $(\widetilde{I},\widetilde{P})$-biadjointable if and only if its restriction to each fiber is.
\end{proof}

We now come to the main result of this subsection. As before, we let $J$ be a category, and let $\Cc\colon J\to\BAdjTrip$ be a functor such that for each $j \in J$ the triple $\Cc(j) = (C(j), I(j),P(j))$ satisfies the assumptions from Convention \ref{conv:most-general}. We let $\widetilde{C} = \Un^{\ct}(C)$ denote the cartesian unstraightening of $C\colon J \to \Cat$, and let $\widetilde{I},\widetilde{P},\widetilde{E}\subset \widetilde{C}$ be the wide subcategories from Construction \ref{cons:Unstraightening_Biadjointable_Triple}.

\begin{theorem}
	\label{thm:Unstraightening-Span-2}
	The inclusion ${\widetilde{C}}\catop \hookrightarrow \Un^\cc\big(\SpanTwo(\Cc(-))\big)$ is $(\widetilde{I},\widetilde{P})$-biadjointable, and uniquely extends to an equivalence of 2-categories over $J$ of the form
	\[
		\SpanTwoP{\widetilde{C}}{\widetilde{E}}{\widetilde{I}}{\widetilde{P}} \iso \Un^\cc\big(\SpanTwo(\Cc(-))\big).
	\]
\end{theorem}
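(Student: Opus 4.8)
The plan is to apply the recognition principle (Theorem~\ref{thm:Recognizing_The_Universal_Biadjointable_Functor}), but in the relative setting over $J$. Concretely, I would work inside the $3$-category $(\CatTwo)_{/J}^\textup{cc}$ of $2$-cocartesian fibrations over $J$, which by Nuiten's straightening equivalence is identified with $\FUN(J,\CatTwo)$. The target $\Un^\cc(\SpanTwo(\Cc(-)))$ is a $2$-cocartesian fibration over $J$ by construction, and the source $\SpanTwoP{\widetilde{C}}{\widetilde{E}}{\widetilde{I}}{\widetilde{P}}$ is a $2$-cocartesian fibration over $J$ by Lemma~\ref{lem:Span2-cocart-fib} (applied to the functor $p\colon\widetilde C\to J\catop$, which inverts all of $\widetilde I,\widetilde P,\widetilde E$). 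The inclusion ${\widetilde C}\catop\hookrightarrow\SpanTwoP{\widetilde{C}}{\widetilde{E}}{\widetilde{I}}{\widetilde{P}}$ is a map over $J$ preserving $2$-cocartesian $1$-morphisms (again by Lemma~\ref{lem:Span2-cocart-fib}), and the inclusion ${\widetilde C}\catop\hookrightarrow\Un^\cc(\SpanTwo(\Cc(-)))$ likewise preserves cocartesian $1$-cells since it is the unstraightening of the pointwise inclusions. So we have two objects of $(\CatTwo)_{/J}^\textup{cc}$ under ${\widetilde C}\catop$, and it suffices to produce an equivalence between them over $J$ extending this inclusion.

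First I would record that the inclusion ${\widetilde C}\catop\hookrightarrow\Un^\cc(\SpanTwo(\Cc(-)))$ is $(\widetilde I,\widetilde P)$-biadjointable: by Lemma~\ref{lem:fiberwise_biadjointability} this amounts to checking that each fiberwise restriction $C(j)\catop\hookrightarrow\SpanTwo(\Cc(j))$ is $(I(j),P(j))$-biadjointable, which is Proposition~\ref{prop:Span2_Biadjointable}, and that for each $f\colon j\to j'$ in $J$ the canonical comparison transformation $\phi_f$ is $(I(j),P(j))$-biadjointable. The latter is a naturality statement for the Beck--Chevalley data along the transition functors $C(f)$; since $C(f)$ is a morphism of biadjointable triples (an object of $\BAdjTrip$), postcomposition with $\SpanTwo$ turns the transition squares for $h$ into honest pullbacks/companions in the span $2$-categories, and the relevant Beck--Chevalley $2$-cells are invertible by the explicit computations in the proof of Proposition~\ref{prop:Span2_Biadjointable}. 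The same applies for the $(\widetilde C,\widetilde E,\widetilde I,\widetilde P)$-assumptions, which hold by Lemma~\ref{lem:Unstraightening_Biadjointable_Triple}.

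Next, by Theorem~\ref{thm:main-theorem} (the absolute case, already proven) applied to $(\widetilde C,\widetilde E,\widetilde I,\widetilde P)$, the inclusion ${\widetilde C}\catop\hookrightarrow\SpanTwoP{\widetilde{C}}{\widetilde{E}}{\widetilde{I}}{\widetilde{P}}$ is the initial $(\widetilde I,\widetilde P)$-biadjointable functor out of ${\widetilde C}\catop$. Hence the $(\widetilde I,\widetilde P)$-biadjointable functor ${\widetilde C}\catop\hookrightarrow\Un^\cc(\SpanTwo(\Cc(-)))$ extends uniquely to a $2$-functor $\Phi\colon\SpanTwoP{\widetilde{C}}{\widetilde{E}}{\widetilde{I}}{\widetilde{P}}\to\Un^\cc(\SpanTwo(\Cc(-)))$. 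It remains to see that $\Phi$ lies over $J$ and is an equivalence. For the former, $\Phi$ composed with the structure map to $J$ is an extension of the structure map of ${\widetilde C}\catop$, and both $\Phi$ followed by projection and the canonical projection $q$ followed by nothing are $(\widetilde I,\widetilde P)$-biadjointable functors (the projections invert $\widetilde I,\widetilde P$) extending the same map; uniqueness in Theorem~\ref{thm:main-theorem} forces them to agree, so $\Phi$ is over $J$. Moreover $\Phi$ preserves $2$-cocartesian $1$-morphisms: both inclusions send the relevant $p$-cartesian morphisms of $\widetilde C$ to $2$-cocartesian $1$-cells (Lemma~\ref{lem:Span2-cocart-fib}), and $\Phi$ is compatible with these. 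Thus $\Phi$ is a morphism in $(\CatTwo)_{/J}^\textup{cc}$, so by straightening it corresponds to a natural transformation of $J$-diagrams of $2$-categories. To conclude it is an equivalence it suffices to check this fiberwise; but the fiber of $\Phi$ over $j$ is, by Lemma~\ref{lem:Span2-cocart-fib} together with the fiberwise description of the source, precisely the canonical functor $\SpanTwoP{C(j)}{E(j)}{I(j)}{P(j)}\to\SpanTwo(\Cc(j))$ extending $h$, which is an equivalence — indeed the \emph{identity} on objects and, on $\HOM$-categories, the equivalence furnished by the absolute Theorem~\ref{thm:main-theorem} for the triple $\Cc(j)$.

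\textbf{Main obstacle.} The one genuinely non-formal point is verifying condition~\eqref{it:transition_biadj} of Lemma~\ref{lem:fiberwise_biadjointability} for the inclusion into $\Un^\cc(\SpanTwo(\Cc(-)))$: one must identify the abstractly-defined comparison cells $\phi_f$ with the Beck--Chevalley cells coming from the adjunctions in the span $2$-categories, and check these are invertible. This is ultimately the claim that the unstraightening operation is compatible with the adjunctions $i=i\rightarrowmono y$ and $y\leftarrowmono i=i$ witnessed in Proposition~\ref{prop:Span2_Biadjointable} — i.e.\ that the transition functors $\SpanTwo(\Cc(f))$ preserve these adjoints and their units/counits. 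Since $\Cc(f)$ preserves pullbacks along $I$ and along $P$ and $\SpanTwo$ is a functor, the explicit formulas for the (co)units in terms of diagonals and projections are carried along verbatim, so this is a (somewhat lengthy but) routine diagram chase; I would either spell it out or deduce it from the uniqueness of adjoints together with the fact that $\Un^\cc(\SpanTwo(\Cc(-)))\to J$ is a $2$-cocartesian fibration, so that fiberwise left adjoints of fiberwise $1$-cells glue to a global left adjoint.
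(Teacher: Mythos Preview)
Your overall approach matches the paper's proof almost exactly: verify $(\widetilde I,\widetilde P)$-biadjointability via Lemma~\ref{lem:fiberwise_biadjointability}, invoke Theorem~\ref{thm:main-theorem} to get the extension $\Phi$, check that $\Phi$ is a map of $2$-cocartesian fibrations over $J$, and conclude by a fiberwise equivalence check.

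The one place where you make your life harder than necessary is what you flag as the ``main obstacle,'' namely condition~\eqref{it:transition_biadj}. You propose a diagram chase comparing $\phi_f$ with Beck--Chevalley cells. The paper's argument is much shorter: the inclusion ${\widetilde C}\catop\hookrightarrow\Un^\cc(\SpanTwo(\Cc(-)))$ is by construction the unstraightening of a \emph{natural transformation} $C(-)\catop\Rightarrow\SpanTwo(\Cc(-))$ of functors $J\to\CatTwo$. For any such unstraightened natural transformation the comparison $2$-cells $\phi_f$ of Notation~\ref{not:fibr_lax_tran} are equivalences by definition, so condition~\eqref{it:transition_biadj} holds trivially---there is no diagram chase to perform. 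Your proposed argument would also work, but this observation removes the obstacle entirely.
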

\begin{proof}
	For the first claim, we check that the inclusion $\phi\colon {\widetilde{C}}\catop \hookrightarrow \Un^\cc\big(\SpanTwo(\Cc(-))\big)$ satisfies the conditions of Lemma \ref{lem:fiberwise_biadjointability}. Condition \eqref{it:fiberwise_biadj} holds because the restriction to the fiber over $j$ is the inclusion $C(j)\catop \hookrightarrow \SpanTwo(\Cc(j))$, which is $(I(j),P(j))$-biadjointable by Proposition~\ref{prop:Span2_Biadjointable}. For condition \eqref{it:transition_biadj}, consider a morphism $f\colon j \to j'$ in $J$. The 2-cocartesian pushforward $f_!$ on $\Un^\cc\big(\SpanTwo(\Cc(-))\big)$ is given by the 2-functor $f^*\colon \SpanTwo(\Cc(j)) \to \SpanTwo(\Cc(j'))$. Since the inclusion $C(j)\catop \hookrightarrow \SpanTwo(\Cc(j))$ is natural in $j$, the canonical natural transformation $\phi_f$ is an equivalence, and hence automatically $(I(j),P(j))$-biadjointable.

	By the universal property, the inclusion uniquely extends to a 2-functor
	\[
		\Phi\colon \SpanTwoP{\widetilde{C}}{\widetilde{E}}{\widetilde{I}}{\widetilde{P}} \to \Un^\cc\big(\SpanTwo(\Cc(-))\big).
	\]
	Since the functor $\phi$ lived over $J$, the universal property implies that $\Phi$ also lives over $J$. To finish the theorem, we must show $\Phi$ is an equivalence. By Lemma \ref{lem:Span2-cocart-fib}, the left-hand side is a 2-cocartesian fibration over $J$, and so is the right-hand side by construction. Moreover, since all 2-cocartesian morphisms on the left lie in the image of the inclusion from ${\widetilde{C}}\catop$ and the inclusion ${\widetilde{C}}\catop \hookrightarrow \Un^\cc\big(\SpanTwo(\Cc(-))\big)$ preserves 2-cocartesian morphisms by construction, the same then follows for its extension to $\SpanTwoP{\widetilde{C}}{\widetilde{E}}{\widetilde{I}}{\widetilde{P}}$. Since all cartesian 2-morphisms on both sides are equivalences, these are preserved by $\Phi$, showing that $\Phi$ is a morphism of 2-cocartesian fibrations over $J$.

	By straightening-unstraightening, it remains to show that $\Phi$ induces an equivalence on fibers over each object $j \in J$. Since $\SpanTwoP{-}{-}{}{}$ preserves pullbacks, the fiber of the left-hand side is the span 2-category $\SpanTwoP{C(j)}{E(j)}{I(j)}{P(j)}$, which is also the fiber over $j$ of the right-hand side. Since the comparison functor $\SpanTwoP{C(j)}{E(j)}{I(j)}{P(j)} \to \SpanTwoP{C(j)}{E(j)}{I(j)}{P(j)}$ is an extension of the inclusion $C(j)\catop \hookrightarrow \SpanTwoP{C(j)}{E(j)}{I(j)}{P(j)}$, it follows by the universal property that it is the identity, hence in particular an equivalence.
\end{proof}

\begin{corollary}\label{cor:lax_tran_out_of_span}
Let $\Cc(-) = (C(-),I(-),P(-))\colon J\to \BAdjTrip$ and $\twoD(-)\colon J\to \CAT_2$ be two functors. Restriction along the inclusion
\[
h\colon \Un^{\cc}(C(-)\catop) \hookrightarrow \Un^{\cc}(\SpanTwo(\Cc(-)))
\]
induces a locally full inclusion
\[\hskip-31.2pt\hfuzz=31.2pt
h^*\colon \HOM_{(\CAT_2)_{/J}}\big(\Un^{\cc}(\SpanTwo(\Cc(-))), \Un^{\cc}(\twoD(-))\big) \hookrightarrow \HOM_{(\CAT_2)_{/J}}\big(\Un^{\cc}(C(-)\catop), \Un^{\cc}(\twoD(-))\big).
\]
Its image is spanned
\begin{enumerate}
	\item\label{it:biad_tran} on objects by those functors $\phi\colon \Un^{\cc}(C(-)\catop) \to \Un^{\cc}(\twoD(-))$ over $J$ such that:
	\begin{enumerate}
		\item\label{it:biad_tran_one} For each $j \in J$, the restriction $\phi_j\colon C(j)\catop \to \twoD(j)$ to the fibers over $j$ is $(I(j),P(j))$-biadjointable,
		\item\label{it:biad_tran_two} For each morphism $f\colon j\to j'$ in $J$, the canonical natural transformation $\phi_f\colon \twoD(f) \phi_j \Rightarrow \phi_{j'} C(f)$ is $(I(j),P(j))$-biadjointable.
		\end{enumerate}
	\item\label{it:biadj_mod} on 1-morphisms by those natural transformations $\epsilon \colon \phi \Rightarrow \phi'$ such that the restriction to each fiber $\epsilon_j\colon \phi_j\Rightarrow \phi_{j}'$ is $(I(j),P(j))$-biadjointable for all $j\in J$.
\end{enumerate}
\end{corollary}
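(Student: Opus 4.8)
The plan is to deduce this directly from the universal property of span 2-categories in the form of Theorem~\ref{thm:main-theorem}, once the source is recognised as a span 2-category via Theorem~\ref{thm:Unstraightening-Span-2} and the biadjointability conditions are unwound via Lemma~\ref{lem:fiberwise_biadjointability}.

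First I would invoke Theorem~\ref{thm:Unstraightening-Span-2}. Writing $\widetilde{C} = \Un^\ct(C)$ and letting $\widetilde{I},\widetilde{P},\widetilde{E}\subset\widetilde{C}$ be the wide subcategories from Construction~\ref{cons:Unstraightening_Biadjointable_Triple}, Lemma~\ref{lem:Unstraightening_Biadjointable_Triple} ensures that $(\widetilde{C},\widetilde{E},\widetilde{I},\widetilde{P})$ satisfies the hypotheses of Convention~\ref{conv:most-general}, and Theorem~\ref{thm:Unstraightening-Span-2} provides an equivalence $\SpanTwoP{\widetilde{C}}{\widetilde{E}}{\widetilde{I}}{\widetilde{P}}\iso\Un^\cc(\SpanTwo(\Cc(-)))$ of 2-categories over $J$ under which $h$ corresponds to the canonical inclusion $h'\colon\widetilde{C}\catop\hookrightarrow\SpanTwoP{\widetilde{C}}{\widetilde{E}}{\widetilde{I}}{\widetilde{P}}$. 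Hence it suffices to analyse precomposition along $h'$.

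Next I would use that Theorem~\ref{thm:main-theorem} is natural in the target and therefore compatible with slicing over $J$. For any 2-functor $\twoD'\to J$, the 2-category $\HOM_{(\CAT_2)_{/J}}(\twoC,\twoD')$ is the pullback $\FUN(\twoC,\twoD')\times_{\FUN(\twoC,J)}\{\pi_\twoC\}$ over the structure map $\pi_\twoC\colon\twoC\to J$, and precomposition along the $J$-morphism $h'$ is the induced map between such pullbacks. Applying Theorem~\ref{thm:main-theorem} with target $\twoD'\coloneqq\Un^\cc(\twoD(-))$ shows that $(h')^*\colon\FUN(\SpanTwoP{\widetilde{C}}{\widetilde{E}}{\widetilde{I}}{\widetilde{P}},\twoD')\to\FUN(\widetilde{C}\catop,\twoD')$ is the locally full inclusion onto $\FUN_{(\widetilde{I},\widetilde{P})\dbadj}(\widetilde{C}\catop,\twoD')$; applying it with target $J$ shows in addition that a 2-functor $\SpanTwoP{\widetilde{C}}{\widetilde{E}}{\widetilde{I}}{\widetilde{P}}\to J$ (and likewise a natural transformation or modification between two such) is determined by its restriction along $h'$, since these correspond to $(\widetilde{I},\widetilde{P})$-biadjointable functors $\widetilde{C}\catop\to J$, that is, to functors inverting $\widetilde{E}$ --- of which $\pi_{\widetilde{C}\catop}$ is one. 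Passing to the $J$-relative parts on both sides, and using that locally full inclusions are stable under the pullbacks that compute hom 2-categories in a slice, it follows that $h^* = (h')^*$ is a locally full inclusion of $\HOM_{(\CAT_2)_{/J}}(\Un^\cc(\SpanTwo(\Cc(-))),\Un^\cc(\twoD(-)))$ into $\HOM_{(\CAT_2)_{/J}}(\Un^\cc(C(-)\catop),\Un^\cc(\twoD(-)))$ whose image consists of the $(\widetilde{I},\widetilde{P})$-biadjointable functors over $J$ together with the $(\widetilde{I},\widetilde{P})$-biadjointable natural transformations over $J$. By Lemma~\ref{lem:fiberwise_biadjointability} the former are precisely the functors satisfying \eqref{it:biad_tran_one} and \eqref{it:biad_tran_two}, and the latter are precisely the natural transformations all of whose fiberwise restrictions are biadjointable, i.e.\ \eqref{it:biadj_mod}; this yields the corollary.

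The only genuine work lies in the bookkeeping of the third paragraph: one must fix conventions for the slice $(\CAT_2)_{/J}$ --- here it is convenient that $J$ is a 1-category, so that $\FUN(\twoC,J)$ is again a 1-category and the various lax, oplax and strict slices behave uniformly --- and then check that the equivalence of Theorem~\ref{thm:main-theorem} is natural enough to restrict to the 2-functors, transformations and modifications living over $J$. Once this is phrased via the pullback description of hom 2-categories in a slice, everything else is a direct citation of Theorems~\ref{thm:main-theorem} and \ref{thm:Unstraightening-Span-2} and Lemma~\ref{lem:fiberwise_biadjointability}.
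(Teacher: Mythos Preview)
Your proposal is correct and follows essentially the same route as the paper: identify the source via Theorem~\ref{thm:Unstraightening-Span-2}, apply the universal property of Theorem~\ref{thm:main-theorem}, and translate the resulting $(\widetilde{I},\widetilde{P})$-biadjointability conditions via Lemma~\ref{lem:fiberwise_biadjointability}. You are in fact somewhat more careful than the paper in justifying why the locally full inclusion of Theorem~\ref{thm:main-theorem} descends to the slice over $J$ (via the pullback description of $\HOM_{(\CAT_2)_{/J}}$ and the observation that any functor $\widetilde{C}\catop\to J$ inverting $\widetilde{E}$ is automatically biadjointable); the paper simply asserts this step.
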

\begin{proof}
We continue using the notation of Construction \ref{cons:Unstraightening_Biadjointable_Triple}. By Theorem \ref{thm:Unstraightening-Span-2}, the inclusion ${\widetilde{C}}\catop \hookrightarrow \Un^{\cc}(\SpanTwo (\Cc(-)))$ induces an equivalence
\[
\SpanTwoP{\widetilde{C}}{\widetilde{E}}{\widetilde{I}}{\widetilde{P}} \iso \Un^{\cc}(\SpanTwo (\Cc(-))).
\]
From the universal property of the left-hand side, we deduce that restricting along
\[
h\colon {\widetilde{C}}\catop\simeq \Un^{\cc}(C(-)^{\op}) \hookrightarrow \Un^{\cc}(\SpanTwo (\Cc(-)))
\]
defines a locally full inclusion
\[
\HOM_{(\CAT_2)_{/J}}(\Un^{\cc}(\SpanTwo(\Cc(-))), \Un^{\cc}(\twoD(-))) \hookrightarrow \HOM_{(\CAT_2)_{/J}}({\widetilde{C}}\catop, \Un^{\cc}(\twoD(-)))
\]
whose image consists of the $(\widetilde{I},\widetilde{P})$-biadjointable functors ${\widetilde{C}}\catop  \to \Un^{\cc}(\twoD(-))$ over $J$ and $(\widetilde{I},\widetilde{P})$-biadjointable natural transformations between them. The claim thus follows from Lemma \ref{lem:fiberwise_biadjointability}.
\end{proof}

\subsection{Proof of Theorem~\ref{thm:introlax}}
\label{subsec:Lax_Monoidal_Universality}

We now apply the result from the previous subsection to deduce the lax symmetric monoidal universal property of $\SpanTwoP{\myC}{E}{I}{P}^\otimes$.

\begin{definition}
Let $\twoC^\otimes$ and $\twoD^\otimes$ be symmetric monoidal 2-categories. A \emph{lax symmetric monoidal 2-functor} $\twoC^\otimes\to\twoD^\otimes$ is a functor $\Unco(\twoC^\otimes)\to \Unco(\twoD^\otimes)$ over $\Span(\Fin)$ which preserves 2-cocartesian morphisms over backwards spans. We write
	\[
	\FUN^{\text{lax-}\otimes}(\twoC^\otimes,\twoD^\otimes) \quad \subset \quad \HOM_{(\CAT_2)_{/\Span(\Fin)}}\big(\Unco(\twoC^\otimes), \Unco(\twoD^\otimes)\big)
	\]
	for the full sub-2-category spanned by the lax symmetric monoidal 2-functors. We refer to the 1-morphisms of this 2-category as \emph{natural transformations of lax symmetric monoidal 2-functors}.
\end{definition}

\begin{definition}\label{def:lax-badj}
	Let $(C,I,P)^{\otimes}\in\CMON(\BAdjTrip)$ and let $\twoD^\otimes$ be any symmetric monoidal 2-category. We say that a lax symmetric monoidal 2-functor $F\colon (C^\otimes)\catop \to \twoD^\otimes$ is \emph{lax symmetric monoidal $(I,P)$-biadjointable} if both of the following conditions are satisfied:
	\begin{enumerate}
		\item\label{it:fun_lax_sym_mon_(I,P)_biadj}The underlying functor $C\catop\to\twoD$ is $(I,P)$-biadjointable.
		\item\label{it:tran_lax_sym_mon_(I,P)_biadj} The transformation $F(-)\otimes F(-)\to F(-\otimes-)$ is $(I \times I, P \times P)$-biadjointable. Explicitly:
		\begin{enumerate}
			\item For every $Y\in C$ and every $i\colon X\to X'$ in $I$ the naturality square
			\begin{equation}\label{diag:lax-structure_IL}
				\begin{tikzcd}
					F(X)\otimes F(Y)\arrow[d]\arrow[r, "F(i)\otimes F(Y)"] &[2em] F(X')\otimes F(Y)\arrow[d]\\
					F(X\otimes Y)\arrow[r, "F(i\otimes Y)"'] & F(X'\otimes Y)
				\end{tikzcd}
			\end{equation}
			for the lax structure is horizontally left adjointable.
			\item Dually, for every $X$ and every $p\colon Y\to Y'$ in $P$ the square
			\[
			\begin{tikzcd}
				F(X)\otimes F(Y)\arrow[d, "F(X)\otimes p"']\arrow[r] &[2em] F(X\otimes Y)\arrow[d, "F(X\otimes p)"]\\
				F(X)\otimes F(Y')\arrow[r] & F(X\otimes Y')
			\end{tikzcd}
			\]
			is vertically right adjointable.
		\end{enumerate}
	\end{enumerate}
	Note that the existence of left adjoints for the horizontal maps in $(\ref{diag:lax-structure_IL})$ is already guaranteed by the first assumption: for the bottom map this follows from $i\otimes Y$ being contained in $I$ again by assumption, while for the top map this uses that the tensor product of $\twoD$ is a 2-functor. Similarly, the vertical maps in the square from Condition~(2b) are already guaranteed to admit right adjoints.
\end{definition}

\begin{theorem}\label{thm:lax-univ-prop}
	Let $C^{\otimes}$ be a symmetric monoidal category, let $I,P\subset E\subset C$ be wide subcategories such that $(I,P)$ is a suitable decomposition of $(C,E)$ in the sense of Convention~\ref{conv:most-general}, assume that $I$ and $P$ are closed under tensor products, and assume that $\otimes \colon C \times C \to C$ preserves pullbacks along morphisms in $I \times I$ as well as pullbacks along morphisms in $P \times P$. Then the restriction
	\[
		h^*\colon \FUN^\textup{lax-$\otimes$}\big(\SpanTwoP{C}{E}{I}{P}^\otimes,\twoD^\otimes\big)\to\FUN^\textup{lax-$\otimes$}\big((C^\otimes)^\op,\twoD^\otimes\big)
	\]
	is a locally full inclusion for any symmetric monoidal 2-category $\twoD^\otimes$. Moreover, the objects in its image are precisely the lax symmetric monoidal $(I,P)$-biadjointable functors in the sense of Definition~\ref{def:lax-badj}, and its 1-morphisms are precisely those natural transformations of lax symmetric monoidal 2-functors whose underlying natural transformations of functors $C^\op\rightrightarrows\twoD$ are $(I,P)$-biadjointable.
\end{theorem}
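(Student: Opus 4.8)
\textit{Strategy.} The plan is to reduce this to the non-monoidal universality of span $2$-categories (Theorem~\ref{thm:main-theorem}) via the unstraightening result of the previous subsection. Recall that a lax symmetric monoidal $2$-functor $\SpanTwoP{C}{E}{I}{P}^\otimes\to\twoD^\otimes$ is by definition a functor over $\Span(\Fin)$ between the $2$-cocartesian unstraightenings which preserves $2$-cocartesian morphisms over backwards spans, and similarly for natural transformations. The first step is to apply Theorem~\ref{thm:Unstraightening-Span-2} with $J=\Span(\Fin)$ to the commutative monoid $\Cc=(C^\otimes,I^\otimes,P^\otimes)\colon\Span(\Fin)\to\BAdjTrip$; this refinement of $C^\otimes$ exists precisely because $I$ and $P$ are closed under tensor products and $\otimes$ preserves pullbacks along $I\times I$ and along $P\times P$, and its value on $\mathbf n$ is the triple $(C^{\times n},I^{\times n},P^{\times n})$, which visibly satisfies Convention~\ref{conv:most-general} (with the associated class $E^{\times n}$). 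Theorem~\ref{thm:Unstraightening-Span-2} then identifies $\Unco(\SpanTwoP{C}{E}{I}{P}^\otimes)$ with the span $2$-category $\SpanTwoP{\widetilde C}{\widetilde E}{\widetilde I}{\widetilde P}$ over $\Span(\Fin)$, where $\widetilde C=\Un^\ct(C^\otimes)$ and $\widetilde I,\widetilde P,\widetilde E$ are as in Construction~\ref{cons:Unstraightening_Biadjointable_Triple}; moreover $\Unco((C^\otimes)^\op)\simeq\widetilde C^\op$, and under these identifications $h$ becomes the canonical inclusion $\widetilde C^\op\hookrightarrow\SpanTwoP{\widetilde C}{\widetilde E}{\widetilde I}{\widetilde P}$.

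\textit{Reducing to Corollary~\ref{cor:lax_tran_out_of_span}.} Applying Corollary~\ref{cor:lax_tran_out_of_span} with $\twoD(-)=\twoD^\otimes(-)$ shows that $h^*$ is a locally full inclusion on the level of the full $\HOM$-categories in $(\CatTwo)_{/\Span(\Fin)}$, with image the $(\widetilde I,\widetilde P)$-biadjointable functors over $\Span(\Fin)$ and the natural transformations that are fiberwise biadjointable. Since $\FUN^\textup{lax-$\otimes$}(-,-)$ is a full sub-$2$-category of the relevant $\HOM$-category, and since $h$ is itself symmetric monoidal (by naturality of the span construction, cf.~\eqref{eq:SpanTwo-sym-mon-structure}) hence in particular lax symmetric monoidal, restriction of $h^*$ to the lax symmetric monoidal subcategories is again a locally full inclusion. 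To identify its essential image, note that a lax symmetric monoidal $\psi$ lies in the image iff it admits a lax symmetric monoidal extension along $h$; by Corollary~\ref{cor:lax_tran_out_of_span} such an extension $\Phi$ exists (and is then unique) iff $\psi$ is $(\widetilde I,\widetilde P)$-biadjointable, and by Lemma~\ref{lem:Span2-cocart-fib} every $2$-cocartesian morphism of $\SpanTwoP{\widetilde C}{\widetilde E}{\widetilde I}{\widetilde P}$ already lies in the image of $\widetilde C^\op$, so $\Phi$ automatically preserves $2$-cocartesian morphisms over backwards spans as soon as $\psi$ does, i.e.\ $\Phi$ is automatically lax symmetric monoidal. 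Hence the image on objects is exactly the lax symmetric monoidal $(\widetilde I,\widetilde P)$-biadjointable functors, and likewise on $1$-morphisms.

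\textit{Translating the conditions.} It remains to rewrite $(\widetilde I,\widetilde P)$-biadjointability, as unpacked in Lemma~\ref{lem:fiberwise_biadjointability}, in terms of Definition~\ref{def:lax-badj}. For a lax symmetric monoidal $\phi$ with underlying functor $F\colon C^\op\to\twoD$, compatibility with the inert backwards spans forces $\phi_{\mathbf n}\simeq F^{\times n}$ under the Segal equivalences $C^\otimes(\mathbf n)\simeq C^{\times n}$, $\twoD^\otimes(\mathbf n)\simeq\twoD^{\times n}$, so condition~\eqref{it:fiberwise_biadj} of Lemma~\ref{lem:fiberwise_biadjointability} for all $\mathbf n$ is equivalent to $(I,P)$-biadjointability of $F$, i.e.\ to Definition~\ref{def:lax-badj}\eqref{it:fun_lax_sym_mon_(I,P)_biadj}. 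For the transition condition~\eqref{it:transition_biadj}, the map $\phi_f$ from Notation~\ref{not:fibr_lax_tran} is an equivalence whenever $f$ is a backwards span, and a general forwards span $f\colon\mathbf m\to\mathbf n$ decomposes, over the components of $\mathbf n$ and then into folds, as a composite of copies of $\nabla\colon\mathbf 2\to\mathbf 1$, copies of the unit $\mathbf 0\to\mathbf 1$, and disjoint unions with identities; since biadjointable natural transformations are closed under vertical composition, whiskering, and postcomposition with the $2$-functor $\otimes_\twoD$ (Remark~\ref{rmk:Postcompose_With_2Functor} and Proposition~\ref{prop:corep-badj}, via the universal property of Theorem~\ref{thm:existence}), and since the unit case is vacuous (its source fibre is the point), condition~\eqref{it:transition_biadj} reduces to biadjointability of $\phi_\nabla$. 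But $\phi_\nabla$ is precisely the lax structure transformation $F(-)\otimes F(-)\Rightarrow F(-\otimes-)$, and writing a morphism of $I^{\times 2}$ (resp.\ $P^{\times 2}$) as a composite of one in $I\times\{\id\}$ and one in $\{\id\}\times I$ shows that its $(I^{\times 2},P^{\times 2})$-biadjointability is equivalent to the horizontal left adjointability of~\eqref{diag:lax-structure_IL} together with the vertical right adjointability of the dual square, i.e.\ to conditions (a) and (b) of Definition~\ref{def:lax-badj}\eqref{it:tran_lax_sym_mon_(I,P)_biadj}. The same Segal argument identifies the fiberwise biadjointability of a natural transformation with the $(I,P)$-biadjointability of its underlying transformation $F\rightrightarrows G$, completing the proof. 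I expect the main obstacle to be exactly this last paragraph: keeping track of the Segal decompositions and of the compositionality of Beck--Chevalley transformations in order to match the abstract fibered conditions of Lemma~\ref{lem:fiberwise_biadjointability} with the hands-on conditions of Definition~\ref{def:lax-badj}; everything else is formal given Theorem~\ref{thm:Unstraightening-Span-2} and Corollary~\ref{cor:lax_tran_out_of_span}.
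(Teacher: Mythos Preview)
Your proposal is correct and follows essentially the same approach as the paper: reduce to Corollary~\ref{cor:lax_tran_out_of_span} via Theorem~\ref{thm:Unstraightening-Span-2}, use Lemma~\ref{lem:Span2-cocart-fib} to see that the extension is automatically lax symmetric monoidal, and then translate the fibered biadjointability conditions into Definition~\ref{def:lax-badj} by using the Segal condition to reduce to the fiber over $\mathbf 1$ and decomposing a general span into a backwards span (where $\phi_f$ is an equivalence) followed by a forward span built from copies of $\nabla\colon\mathbf 2\to\mathbf 1$. Your treatment is in fact slightly more explicit than the paper's in handling the unit map $\mathbf 0\to\mathbf 1$ and in reducing $(I^{\times 2},P^{\times 2})$-biadjointability of $\phi_\nabla$ to the one-variable conditions~(a) and~(b).
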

\begin{proof}
	By Corollary~\ref{cor:lax_tran_out_of_span}, there is a locally full inclusion
	\[\hskip-45.5pt\hfuzz=45.5pt
	\HOM_{(\CAT_2)_{/\Span(\Fin)}}\big(\Un^{\cc}(\SpanTwoP{C}{E}{I}{P}^\otimes), \Un^{\cc}(\twoD^{\otimes})\big)\hookrightarrow \HOM_{(\CAT_2)_{/\Span(\Fin)}}\big(\Un^{\cc}((C^{\otimes})^{\op}), \Un^{\cc}(\twoD^{\otimes})\big).
	\]
	The characterization of the 2-cocartesian morphisms in $\SpanTwoP{\widetilde{C}}{\widetilde{E}}{\widetilde{I}}{\widetilde{P}}$ recorded in Lemma~\ref{lem:Span2-cocart-fib} shows that this restricts immediately to lax symmetric monoidal functors. Therefore it suffices to prove that a lax symmetric monoidal 2-functor $F\colon (C^{\otimes})^{\op}\to \twoD^\otimes$ is lax symmetric monoidal $(I,P)$-biadjointable if and only if it satisfies \eqref{it:biad_tran} in Corollary~\ref{cor:lax_tran_out_of_span}, and similarly for 1-morphisms. Since $F(\textbf{n})$ is an $\textbf{n}$-fold product of $F(\textbf{1})$, it is clearly $(I^n,P^n)$-biadjointable if and only if $F(\textbf{1})$ is $(I,P)$-biadjointable. So Condition~\eqref{it:biad_tran_one} is equivalent to Condition~\eqref{it:fun_lax_sym_mon_(I,P)_biadj} in the definition of lax symmetric monoidal $(I,P)$-biadjointability. Now observe that Condition~\eqref{it:tran_lax_sym_mon_(I,P)_biadj} corresponds to Condition~\eqref{it:biad_tran_two} for the special case of the unique active morphism $\textbf{2}\to \textbf{1}$. However this clearly implies the same statement for an arbitrary forward span, since we can decompose it as a composite of equivalences and maps $\textbf{2}\amalg \textbf{n} \to \textbf{1}\amalg\textbf{n}$. Moreover, by assumption $F$ preserves cocartesian edges over backwards spans, meaning that the associated natural transformation is a natural equivalence and so automatically $(I,P)$-biadjointable. Finally any span composes as a composite of a backwards span followed by a forwards span, and so we conclude that Condition~\eqref{it:biad_tran_two} holds for every morphism in $\Span(\Fin)$. This shows that Condition~\eqref{it:tran_lax_sym_mon_(I,P)_biadj} is equivalent to Condition~\eqref{it:biad_tran_two}. Similarly, the fact that both $(C^\otimes)\catop$ and $\twoD^\otimes$ are product preserving shows that the condition on 1-morphisms agrees.
\end{proof}

\begin{remark}
The arguments of this section work just as well for lax $\mathcal{O}$-monoidal functors for any operad $\mathcal{O} \to \Span(\Fin)$, by making the following replacements in Definition \ref{def:lax-badj}:
\begin{enumerate}
	\item Condition~\eqref{it:fun_lax_sym_mon_(I,P)_biadj} is replaced by the analogous condition indexed over all colors of the operad and 
	\item Condition~\eqref{it:tran_lax_sym_mon_(I,P)_biadj} is replaced with the analogous condition indexed over a generating set of active maps of $\mathcal{O}$.
\end{enumerate} 
\end{remark}

\begin{proposition}\label{prop:sym-mon-univ-prop}
	Let $(C^{\otimes},I^{\otimes},P^{\otimes})$ be as above, with $I,P\subset E\subset C$ as in Theorem~\ref{thm:lax-univ-prop}. Then
	\[
h\colon(C^\otimes)^\op\to\SpanTwoP{C}{E}{I}{P}^\otimes
	\]
	is the initial strong symmetric monoidal functor whose underlying functor is $(I,P)$-biadjointable.
	\begin{proof}
		Whether a lax symmetric monoidal 2-functor is strong symmetric monoidal can be checked objectwise, so since $h$ is essentially surjective, the locally full inclusion of Theorem \ref{thm:lax-univ-prop} restricts to a locally full inclusion
		\[
		\FUN^{\otimes}\big(\SpanTwoP{C}{E}{I}{P}^\otimes,\twoD^\otimes\big) \hookrightarrow \FUN^{\otimes}\big((C\catop)^\otimes,\twoD^\otimes\big),
		\]
		with image given by the symmetric monoidal $(I,P)$-biadjointable functors and transformations.
	\end{proof}
\end{proposition}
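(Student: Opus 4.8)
The plan is to deduce this directly from the lax symmetric monoidal universal property of Theorem~\ref{thm:lax-univ-prop} by cutting down from lax to strong symmetric monoidal functors on both sides. The input observation is that, in the model of symmetric monoidal $2$-categories as product-preserving functors $\Span(\Fin)\to\CatTwo$ (i.e.\ as objects of $\CMON(\CatTwo)$), a lax symmetric monoidal $2$-functor $\twoC^\otimes\to\twoD^\otimes$ — a functor $\Unco(\twoC^\otimes)\to\Unco(\twoD^\otimes)$ over $\Span(\Fin)$ preserving $2$-cocartesian morphisms over backwards spans — is strong symmetric monoidal precisely when it also preserves $2$-cocartesian morphisms over forward spans, and it suffices to test this on the generating active maps $\textbf{2}\to\textbf{1}$ and $\textbf{0}\to\textbf{1}$ (using product-preservation to reduce the maps $\textbf{2}\amalg\textbf{n}\to\textbf{1}\amalg\textbf{n}$ to these). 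Unwinding, this amounts to asking that the lax structure maps $F(X)\otimes F(Y)\to F(X\otimes Y)$ and the unit comparison map be equivalences, a condition phrased entirely in terms of objects of $\iota_1\twoC$. Hence the full sub-$2$-category $\FUN^{\otimes}(\twoC^\otimes,\twoD^\otimes)\subset\FUN^{\textup{lax-}\otimes}(\twoC^\otimes,\twoD^\otimes)$ of strong functors is detected objectwise.

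First I would apply this with $\twoC^\otimes=\SpanTwoP{C}{E}{I}{P}^\otimes$ and with $(C^\otimes)^\op$. Since $h$ is strong symmetric monoidal, precomposition with $h$ carries strong functors to strong functors, so the locally full inclusion $h^*$ of Theorem~\ref{thm:lax-univ-prop} restricts to a functor $\FUN^{\otimes}(\SpanTwoP{C}{E}{I}{P}^\otimes,\twoD^\otimes)\to\FUN^{\otimes}((C^\otimes)^\op,\twoD^\otimes)$; being the restriction of a locally full inclusion along the inclusions of full sub-$2$-categories on both source and target, it is again locally fully faithful, and the characterization of its $1$- and $2$-morphisms is inherited unchanged from Theorem~\ref{thm:lax-univ-prop}. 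It then remains to identify the essential image. On the one hand, a strong symmetric monoidal functor $F$ whose underlying $2$-functor is $(I,P)$-biadjointable is automatically lax symmetric monoidal $(I,P)$-biadjointable in the sense of Definition~\ref{def:lax-badj}: condition~\eqref{it:fun_lax_sym_mon_(I,P)_biadj} is exactly the biadjointability hypothesis, and condition~\eqref{it:tran_lax_sym_mon_(I,P)_biadj} is vacuous because the lax structure maps are equivalences, hence trivially $(I\times I,P\times P)$-biadjointable. So $F$ extends uniquely to a lax symmetric monoidal $\overline F\colon\SpanTwoP{C}{E}{I}{P}^\otimes\to\twoD^\otimes$ by Theorem~\ref{thm:lax-univ-prop}; the point is that $\overline F$ is in fact strong. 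Here I would use that $h\colon C^\op\to\SpanTwoP{C}{E}{I}{P}$ is essentially surjective (every object is some $h(x)$, as $\SpanTwoP{C}{E}{I}{P}$ has the same objects as $C$ by Lemma~\ref{lemma:span2-underlying-1}): by the objectwise criterion above, $\overline F$ is strong iff its comparison map at $h(x),h(y)$ is an equivalence for all $x,y\in C$, and since $h$ is strong, the equivalence $h(x)\otimes h(y)\simeq h(x\otimes y)$ identifies this comparison map with that of $\overline F\circ h=F$ at $(x,y)$, which is an equivalence. Thus $\overline F\in\FUN^{\otimes}$ and $h^*\overline F=F$, while conversely any object of $\FUN^{\otimes}(\SpanTwoP{C}{E}{I}{P}^\otimes,\twoD^\otimes)$ restricts along $h$ to a strong functor with $(I,P)$-biadjointable underlying functor (e.g.\ by Theorem~\ref{thm:main-theorem}). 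Combining these, $h^*$ identifies $\FUN^{\otimes}(\SpanTwoP{C}{E}{I}{P}^\otimes,\twoD^\otimes)$ with the locally full sub-$2$-category of $\FUN^{\otimes}((C^\otimes)^\op,\twoD^\otimes)$ on the strong symmetric monoidal $(I,P)$-biadjointable functors and the $(I,P)$-biadjointable transformations — which is precisely the assertion that $h$ is the initial strong symmetric monoidal functor with $(I,P)$-biadjointable underlying functor.

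The only genuinely delicate point is the bookkeeping in the last step: making precise, in the model $\CMON(\CatTwo)$, that "strong" is an objectwise condition, and that the comparison maps of $\overline F$ and of $F=\overline F\circ h$ agree under the canonical equivalence $h(x)\otimes h(y)\simeq h(x\otimes y)$ furnished by strongness of $h$ (together with naturality of the comparison-map construction under composition). Everything else is a formal consequence of Theorem~\ref{thm:lax-univ-prop} together with the elementary interaction of full subcategories and locally full inclusions with restriction.
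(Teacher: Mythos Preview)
Your proposal is correct and follows exactly the same approach as the paper's proof: use that strongness of a lax symmetric monoidal functor is an objectwise condition, combine this with the essential surjectivity of $h$, and restrict the locally full inclusion of Theorem~\ref{thm:lax-univ-prop} accordingly. You spell out in detail (the vacuity of condition~\eqref{it:tran_lax_sym_mon_(I,P)_biadj} for strong functors, the identification of comparison maps of $\overline F$ with those of $F$) what the paper compresses into a single sentence, but the argument is the same.
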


\subsection{Constructing 6-functor formalisms}
\label{sec:6FF}

We will now specialize the previous result to the situation relevant for 6-functor formalisms. Here we assume that $C$ is equipped with the cartesian symmetric monoidal structure, so that $C^\op$ carries the \emph{cocartesian} symmetric monoidal structure. By \cite[Theorem~2.4.3.18]{lurie2016HA}, a lax symmetric monoidal functor $\mathcal D_0\colon (C^\times)^\op\to\twoD^\otimes$ can then be equivalently described as a functor $\mathcal D_0'\colon C^\op\to\CAlg(\iota_1\twoD^\otimes)$. Postcomposing $\mathcal D_0'$ with the forgetful functor to $\twoD$ just recovers the underlying functor of $\mathcal D_0$, so the first half of the biadjointability condition of Definition~\ref{def:lax-badj} is equivalent to demanding that the composite $C^\op\to\CAlg(\iota_1\twoD^\otimes)\to\twoD$ be $(I,P)$-biadjointable. Similarly one sees by unravelling definitions---also cf.~\cite[proof of Theorem~A.5.8(iv)]{Mann2022SixFunctor}---that the second half of Definition~\ref{def:lax-badj} translates to the \emph{projection formul\ae} for $\mathcal D_0'$, i.e.~demanding that the Beck--Chevalley maps
\[
i_{\sharp}(i^*(-)\otimes-)\to (-)\otimes i_{\sharp}(-)
\qquad\text{and}\qquad
(-)\otimes p_*(-) \to p_*(p^*(-)\otimes-)
\]
be invertible for every $i\in I$ and $p\in P$.

Thus, Theorem~\ref{thm:lax-univ-prop} specializes to the following 2-categorical refinement of \cite[Proposition~A.5.10]{Mann2022SixFunctor}, where we unpack most of our definitions for easier reference:

\begin{theorem}\label{thm:6FF}
	Let $C$ be a category with finite products, and let $I,P\subset E\subset C$ be wide subcategories closed under base change satisfying the following assumptions:
	\begin{enumerate}
		\item $I$ and $P$ are left cancellable.
		\item Every map in $E$ factors as a map in $I$ followed by a map in $P$.
		\item Every map in $I\cap P$ is truncated.
	\end{enumerate}
	Let $\mathcal D_0\colon C^\op\to\CAlg(\Cat)$ be a functor satisfying the following assumptions:
	\begin{enumerate}
		\item For every $i\colon a\to b$ in $I$ the functor $i^*\colon\mathcal D_0(b)\to\mathcal D_0(a)$ admits a left adjoint $i_\sharp$ satisfying the Beck--Chevalley condition with respect to pullbacks in $C$ and satisfying the projection formula $i_\sharp(-\otimes i^*(-))\simeq i_\sharp(-)\otimes(-)$.
		\item For every $p\colon a\to b$ in $P$ the functor $p^*\colon\mathcal D_0(b)\to\mathcal D_0(a)$ admits a right adjoint $p_*$ satisfying the dual Beck--Chevalley condition and the dual projection formula.
		\item For every pullback
		\[
		\begin{tikzcd}
			a\arrow[r, "i"]\arrow[d,"p"']\arrow[dr,pullback] & b\arrow[d, "q"]\\
			c\arrow[r, "j"'] & d
		\end{tikzcd}
		\]
		in $C$ with vertical maps in $P$ and horizontal maps in $I$, the double Beck--Chevalley map $j_\sharp p_*\to q_*i_\sharp$ is an equivalence.
	\end{enumerate}
	Then the lax symmetric monoidal functor $(C^\times)^\op\to\Cat^\times$ corresponding to $\mathcal D_0$ admits a \emph{unique} extension to a lax symmetric monoidal 2-functor
	\[
	\mathcal D\colon\SpanTwoP{C}{E}{I}{P}^\otimes\to\CAT^\times\!.
	\]
	Moreover, this extension has the following property: if $e\colon x\to y$ is any map in $E$ and $e=pi$ is any factorization into a map in $I$ followed by a map in $P$, then the exceptional pushforward $e_!\coloneqq\mathcal D(x\xleftarrow{\;\smash{\lower1pt\hbox{$\scriptstyle=$}}\;}x\xrightarrow{\;\smash{\lower1pt\hbox{$\scriptstyle e$}}\;}y)$ satisfies $e_!\simeq p_*i_\sharp$.\qed
\end{theorem}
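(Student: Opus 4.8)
The plan is to obtain Theorem~\ref{thm:6FF} as a direct consequence of the lax symmetric monoidal universal property of Theorem~\ref{thm:lax-univ-prop}, applied to the target $\twoD^\otimes=\CAT^\times$, with the rest of the argument being a translation of hypotheses. First I would record that the cartesian symmetric monoidal structure on $C$ meets the standing assumptions of Theorem~\ref{thm:lax-univ-prop}: the classes $I$ and $P$ are closed under products, since $i\times j$ decomposes as $(i\times\id)\circ(\id\times j)$, a composite of base changes of $i$ and $j$, and $\times\colon C\times C\to C$ preserves all pullbacks, being a right adjoint to the diagonal. The remaining hypotheses of Theorem~\ref{thm:6FF} are precisely those of Convention~\ref{conv:most-general}; in particular $I,P\subset E$ together with closure of $E$ under composition and the factorization property forces $E$ to consist exactly of the maps factoring as $p\circ i$.

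Next I would pass through Lurie's straightening equivalence \cite[Theorem~2.4.3.18]{lurie2016HA}, which turns $\mathcal D_0\colon C^\op\to\CAlg(\Cat)$ into a lax symmetric monoidal functor $\widetilde{\mathcal D}_0\colon(C^\times)^\op\to\CAT^\times$ whose underlying functor $C^\op\to\CAT$ is $\mathcal D_0$ followed by the forgetful functor, and then verify that $\widetilde{\mathcal D}_0$ is lax symmetric monoidal $(I,P)$-biadjointable in the sense of Definition~\ref{def:lax-badj}. Condition~(1) there unwinds, via Definition~\ref{def:Adjointable_Functor}, to exactly the three conditions imposed on $\mathcal D_0$: existence of $i_\sharp$ satisfying Beck--Chevalley along $I$, existence of $p_*$ satisfying Beck--Chevalley along $P$, and the double Beck--Chevalley equivalence $j_\sharp p_*\simeq q_*i_\sharp$. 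Condition~(2) translates, after identifying the Beck--Chevalley maps of the lax-structure squares~\eqref{diag:lax-structure_IL} and their duals with the comparison maps appearing in the projection formul\ae\ (as in \cite[proof of Theorem~A.5.8(iv)]{Mann2022SixFunctor}), into invertibility of $i_\sharp(-\otimes i^*(-))\to i_\sharp(-)\otimes(-)$ and $(-)\otimes p_*(-)\to p_*(p^*(-)\otimes-)$ — precisely the assumed projection formul\ae. Theorem~\ref{thm:lax-univ-prop} then supplies a lax symmetric monoidal $2$-functor $\mathcal D\colon\SpanTwoP{C}{E}{I}{P}^\otimes\to\CAT^\times$ with $h^*\mathcal D\simeq\widetilde{\mathcal D}_0$, and it is essentially unique because $h^*$ is a locally full inclusion, hence an equivalence onto its (locally full) image.

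It then remains to establish the formula for the exceptional pushforward. Given $e=p\circ i$ with $i\colon x\rightarrowmono z$ and $p\colon z\rightarrowepic y$, the span $x=x\rightarrowepmo y$ factors in $\SpanTwoP{C}{E}{I}{P}$ as the composite of $x=x\rightarrowmono z$ followed by $z=z\rightarrowepic y$, the intervening pullback being taken along an identity. By the proof of Proposition~\ref{prop:Span2_Biadjointable} (see also \cite[Lemma~12.3]{HaugsengSpans}) the forward span $x=x\rightarrowmono z$ is left adjoint in $\SpanTwoP{C}{E}{I}{P}$ to the backward span $h(i)$, and dually $z=z\rightarrowepic y$ is right adjoint to $h(p)$. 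Since $\mathcal D$ is a $2$-functor it preserves these adjunctions, and since $\mathcal D(h(i))\simeq i^*$ and $\mathcal D(h(p))\simeq p^*$ by $h^*\mathcal D\simeq\widetilde{\mathcal D}_0$, uniqueness of adjoints identifies $\mathcal D(x=x\rightarrowmono z)\simeq i_\sharp$ and $\mathcal D(z=z\rightarrowepic y)\simeq p_*$; functoriality of $\mathcal D$ then yields $e_!=\mathcal D(x=x\rightarrowepmo y)\simeq p_*i_\sharp$. Since $e_!$ is defined intrinsically as $\mathcal D(x=x\rightarrowepmo y)$, this in particular gives independence of the chosen factorization.

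The genuinely categorical weight is all carried by Theorem~\ref{thm:lax-univ-prop}, so I do not expect a real obstacle; the one point needing care is the translation in the second step, namely matching the Beck--Chevalley conditions for the lax-structure squares of Definition~\ref{def:lax-badj}(2) with the projection formul\ae\ for $\mathcal D_0$. This amounts to unwinding how the lax monoidal structure on $\widetilde{\mathcal D}_0$ relates, under the equivalence of \cite{lurie2016HA}, to the symmetric monoidal structures on the individual fibers $\mathcal D_0(a)$, and then checking that the comparison $2$-morphisms coincide — a bookkeeping diagram chase rather than a conceptual difficulty.
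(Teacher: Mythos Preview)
Your proposal is correct and follows essentially the same approach as the paper: the paper presents Theorem~\ref{thm:6FF} as an immediate specialization of Theorem~\ref{thm:lax-univ-prop} (note the \qed at the end of the statement), with the preceding paragraph carrying out exactly the translation you describe via \cite[Theorem~2.4.3.18]{lurie2016HA} and the identification of Definition~\ref{def:lax-badj}(2) with the projection formul\ae. Your additional argument for the formula $e_!\simeq p_*i_\sharp$ via preservation of adjunctions by the 2-functor $\mathcal D$ is a bit more explicit than what the paper provides (which only sketches this informally in the introduction), but it is the natural way to make that claim precise.
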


\section{Lax transformations of 6-functor formalisms}\label{sec:lax_transformations_6FF}

In this final section, we prove analogues of the preceding universal properties for \emph{biadjointable (op)lax transformations}, to be defined below. As before, we first treat the non-monoidal situation, then pass to the lax symmetric monoidal setting, and finally specialize to lax and oplax natural transformations of 6-functor formalisms.

The concept of lax and oplax transformations requires the Gray tensor product of 2-categories, which we denote by
\[
-\boxtimes - \colon \Cat_2\times \Cat_2\to \Cat_2, \quad (\twoC,\twoD)\mapsto \twoC\boxtimes \twoD.
\]
For our definition of the Gray tensor product, we follow Gagna--Harpaz--Lanari \cite{Gagna_Harpaz_Lanari_Gray}. 

\begin{definition}\label{def:FunLax}
We define $\FUN(\twoE,-)^\lax$ and $\FUN(\twoE,-)^{\oplax}$ to be the right adjoints of the functors 
\[
-\boxtimes \twoE \colon \Cat_2 \to \Cat_2\qquad \text{and}\qquad \twoE\boxtimes -\colon \Cat_2\to \Cat_2
\]
respectively.
\end{definition}

\begin{remark}
	We note that ${\ast\boxtimes \twoE }\simeq {\twoE\boxtimes \ast} \simeq \twoE$, and so the objects of $\FUN(\twoE,\twoC)^{\textup{(op)lax}}$ are given by functors $\twoE \to \twoC$. We call the morphisms $\theta\colon F\to G$ of $\FUN(\twoE,\twoC)^{\textup{lax}}$ \emph{lax natural transformations}. By definition they are given by functors
	\[
	\theta\colon [1]\boxtimes \twoE \to \twoC.
	\]
	Given an object $X\in \twoE$, we can restrict $\theta$ along $\{X\}\colon \ast \to \twoE$ to obtain a 1-morphism $\eta_X\colon [1] \to \twoC$, which we call the \emph{component of $\theta$ at $X$}. The 2-category $[1]\boxtimes [1]$ agrees with the 2-category
\[\begin{tikzcd}
	\cdot & \cdot \\
	\cdot & \cdot,
	\arrow[from=1-1, to=1-2]
	\arrow[from=1-1, to=2-1]
	\arrow[Rightarrow, from=2-1, to=1-2]
	\arrow[from=1-2, to=2-2]
	\arrow[from=2-1, to=2-2]
\end{tikzcd}\]
 where we continue our convention of denoting the first component vertically. Given a morphism $f\colon X\to Y$ in $\twoE$, we will denote the image of the restriction of $\theta$ along $[1]\boxtimes\{f\} \colon [1]\boxtimes [1]\to [1]\boxtimes \twoE$ as follows: 
\[\begin{tikzcd}
	{F(X)} & {F(Y)} \\
	{G(X)} & {G(Y).}
	\arrow["{{F(f)}}", from=1-1, to=1-2]
	\arrow["{{\theta_X}}"', from=1-1, to=2-1]
	\arrow["{{\theta_Y}}", from=1-2, to=2-2]
	\arrow["{{\theta_f}}", Rightarrow, from=2-1, to=1-2]
	\arrow["{{G(f)}}"', from=2-1, to=2-2]
\end{tikzcd}\]
 Since the Gray tensor product preserves colimits in each variable, if $\twoE$ is a 1-category one can intuitively think of a lax natural transformation as a coherent collection of squares above. If $\twoE$ is a 2-category then a lax natural transformation additionally contains coherent data witnessing the relation $\theta_f F(\varphi) \simeq G(\varphi)\theta_g$ for every 2-morphism $\varphi\colon f\Rightarrow g$ in $\twoE$.
 
 We can make the same observations and definitions for the 1-morphisms $\theta\colon {[1]\boxtimes \twoE}\to \twoC$ of $\FUN^{\oplax}(\twoE,\twoC)$, which we call \emph{oplax natural transformations}. In this case the component $\theta_f$ is a 2-morphism $\theta_Y F(f)\Rightarrow G(f)\theta_X$. 
\end{remark}

\begin{remark}\label{def:Fun_lax}
Let us write $\NAT(F,G)^{\lax}$ for the Hom-category in $\FUN(J,\CAT_2)^{\lax}$ between functors $F,G\colon J\to \CAT_2$. By \cite[Theorem~A]{Gray-vs-cc}, $\NAT(F,G)^{\lax}$ is equivalent to the underlying 1-category of $\HOM_{(\CAT_2)_{/J}}\big(\Un^{\cc}(F), \Un^{\cc}(G)\big)$. Moreover, unravelling the proof of the cited result shows that the 2-morphisms defined in Notation~\ref{not:fibr_lax_tran} agree with the identically named 2-morphisms above. 

In particular let us note that, under this identification, Corollary~\ref{cor:lax_tran_out_of_span} implies that restriction along the natural transformation $C(-)\catop\to \SpanTwo(\Cc(-))$ induces a faithful inclusion
\[
\mathrm{Nat}(\SpanTwo(\Cc(-)),\twoD(-))^{\lax} \hookrightarrow \mathrm{Nat}(C(-)^{\op},\twoD(-))^{\lax}
\]
of 1-categories whose image on objects consists of those lax natural transformations $\phi\colon C(-)\catop\Rightarrow \twoD(-)$ satisfying conditions~\eqref{it:biad_tran_one} and~\eqref{it:biad_tran_two}, and whose image on morphisms consists of those modifications satisfying condition~\eqref{it:biadj_mod}.
\end{remark}

\begin{definition}
We will call an (op)lax natural transformation $\theta$ \emph{strong} if the transformation $\theta_f$ is invertible for all 1-morphisms $f\colon X\to Y$ in $\twoE$. By \cite[Corollary 2.8.12]{Gray-vs-cc} a strong (op)lax natural transformation is equivalent to an ordinary natural transformation.
\end{definition}

\begin{definition}
Consider an oplax square $\sigma\colon [1]\boxtimes [1]\to \twoC$ in a 2-category $\twoC$:
\[\begin{tikzcd}
	{X'} & {Y'} \\
	X & Y
	\arrow["{{f^*}}", from=1-1, to=1-2]
	\arrow["{{g^*}}"', from=1-1, to=2-1]
	\arrow["{{h^*}}", from=1-2, to=2-2]
	\arrow["\theta", Rightarrow, from=2-1, to=1-2]
	\arrow["{{k^*}}"', from=2-1, to=2-2]
\end{tikzcd}\]
We say $\sigma$ is (vertically) \emph{left adjointable} if the functors $g^*$ and $h^*$ admit left adjoints $g_\sharp$ and $h_\sharp$ and the Beck--Chevalley 2-morphism 
\[
h_\sharp k^* \xrightarrow{\eta_g} h_\sharp k^*  g^* g_\sharp \xrightarrow{\theta} h_\sharp h^*f^* g_\sharp \xrightarrow{\epsilon_h} f^* g_\sharp
\]
is an equivalence. Dually, we can define the notion of (horizontal) \emph{right adjointability}.
\end{definition}

\begin{remark} 
	When $\theta$ is an equivalence, the definitions above clearly agree with the notions of vertical left adjointability and horizontal right adjointability, as defined in Definition~\ref{def:adj_squares}, after passing through the identification of strong oplax squares and commutative squares. Note that, in contrast to Definition~\ref{def:adj_squares}, it is not possible to define vertical right adjointability or horizontal left adjointability of oplax squares.
\end{remark}

\begin{theorem}\label{thm:lax_nat_trans_of_4FF}
	Let $I,P\subset E\subset C$ be as in Convention~\ref{conv:most-general}. Then for every 2-category $\twoD$ the inclusion $h\colon C^\op\hookrightarrow\SpanTwoP{C}{E}{I}{P}$ induces a locally full inclusion of 2-categories
	\[
	\FUN(\SpanTwoP{C}{E}{I}{P},\twoD)^{\lax}\hookrightarrow \FUN(C^\op,\twoD)^{\lax}
	\]
	with essential image given on 
	\begin{enumerate}
		\item objects by the $(I,P)$-biadjointable functors,
		\item and on morphisms by those lax natural transformations $\theta\colon D\to D'$ of $(I,P)$-biadjointable functors such that
		\begin{enumerate}
			\item[(a)] $\theta$ restricted to $I$ is strong,
			\item[(b)] and for all $p\colon x\to y$ in $P$ the oplax square
			\[\begin{tikzcd}
				{D(y)} & {D(x)} \\
				{D'(y)} & {D'(x)}
				\arrow["{{p^*}}", from=1-1, to=1-2]
				\arrow["{{\theta_y}}"', from=1-1, to=2-1]
				\arrow["{{\theta_x}}", from=1-2, to=2-2]
				\arrow[Rightarrow, from=2-1, to=1-2]
				\arrow["{{p^*}}"', from=2-1, to=2-2]
			\end{tikzcd}\]
			is right adjointable.
		\end{enumerate}
	\end{enumerate}
	Dually, we obtain a locally full inclusion
	\[
	\FUN(\SpanTwoP{C}{E}{I}{P},\twoD)^{\oplax}\hookrightarrow \FUN(C^\op,\twoD)^{\oplax}
	\]
	with essential image given on 
\begin{enumerate}
	\item objects by the $(I,P)$-biadjointable functors,
	\item and on morphisms by those oplax natural transformations $\theta\colon D\to D'$ of $(I,P)$-biadjointable functors such that
		\begin{enumerate}
		\item $\theta$ restricted to $P$ is strong,
		\item and for all $i\colon x\to y$ in $I$, the oplax square
		\[\begin{tikzcd}
			{D(y)} & {D'(y)} \\
			{D(x)} & {D'(x)}
			\arrow["{{\theta_y}}", from=1-1, to=1-2]
			\arrow["{{i^*}}"', from=1-1, to=2-1]
			\arrow["{{i^*}}", from=1-2, to=2-2]
			\arrow[Rightarrow, from=2-1, to=1-2]
			\arrow["{{\theta_x}}"', from=2-1, to=2-2]
		\end{tikzcd}\]
		is left adjointable.
		\end{enumerate}
\end{enumerate}
\end{theorem}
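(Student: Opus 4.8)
The plan is to deduce both halves of the theorem from the absolute universal property of $\SpanTwoP{C}{E}{I}{P}$ established in Theorem~\ref{thm:main-theorem}, by re-encoding (op)lax natural transformations as honest $2$-functors into a $2$-categorical arrow object. I will treat the lax case explicitly; the oplax case is formally dual, interchanging left and right adjoints and, correspondingly, the roles of $I$ and $P$ in the arrow-category computation below. The starting point is the standard dictionary (a consequence of the defining Gray-tensor adjunctions, cf.\ \cite{Gray-vs-cc}): since $\FUN(\twoE,-)^{\lax}$ is right adjoint to ${-\boxtimes\twoE}$ and ${[1]\boxtimes-}$ is left adjoint to $\FUN([1],-)^{\oplax}$, a lax natural transformation between $2$-functors $\twoE\to\twoD$ is the same datum as a $2$-functor $\widetilde\theta\colon\twoE\to\twoQ$ over the two evaluation $2$-functors $\ev_0,\ev_1\colon\twoQ\to\twoD$, where $\twoQ\coloneqq\FUN([1],\twoD)^{\oplax}$ is the pertinent arrow $2$-category, and a modification of lax transformations corresponds to a $1$-morphism of $\FUN(\twoE,\twoQ)$ lying over the identity transformations. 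Under this dictionary, restriction along $h\colon C^\op\hookrightarrow\SpanTwoP{C}{E}{I}{P}$ becomes precomposition of arrow-valued functors with $h$.

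\textbf{Reduction to the main theorem.} Fix $2$-functors $F,G\colon\SpanTwoP{C}{E}{I}{P}\to\twoD$. The Hom-category $\HOM_{\FUN(\SpanTwoP{C}{E}{I}{P},\twoD)^{\lax}}(F,G)$ of lax transformations and modifications is the category whose objects are the $2$-functors $\SpanTwoP{C}{E}{I}{P}\to\twoQ$ lifting $(F,G)$ along $(\ev_0,\ev_1)$, and whose morphisms are the $1$-morphisms of $\FUN(\SpanTwoP{C}{E}{I}{P},\twoQ)$ projecting to $(\id_F,\id_G)$; likewise for $C^\op$ in place of $\SpanTwoP{C}{E}{I}{P}$. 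I would then apply Theorem~\ref{thm:main-theorem} with target $2$-category $\twoQ$: restriction along $h$ is an equivalence $\FUN(\SpanTwoP{C}{E}{I}{P},\twoQ)\iso\FUN_{(I,P)\dbadj}(C^\op,\twoQ)$, hence fully faithful on Hom-categories. As $\ev_0$ and $\ev_1$ are $2$-functors they preserve $(I,P)$-biadjointability, and by uniqueness of extensions the extension of an $(I,P)$-biadjointable lift $C^\op\to\twoQ$ of a pair $(F_0,G_0)$ again lifts the extensions $(F,G)$ of $F_0,G_0$. Fixing $F,G$ to be these extensions, it follows that $h^*$ identifies $\HOM_{\FUN(\SpanTwoP{C}{E}{I}{P},\twoD)^{\lax}}(F,G)$ with the full subcategory of $\HOM_{\FUN(C^\op,\twoD)^{\lax}}(F_0,G_0)$ spanned by the lax transformations $\theta$ whose associated functor $\widetilde\theta\colon C^\op\to\twoQ$ is $(I,P)$-biadjointable. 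Assembling this over all $F,G$, together with the $2$-functoriality of $h^*$ on (op)lax functor categories (obtained by applying $\FUN(-,\twoD)^{\lax}$ to the $2$-functor $h$), produces a locally full inclusion $\FUN(\SpanTwoP{C}{E}{I}{P},\twoD)^{\lax}\hookrightarrow\FUN(C^\op,\twoD)^{\lax}$ whose essential image on objects is exactly the $(I,P)$-biadjointable functors (this part being Theorem~\ref{thm:main-theorem} itself). What remains is to translate the condition "$\widetilde\theta$ is $(I,P)$-biadjointable" into conditions (a) and (b) of the statement, and to check that no further condition is imposed on modifications.

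\textbf{Adjunctions in the arrow $2$-category.} The technical heart is an (op)lax refinement of Proposition~\ref{prop:corep-badj}(1)--(2) (i.e.\ of \cite[Proposition~2.1.5]{ElmantoHaugseng2023Distributivity}): presenting a $1$-morphism of $\twoQ=\FUN([1],\twoD)^{\oplax}$ as an oplax square in $\twoD$, one shows that it admits a \emph{left} adjoint if and only if its oplax $2$-cell is invertible and the underlying strict square is horizontally left adjointable in the sense of Definition~\ref{def:adj_squares}, while it admits a \emph{right} adjoint if and only if the two vertical $1$-morphisms admit right adjoints and the iterated Beck--Chevalley mate of the oplax $2$-cell is invertible. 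I expect this asymmetric invertibility phenomenon --- a strongness hypothesis being forced for left but not for right adjoints --- to be the main obstacle; I would establish it by a mate calculus with the unit and counit modifications, or more conceptually by Gray-tensoring with the walking adjunction $\Adj$ and identifying $\FUN(\Adj,\twoQ)$ with the appropriate locally full sub-$2$-category, in the spirit of the proof of Proposition~\ref{prop:corep-badj}. Granting it, the translation is routine: since a $2$-cell of $\twoQ$ is invertible precisely when both $\ev_0$ and $\ev_1$ of it are invertible in $\twoD$, left $I$-adjointability of $\widetilde\theta$ forces, for $i\in I$, the functors $i^*$ in $F_0$ and $G_0$ to admit left adjoints (automatic, since $F_0$ and $G_0$ are $(I,P)$-biadjointable) and the cell $\theta_i$ to be invertible --- that is, $\theta|_I$ strong, which is condition (a) --- whereas the attendant Beck--Chevalley identities in $\twoQ$ hold because they hold componentwise in $F_0$ and $G_0$. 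Dually, for $p\in P$, right $P$-adjointability of $\widetilde\theta$ amounts exactly to right adjointability of the oplax square $\theta_p$, which is condition (b). The mixed Beck--Chevalley condition, and the $(I,P)$-biadjointability into $\twoQ$ of any vertical modification, reduce likewise to componentwise statements that hold automatically; hence $h^*$ is genuinely locally full, with no constraint on modifications. Finally, repeating the argument with $\FUN([1],\twoD)^{\lax}$ in place of $\FUN([1],\twoD)^{\oplax}$ yields the oplax half, where by the mirror-image adjunction computation the strongness now lands on $P$ and the oplax-square left adjointability condition on the maps in $I$.
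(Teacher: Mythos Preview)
Your proposal is correct and follows essentially the same strategy as the paper: curry along the Gray tensor to move the (op)lax direction into the target, apply Theorem~\ref{thm:main-theorem} to the resulting target $2$-category, and then invoke the characterisation of adjunctions in (op)lax functor categories to unpack $(I,P)$-biadjointability into conditions (a) and (b). The only notable presentational difference is that the paper tests against an arbitrary $2$-category $\twoE$ (so that ``locally full inclusion'' falls out immediately from the $\Hom(\twoE,-)$ description) rather than fixing $\twoE=[1]$ and assembling Hom-categories as you do, and the paper simply cites the asymmetric adjunction characterisation in $\FUN([1],\twoD)^{\oplax}$ as Proposition~\ref{prop:adj_in_oplax} (i.e.\ \cite[Corollary~5.2.10]{Gray-vs-cc}) rather than sketching a proof; your anticipated ``main obstacle'' is thus already available in the literature.
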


\begin{remark}
	We say that a (op)lax natural natural transformation is \emph{$(I,P)$-biadjointable} if the corresponding condition (2) is satisfied.
\end{remark}

Before we prove our result we recall the following characterization of adjoints in (op)lax functor categories.

\begin{proposition}[{\cite[Corollary 5.2.10]{Gray-vs-cc}}]\label{prop:adj_in_oplax}
	Let $\twoE$ and $\twoC$ be 2-categories and let $\theta\colon F\Rightarrow G$ be a 1-morphism in $\FUN(\twoE,\twoC)^{\lax}$. Then: 
	\begin{enumerate}
		\item $\theta$ is a left adjoint if and only if for all 1-morphisms $f\colon X\to Y$ in $\twoE$, the oplax square
		\[\begin{tikzcd}
			{F(X)} & {F(Y)} \\
			{G(X)} & {G(Y).}
			\arrow["{{F(f)}}", from=1-1, to=1-2]
			\arrow["{{\theta_X}}"', from=1-1, to=2-1]
			\arrow["{{\theta_Y}}", from=1-2, to=2-2]
			\arrow["{{\theta_f}}", Rightarrow, from=2-1, to=1-2]
			\arrow["{{G(f)}}"', from=2-1, to=2-2]
		\end{tikzcd}\]
		is left adjointable.
		\item $\theta$ is a right adjoint if and only if it is strong and $\theta_X$ admits a right adjoint for all $X\in \twoE$.
		\item A commutative square 
		\[\begin{tikzcd}
			F & G \\
			{F'} & {G'}
			\arrow["\theta", from=1-1, to=1-2]
			\arrow["\phi"', from=1-1, to=2-1]
			\arrow["\psi", from=1-2, to=2-2]
			\arrow["{\theta'}"', from=2-1, to=2-2]
		\end{tikzcd}\]
		in $\FUN(\twoE,\twoC)^{\lax}$ is horizontally left/right adjointable if and only if $\theta$ and $\theta'$ are left/right adjoints and for each $X\in \twoE$, the square 
		\[\begin{tikzcd}
			{F(X)} & {G(X)} \\
			{F'(X)} & {G'(X)}
			\arrow["{\theta_X}", from=1-1, to=1-2]
			\arrow["{\phi_X}"', from=1-1, to=2-1]
			\arrow["{\psi_X}", from=1-2, to=2-2]
			\arrow["{\theta'_X}"', from=2-1, to=2-2]
		\end{tikzcd}\]
		in $\twoC$ is horizontally left/right adjointable.
	\end{enumerate}
	Moreover, the dual statements hold for $\FUN(\twoE,\twoC)^{\oplax}$.
\end{proposition}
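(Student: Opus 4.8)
The statement is an instance of the classical theory of mates and doctrinal adjunction, and my plan is to prove it by combining two ingredients: reading off the \emph{necessary} conditions from evaluation $2$-functors, and constructing the adjoint transformations \emph{by hand} via mates, using the universal property of $\Adj$ recalled above (Proposition~\ref{prop:corep-badj}) to organize the pointwise bookkeeping. For the necessity directions, note that for each object $X\in\twoE$ restriction along $\{X\}\colon\ast\to\twoE$ induces an evaluation $2$-functor $\ev_X\colon\FUN(\twoE,\twoC)^\lax\to\twoC$, and whiskering by a $1$-morphism $f\colon X\to Y$ produces the naturality data $\theta_f$. Since $2$-functors preserve adjunctions, if $\theta$ is a left (resp.\ right) adjoint then each component $\theta_X$ carries the one-sided adjoint demanded in (1) (resp.\ (2)). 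To produce the Beck--Chevalley condition in~(1) I would invoke the Gray adjunction ${-}\boxtimes\twoE\dashv\FUN(\twoE,-)^\lax$: an extension of $\theta$ to a functor $\Adj\to\FUN(\twoE,\twoC)^\lax$ corresponds to a functor $\Adj\boxtimes\twoE\to\twoC$, and restricting the latter along $\Adj\boxtimes\{f\}$ exhibits the square of part~(1) as adjointable in the sense of Proposition~\ref{prop:corep-badj}. For~(2) the same evaluation argument shows that a right adjoint $\theta$ must have invertible structure cells, the inverse of $\theta_f$ being manufactured from the structure cells of the adjoint transformation.

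For the sufficiency directions I would construct the adjoint explicitly. In the situation of~(2), choose an adjoint $\theta_X^\dagger$ of each $\theta_X$ and define the structure cells $\theta_f^\dagger$ as the mates of the \emph{invertible} cells $\theta_f$; strongness guarantees that these mates are again invertible and, crucially, that they satisfy the coherences (functoriality in $f$ and compatibility with $2$-cells of $\twoE$) needed for $\theta^\dagger$ to be a genuine lax transformation. The pointwise units and counits then assemble into modifications by naturality of the mate construction, and the triangle identities hold because they hold objectwise; this exhibits the desired adjunction. Part~(1) proceeds in the same spirit, except that the structure cells of the adjoint transformation are now the mates of the $\theta_f$ \emph{without} any invertibility of $\theta_f$ itself: here it is precisely the Beck--Chevalley equivalence that upgrades the candidate unit/counit data to modifications and secures the triangle identities.

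Part~(3) I would deduce formally from~(1) and~(2). A commutative square of lax transformations is horizontally left/right adjointable exactly when its horizontal edges $\theta,\theta'$ are left/right adjoints --- which is~(1)/(2) --- and the resulting Beck--Chevalley transformation is invertible. Since the latter is itself a modification, and a modification is invertible iff all of its components are, invertibility may be tested objectwise; and because $\ev_X$ preserves the relevant adjunctions together with their units and counits, the component at $X$ is precisely the Beck--Chevalley transformation of the pointwise square in $\twoC$. This reduces~(3) to the pointwise statement already recorded in Proposition~\ref{prop:corep-badj}. The oplax assertions then follow by transporting the lax case through the involution reversing $2$-cells: under $\twoC\rightsquigarrow\twoC^{\mathrm{co}}$ and $\twoE\rightsquigarrow\twoE^{\mathrm{co}}$ the Gray tensor interchanges its two factors, $\FUN(\twoE,\twoC)^\oplax$ is identified with a $2$-cell dual of $\FUN(\twoE^{\mathrm{co}},\twoC^{\mathrm{co}})^\lax$, and left/right adjointability of (op)lax squares are swapped, turning each lax statement into its oplax counterpart.

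The hard part will be the coherent assembly of the pointwise adjoint data into an honest (op)lax transformation via mates, together with keeping straight the lax/oplax variance coming from the asymmetry of the Gray tensor product. This variance is exactly what forces the dichotomy in the statement --- a Beck--Chevalley condition for left adjoints versus strongness for right adjoints --- and checking that the mate cells obey the modification axioms and the triangle identities is the one genuinely technical verification; everything else reduces, via $\ev_X$ and Proposition~\ref{prop:corep-badj}, to the already-established pointwise theory of adjointable squares.
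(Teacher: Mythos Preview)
Your approach to part~(3) is essentially identical to the paper's: the paper simply observes that the evaluation functors $\ev_X\colon\FUN(\twoE,\twoC)^{\textup{(op)lax}}\to\twoC$ jointly detect invertibility of $2$-morphisms (citing \cite[Proposition~2.4.3(1)]{Gray-vs-cc}), which is exactly your argument that the Beck--Chevalley modification is invertible iff its components are.

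For parts~(1) and~(2), the paper does not prove anything---it cites \cite[Corollary~5.2.10]{Gray-vs-cc} directly. Your proposal attempts more, outlining the classical doctrinal-adjunction argument via mates. The necessity directions you give are fine. For sufficiency, however, your sketch leaves the genuinely $\infty$-categorical step unaddressed: ``the pointwise units and counits then assemble into modifications by naturality of the mate construction'' and ``checking that the mate cells obey the modification axioms'' are precisely the statements that require work beyond the strict $2$-categorical setting. You cannot simply check coherences by hand here; one needs either a model-level argument or a universal-property trick (e.g.\ showing that the relevant pointwise data corresponds to a map out of $\Adj\boxtimes\twoE$, not merely out of each $\Adj\boxtimes[1]$). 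You correctly flag this as ``the hard part,'' but the proposal does not indicate how to carry it out. So your outline is a reasonable roadmap for the classical argument, but as an $\infty$-categorical proof of~(1) and~(2) it is incomplete in exactly the place the cited reference does the real work.
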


\begin{proof}
	Points (1) and (2) are precisely \cite[Corollary 5.2.10]{Gray-vs-cc}, while the final point follow from the observation that the functors $\ev_X\colon \FUN(\twoE,\twoC)^{\textup{(op)lax}} \to \twoC$ for $X\in \twoE$ jointly detect invertibility of $2$-morphisms, see \cite[Proposition 2.4.3(1)]{Gray-vs-cc}. 
\end{proof}

\begin{proof}[Proof of Theorem~\ref{thm:lax_nat_trans_of_4FF}]
	We will prove the second statement; the first one is dual. Let $\twoE$ be another 2-category. Note that it suffices to prove that the top map in the following commutative diagram
	\[\begin{tikzcd}
		{\Hom_{\CatTwo}(\twoE,\FUN(\SpanTwoP{C}{E}{I}{P},\twoD)^{\oplax}}) & {\Hom_{\CatTwo}(\twoE,\FUN(C^\op,\twoD)^{\oplax})} \\
		{\Hom_{\CatTwo}(\SpanTwoP{C}{E}{I}{P}\boxtimes \twoE,\twoD)} & {\Hom_{\CatTwo}(C^{\op}\boxtimes \twoE,\twoD)} \\
		{\Hom_{\CatTwo}(\SpanTwoP{C}{E}{I}{P},\FUN(\twoE,\twoD)^{\lax})} & {\Hom_{\CatTwo}(C^{\op},\FUN(\twoE,\twoD)^{\lax})}
		\arrow["{h^*}", from=1-1, to=1-2]
		\arrow["\sim"', from=1-1, to=2-1]
		\arrow["\sim", from=1-2, to=2-2]
		\arrow["{(\twoE\boxtimes h)^*}", from=2-1, to=2-2]
		\arrow["\sim"', from=2-1, to=3-1]
		\arrow["\sim", from=2-2, to=3-2]
		\arrow["{h^*}", from=3-1, to=3-2]
	\end{tikzcd}\]
	is an inclusion of path-components, with essential image given by those functors $F\colon \twoE \to \FUN(C\catop,\twoD)^{\oplax}$ such that the restriction of $F$ to all objects and 1-morphisms of $\twoE$ is $(I,P)$-biadjointable.
	
	By Theorem~\ref{thm:main-theorem}, we know the bottom map is an inclusion of path-components, and that its image consists of the $(I,P)$-biadjointable functors $C^{\op}\to \FUN(\twoE,\twoD)^{\lax}$. As a result of Proposition~\ref{prop:adj_in_oplax}, we may identify this with the subspace of functors  $F\colon  C^{\op}\boxtimes \twoE \to \twoD$ such that the following conditions are satisfied:
	\begin{enumerate}
		\item For every object $e\in \twoE$, the functor 
		\[
			F(-,e)\colon C^{\op}\to \twoD
		\]
		is $(I,P)$-biadjointable.
		\item For every pair of a morphism $i\colon x\to y$ in $I$ and $h\colon e\to e'$ in $\twoE$, the oplax square 
		\[\begin{tikzcd}
			{F(y,e)} & {F(y,e')} \\
			{F(x,e)} & {F(x,e')}
			\arrow["{{F(y,h)}}", from=1-1, to=1-2]
			\arrow["{{F(i,e)}}"', from=1-1, to=2-1]
			\arrow["{{F(i,e')}}", from=1-2, to=2-2]
			\arrow[Rightarrow, from=2-1, to=1-2]
			\arrow["{{F(x,h)}}"', from=2-1, to=2-2]
		\end{tikzcd}\]
		is left adjointable.
		\item For every pair of a morphism $p\colon x\to y$ in $P$ and $h\colon e\to e'$ in $\twoE$, the oplax square 
		\[\begin{tikzcd}
			{F(y,e)} & {F(y,e')} \\
			{F(x,e)} & {F(x,e')}
			\arrow["{{F(y,h)}}", from=1-1, to=1-2]
			\arrow["{{F(p,e)}}"', from=1-1, to=2-1]
			\arrow["{{F(p,e')}}", from=1-2, to=2-2]
			\arrow[Rightarrow, from=2-1, to=1-2]
			\arrow["{{F(x,h)}}"', from=2-1, to=2-2]
		\end{tikzcd}\]
		is strong.
	\end{enumerate}	
	Translating this further to 2-functors $\widetilde{F}\colon\twoE \to \Fun(C\catop,\twoD)^{\oplax}$, condition~(1) says exactly that each object $\tilde{F}(e)$ is $(I,P)$-biadjointable, while conditions~(2) and~(3) identify the image of each morphism of $\twoE$ under $\widetilde{F}$ with an $(I,P)$-biadjointable lax transformation.
\end{proof}

As the final result, we provide a lax symmetric monoidal analogue of Theorem~\ref{thm:lax_nat_trans_of_4FF}. Since the functors $\FUN(\twoC,-)^\lax$ and $\FUN(\twoC,-)^{\oplax}$ are right adjoints, they preserve products. It follows that for a symmetric monoidal 2-category $\twoD^\otimes$, the 2-categories $\FUN(\twoC,\twoD^\otimes)^\lax$ and $\FUN(\twoC,\twoD^\otimes)^{\oplax}$ inherit canonical symmetric monoidal structures. We refer to these as the \emph{pointwise} symmetric monoidal structures.

We make the following definition.

\begin{definition}
Let $\twoC^\otimes,\twoD^\otimes\in \CMon(\CatTwo)$. We define 2-categories \[\FUN^\textup{lax-$\otimes$}(\twoC^\otimes,\twoD^\otimes)^{\lax}\quad \text{and}\quad \FUN^\textup{lax-$\otimes$}(\twoC^\otimes,\twoD^\otimes)^{\oplax}\] by the universal property that for all $\twoE\in \CatTwo$,
\begin{align*}
	\Hom_{\Cat_2}(\twoE, \FUN^\textup{lax-$\otimes$}(\twoC^\otimes,\twoD^\otimes)^{\lax}) &\simeq \iota\FUN^\textup{lax-$\otimes$}(\twoC^\otimes, \FUN(\twoE,\twoD^\otimes)^{\oplax}),\\
	\Hom_{\Cat_2}(\twoE, \FUN^\textup{lax-$\otimes$}(\twoC^\otimes,\twoD^\otimes)^{\oplax}) &\simeq \iota\FUN^\textup{lax-$\otimes$}(\twoC^\otimes, \FUN(\twoE,\twoD^\otimes)^{\lax}).
\end{align*}
\end{definition}

To see that this is well-defined, i.e.\ that the right hand sides are indeed representable as functors of $\mathbb E$, it will suffice by \cite[Proposition 5.5.2.2]{lurie2009HTT} to show:

\begin{lemma}
The functors
\[
 \FUN^\textup{lax-$\otimes$}(\twoC^\otimes, \FUN(-,\twoD^\otimes)^{\oplax}) \quad\text{and}\quad \FUN^\textup{lax-$\otimes$}(\twoC^\otimes, \FUN(-,\twoD^\otimes)^{\lax})
\]	
are limit-preserving functors $\Cat_2^{\op}\to \Cat$.
\end{lemma}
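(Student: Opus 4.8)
The plan is to reduce both statements to the fact that the Gray tensor product $-\boxtimes-$ preserves colimits in each variable, together with the fact that lax symmetric monoidal functors out of a fixed $\twoC^\otimes$ are detected fiberwise. First I would unwind the definitions: by Definition~\ref{def:FunLax}, the functor $\FUN(-,\twoD^\otimes)^{\oplax}$ is the right adjoint of $-\boxtimes\twoD^\otimes\colon\Cat_2\to\Cat_2$ precomposed appropriately---more precisely, $\FUN(\twoE,\twoD^\otimes)^{\oplax}$ represents $\twoE\boxtimes-$, so that $\Hom_{\Cat_2}(\twoF,\FUN(\twoE,\twoD^\otimes)^{\oplax})\simeq\Hom_{\Cat_2}(\twoE\boxtimes\twoF,\twoD^\otimes)$. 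Hence as a functor of $\twoE\in\Cat_2^\op$, the assignment $\twoE\mapsto\FUN(\twoE,\twoD^\otimes)^{\oplax}$ sends colimits in $\twoE$ to limits in $\Cat_2$; this uses that $\twoE\boxtimes-$ preserves colimits (so that $-\boxtimes\twoF$ applied degreewise, or rather the two-variable adjunction, turns the colimit into a limit after taking mapping spaces). Since the pointwise symmetric monoidal structure on $\FUN(\twoE,\twoD^\otimes)^{\oplax}$ is obtained by postcomposition with $\FUN(\twoE,-)^{\oplax}$---which preserves products as a right adjoint---a colimit $\twoE\simeq\operatorname{colim}_k\twoE_k$ induces an equivalence of symmetric monoidal $2$-categories $\FUN(\twoE,\twoD^\otimes)^{\oplax}\simeq\lim_k\FUN(\twoE_k,\twoD^\otimes)^{\oplax}$, compatibly with the $\CMon$-structure.

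Next I would invoke that $\FUN^\textup{lax-$\otimes$}(\twoC^\otimes,-)$, regarded as a functor $\CMon(\Cat_2)\to\Cat_2$ (or into $\Cat$ after taking cores), preserves limits. This is because a lax symmetric monoidal functor $\twoC^\otimes\to\twoD^\otimes$ is, by definition, a functor $\Unco(\twoC^\otimes)\to\Unco(\twoD^\otimes)$ over $\Span(\Fin)$ preserving $2$-cocartesian lifts of backwards spans; a limit of symmetric monoidal $2$-categories is computed by the limit of the underlying $2$-categories equipped with the limit fibration over $\Span(\Fin)$, and a functor into a limit of fibrations is the same as a compatible family of functors into each, which preserves the relevant cocartesian edges if and only if each component does. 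Chaining these two observations: for $\twoE\simeq\operatorname{colim}_k\twoE_k$ in $\Cat_2$,
\[
\FUN^\textup{lax-$\otimes$}\big(\twoC^\otimes,\FUN(\twoE,\twoD^\otimes)^{\oplax}\big)
\;\simeq\;\FUN^\textup{lax-$\otimes$}\big(\twoC^\otimes,\lim_k\FUN(\twoE_k,\twoD^\otimes)^{\oplax}\big)
\;\simeq\;\lim_k\FUN^\textup{lax-$\otimes$}\big(\twoC^\otimes,\FUN(\twoE_k,\twoD^\otimes)^{\oplax}\big),
\]
and likewise with $\lax$ in place of $\oplax$, which is exactly the assertion that the two functors in the lemma are limit-preserving functors $\Cat_2^\op\to\Cat$. (Strictly speaking one also checks that they land in $\Cat$ rather than a larger category, i.e.\ that these mapping $2$-categories are locally small and essentially small in the relevant sense, but this is automatic from $\twoC^\otimes$, $\twoD^\otimes$, $\twoE$ being $2$-categories and not a genuine obstacle.)

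\textbf{Main obstacle.}
The one step that requires genuine care is the interaction between the Gray tensor product and the $\Unco$/$\Span(\Fin)$ packaging of symmetric monoidal $2$-categories: I need to know that forming $\FUN^\textup{lax-$\otimes$}(\twoC^\otimes,-)$ commutes with the limit $\lim_k\FUN(\twoE_k,\twoD^\otimes)^{\oplax}$ \emph{as a limit in $\CMon(\Cat_2)$}, not just underlying $2$-categories, and that the pointwise monoidal structure on $\FUN(\twoE,\twoD^\otimes)^{\oplax}$ really is the limit monoidal structure. This amounts to checking that $\FUN(\twoE,-)^{\oplax}\colon\Cat_2\to\Cat_2$, being a right adjoint, is lax symmetric monoidal for the cartesian structures in a way compatible with $\Unco$---equivalently, that $\twoE\boxtimes-$ is \emph{op}lax symmetric monoidal, which follows from the symmetric monoidality of the Gray tensor product itself (\cite{Gagna_Harpaz_Lanari_Gray}) together with the fact that $-\times-$ is computed degreewise. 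I expect this bookkeeping to be the bulk of the work, but it is formal; no new mathematical input beyond the cited properties of $-\boxtimes-$ and the definition of $\CMon(\Cat_2)$ is needed.
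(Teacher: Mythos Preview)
Your approach is essentially the same as the paper's: factor the functor as $\twoE\mapsto\FUN(\twoE,\twoD^\otimes)^{\textup{(op)lax}}$ (landing in $\CMon(\Cat_2)$) followed by $\FUN^{\textup{lax-}\otimes}(\twoC^\otimes,-)$, and argue each preserves limits. Your first step (via Gray tensor preserving colimits) matches the paper's, which phrases it as the existence of an adjoint $\FUN(-,\twoD^{\times n})^{\oplax}$ for each $\mathbf n$.

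There is one genuine difference in emphasis. The delicate point is not the one you flag as your ``main obstacle'': the pointwise monoidal structure on $\FUN(\twoE,\twoD^\otimes)^{\textup{(op)lax}}$ comes from applying a product-preserving right adjoint to a commutative monoid, so compatibility with limits in $\CMon(\Cat_2)$ is automatic and needs nothing about the symmetric monoidality of the Gray tensor product. The actual subtlety is the one you pass over quickly: that $\FUN^{\textup{lax-}\otimes}(\twoC^\otimes,-)$, which is the hom in the $2$-category $(\CAT_2)_{/\Span(\Fin)}^{\textup{bw-cc}}$ of fibrations and backwards-cocartesian-preserving maps, commutes with limits of \emph{commutative monoids}. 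The paper handles this by factoring $\CMon(\Cat_2)\hookrightarrow(\Cat_2)_{/\Span(\Fin)}^{\textup{cc}}\hookrightarrow(\Cat_2)_{/\Span(\Fin)}^{\textup{bw-cc}}$ and invoking an external reference to produce a left adjoint to the last inclusion. Your direct argument (cocartesian edges in a limit of fibrations are detected componentwise, so the backwards-preservation condition is detected componentwise) is also correct and arguably more elementary, avoiding the cited result; but you should recognize that this, not the monoidal bookkeeping, is where the content lies.
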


\begin{proof}
	Consider the locally full subcategory $(\CAT_2)_{/\Span(\Fin)}^{\textup{bw-cc}}$ of $(\CAT_2)_{/\Span(\Fin)}$ whose objects are the 2-cocartesian fibrations and whose morphisms are those 2-functors over $\Span(\Fin)$ which preserve 2-cocartesian edges over backwards maps. By definition, $\FUN^\textup{lax-$\otimes$}(\twoC^{\otimes},-)$ denotes the hom category in this 2-category, and in particular preserves limits. It remains to show that the composite functors
	\[
		\FUN(-,\twoD^\otimes)^{\oplax}, \FUN(-,\twoD^\otimes)^{\lax}\colon \Cat_2^\op \to \CMon(\Cat_2) \hookrightarrow (\Cat_2)_{/\Span(\Fin)}^{\textup{bw-cc}}
	\] 
	preserve limits. Here the inclusion is given by the composite
	\[
		\CMon(\Cat_2) \subset \Fun(\Span(\Fin),\Cat_2) \simeq (\Cat_2)_{/\Span(\Fin)}^{\textup{cc}} \hookrightarrow (\Cat_2)_{/\Span(\Fin)}^{\textup{bw-cc}}.
	\]
	The first inclusion clearly preserves limits, and the final inclusion preserves limits as it admits a left adjoint by \cite[Theorem 5.6.5]{AHM_FreeFibrations}. It thus remains to show that $\FUN(-,\twoD^\otimes)^{\oplax}$ and $\FUN(-,\twoD^\otimes)^{\lax}$ preserve limits when regarded as functors from $\Cat_2\catop$ to $\Fun(\Span(\Fin),\Cat_2)$. We prove the claim in the lax case; the other case is dual. Since limits in the latter category are computed pointwise, we need to show that for each $\textbf{n}$ the functor $\FUN(-, \twoD^{\times n})^{\lax}\colon \Cat_2\catop \to \Cat_2$ preserves limits. But this functor admits a left adjoint given by $\FUN(-,\twoD^{\times n})^{\op\lax}\colon \Cat_2 \to \Cat_2\catop$, finishing the proof.
\end{proof}

\begin{remark}
	The objects of $\FUN^\textup{lax-$\otimes$}(\twoC^\otimes,\twoD^\otimes)^{\textup{(op)lax}}$ are precisely lax symmetric monoidal 2-functors $\twoC^\otimes\to \twoD^\otimes$. We call the 1-morphisms \emph{(op)lax transformations of lax symmetric monoidal 2-functors}. Note that forgetting the lax symmetric monoidal structures gives a functor 
	\[
	\FUN^\textup{lax-$\otimes$}(\twoC^\otimes,\twoD^\otimes)^{\textup{(op)lax}} \to \FUN(\twoC,\twoD)^{\textup{(op)lax}}.
	\]
\end{remark}

\begin{theorem}\label{thm:lax_nat_trans_of_6FF}
	Let $(C,I,P)^\otimes \in \CMON(\BAdjTrip)$ and let $\twoD^\otimes$ be a symmetric monoidal 2-category. Then restriction induces a locally full inclusion
	\[
	\FUN^\textup{lax-$\otimes$}(\SpanTwoP{C}{E}{I}{P}^\otimes,\twoD^\otimes)^{\textup{(op)lax}}\hookrightarrow \FUN^\textup{lax-$\otimes$}((C^\otimes)^\op,\twoD^\otimes)^{\textup{(op)lax}}
	\]
	of 2-categories with essential image given by the lax symmetric monoidal $(I,P)$-biadjointable functors and (op)lax natural transformations thereof which are underlying $(I,P)$-biadjointable.
\end{theorem}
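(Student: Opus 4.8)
The plan is to run the argument of Theorem~\ref{thm:lax_nat_trans_of_4FF} with its appeal to Theorem~\ref{thm:main-theorem} replaced by the lax symmetric monoidal universal property established in Theorem~\ref{thm:lax-univ-prop}. I will treat only the lax case, the oplax one being entirely dual (interchange $\FUN(-,-)^{\lax}$ and $\FUN(-,-)^{\oplax}$ throughout). Since $h^*$ is induced by a family of restriction functors natural in the test 2-category, it suffices, as in the proof of Theorem~\ref{thm:lax_nat_trans_of_4FF}, to show for every 2-category $\twoE$ that $\Hom_{\Cat_2}(\twoE,h^*)$ is an inclusion of path components with the appropriate essential image. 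Unwinding the defining universal property of $\FUN^{\textup{lax-}\otimes}(-,-)^{\lax}$, this map is identified with restriction along $h$,
\[
	\iota\FUN^{\textup{lax-}\otimes}\big(\SpanTwoP{C}{E}{I}{P}^\otimes,\FUN(\twoE,\twoD^\otimes)^{\oplax}\big)\longrightarrow\iota\FUN^{\textup{lax-}\otimes}\big((C^\otimes)^\op,\FUN(\twoE,\twoD^\otimes)^{\oplax}\big),
\]
where $\FUN(\twoE,\twoD^\otimes)^{\oplax}$ is equipped with its pointwise symmetric monoidal structure.

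I would then invoke Theorem~\ref{thm:lax-univ-prop} with the symmetric monoidal 2-category $\FUN(\twoE,\twoD^\otimes)^{\oplax}$ in place of $\twoD^\otimes$; this is legitimate because the target there may be arbitrary, and $\FUN(\twoE,-)^{\oplax}$ preserves products so $\FUN(\twoE,\twoD^\otimes)^{\oplax}$ is a genuine symmetric monoidal 2-category. Passing to cores — and using that an equivalence between lax symmetric monoidal $(I,P)$-biadjointable functors is automatically underlying $(I,P)$-biadjointable, since the Beck--Chevalley maps attached to an equivalence are invertible — this yields that the displayed restriction map is an inclusion of path components whose image is exactly the path components of lax symmetric monoidal $(I,P)$-biadjointable functors $(C^\otimes)^\op\to\FUN(\twoE,\twoD^\otimes)^{\oplax}$.

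The last step is to decode this condition across the adjunction. Such a functor $F$ corresponds to a 2-functor $\widetilde F\colon\twoE\to\FUN^{\textup{lax-}\otimes}((C^\otimes)^\op,\twoD^\otimes)^{\lax}$, and I must show that $F$ is lax symmetric monoidal $(I,P)$-biadjointable in the sense of Definition~\ref{def:lax-badj} precisely when $\widetilde F$ sends every object of $\twoE$ to a lax symmetric monoidal $(I,P)$-biadjointable functor and every 1-morphism of $\twoE$ to an underlying-$(I,P)$-biadjointable lax transformation of lax symmetric monoidal 2-functors. Both clauses of Definition~\ref{def:lax-badj} — biadjointability of the underlying functor, and $(I^{2},P^{2})$-biadjointability of the lax structure transformation — are instances of the statement that a functor, respectively a natural transformation, valued in $\FUN(\twoE,\twoD)^{\oplax}$ is $(I^{k},P^{k})$-biadjointable; exactly as in the proof of Theorem~\ref{thm:lax_nat_trans_of_4FF}, Proposition~\ref{prop:adj_in_oplax} (in its $\FUN(-,-)^{\oplax}$ form) unwinds each such statement into a fibrewise condition over the objects of $\twoE$ together with an adjointability condition over its 1-morphisms. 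Since both $(C^\otimes)^\op$ and $\FUN(\twoE,\twoD^\otimes)^{\oplax}$ are product-preserving diagrams on $\Span(\Fin)$, evaluating these conditions at $\mathbf{1}$ and at the unique active map $\mathbf{2}\to\mathbf{1}$ recombines them into the asserted description of $\widetilde F$, precisely the reduction to the single active map already used in the proof of Theorem~\ref{thm:lax-univ-prop}; the same analysis applied to modifications gives the statement about the 1-morphisms of the functor 2-categories. The only genuine obstacle is organisational: one must carefully align the lax/oplax conventions in the adjunction defining $\FUN^{\textup{lax-}\otimes}(-,-)^{\textup{(op)lax}}$, in the component squares of (op)lax transformations valued in $\FUN(\twoE,\twoD^\otimes)^{\oplax}$, and in the two biadjointability clauses of Definition~\ref{def:lax-badj}, and check that they decode compatibly — which works because every monoidal structure in play is controlled by a product-preserving diagram on $\Span(\Fin)$, so the monoidal coherence interacts with the 2-categorical adjointability conditions only through the map $\mathbf{2}\to\mathbf{1}$.
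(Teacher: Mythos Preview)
Your proposal is correct and follows essentially the same route as the paper: reduce to maps out of a test 2-category $\twoE$ via the defining universal property of $\FUN^{\textup{lax-}\otimes}(-,-)^{\lax}$, apply Theorem~\ref{thm:lax-univ-prop} with target $\FUN(\twoE,\twoD^\otimes)^{\oplax}$, and then decode the biadjointability conditions using Proposition~\ref{prop:adj_in_oplax}. The only minor discrepancy is that condition~(2) of Definition~\ref{def:lax-badj} decomposes purely into a fibrewise condition over $\twoE$ (via Proposition~\ref{prop:adj_in_oplax}(3), or equivalently via the jointly conservative strong monoidal evaluations $\ev_e$ as the paper phrases it), not an additional condition on 1-morphisms---so the morphism clause comes entirely from condition~(1); this does not affect the argument's validity.
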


\begin{proof}
	We will prove the lax case of the result; the oplax case is dual. It suffices to prove that for every 2-category $\twoE$, the top map in the following commutative diagram
	\[\hskip-18.31pt\hfuzz=18.31pt\begin{tikzcd}[cramped]
		{\Hom_{\CatTwo}(\twoE,\FUN^\textup{lax-$\otimes$}(\SpanTwoP{C}{E}{I}{P}^\otimes,\twoD^\otimes)^{\lax})} & {\Hom_{\CatTwo}(\twoE,\FUN^\textup{lax-$\otimes$}((C^\otimes)^\op,\twoD^\otimes)^{\lax})} \\
		{\iota\FUN^\textup{lax-$\otimes$}(\SpanTwoP{C}{E}{I}{P}^\otimes,\FUN(\twoE,\twoD^\otimes)^{\oplax})} & {\iota\FUN^\textup{lax-$\otimes$}((C^\otimes)^{\op},\FUN(\twoE,\twoD^\otimes)^{\oplax})}
		\arrow["{h^*}", from=1-1, to=1-2]
		\arrow["\sim"', from=1-1, to=2-1]
		\arrow["\sim", from=1-2, to=2-2]
		\arrow["{h^*}", from=2-1, to=2-2]
	\end{tikzcd}\]
	is an inclusion of path-components with essential image given by those functors $F\colon \twoE\to \FUN^\textup{lax-$\otimes$}((C^\otimes)^\op,\twoD^\otimes)^{\lax})$ such that for all objects of $\twoE$ the restriction of $F$ is lax symmetric monoidal $(I,P)$-biadjointable and for all morphisms of $\twoE$ the lax natural transformation is underlying $(I,P)$-biadjointable. By Theorem~\ref{thm:lax-univ-prop}, the bottom map is an inclusion of path-components, and its image consists of the lax symmetric monoidal $(I,P)$-biadjointable functors $F\colon (C^\otimes)^{\op}\to \FUN(\twoE,\twoD^\otimes)^{\oplax}$. By definition this means that:
	\begin{enumerate}
		\item The underlying functor of $F$ is $(I,P)$-biadjointable, and
		\item The transformation $F(-)\otimes F(-)\to F(- \otimes -)$ is $(I\times I,P\times P)$-biadjointable.
	\end{enumerate}
	Denote by $\tilde{F}\colon \twoE\to \FUN^\textup{lax-$\otimes$}((C^\otimes)^{\op},\twoD^\otimes)^\lax$ the functor associated to $F$ under the right-hand equivalence in the diagram. By Proposition~\ref{prop:adj_in_oplax}, condition $(1)$ is equivalent to:
	\begin{enumerate}
		\item[(1a)] For each $e \in \twoE$, the lax symmetric monoidal functor $\tilde{F}(e)\colon (C^{\otimes})\catop \to \twoD^{\otimes}$ is $(I,P)$-biadjointable, and
		\item[(1b)] For each 1-morphism $f\colon e\to e'$ of $\twoE$, the underlying lax natural transformation $\tilde{F}(f)\colon \tilde{F}(e) \to \tilde{F}(e')$ is $(I,P)$-biadjointable.
	\end{enumerate}
	Since the collection of functors $\ev_x\colon \FUN(\twoE,\twoD^\otimes)^{\textup{oplax}} \to \twoD^\otimes$ for $x\in \twoE$ are strong symmetric monoidal and jointly conservative, we see that $(2)$ and $(1\textup{a})$ together are equivalent to the condition that $\tilde{F}(e)$ is lax symmetric monoidal $(I,P)$-biadjointable for all $e\in \twoE$, while $(\text{1b})$ is precisely the required condition on morphisms.
\end{proof}

\begin{remark}
	When $C$ is equipped with the cartesian symmetric monoidal structure, Theorem~\ref{thm:lax_nat_trans_of_6FF} specializes to the situation relevant for 6-functor formalisms. In particular, it recovers the corresponding lax and oplax natural transformations between the induced 6-functor formalisms discussed in the introduction.
\end{remark}

\bibliographystyle{amsalpha}
\bibliography{Bibliography.bib}
\end{document}